\documentclass[11pt,leqno]{amsart}

\usepackage[a4paper, right=20mm, left=20mm, top=25mm]{geometry}

\usepackage{microtype}

\usepackage{amsthm}
\usepackage[english]{babel}
\usepackage[utf8]{inputenc}
\usepackage{hyperref}

\usepackage{amsmath}
\usepackage{amssymb}
\usepackage{mathtools}
\usepackage{mathrsfs}

\usepackage{pdflscape}

\usepackage{tikz}
\usetikzlibrary{matrix,arrows,positioning,cd}

\numberwithin{equation}{section}
\numberwithin{figure}{section}

\theoremstyle{plain}
\newtheorem{Theorem}{Theorem}[section]
\newtheorem{Lemma}[Theorem]{Lemma}
\newtheorem{Proposition}[Theorem]{Proposition}
\newtheorem{Corollary}[Theorem]{Corollary}
\newtheorem{thmalph}{Theorem}

\newtheorem{Conjecture}[thmalph]{Conjecture}
\newtheorem{Hypothesis}[thmalph]{Hypothesis}

\theoremstyle{definition}
\newtheorem{Definition}[Theorem]{Definition}
\newtheorem{Remark}[Theorem]{Remark}
\newtheorem{Example}[Theorem]{Example}

\newcommand{\C}{\mathbb{C}}
\newcommand{\R}{\mathbb{R}}

\newcommand{\Z}{\mathbb{Z}}

\newcommand{\Gr}{\mathcal{G}r}
\newcommand{\Fl}{\mathcal{F}l}

\newcommand{\GL}{\mathrm{GL}}

\providecommand{\abs}[1]{\lvert#1\rvert}

\title[Pseudo-centralizers in affine Hecke algebras]{Pseudo-centralizers in affine Hecke algebras}
\author{Jonathan Gruber}
\address{RWTH Aachen, Chair of Algebra and Representation Theory, Pontdriesch 10--12, 52062 Aachen, Germany}
\email{gruber@art.rwth-aachen.de}
\date{\today}

\begin{document}

\begin{abstract}
	We introduce and study a subalgebra $\mathcal{B}$ of the affine Hecke algebra, which arises from a centralizer construction in the double affine Hecke algebra, and which may be regarded as a $v$-deformation of the affine Fomin--Stanley subalgebra introduced by Lam \cite{LamSchubert} as a combinatorial model for the affine Grassmannian homology ring.
	In types $\mathsf{A}_n$ and $\mathsf{B}_2$ and $\mathsf{G}_2$, we show that $\mathcal{B}_\mathrm{aff}$ admits a canonical basis indexed by the cosets of the finite Weyl group in the affine Weyl group.
	We also discuss conjectural positivity properties of the canonical basis and explain how it can be used to study the center of the affine Hecke algebra.
\end{abstract}

\maketitle

\section*{Introduction}

Let $\Phi$ be a simple root system with finite Weyl group $W_\mathrm{fin}$, affine Weyl group $W_\mathrm{aff} = W_\mathrm{fin} \ltimes \Z\Phi^\vee$ and weight lattice $X$.
The affine Hecke algebra $\mathcal{H}$ and the double affine Hecke algebra $\mathbb{H}$ are deformations the group algebras $\Z[W_\mathrm{aff}]$ and $\Z[ X \rtimes W_\mathrm{aff} ]$, respectively, over the Laurent polynomial ring $\mathcal{A} = \Z[v,v^{-1}]$.
Both $\mathcal{H}$ and the group ring $\mathcal{A}[X] = \bigoplus_{\lambda \in X} \mathcal{A} \cdot X_\lambda$ of $X$ can be identified with $\mathcal{A}$-subalgebras of $\mathbb{H}$, and the multiplication in $\mathbb{H}$ affords an isomorphism of $\mathcal{A}$-modules $\mathbb{H} \cong \mathcal{A}[X] \otimes_\mathcal{A} \mathcal{H}$.
We write $\varphi \colon \mathbb{H} \to \mathcal{H}$ for the unique $\mathcal{A}$-linear map such that $\varphi(X_\lambda h) = h$ for all $\lambda \in X$ and $h \in \mathcal{H}$,%
\footnote{We stress that $\varphi \colon \mathbb{H} \to \mathcal{H}$ is not a homomorphism of $\mathcal{A}$-algebras.}
and we define the \emph{pseudo-centralizer} of $X$ in $\mathcal{H}$ via
\[ \mathcal{B} \coloneqq \big\{ h \in \mathcal{H} \mathop{\big|} \varphi( h X_\lambda ) = h \text{ for all } \lambda \in X \big\} . \]
In this article, we initiate the study of $\mathcal{B}$ using algebraic and combinatorial techniques.
For instance, we prove in Section \ref{sec:pseudocentralizer} below that $\mathcal{B}$ is a subalgebra of $\mathcal{H}$ and that the restriction of $\varphi$ to the centralizer $\mathcal{C} = C_\mathbb{H}(X)$ of $\mathcal{A}[X]$ in $\mathbb{H}$ is an $\mathcal{A}$-algebra homomorphism $\varphi \colon \mathcal{C} \to \mathcal{B}$, giving rise to the following commutative diagram:
\[ \begin{tikzcd} \mathcal{C} \ar[r] \ar[d,"\varphi",swap] & \mathbb{H} \ar[d,"\varphi"] \\ \mathcal{B} \ar[r] & \mathcal{H}  \end{tikzcd} \]
Furthermore, we show that $\mathcal{B}$ contains the center $Z(\mathcal{H})$ of $\mathcal{H}$.

The definition of $\mathcal{B}$ and the aforementioned structural results are strongly inspired by results of Lam \cite{LamSchubert} (and Lam--Schilling--Shimozono \cite{LamSchillingShimozono}), where a subalgebra analogous to $\mathcal{B}$ in the affine nil-Hecke algebra (or the affine $0$-Hecke algebra) is shown to provide a combinatorial model for the homology (or $K$-homology) of the affine Grassmannian.
While a similar geometric interpretation of $\mathcal{B}$ is currently lacking, the results from \cite{LamSchubert,LamSchillingShimozono} still provide an important motivation and some useful guiding principles for the study of $\mathcal{B}$.
We briefly summarize these results below before before proceeding with our discussion of $\mathcal{B}$.

\subsection*{Affine Grassmannian homology}

Let $G$ be the simply connected simple algebraic group over $\C$ with root system $\Phi$, and let $T \subseteq G$ be a maximal torus.
Further let $\Gr_G$ and $\Fl_G$ be the corresponding affine Grassmannian and affine Flag manifold.
The $T$-equivariant homology $H^T_\bullet( \Fl_G )$ can be identified with the double affine nil-Hecke algebra $\mathbb{H}_\mathrm{nil}$, which is generated by the affine nil-Hecke algebra $\mathcal{H}_\mathrm{nil}$ and the symmetric algebra $S = \mathrm{Sym}(X)$, subject to certain commutation relations given explicitly in Section \ref{sec:typeA} below.
As for the double affine Hecke algebra, there is a $\Z$-linear map $\varphi_\mathrm{nil} \colon \mathbb{H}_\mathrm{nil} \to \mathcal{H}_\mathrm{nil}$ with $\varphi_\mathrm{nil}(fh) = f(0) \cdot h$ for all $f \in S$ and $h \in \mathcal{H}_\mathrm{nil}$, which corresponds to ``forgetting equivariance''
\[ \mathrm{for} \colon H^T_\bullet( \Fl_G ) \longrightarrow H_\bullet( \Fl_G ) \cong \mathcal{H}_\mathrm{nil} \]
in homology.
Lam proves in \cite[Theorems 4.4 and 5.5]{LamSchubert} that the equivariant homology $H^T_\bullet( \Gr_G )$ can be identified with the centralizer $\mathcal{C}_\mathrm{nil} = C_{\mathbb{H}_\mathrm{nil}}(S)$ of $S$ in $\mathbb{H}_\mathrm{nil}$ and that the non-equivariant homology $H_\bullet( \Gr_G )$ can be identified with the subalgebra
\[ \mathcal{B}_\mathrm{nil} \coloneqq \big\{ h \in \mathcal{H}_\mathrm{nil} \mathop{\big|} \varphi( h f ) = f(0) \cdot h \text{ for all } f \in S \big\} \]
of $\mathcal{H}_\mathrm{nil}$.
In summary, we have the following commutative diagrams, which correspond to each other via the isomorphism $\mathbb{H}_\mathrm{nil} \cong H^T_\bullet( \Fl_G )$:
\[ \begin{tikzcd}[baseline={(current bounding box.center)}] \mathcal{C}_\mathrm{nil} \ar[r] \ar[d,"\varphi_\mathrm{nil}",swap] & \mathbb{H}_\mathrm{nil} \ar[d,"\varphi_\mathrm{nil}"] \\ \mathcal{B}_\mathrm{nil} \ar[r] & \mathcal{H}_\mathrm{nil}  \end{tikzcd}
\hspace{2cm}
\begin{tikzcd}[baseline={(current bounding box.center)}] H^T_\bullet( \Gr_G ) \ar[r] \ar[d,"\mathrm{for}",swap] & H^T_\bullet( \Fl_G ) \ar[d,"\mathrm{for}"] \\ H_\bullet( \Gr_G ) \ar[r] & H_\bullet( \Fl_G )  \end{tikzcd} \]
The horizontal arrows in the left hand diagram are the obvious inclusions, whereas the horizontal arrows in the right hand diagram are the embeddings given by the ``wrong way'' map also considered in \cite[Section 5.1]{LamSchillingShimozono}.
Replacing the double affine nil-Hecke algebra by the double affine $0$-Hecke algebra, Lam--Schilling--Shimozono prove analogous results about the equivariant and non-equivariant $K$-homology of $\Gr_G$ and $\Fl_G$ in \cite{LamSchillingShimozono}.

As a consequence of the isomorphism $\mathcal{B}_\mathrm{nil} \cong H_\bullet( \Gr_G )$, the subalgebra $\mathcal{B}_\mathrm{nil}$ of $\mathcal{H}_\mathrm{nil}$ has a basis corresponding to the basis of $H_\bullet( \Gr_G )$ given by the fundamental classes of Schubert varieties.
To make this more specific, let us write $\ell \colon W_\mathrm{aff} \to \Z_{\geq 0}$ for the length function, and let
\[ W_\mathrm{aff}^+ = \{ x \in W_\mathrm{aff} \mid x \text{ has minimal length in } W_\mathrm{fin} x \} \]
be the set of minimal $W_\mathrm{fin}$-coset representatives in $W_\mathrm{aff}$.
Then the affine Grassmannian has a Bruhat decomposition
\[ \Gr_G = \bigsqcup_{w \in W_\mathrm{aff}^+} \Omega_w = \bigcup_{w \in W_\mathrm{aff}^+} X_w , \]
where $\Omega_w$ is a Schubert cell and its closure $X_w = \overline{\Omega_w}$ is a Schubert variety, for $w \in W_\mathrm{aff}^+$.
The fundamental classes of the Schubert varieties form a basis $\{ [X_w] \mid w \in W_\mathrm{aff}^+ \}$ of $H_\bullet( \Gr_G )$, and the latter corresponds via the isomorphism $\mathcal{B}_\mathrm{nil} \cong H_\bullet( \Gr_G )$ to a basis $\{ b_w \mid w \in W_\mathrm{aff}^+ \}$ of $\mathcal{B}_\mathrm{nil}$, which we call the \emph{Schubert basis}.
The Schubert basis can also described in a purely algebro-combinatorial way:

Writing $\{ A_x \mid x \in W_\mathrm{aff} \}$ for the standard basis of $\mathcal{H}_\mathrm{nil}$ such that $A_x A_y = A_{xy}$ if $\ell(xy) = \ell(x)+\ell(y)$ and $A_x A_y = 0$ if $\ell(xy) < \ell(x)+\ell(y)$, Lam proves in \cite[Proposition 5.4]{LamSchubert} that $b_w$ can be characterized as the unique element of $\mathcal{B}_\mathrm{nil}$ whose expansion
\[ b_w = \sum_{x \in W_\mathrm{aff}} p_{x,w} \cdot A_x \]
in terms of the standard basis satisfies $p_{w,w} = 1$ and $p_{x,w} = 0$ for all $w \neq x \in W_\mathrm{aff}^+$.

\subsection*{Bases of \texorpdfstring{$\mathcal{B}$}{B}}

Although we do not know a geometric interpretation of the subalgebra $\mathcal{B}$ of $\mathcal{H}$ (in terms of $\Gr_G$ or otherwise), we may still ask whether there are bases of $\mathcal{B}$ with the same algebro-combinatorial behavior as the Schubert basis of $\mathcal{B}_\mathrm{nil}$.
A new feature that arises in the study of $\mathcal{B}$ in this context (in contrast to the study of $\mathcal{B}_\mathrm{nil}$ or its $K$-theory version) is that the affine Hecke algebra has two different bases (the \emph{standard basis} and the \emph{Kazhdan--Lusztig basis}) that could both play the role that the standard basis $\{A_x \mid x \in W_\mathrm{aff}\}$ of $\mathcal{H}_\mathrm{nil}$ plays in the above characterization of $b_w$, and both choices of basis potentially lead to interesting classes of elements of $\mathcal{B}$.

Let us write $\{ H_x \mid x \in W_\mathrm{aff} \}$ for the standard basis and $\{ \underline{H}_x \mid x \in W_\mathrm{aff} \}$ for the Kazhdan--Lusztig basis of $\mathcal{H}$ (as defined in Subsections \ref{subsec:Heckealgebras} and \ref{subsec:KLbasis}).
In Section \ref{sec:standardbasis} below, we prove that for all $w \in W_\mathrm{aff}^+$, there is at most one element $B_w \in \mathcal{B}$ whose expansion
\[ B_w = \sum_{x \in W_\mathrm{aff}} P_{x,w} \cdot H_x \]
in terms of the standard basis satisfies $P_{w,w} = 1$ and $P_{x,w} = 0$ for $w \neq x \in W_\mathrm{aff}^+$.
Furthermore, if an element $B_w \in \mathcal{B}$ as above exists for all $w \in W_\mathrm{aff}$, then the elements $\{ B_w \mid w \in W_\mathrm{aff}^+ \}$ form a basis of $\mathcal{B}$, which we call the \emph{standard basis}.
Lacking a geometric interpretation of $\mathcal{B}$, we do not know how to prove the existence of the standard basis of $\mathcal{B}$ in full generality, but we can construct it using explicit computations for root systems of type $\mathsf{A}_n$ or of rank $2$.

\begin{thmalph} \label{thm:introstandardbasis}
	Suppose that $\Phi$ is of type $\mathsf{A}_n$ (for some $n \geq 1$) or $\mathsf{B}_2$ or $\mathsf{G}_2$.
	Then for all $w \in W_\mathrm{aff}^+$, there exists an element $B_w \in \mathcal{B}$ whose expansion
	\[ B_w = \sum_{x \in W_\mathrm{aff}} P_{x,w} \cdot H_x \]
	 in terms of the standard basis of $\mathcal{H}$ satisfies $P_{w,w} = 1$ and $P_{x,w} = 0$ for $w \neq x \in W_\mathrm{aff}^+$.
	In particular, the standard basis $\{ B_w \mid w \in W_\mathrm{aff}^+ \}$ of $\mathcal{B}$ exists.
\end{thmalph}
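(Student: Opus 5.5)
We plan to reduce existence of the full standard basis to existence of finitely many "generating" elements, and then to construct those by explicit computation.

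\emph{Step 1: closure of $\mathcal{B}$ and multiplicativity of leading terms.} Using that $\mathcal{B}$ is a subalgebra containing $Z(\mathcal{H})$, as shown in Section \ref{sec:pseudocentralizer}, we study how the triangularity condition behaves under products. Suppose $B_u, B_{u'}$ exist. The product $B_u B_{u'}$ lies in $\mathcal{B}$. We aim to show that, after expanding in the standard basis, its restriction to $W_\mathrm{aff}^+$ is a finite combination of $H_x$ with $x\in W_\mathrm{aff}^+$, with leading term at $uu'$ (or the appropriate translation product) in Bruhat order. By the uniqueness result of Section \ref{sec:standardbasis}, an element of $\mathcal{B}$ whose $W_\mathrm{aff}^+$-part is $\sum_{x} c_x H_x$ equals $\sum c_x B_x$ whenever the relevant $B_x$ exist. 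This gives an inductive mechanism: if $B_x$ exists for all $x<w$ in $W_\mathrm{aff}^+$ and some product has leading term $H_w$, then subtracting gives $B_w$.

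\emph{Step 2: generators.} Next we identify a finite (or explicitly parametrized) set of $w\in W_\mathrm{aff}^+$ whose $B_w$ generate. Every $w\in W_\mathrm{aff}^+$ corresponds to a translation coset $t_\lambda$ with $\lambda$ in the appropriate (anti)dominant chamber. Natural candidates are the Bernstein-type central elements $z_\lambda=\sum_{\mu\in W_\mathrm{fin}\lambda}\theta_\mu$, which lie in $Z(\mathcal{H})\subseteq\mathcal{B}$, for $\lambda$ fundamental coweights. These are supplemented, for non-translation cosets, by elements of the form $B_{s_0}$ and short words. In type $\mathsf{A}_n$ we would mimic Lam's affine Fomin--Stanley construction: define $v$-deformed noncommutative $k$-Schur elements $\mathbf{h}_k=\sum_{A} H_{w_A}$ over cyclically decreasing subsets $A\subsetneq \Z/(n+1)$ of size $k$, suitably weighted by powers of $v$. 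We would then verify directly that $\mathbf{h}_k\in\mathcal{B}$ by checking $\varphi(\mathbf{h}_k X_\lambda)=\mathbf{h}_k$ on fundamental weights $\lambda$, using the Bernstein relation to move $X_\lambda$ left past each $H_s$. For $\mathsf{B}_2$ and $\mathsf{G}_2$ we would instead compute, by computer, low-length elements $B_w$ directly by solving the finite linear system imposed by the conditions $\varphi(hX_{\varpi_i})=h$ with the prescribed support.

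\emph{Step 3: induction.} We then run induction on $\ell(w)$ for $w\in W_\mathrm{aff}^+$. We would show that every such $w$ is the leading term of a product of generators, for instance via the affine-Grassmannian factorization $w=w_1\cdots w_k$ into cyclically decreasing pieces in type $\mathsf{A}$. In types $\mathsf{B}_2$ and $\mathsf{G}_2$ we would use a periodicity argument: beyond a bounded region, $w=t_\lambda w'$ with $\lambda$ dominant, and $z_\lambda B_{w'}$ has the correct leading term.

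\emph{Main obstacle.} The hardest step is controlling the $W_\mathrm{aff}^+$-part of products. Multiplying standard basis elements produces lower terms with coefficients in $\mathcal{A}$, and we must show that the terms landing in $W_\mathrm{aff}^+$ are lower than $w$ and that the leading coefficient is $1$, not merely a unit. Verifying $\mathbf{h}_k\in\mathcal{B}$ in type $\mathsf{A}_n$ for all $n$ is also delicate. For this we would first try to show that $\mathcal{B}$ is characterized by a finite set of local conditions per simple reflection, analogous to Lam's nil-Hecke proof, and then check those conditions cyclically.
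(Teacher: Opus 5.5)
There is a genuine gap, and it sits exactly at what you call the main obstacle. Nothing in your plan produces an element of $\mathcal{B}$ whose top-length $W_\mathrm{aff}^+$-part is exactly $H_w$.

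\emph{Type $\mathsf{A}_n$.} Step 1 cannot hold as stated. Distinct elements of equal length are Bruhat-incomparable, and products of generators usually have several top-length $W_\mathrm{aff}^+$-terms. Their coefficients count length-additive factorizations into cyclically decreasing pieces. Already in type $\mathsf{A}_2$ one has $B_{s_0}^2=B_{01}+B_{02}+(v^{-1}-v)B_{s_0}+3H_e$, and $H_{0121}$ occurs in $B_{s_0}^4$ with coefficient $2$. So induction on $\ell(w)$ with subtraction of lower $B_x$ does not close.

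The missing idea is to pass to the associated graded of the length filtration.
\begin{itemize}
\item $A_i\mapsto[H_i]$ defines a ring homomorphism $\psi\colon\mathcal{H}_\mathrm{nil}\to\mathrm{Gr}(\mathcal{H})$ with $\psi(A_w)=[H_w]$. Hence $\psi(b_r)=[B_r]$ for the unweighted sums $B_r=\sum_{\abs{Y}=r}H_Y$.
\item By Lam, $\mathcal{B}_\mathrm{nil}=\Z[b_1,\dots,b_n]$, and $b_w$ is homogeneous with $W_\mathrm{aff}^+$-support $\{w\}$.
\item Write $b_w$ as an integer polynomial in the $b_r$ and substitute the $B_r$. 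This gives $\hat B_w\in\mathcal{B}\cap\mathcal{H}^{\leq\ell(w)}$ whose length-$\ell(w)$ part is $\psi(b_w)$.
\item Every remaining unwanted $W_\mathrm{aff}^+$-term then has smaller length and is removed by induction on length alone.
\end{itemize}

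For $B_r\in\mathcal{B}$ you need neither $v$-weights nor a local characterization of $\mathcal{B}$. One checks $\varphi(B_rX_{-\varpi_1})=B_r$ directly; the correction terms cancel via the bijection $(Y,\ell)\mapsto Y\cup\{0\}\setminus\{\ell\}$. Then one uses $\omega B_r\omega^{-1}=B_r$ for $\omega\in\Omega$, together with the fact that $\Omega(-\varpi_1)$ generates $X$.

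\emph{Types $\mathsf{B}_2$ and $\mathsf{G}_2$.} Your periodicity plan is the paper's reduction theorem in outline, but it lacks the ingredients that make it work.
\begin{itemize}
\item You must impose and propagate $B_u\in\mathcal{H}^{\leq\ell(u)}$. Otherwise long non-$W_\mathrm{aff}^+$ terms of $B_u$ can create $W_\mathrm{aff}^+$-terms of length $>\ell(w)$ in $z_\lambda B_u$.
\item The claim that $z_\lambda B_u$ has the right leading term needs two length facts. First, $\ell(t_\lambda u)=\ell(t_\lambda)+\ell(u)$ for $\lambda\in Y^+$ and $u\in W_\mathrm{ext}^+$. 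Second, conversely, a length-additive product $t_{\lambda'}u'\in W_\mathrm{ext}^+$ forces $\lambda'\in Y^+$ and $u'\in W_\mathrm{ext}^+$. Combine these with $z_\lambda\in\sum_{\lambda'\in W_\mathrm{fin}\lambda}H_{t_{\lambda'}}+\mathcal{H}_\mathrm{ext}^{\leq\ell(t_\lambda)-1}$ and with $P_{u',u}=\delta_{u',u}$ on $W_\mathrm{ext}^+$. This pins the top-length $W_\mathrm{ext}^+$-part to $H_{t_\lambda u}$ with coefficient $1$.
\item Fundamental coweights need not lie in $Q^\vee$ (e.g.\ $\varpi_\beta^\vee$ in $\mathsf{B}_2$). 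So $z_\lambda$ lies in $Z(\mathcal{H}_\mathrm{ext})$, not $Z(\mathcal{H})$, and you have to work in $\mathcal{B}_\mathrm{ext}=\bigoplus_{\omega\in\Omega}\mathcal{B}\omega$ and project back to $\mathcal{B}$ at the end.
\item The base region is all of $\mathcal{P}_\mathrm{aff}$: in $\mathsf{G}_2$ that is twelve elements, up to length $10$. These cases must actually be computed. In $\mathsf{G}_2$ this includes a brute-force solution for $B_{stuts}$, for which no conceptual construction is available.
\end{itemize}
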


The computations that lead to Theorem \ref{thm:introstandardbasis} (in Section \ref{sec:typeA} and Appendix \ref{app:rank2}) seem infeasible outside of type $\mathsf{A}_n$ or small rank root system.
Nevertheless, we expect that the standard basis of $\mathcal{B}$ should exist independently of the type of $\Phi$.

In the cases where the standard basis of $\mathcal{B}$ is known to exist (in types $\mathsf{A}_n$ and $\mathsf{B}_2$ and $\mathsf{G}_2$), its elements additionally satisfy the condition
\begin{equation} \label{eq:Bwfiltration}
	B_w \in \mathcal{H}^{\leq \ell(w)} \coloneqq \mathrm{span}_\mathcal{A}\{ H_x \mid x \in W_\mathrm{aff} \text{ with } \ell(x) \leq \ell(w) \} \tag{$*$}
\end{equation}
for all $w \in W_\mathrm{aff}^+$.
Assuming the existence of the standard basis and the condition \eqref{eq:Bwfiltration}, we can also define a canonical basis of $\mathcal{B}$ as in the following theorem.

\begin{thmalph}
	Suppose that the standard basis of $\mathcal{B}$ exists and that $B_w \in \mathcal{H}^{\leq \ell(w)}$ for all $w \in W_\mathrm{aff}^+$.
	Then for all $w \in W_\mathrm{aff}^+$, there is a unique element $\underline{B}_w \in \mathcal{B}$ whose expansion
	\[ \underline{B}_w = \sum_{x \in W_\mathrm{aff}} Q_{x,w} \cdot \underline{H}_x \]
	 in terms of the Kazhdan--Lusztig basis of $\mathcal{H}$ satisfies $Q_{w,w} = 1$ and $Q_{x,w} = 0$ for $w \neq x \in W_\mathrm{aff}^+$.
	 Furthermore, the elements $\{ \underline{B}_w \mid w \in W_\mathrm{aff}^+ \}$ form a basis of $\mathcal{H}$, which we call the \emph{canonical basis}.
\end{thmalph}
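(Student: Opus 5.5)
We construct $\underline{B}_w$ by induction on $\ell(w)$, correcting the standard basis element $B_w$ by a triangular change of basis. We use two facts about $\mathcal{H}$. First, $\underline{H}_x \in H_x + \sum_{y<x} v\Z[v]\, H_y$, so the transition matrix between $\{H_x\}$ and $\{\underline{H}_x\}$ is unitriangular with respect to the Bruhat order. Second, each $\mathcal{H}^{\leq n}$ is spanned by $\{\underline{H}_x \mid \ell(x)\le n\}$ as well as by $\{H_x \mid \ell(x)\le n\}$, since Bruhat order refines length.

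\textbf{Existence.} Expand $B_w=\sum_x R_x \underline{H}_x$. By $(*)$, only $x$ with $\ell(x)\le\ell(w)$ occur. Moreover $R_w=1$: the coefficient of $H_w$ in $B_w$ is $P_{w,w}=1$, and every other $\underline{H}_x$ with $\ell(x)\le\ell(w)$ and $x\ne w$ contributes no $H_w$-term, since $w\not< x$. For $x\in W_\mathrm{aff}^+$ with $x\ne w$ and $R_x\neq 0$ we therefore have $\ell(x)<\ell(w)$, because $\ell(x)=\ell(w)$ together with $x\ne w$ would force the $H_x$-coefficient of $B_w$ to equal $R_x$, yet that coefficient is $P_{x,w}=0$. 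So all the offending coset representatives are strictly shorter than $w$. By induction, $\underline{B}_x$ exists for these $x$, so we set
\[ \underline{B}_w \coloneqq B_w-\sum_{x\in W_\mathrm{aff}^+,\,x\ne w} R_x\,\underline{B}_x . \]
This lies in $\mathcal{B}$. Its KL-coefficient at $w$ is still $1$, because $\underline{B}_x\in\mathcal{H}^{\leq\ell(x)}$ with $\ell(x)<\ell(w)$. This filtration property must be carried along in the induction: it holds since $\underline{B}_x$ is an $\mathcal{A}$-combination of $B_y$ with $\ell(y)\le\ell(x)$, and each such $B_y$ lies in $\mathcal{H}^{\le \ell(y)}$. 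The main obstacle is the vanishing of the KL-coefficients at the other $y\in W_\mathrm{aff}^+$. Subtracting $R_x\underline{B}_x$ kills the $\underline{H}_x$-term, but it may reintroduce terms $\underline{H}_y$ for $y\in W_\mathrm{aff}^+$ of smaller length. To handle this, I would instead run a downward induction on length inside the fixed element. Process the $y\in W_\mathrm{aff}^+$ with nonzero coefficient in decreasing order of $\ell(y)$. At each step, subtracting a multiple of $\underline{B}_y$ only changes KL-coefficients at $y$ and at $W_\mathrm{aff}^+$-elements of length $<\ell(y)$ (by the induction hypothesis for $\underline{B}_y$ and $(*)$). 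Finitely many steps then finish the construction.

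\textbf{Uniqueness.} Suppose two such elements exist; their difference $d\in\mathcal{B}$ has vanishing KL-coefficients at all of $W_\mathrm{aff}^+$. Since $\{B_u\}$ is a basis of $\mathcal{B}$, write $d=\sum_u c_u B_u$ and suppose $d\neq0$. Among the $u$ with $c_u\ne0$, choose $u$ maximal in length, and among those of that length let $S$ be the set of them. Write $m$ for this common length. By $(*)$, $d\in\mathcal{H}^{\le m}$. Consider the $H_{u}$-coefficient of $d$ for $u\in S$: it equals $c_u$, since $P_{u,u'}=0$ for $u\neq u'\in W_\mathrm{aff}^+$. On the other hand, computing via KL expansion as above, the $H_u$-coefficient of an element of $\mathcal{H}^{\le m}$ with $\ell(u)=m$ equals its $\underline{H}_u$-coefficient, which is $0$. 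Hence $c_u=0$, a contradiction, so $d=0$.

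\textbf{Basis.} The elements $\underline{B}_w$ lie in $\mathcal{B}$. Reversing the triangularity shows that each $B_w$ is $\underline{B}_w$ plus an $\mathcal{A}$-combination of $\underline{B}_x$ with $\ell(x)<\ell(w)$, by induction on length. Hence the $\underline{B}_w$ span $\mathcal{B}$. Linear independence follows from the uniqueness argument: a nontrivial relation is again detected by the top-length coefficients. So the statement's ``basis of $\mathcal{H}$'' should read ``basis of $\mathcal{B}$'', which is what this argument proves.
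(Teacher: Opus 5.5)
Your argument is correct, but it takes a more elementary route than the paper's.

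\textbf{The paper's route.} The paper first proves that $\mathcal{B}$ is stable under the bar involution (Lemma \ref{lem:Bbarinvolution}, using Cherednik's extension of the involution to $\mathbb{H}$). It then uses the Lanini--Ram--Sobaje KL-module formalism to build a self-dual Kazhdan--Lusztig basis $\underline{\tilde B}_w$ of $\mathcal{B}$ (Theorem \ref{thm:KLbasisB}). Next it checks that the KL-coefficients $\tilde Q_{x,w}$ of $\underline{\tilde B}_w$ vanish for $\ell(x)>\ell(w)$ and equal $\delta_{x,w}$ for $x\in W_\mathrm{aff}^+$ with $\ell(x)=\ell(w)$ (Proposition \ref{prop:KLbasisproperties}). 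Only then does it perform the unitriangular correction $\underline{B}_w=\underline{\tilde B}_w-\sum_{\ell(x)<\ell(w)}\tilde Q_{x,w}\underline{B}_x$.

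\textbf{Your route.} You apply the same correction directly to $B_w$. Its KL-coefficients have exactly those two properties, by $(*)$ and the Bruhat-triangularity of $\{\underline{H}_x\}$. Your top-length comparison then gives uniqueness, and the basis property follows from the basis $\{B_w\}$. None of this uses the bar involution.

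\textbf{What each approach buys.} The paper's detour makes $\underline{B}_w$ self-dual by construction. Self-duality is not part of this statement, and you can recover it afterwards from uniqueness: $\overline{\underline{B}_w}\in\mathcal{B}$ by Lemma \ref{lem:Bbarinvolution}, and its KL-coefficients $\overline{Q_{x,w}}$ satisfy the same normalization.

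\textbf{A correction to your write-up.} The ``main obstacle'' you describe does not exist. By the induction hypothesis, each $\underline{B}_x$ already has vanishing KL-coefficient at every $y\in W_\mathrm{aff}^+\setminus\{x\}$. So subtracting $R_x\underline{B}_x$ changes only the coefficient at $x$, and your one-line formula $\underline{B}_w=B_w-\sum_{x\in W_\mathrm{aff}^+,\,x\neq w}R_x\underline{B}_x$ already has all the required properties. The downward induction is harmless but redundant.

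You are also right that ``basis of $\mathcal{H}$'' in the statement should read ``basis of $\mathcal{B}$'' (compare Corollary \ref{cor:canonicalbasis}).
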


The canonical basis of $\mathcal{B}$ (when it exists) combines aspects of Schubert calculus (being defined as an analogue of the Schubert basis of $\mathcal{B}_\mathrm{nil}$) and of Kazhdan--Lusztig theory (being invariant under the bar involution of $\mathcal{H}$).
Furthermore, we have observed in sample computations that the canonical basis seems to have interesting positivity properties.
By analogy with the Kazhdan--Lusztig positivity conjectures (a theorem due to Elias--Williamson \cite{EliasWilliamsonHodgetheory}), we propose the following positivity conjectures:

\begin{Conjecture} \label{conj:intropositivity}
	Suppose that the standard basis of $\mathcal{B}$ exists and that $B_w \in \mathcal{H}^{\leq \ell(w)}$ for all $w \in W_\mathrm{aff}^+$.
	\begin{enumerate}
	\item The Laurent polynomials $Q_{x,w} \in \Z[v,v^{-1}]$ in the expansion
	\[ \underline{B}_w = \sum_{x \in W_\mathrm{aff}} Q_{x,w} \cdot \underline{H}_x \]
	have non-negative coefficients for all $x \in W_\mathrm{aff}$ and $w \in W_\mathrm{aff}^+$.
	\item The Laurent polynomials $\mu_{x,y}^z \in \Z[v,v^{-1}]$ in the expansion
	\[ \underline{B}_x \cdot \underline{B}_y = \sum_{z \in W_\mathrm{aff}^+} \mu_{x,y}^z \cdot \underline{B}_z \]
	have non-negative coefficients for all $x,y,z \in W_\mathrm{aff}^+$. 
	\end{enumerate}
\end{Conjecture}

In Section \ref{sec:canonicalbases} below, we prove that the first part of Conjecture \ref{conj:intropositivity} implies the second part.
If true, the conjecture would suggest the existence of a geometric interpretation or a categorification of $\mathcal{B}$ and its canonical basis, possibly related to known geometric interpretations or categorifications of the affine Hecke algebra and the Kazhdan--Lusztig basis (e.g.\ via perverse sheaves on $\Fl_G$ or via Soergel bimodules).
We leave the existence of such a categorification as a further open problem.

\subsection*{The center of \texorpdfstring{$\mathcal{H}$}{H}}

Besides the analogies with Schubert calculus and Kazhdan--Lusztig theory, another reason for out interest in $\mathcal{B}$ is that the canonical basis of $\mathcal{B}$ can be used to study combinatorial properties of the center $Z(\mathcal{H})$ of $\mathcal{H}$ via the chain of inclusions $Z(\mathcal{H}) \subseteq \mathcal{B} \subseteq \mathcal{H}$. 
By a result of Bernstein (see \cite[Theorem 8.1]{LusztigSpherical}), the center of $\mathcal{H}$ can be described as the ring of $W_\mathrm{fin}$-invariants $Z(\mathcal{H}) = \mathcal{A}[\Z\Phi^\vee]^{W_\mathrm{fin}}$ in the group ring of the coroot lattice, and it admits a canonical basis given by the characters $\{ \chi_\lambda \mid \lambda \in \Z \Phi^\vee \text{ dominant} \}$ of the irreducible representation of the complex simple algebraic group $G_\mathrm{ad}^\vee$ of adjoint type with root system $\Phi^\vee$.
To determine the Laurent polynomials $a_{x,\lambda} \in \Z[v,v^{-1}]$ in the expansion
\[ \chi_\lambda = \sum_{x \in W_\mathrm{aff}} a_{x,\lambda} \cdot \underline{H}_x \]
is equivalent to computing graded composition multiplicities in Gaitsgory's central sheaves on $\Fl_G$ \cite{Gaitsgorycentral} (see Section \ref{sec:center} below for more details), a problem which has previously been considered by Görtz--Haines in \cite{GoertzHainesBoundsWeightsNearbyCycles,GoertzHainesJordanHoelderNearbyCycles}.
Assuming the existence of the canonical basis of $\mathcal{B}$, we can also write
\[ \chi_\lambda = \sum_{w \in W_\mathrm{aff}^+} b_{w,\lambda} \cdot \underline{B}_w \]
for certain Laurent polynomials $b_{w,\lambda} \in \Z[v,v^{-1}]$, and writing $\underline{B}_w = \sum_{x \in W_\mathrm{aff}} Q_{x,w} \cdot \underline{H}_x$ as above, it follows that
\[ a_{x,\lambda} = \sum_{w \in W_\mathrm{aff}^+} Q_{x,w} \cdot b_{w,\lambda} \]
for all $x \in W_\mathrm{aff}$.
In Theorem \ref{thm:centercanonicalbasiscoefficients}, we give an explicit formula for the Laurent polynomials $b_{w,\lambda}$ as a twisted sum of spherical and anti-spherical inverse Kazhdan--Lusztig polynomials.
Thus, the problem of computing the Laurent polynomials $a_{x,\lambda}$ can be reduced to computing the Laurent polynomials $Q_{x,w}$.
This, in our opinion, further underlines the importance of understanding the canonical basis of $\mathcal{B}$.

\subsection*{Outline}

We conclude the introduction with a brief outline of the contents of this article.
Section 1 is mainly concerned with setting up our notation and recalling the definitions of affine Weyl groups and (double) affine Hecke algebras.
In Section \ref{sec:pseudocentralizer}, we introduce the pseudo-centralizer subalgebra $\mathcal{B}$ and prove some of its elementary properties, mostly generalizing results of \cite{LamSchubert,LamSchillingShimozono}.
We then begin the study of the standard basis of $\mathcal{B}$ in Section \ref{sec:standardbasis}, proving its uniqueness (if it exists), and in Section \ref{sec:typeA}, we prove the existence of the standard basis of $\mathcal{B}$ for root systems of type $\mathsf{A}_n$.
In Section \ref{sec:canonicalbases}, we prove the existence of the canonical basis of $\mathcal{B}$ (assuming the existence of the standard basis) and compute some examples that motivate Conjecture \ref{conj:intropositivity}.
Section \ref{sec:center} explains how $\mathcal{B}$ can be used to study the center of the affine Hecke algebra.
Finally, in Appendix \ref{app:rank2}, we prove that the existence of the standard basis of $\mathcal{B}$ can be reduced to a finite computational problem for any given root system, and we use this reduction to establish the existence of the standard basis for root systems of type $\mathsf{B}_2$ and $\mathsf{G}_2$.

\subsubsection*{Acknowledgements}

I would like to thank Pramod Achar, Stefan Dawydiak, Ben Elias, Thomas Lam, Arun Ram, Peng Shan, Philip Thomas and Geordie Williamson for helpful discussions and comments at different stages of this project.
This work was partially supported by the Swiss National Science Foundation via grant P500PT\_206751.

\section{Affine Weyl group and (double) affine Hecke algebra}
\label{sec:Heckealgebras}

In this section, we set up our notation and recall some important results on affine Weyl groups and the corresponding affine and double affine Hecke algebras.
The main reference is \cite{CherednikDAHA}, although our normalization of Hecke algebras is that of \cite{SoergelKL} (because we find it better adapted to discussing canonical bases in Section \ref{sec:canonicalbases} below).

\subsection{}

Let $\Phi$ be an irreducible root system in a Euclidean space $X_\R$ with inner product $(-\,,-)$.
For $\alpha \in \Phi$, we write $\alpha^\vee = 2\alpha / (\alpha,\alpha)$ for the coroot corresponding to $\alpha$ and $s_\alpha \in \GL(X_\R)$ for the reflection corresponding to $\alpha$, with $s_\alpha(x) = x - (x,\alpha^\vee) \cdot \alpha$ for $x \in X_\R$.
We further denote by $\Phi^\vee = \{ \alpha^\vee \mid \alpha \in \Phi \}$ the dual root system and by $W_\mathrm{fin} = \langle s_\alpha \mid \alpha \in \Phi \rangle$ the finite Weyl group of $\Phi$.
Let us fix a base $\Pi \subseteq \Phi$ and write $\Phi^+ \subseteq \Phi$ for the corresponding positive system and $S_\mathrm{fin} = \{ s_\alpha \mid \alpha \in \Pi \}$ for the corresponding set of simple reflections in $W_\mathrm{fin}$.
For $\alpha \in \Pi$ and $s = s_\alpha \in S_\mathrm{fin}$, we also write $\alpha_s = \alpha$.
The fundamental dominant weight $\varpi_\alpha$ corresponding to $\alpha \in \Pi$ is defined via $(\varpi_\alpha,\beta^\vee) = \delta_{\alpha,\beta}$ for all $\beta \in \Pi$, and the root lattice and weight lattice of $\Phi$ are given by
\[ Q = \bigoplus_{\alpha \in \Pi} \Z \alpha , \hspace{2cm} X = \bigoplus_{\alpha \in \Pi} \Z \varpi_\alpha . \]
Analogously, we define the fundamental dominant coweights $\varpi_\alpha^\vee \in X_\R$ via $(\beta,\varpi_\alpha^\vee) = \delta_{\alpha,\beta}$ for $\alpha,\beta \in \Phi$, and we consider the coroot lattice and the coweight lattice defined via
\[ Q^\vee = \bigoplus_{\alpha \in \Pi} \Z \alpha^\vee , \hspace{2cm} Y = \bigoplus_{\alpha \in \Pi} \Z \varpi_\alpha^\vee . \]
We will often fix an indexing set $I_\mathrm{fin}$ such that $\Pi = \{ \alpha_i \mid i \in I_\mathrm{fin} \}$ and write $s_{\alpha_i} = s_i$ and $\varpi_{\alpha_i} = \varpi_i$ for $i \in I_\mathrm{fin}$ accordingly.

\subsection{}
\label{subsec:WaffWext}

The \emph{affine Weyl group} and the \emph{extended affine Weyl group} are defined via
\[ W_\mathrm{aff} = W_\mathrm{fin} \ltimes Q^\vee , \hspace{2cm} W_\mathrm{ext} = W_\mathrm{fin} \ltimes Y , \]
and we write $\gamma \mapsto t_\gamma$ for the canonical embedding $Y \to W_\mathrm{fin} \ltimes Y = W_\mathrm{ext}$.
The affine Weyl group is generated by the (affine) reflections $s_{\alpha,r} = t_{r\alpha^\vee} s_\alpha$, for $\alpha \in \Phi$ and $r \in \Z$.
Let $\alpha_\mathrm{h} \in \Phi^+$ be the highest root and consider the affine reflection
\[ s_0 = s_{\alpha_\mathrm{h},1} = t_{\alpha_\mathrm{h}^\vee} s_{\alpha_\mathrm{h}} \in W_\mathrm{aff} . \]
Then $W_\mathrm{aff}$ is a Coxeter group with simple reflections $S_\mathrm{aff} = S_\mathrm{fin} \cup \{ s_0 \}$.
For $s,t \in W_\mathrm{aff}$, we write $m_{st}$ for the order of the product $st$.
Furthermore, $W_\mathrm{aff}$ is a normal subgroup in $W_\mathrm{ext}$, and the action of $W_\mathrm{ext}$ on $W_\mathrm{aff}$ by conjugation preserves the set of reflections in $W_\mathrm{aff}$.
The subgroup
\[ \Omega = \{ \omega \in W_\mathrm{ext} \mid \omega S_\mathrm{aff} \omega^{-1} = S_\mathrm{aff} \} \]
of $W_\mathrm{ext}$ is a complement of $W_\mathrm{aff}$ in $W_\mathrm{ext}$, so we have $W_\mathrm{ext} \cong W_\mathrm{aff} \rtimes \Omega$ and
\[ \Omega \cong W_\mathrm{ext} / W_\mathrm{aff} \cong Y / Q^\vee . \]
The length function $\ell \colon W_\mathrm{aff} \to \Z_{\geq 0}$ can be extended to a length function $\ell \colon W_\mathrm{ext} \to \Z_{\geq 0}$ via $\ell(x\omega) = \ell(x)$ for $x \in W_\mathrm{aff}$ and $\omega \in \Omega$, so $\Omega = \{ x \in W_\mathrm{ext} \mid \ell(x) = 0 \}$, and we also extend the Bruhat order on $W_\mathrm{aff}$ to a partial order on $W_\mathrm{ext}$ via $x \omega \leq x' \omega'$ if and only if $x \leq x'$ and $\omega = \omega'$, for $x,x' \in W_\mathrm{aff}$ and $\omega,\omega' \in \Omega$.

\subsection{}
\label{subsec:affineweights}

We define an action of $W_\mathrm{ext}$ on the space $\tilde X_\R = X_\R \oplus \R \delta$ via
\[ w t_\gamma( \lambda + a \delta ) = w(\lambda) + \big( a - ( \lambda , \gamma ) \big) \cdot \delta \]
for all $w \in W_\mathrm{fin}$, $\gamma \in Y$, $\lambda \in X_\R$ and $a \in \R$.
Let $m$ be the least positive integer such that $m \cdot (\lambda,\gamma) \in \Z$ for all $\lambda \in X$ and  $\gamma \in Y$, and observe that
\begin{itemize}
	\item the action of $W_\mathrm{ext}$ preserves the lattices $\tilde X_m = X \oplus \Z \frac{\delta}{m}$ and $\tilde Q = Q \oplus \Z \delta$ in $\tilde X_\R$,
	\item the action of $W_\mathrm{aff}$ preserves the lattice $\tilde X = X \oplus \Z \delta$ in $\tilde X_\R$.
\end{itemize}
Alternatively, $m$ can be defined as the exponent of $Y / Q^\vee$, so $m=2$ if $\Phi$ is of type $\mathsf{D}_{2n}$ for some $n \geq 2$ and $m = \abs{ Y / Q^\vee }$ for all other root systems.
The affine simple root corresponding to $s_0$ is
\[ \alpha_0 = \alpha_{s_0} \coloneqq \delta - \alpha_\mathrm{h} \in \tilde Q , \]
and the set of simple roots of $W_\mathrm{aff}$ is given by
\[ \Pi_\mathrm{aff} = \Pi \cup \{ \alpha_0 \} = \{ \alpha_s \mid s \in S_\mathrm{aff} \} . \]
We also extend the inner product $( - \,, - )$ on $X_\R$ to a symmetric bilinear form on $\tilde X_\R$ via $(\delta,\lambda) = 0$ for all $\lambda \in \tilde X_\R$, so that the coroot corresponding to $\alpha_0$ is
\[ \alpha_0^\vee = \alpha_{s_0}^\vee \coloneqq 2 \alpha_0 / (\alpha_0,\alpha_0) = - \alpha_\mathrm{h}^\vee . \]

\subsection{}
\label{subsec:Heckealgebras}

Now we introduce different Hecke algebras over the Laurent polynomial ring $\mathcal{A} = \Z[v,v^{-1}]$.

\begin{Definition}
	The affine Hecke algebra $\mathcal{H}$ is the $\mathcal{A}$-algebra with generators
	\[ \{ H_s \mid s \in S_\mathrm{aff} \} , \]
	subject to the \emph{quadratic relations}
	\begin{equation} \label{eq:relationsquadratic}
	( H_s + v ) ( H_s - v^{-1} ) = 0
	\end{equation}
	and the \emph{braid relations}
	\begin{equation} \label{eq:relationsbraid}
	\underbrace{H_s H_t H_s \cdots}_{m_{st} \text{ factors}} = \underbrace{H_t H_s H_t \cdots}_{m_{st} \text{ factors}} ,
	\end{equation}
	for $s,t \in S_\mathrm{aff}$.
\end{Definition}

\begin{Remark}
	For an element $w \in W_\mathrm{aff}$ with reduced expression $w = s_1 \cdots s_n$, the braid relations \eqref{eq:relationsbraid} imply that the element $H_w = H_{s_1} \cdots H_{s_n} \in \mathcal{H}$ is independent of the choice of reduced expression.
	The elements $\{ H_w \mid w \in W_\mathrm{aff} \}$ form an $\mathcal{A}$-basis of $\mathcal{H}$, which we call the \emph{standard basis}.
\end{Remark}

\begin{Definition}
	The extended affine Hecke algebra $\mathcal{H}_\mathrm{ext}$ is the $\mathcal{A}$-algebra with generators
	\[ \{ H_s \mid s \in S_\mathrm{aff} \} \cup \{ H_\omega \mid \omega \in \Omega \} , \]
	subject to the quadratic relations \eqref{eq:relationsquadratic}, the braid relations \eqref{eq:relationsbraid}, and the \emph{$\Omega$-relations}
	\begin{equation} \label{eq:relationsomega}
	H_\omega H_{\omega'} = H_{\omega\omega'} , \hspace{2cm} H_\omega H_s H_{\omega^{-1}} = H_{\omega s \omega^{-1}}
	\end{equation}
	for $\omega,\omega' \in \Omega$ and $s \in S_\mathrm{aff}$.
\end{Definition}

\begin{Remark}
	For all $x \in W_\mathrm{ext}$, we can uniquely write $x = w \omega$ with $w \in W_\mathrm{aff}$ and $\omega \in \Omega$, and we define $H_x = H_w H_\omega$.
	The elements $\{ H_x \mid x \in W_\mathrm{ext} \}$ form the \emph{standard basis} of $\mathcal{H}_\mathrm{ext}$.
	For all $x,y \in W_\mathrm{ext}$ with $\ell(xy) = \ell(x) + \ell(y)$, we have $H_{xy} = H_x H_y$.
\end{Remark}

\begin{Definition}
	The double affine Hecke algebra $\mathbb{H}$ is the $\mathcal{A}$-algebra with generators
	\[ \{ H_s \mid s \in S_\mathrm{aff} \} \cup \{ X_\lambda \mid \lambda \in \tilde X \} , \]
	subject to the quadratic relations \eqref{eq:relationsquadratic}, the braid relations \eqref{eq:relationsbraid}, the \emph{weight relations}
	\begin{equation} \label{eq:relationsweight}
	X_\lambda X_\mu = X_{\lambda+\mu}
	\end{equation}
	for $\lambda,\mu \in \tilde X$, and the \emph{Bernstein relations}
	\begin{equation} \label{eq:relationsDL}
	H_s X_\lambda = X_{s(\lambda)} H_s + ( v - v^{-1} ) \cdot \frac{ X_\lambda - X_{s(\lambda)} }{ X_{\alpha_s} - 1 } 
	\end{equation}
	for $\lambda \in \tilde X$ and $s \in S_\mathrm{aff}$.
\end{Definition}

\begin{Remark} \label{rem:Demazureoperator}
	Note that $X_\lambda - X_{s(\lambda)}$ is a multiple of $X_{\alpha_s} - 1$ for all $\lambda \in \tilde X$ and $s \in S_\mathrm{aff}$, so that the expression on the right hand side of \eqref{eq:relationsDL} is well-defined.
	Indeed, if $r = (\lambda,\alpha_s^\vee) \geq 0$ then we have
	\[ X_\lambda - X_{s(\lambda)} = ( X_{\alpha_s} - 1 ) \cdot ( X_{\lambda - \alpha_s} + X_{\lambda - 2 \alpha_s} + \cdots + X_{\lambda - r \alpha_s} ) , \]
	and if $r = (\lambda,\alpha_s^\vee) \leq 0$ then
	\[ X_\lambda - X_{s(\lambda)} = - ( X_{\alpha_s} - 1 ) \cdot ( X_\lambda + X_{\lambda + \alpha_s} + \cdots + X_{\lambda + (1-r) \alpha_s} ) . \]
	For the expression in \eqref{eq:relationsDL}, we obtain
	\[ \frac{ X_\lambda - X_{s(\lambda)} }{ X_{\alpha_s} - 1 } = \begin{cases}
	X_\lambda \cdot ( X_{-\alpha_s} + X_{-2\alpha_s} + \cdots + X_{-r\alpha_s} ) & \text{if } r = (\lambda,\alpha_s^\vee) \geq 0 \\
	- X_\lambda \cdot ( 1 + X_{\alpha_s} + \cdots + X_{ - (r+1) \cdot \alpha_s} ) & \text{if } r = (\lambda,\alpha_s^\vee) \leq 0 .
	\end{cases} \]
\end{Remark}

\begin{Remark} \label{rem:PBWbasis}
	By the weight relations \eqref{eq:relationsweight}, the elements $X_\lambda$ for $\lambda \in \tilde X$ span a commutative $\mathcal{A}$-subalgebra $\mathcal{A}[\tilde X] = \bigoplus_\lambda \mathcal{A} X_\lambda$ in $\mathbb{H}$.
	The multiplication in $\mathbb{H}$ induces isomorphisms of $\mathcal{A}$-modules
	\[ \mathbb{H} \cong \mathcal{A}[\tilde X] \otimes_\mathcal{A} \mathcal{H} \cong \mathcal{H} \otimes_\mathcal{A} \mathcal{A}[\tilde X] ; \]
	this result (or variations thereof) is often referred to as the \emph{Poincaré--Birckhoff--Witt theorem} for double affine Hecke algebras.
	In particular, the elements $\{ X_\lambda \cdot H_x \mid \lambda \in \tilde X , x \in W_\mathrm{ext} \}$ form an $\mathcal{A}$-basis of $\mathbb{H}$.
	
	It is customary to set $q = X_\delta$ and view $\mathbb{H}$ as an algebra over $\mathcal{A}_q = \mathcal{A}[q^{\pm 1}] = \Z[v^{\pm 1},q^{\pm 1}]$.
	As such, an $\mathcal{A}_q$-basis of $\mathbb{H}$ is given by $\{ X_\lambda \cdot H_x \mid \lambda \in X , x \in W_\mathrm{ext} \}$.
	This remains true in the specialization $q \mapsto 1$, which we will consider later on.
\end{Remark}

\begin{Definition}
	The extended double affine Hecke algebra $\mathbb{H}_\mathrm{ext}$ is the $\mathcal{A}$-algebra with generators
	\[ \{ H_s \mid s \in S_\mathrm{aff} \} \cup \{ H_\omega \mid \omega \in \Omega \} \cup \{ X_\lambda \mid \lambda \in \tilde X_m \} , \]
	subject to the quadratic relations \eqref{eq:relationsquadratic}, the braid relations \eqref{eq:relationsbraid}, the $\Omega$-relations \eqref{eq:relationsomega}, the weight relations \eqref{eq:relationsweight} (for $\lambda,\mu \in \tilde X_m$) the Bernstein relations \eqref{eq:relationsDL} (for $\lambda \in \tilde X_m$ and $s \in S_\mathrm{aff}$), and the \emph{$\Omega$-weight relations}
	\begin{equation} \label{eq:relationsomegaweight}
	H_\omega X_\lambda H_{\omega^{-1}} = X_{\omega(\lambda)} ,
	\end{equation}
	for $\lambda \in \tilde X_m$ and $\omega \in \Omega$.
\end{Definition}

\begin{Remark} \label{rem:omegainHext}
	In view of the $\Omega$-relations \eqref{eq:relationsomega}, the elements $\{ H_\omega \mid \omega \in \Omega \}$ form a subgroup isomorphic to $\Omega$ in $\mathbb{H}_\mathrm{ext}$ (and in $\mathcal{H}_\mathrm{ext}$).
	By abuse of notation, we write $H_\omega = \omega$ for $\omega \in \Omega$, so that the relations in \eqref{eq:relationsomega} and \eqref{eq:relationsomegaweight} become
	\[ \omega H_s \omega^{-1} = H_{\omega s \omega^{-1}} , \hspace{2cm} \omega X_\lambda \omega^{-1} = X_{\omega(\lambda)} \]
	for $s \in S_\mathrm{aff}$ and $\lambda \in \tilde X_m$.
\end{Remark}

\begin{Remark} \label{rem:PBWbasisextended}
	As in Remark \ref{rem:PBWbasis}, the multiplication in $\mathbb{H}_\mathrm{ext}$ induces isomorphisms of $\mathcal{A}$-modules
	\[ \mathbb{H}_\mathrm{ext} \cong \mathcal{A}[\tilde X_m] \otimes_\mathcal{A} \mathcal{H}_\mathrm{ext} \cong \mathcal{H}_\mathrm{ext} \otimes_\mathcal{A} \mathcal{A}[\tilde X_m] , \]
	and an $\mathcal{A}$-basis of $\mathbb{H}_\mathrm{ext}$ is given by $\{ X_\lambda \cdot H_x \mid \lambda \in \tilde X_m , x \in W_\mathrm{ext} \}$.
	If we set $q' = X_{\delta/m}$ and view $\mathbb{H}_\mathrm{ext}$ as an algebra over $\mathcal{A}_{q'} = \mathcal{A}[q',q'^{-1}]$ then an $\mathcal{A}_{q'}$-basis of $\mathbb{H}_\mathrm{ext}$ is given by $\{ X_\lambda \cdot H_x \mid \lambda \in X , x \in W_\mathrm{ext} \}$, and this remains true in the specialization $q' \mapsto 1$.
\end{Remark}

\section{The pseudo-centralizer subalgebra}
\label{sec:pseudocentralizer}

In this section, we introduce the main object of interest in this article, namely, a subalgebra $\mathcal{B}$ of the affine Hecke algebra $\mathcal{H}$ that arises from a centralizer construction in the double affine Hecke algebra.
We also establish some elementary properties of $\mathcal{B}$ and exhibit some explicit elements.

\subsection{}

From now on and for the remainder of this article, we consider the double affine Hecke algebra $\mathbb{H}$ and the extended double affine Hecke algebra $\mathbb{H}_\mathrm{ext}$ with the parameters $q = X_\delta$ and $q' = X_{\delta/m}$ specialized to $1$ (see Remarks \ref{rem:PBWbasis} and \ref{rem:PBWbasisextended}).
Thus, $\mathbb{H}$ and $\mathbb{H}_\mathrm{ext}$ are algebras over the Laurent polynomial ring $\mathcal{A} = \Z[v,v^{-1}]$, and the multiplication in $\mathbb{H}$ and $\mathbb{H}_\mathrm{ext}$ induces isomorphisms of $\mathcal{A}$-modules
\[ \mathbb{H} \cong \mathcal{A}[X] \otimes_\mathcal{A} \mathcal{H} , \hspace{2cm} \mathbb{H}_\mathrm{ext} \cong \mathcal{A}[X] \otimes_\mathcal{A} \mathcal{H}_\mathrm{ext} . \]
In particular, an $\mathcal{A}$-basis of $\mathbb{H}$ is given by $\{ X_\lambda H_w \mid \lambda \in X , w \in W_\mathrm{aff} \}$, and an $\mathcal{A}$-basis of $\mathbb{H}_\mathrm{ext}$ is given by $\{ X_\lambda H_x \mid \lambda \in X , x \in W_\mathrm{ext} \}$.

\subsection{}

Every element $x \in \mathbb{H}_\mathrm{ext}$ can be written uniquely in the form
\[ x = \sum_{\mu \in X} X_\mu \cdot c_\mu(x) , \]
with $c_\mu(x) \in \mathcal{H}_\mathrm{ext}$ for all $\mu \in X$, and if $x \in \mathbb{H}$ then we further have $c_\mu(x) \in \mathcal{H}$ for all $\mu \in X$.
We consider the $\mathcal{A}$-linear map
\begin{equation} \label{eq:phi}
	\varphi \colon \mathbb{H}_\mathrm{ext} \longrightarrow \mathcal{H}_\mathrm{ext} , \qquad x \longmapsto \sum_{\mu \in Y} c_\mu(x) ,
\end{equation}
which restricts to a map $\varphi \colon \mathbb{H} \to \mathcal{H}$.
In other words, $\varphi \colon \mathbb{H}_\mathrm{ext} \to \mathcal{H}_\mathrm{ext}$ is the unique $\mathcal{A}$-linear map with $\varphi( X_\lambda h ) = h$ for all $\lambda \in X$ and $h \in \mathcal{H}_\mathrm{ext}$.
Further consider the $\mathcal{A}$-submodule
\[ \mathcal{B}_\mathrm{ext} \coloneqq \Big\{ h \in \mathcal{H}_\mathrm{ext} \mathrel{\Big|} \varphi(h X_\lambda) = h \text{ for all } \lambda \in X \Big\} \subseteq \mathcal{H}_\mathrm{ext} . \]
Observe that for $\lambda \in X$ and $h \in \mathbb{H}_\mathrm{ext}$, we have $\varphi(X_\lambda h) = h$, whence we can equivalently define $\mathcal{B}_\mathrm{ext}$ via
\[ \mathcal{B}_\mathrm{ext} = \Big\{ h \in \mathcal{H}_\mathrm{ext} \mathrel{\Big|} \varphi(h X_\lambda - X_\lambda h) = 0 \text{ for all } \lambda \in X \Big\} . \]
In view of this description, we refer to $\mathcal{B}_\mathrm{ext}$ as the \emph{pseudo-centralizer} of $X$ in $\mathcal{H}_\mathrm{ext}$.

\begin{Lemma} \label{lem:Bsubalgebra}
	The pseudo-centralizer $\mathcal{B}_\mathrm{ext} \subseteq \mathcal{H}_\mathrm{ext}$ is an $\mathcal{A}$-subalgebra.
\end{Lemma}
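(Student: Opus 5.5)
The plan is to deduce the statement from one identity for $\varphi$: left multiplication by an element of $\mathcal{H}_\mathrm{ext}$ commutes with $\varphi$ up to weights. Concretely, I will show
\[ \varphi(h x) = \varphi\big(h \cdot \varphi(x)\big) \quad \text{for all } h \in \mathcal{H}_\mathrm{ext} \text{ and } x \in \mathbb{H}_\mathrm{ext}. \]
By $\mathcal{A}$-linearity it suffices to treat $x = X_\mu h'$ with $\mu \in X$ and $h' \in \mathcal{H}_\mathrm{ext}$, in which case $\varphi(x) = h'$.

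Write $h X_\mu = \sum_\nu X_\nu c_\nu(hX_\mu)$, using the PBW decomposition of Remark \ref{rem:PBWbasisextended}. Then $hX_\mu h' = \sum_\nu X_\nu c_\nu(hX_\mu) h'$, and applying $\varphi$ gives
\[ \varphi(hX_\mu h') = \sum_\nu c_\nu(hX_\mu)\, h' = \varphi(hX_\mu)\, h'. \]
So $\varphi(y h') = \varphi(y) h'$ for all $y \in \mathbb{H}_\mathrm{ext}$ and $h' \in \mathcal{H}_\mathrm{ext}$: $\varphi$ is right $\mathcal{H}_\mathrm{ext}$-linear. This holds because right multiplication by $h'$ does not touch the $X$-part of the normal form $\sum X_\nu c_\nu$.

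With this, closure under products is a short computation. Let $h_1, h_2 \in \mathcal{B}_\mathrm{ext}$ and $\lambda \in X$, and write $h_2 X_\lambda = \sum_\nu X_\nu c_\nu$ with $\sum_\nu c_\nu = \varphi(h_2 X_\lambda) = h_2$. Then
\[ \varphi(h_1 h_2 X_\lambda) = \sum_\nu \varphi(h_1 X_\nu c_\nu) = \sum_\nu \varphi(h_1 X_\nu)\, c_\nu = \sum_\nu h_1 c_\nu = h_1 h_2. \]
The second equality uses right linearity, and the third uses $h_1 \in \mathcal{B}_\mathrm{ext}$, applied with the weight $\nu \in X$. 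Hence $h_1 h_2 \in \mathcal{B}_\mathrm{ext}$.

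Finally, $1 \in \mathcal{B}_\mathrm{ext}$ because $\varphi(X_\lambda) = 1$, and $\mathcal{B}_\mathrm{ext}$ is an $\mathcal{A}$-submodule since $\varphi$ is $\mathcal{A}$-linear. There is no real obstacle here. The only point needing care is that the right $\mathcal{H}_\mathrm{ext}$-linearity of $\varphi$ relies on the decomposition $\mathbb{H}_\mathrm{ext} \cong \mathcal{A}[X] \otimes \mathcal{H}_\mathrm{ext}$ being taken with the $X$'s on the left, which matches how $\varphi$ is defined.
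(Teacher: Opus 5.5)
Your proof is correct and is essentially the paper's argument: the paper derives $c_\nu(ghX_\lambda)=\sum_\mu c_\nu(gX_\mu)\,c_\mu(hX_\lambda)$ and sums over $\nu$, which is exactly your right $\mathcal{H}_\mathrm{ext}$-linearity of $\varphi$ combined with $h_1\in\mathcal{B}_\mathrm{ext}$. One correction to your opening paragraph: the identity $\varphi(hx)=\varphi(h\,\varphi(x))$ is false for general $h\in\mathcal{H}_\mathrm{ext}$ (take $h=H_{s_\alpha}$ and $x=X_{\varpi_\alpha}$, where $\varphi(hx)=H_{s_\alpha}+(v-v^{-1})\neq H_{s_\alpha}$), and it in fact holds exactly when $h\in\mathcal{B}_\mathrm{ext}$; since your actual argument only uses right linearity, this misstatement does not affect the proof.
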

\begin{proof}
	For all $g,h \in \mathbb{H}_\mathrm{ext}$ and $\lambda \in X$, we have
	\[ g h X_\lambda = \sum_\mu g X_\mu c_\mu(hX_\lambda) = \sum_{\mu,\nu} X_\nu c_\nu(g X_\mu) c_\mu(hX_\lambda) \]
	and therefore $c_\nu(gh X_\lambda) = \sum_\mu c_\nu(g X_\mu) c_\mu(hX_\lambda)$ for all $\nu \in X$.
	For $g,h \in \mathcal{B}_\mathrm{ext}$, we obtain
	\[ \sum_\nu c_\nu(g h X_\lambda) = \sum_{\mu,\nu} c_\nu(g X_\mu) c_\mu(hX_\lambda) = \sum_\mu g c_\mu(hX_\lambda) = gh , \]
	and it follows that $gh \in \mathcal{B}_\mathrm{ext}$, as required.
\end{proof}

We further define the pseudo-centralizer of $X$ in $\mathcal{H}$ via
\[ \mathcal{B} \coloneqq \Big\{ h \in \mathcal{H} \mathrel{\Big|} \varphi(h X_\lambda) = h \text{ for all } \lambda \in X \Big\} = \mathcal{B}_\mathrm{ext} \cap  \mathcal{H} \subseteq \mathcal{H} . \]
It is obvious from Lemma \ref{lem:Bsubalgebra} that $\mathcal{B}$ is an $\mathcal{A}$-subalgebra of $\mathcal{H}$.

\subsection{}

Recall from Remark \ref{rem:omegainHext} that the elements of length zero in $W_\mathrm{ext}$ afford a subgroup $\Omega$ in the group of units of $\mathbb{H}_\mathrm{ext}$.
Using the relations \eqref{eq:relationsomega} and \eqref{eq:relationsomegaweight}, it is straightforward to see that we have $\mathcal{H} \omega = \omega \mathcal{H}$ and $\mathbb{H} \omega = \omega \mathbb{H}$ for all $\omega \in \Omega$, and so $\mathcal{H}_\mathrm{ext}$ and $\mathbb{H}_\mathrm{ext}$ are $\Omega$-graded $\mathcal{A}$-algebras with
\begin{equation} \label{eq:omegagrading}
	\mathcal{H}_\mathrm{ext} = \bigoplus_{\omega \in \Omega} \mathcal{H} \omega , \hspace{2cm} \mathbb{H}_\mathrm{ext} = \bigoplus_{\omega \in \Omega} \mathbb{H} \omega .
\end{equation}
Furthermore, it is straightforward to see that the map $\varphi \colon \mathbb{H}_\mathrm{ext} \to \mathcal{H}_\mathrm{ext}$ is homogeneous with respect to the $\Omega$-grading, that is, we have $\varphi( \mathbb{H} \omega ) \subseteq \mathcal{H} \omega$ for all $\omega \in \Omega$.
More precisely, we have the following compatibility of $\varphi$ with the action of $\Omega$.

\begin{Lemma} \label{lem:phiomega}
	For all $x \in \mathbb{H}_\mathrm{ext}$ and $\omega,\omega' \in \Omega$, we have $\varphi(\omega x \omega') = \omega \varphi(x) \omega'$.
\end{Lemma}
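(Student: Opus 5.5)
By linearity it suffices to treat basis elements $x = X_\lambda h$ with $\lambda \in X$ and $h \in \mathcal{H}_\mathrm{ext}$, using the PBW basis from Remark \ref{rem:PBWbasisextended} (with $q'$ specialized to $1$). The plan is to rewrite $\omega X_\lambda h \omega'$ in the normal form $X_\mu \cdot (\text{element of } \mathcal{H}_\mathrm{ext})$ and then read off $\varphi$.

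\textbf{Right multiplication.} This is immediate. Since $\omega' \in \mathcal{H}_\mathrm{ext}$, the product $x\omega' = X_\lambda (h\omega')$ is already in normal form. Hence
\[ \varphi(x\omega') = h\omega' = \varphi(x)\omega' . \]

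\textbf{Left multiplication.} By Remark \ref{rem:omegainHext} we have $\omega X_\lambda = X_{\omega(\lambda)} \omega$, and therefore
\[ \omega X_\lambda h = X_{\omega(\lambda)} \cdot (\omega h) . \]
The one point to check is that $X_{\omega(\lambda)}$ has the form $X_\mu$ with $\mu \in X$. Writing $\omega = w t_\gamma$ with $w \in W_\mathrm{fin}$ and $\gamma \in Y$, the action of \ref{subsec:affineweights} gives
\[ \omega(\lambda) = w(\lambda) - (\lambda,\gamma)\,\delta , \]
where $(\lambda,\gamma) \in \frac{1}{m}\Z$. After specializing $q' = X_{\delta/m} \mapsto 1$, we get $X_{\omega(\lambda)} = X_{w(\lambda)}$ with $w(\lambda) \in X$. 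So $\omega X_\lambda h = X_{w(\lambda)} (\omega h)$ is in normal form, and
\[ \varphi(\omega x) = \omega h = \omega \varphi(x) . \]

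Combining the two steps gives $\varphi(\omega x \omega') = \omega\varphi(x)\omega'$. The only mildly delicate point is the bookkeeping of the $\delta$-component of $\omega(\lambda)$, which is why the specialization $q' \mapsto 1$ is needed. There is no real obstacle beyond that.
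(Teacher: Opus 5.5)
Your proposal is correct and is essentially the paper's proof: expand $x=\sum_{\mu} X_\mu c_\mu(x)$, move $\omega$ past $X_\mu$ using $\omega X_\mu = X_{\omega(\mu)}\omega$, and read off $\varphi$ from the resulting normal form. Handling right and left multiplication separately, and checking that the $\delta$-component of $\omega(\lambda)$ disappears under the specialization $q' = X_{\delta/m} \mapsto 1$, only spells out details the paper leaves implicit.
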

\begin{proof}
	Using \eqref{eq:relationsomegaweight} and the definition of $\varphi$ in \eqref{eq:phi}, we compute
	\[ \varphi( \omega x \omega' ) = \sum_{\mu \in X} \varphi\big( \omega X_\mu c_\mu(x) \omega' \big) = \sum_{\mu \in X} \varphi\big( X_{\omega(\mu)} \omega c_\mu(x) \omega' \big) = \sum_{\mu \in X} \omega c_\mu(x) \omega' = \omega \varphi(x) \omega' , \]
	as claimed.
\end{proof}

\begin{Lemma} \label{lem:Bomega}
	We have $\Omega \subseteq \mathcal{B}_\mathrm{ext}$, and $\mathcal{B}_\mathrm{ext}$ is an $\Omega$-graded $\mathcal{A}$-algebra with
	\[ \mathcal{B}_\mathrm{ext} = \bigoplus_{\omega \in \Omega} \mathcal{B} \omega . \]
\end{Lemma}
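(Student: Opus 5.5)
First I show that each $\omega \in \Omega$ lies in $\mathcal{B}_\mathrm{ext}$. For $\lambda \in X$, the relation \eqref{eq:relationsomegaweight} in the form of Remark \ref{rem:omegainHext} gives $\omega X_\lambda = X_{\omega(\lambda)} \omega$. Here $\omega(\lambda)$ is an element of $\tilde X_m$, say $\omega(\lambda) = \mu + a\delta$ with $\mu \in X$ the finite part. Because we have specialized $X_{\delta/m} = 1$, we get $X_{\omega(\lambda)} = X_\mu$ with $\mu \in X$. So $\omega X_\lambda = X_\mu \omega$. By the defining property of $\varphi$, this gives $\varphi(\omega X_\lambda) = \omega$. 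Hence $\omega \in \mathcal{B}_\mathrm{ext}$. Alternatively, this follows from Lemma \ref{lem:phiomega}: $\varphi(\omega X_\lambda) = \omega \varphi(X_\lambda) = \omega$.

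Next I prove the inclusion $\bigoplus_\omega \mathcal{B}\omega \subseteq \mathcal{B}_\mathrm{ext}$. Since $\mathcal{B} = \mathcal{B}_\mathrm{ext} \cap \mathcal{H}$ and $\Omega \subseteq \mathcal{B}_\mathrm{ext}$, this inclusion follows from Lemma \ref{lem:Bsubalgebra}. The sum is direct because of the decomposition \eqref{eq:omegagrading} of $\mathcal{H}_\mathrm{ext}$.

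For the reverse inclusion, let $h \in \mathcal{B}_\mathrm{ext}$ and decompose $h = \sum_\omega h_\omega \omega$ with $h_\omega \in \mathcal{H}$ using \eqref{eq:omegagrading}. It suffices to show that each homogeneous component $h_\omega \omega$ lies in $\mathcal{B}_\mathrm{ext}$. Then $h_\omega = (h_\omega\omega)\omega^{-1} \in \mathcal{B}_\mathrm{ext} \cap \mathcal{H} = \mathcal{B}$, again by Lemma \ref{lem:Bsubalgebra} and the first step.

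To show this, fix $\lambda \in X$. Right multiplication by $X_\lambda$ preserves each graded piece: $\mathbb{H}\omega \cdot X_\lambda = \mathbb{H} X_{\omega(\lambda)}\omega \subseteq \mathbb{H}\omega$, since $X_{\omega(\lambda)} \in \mathcal{A}[X] \subseteq \mathbb{H}$ after the specialization. Therefore $h X_\lambda = \sum_\omega h_\omega \omega X_\lambda$ is the decomposition of $hX_\lambda$ into homogeneous components. The map $\varphi$ is homogeneous, since $\varphi(\mathbb{H}\omega) \subseteq \mathcal{H}\omega$. So the equality $\varphi(hX_\lambda) = h$ can be compared component by component, which gives $\varphi(h_\omega\omega X_\lambda) = h_\omega \omega$ for every $\omega$.

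The only delicate point is the claim that $X_{\omega(\lambda)}$ lies in $\mathcal{A}[X]$, which is what puts $X_\mu\omega$ in the correct form for $\varphi$. This is exactly where the specialization $q' = 1$ is used.
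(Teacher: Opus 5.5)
Your proposal is correct and follows the paper's proof essentially step by step. You show $\omega \in \mathcal{B}_\mathrm{ext}$ via Lemma \ref{lem:phiomega} (or the direct computation), get the easy inclusion from the subalgebra property, and obtain the reverse inclusion by decomposing $h = \sum_\omega h_\omega \omega$ and comparing $\Omega$-homogeneous components of $\varphi(hX_\lambda) = h$, using $h_\omega \omega X_\lambda = h_\omega X_{\omega(\lambda)} \omega \in \mathbb{H}\omega$. Your explicit remark that $X_{\omega(\lambda)}$ lies in $\mathcal{A}[X]$ only because of the specialization $q' = 1$ is a detail the paper leaves implicit.
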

\begin{proof}
	For $\omega \in \Omega$ and $\lambda \in X$, we have $\varphi(\omega X_\lambda) = \omega \varphi(X_\lambda) = \omega$ by Lemma \ref{lem:phiomega}, and therefore $\omega \in \mathcal{B}_\mathrm{ext}$.
	As we also have $\mathcal{B} \subseteq \mathcal{B}_\mathrm{ext}$, it follows that $\bigoplus_\omega \mathcal{B} \omega \subseteq \mathcal{B}_\mathrm{ext}$.
	
	Now let $h \in \mathcal{B}_\mathrm{ext}$ and write $h = \sum_\omega h_\omega \omega$ with $h_\omega \in \mathcal{H}$ for all $\omega \in \Omega$.
	For all $\lambda \in X$, we have
	\[ \sum_{\omega \in \Omega} h_\omega \omega = h = \varphi( h X_\lambda ) = \sum_{\omega \in \Omega} \varphi( h_\omega \omega X_\lambda ) , \]
	where $\varphi( h_\omega \omega X_\lambda ) \in \mathcal{H} \omega$ for $\omega \in \Omega$ because $h_\omega \omega X_\lambda = h_\omega X_{\omega(\lambda)} \omega \in \mathbb{H} \omega$ by \eqref{eq:relationsomegaweight}.
	Using the $\Omega$-grading from \eqref{eq:omegagrading}, it follows that $\varphi( h_\omega \omega X_\lambda ) = h_\omega \omega$ for all $\lambda \in X$ and $\omega \in \Omega$, so $h_\omega \omega \in \mathcal{B}_\mathrm{ext} \cap \mathcal{H} \omega$ and
	\[ h_\omega = h_\omega \omega \cdot \omega^{-1} \in \mathcal{B}_\mathrm{ext} \cap \mathcal{H} = \mathcal{B} . \]
	In particular, we have $h = \sum_\omega h_\omega \omega \in \bigoplus_\omega \mathcal{B} \omega$, as required.
\end{proof}

In the Sections \ref{sec:standardbasis}, \ref{sec:typeA} and \ref{sec:canonicalbases} below, we will mostly restrict our attention to the pseudo-centralizer $\mathcal{B}$ in $\mathcal{H}$, but most of our results can easily be generalized to $\mathcal{B}_\mathrm{ext}$ using Lemma \ref{lem:Bomega}.
We will only spell this out explicitly when the results about $\mathcal{B}_\mathrm{ext}$ are relevant for the rest of this article.

\subsection{}

For computational purposes, it is useful to record the following result.

\begin{Lemma} \label{lem:BgeneratingsetX}
	Let $\Gamma \subseteq X$ be a generating set.
	Then we have
	\[ \mathcal{B}_\mathrm{ext} = \Big\{ h \in \mathcal{H}_\mathrm{ext} \mathrel{\Big|} \varphi(h X_\lambda) = h \text{ for all } \lambda \in \Gamma \Big\} \]
\end{Lemma}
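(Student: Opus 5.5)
One inclusion is obvious: if $\varphi(hX_\lambda)=h$ for all $\lambda\in X$, then in particular for all $\lambda\in\Gamma$. For the converse, fix $h\in\mathcal{H}_\mathrm{ext}$ with $\varphi(hX_\lambda)=h$ for all $\lambda\in\Gamma$. Consider the set
\[ M_h \coloneqq \{ \lambda \in X \mid \varphi(h X_\lambda) = h \} . \]
The plan is to show that $M_h$ contains $0$, is closed under addition, and is closed under negation. Since it contains $\Gamma$ by assumption, it then contains the subgroup generated by $\Gamma$, which is $X$.

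\textbf{Zero and addition.} We have $0\in M_h$ because $\varphi(h)=h$ for $h\in\mathcal{H}_\mathrm{ext}$. For closure under addition I will reuse the computation from the proof of Lemma \ref{lem:Bsubalgebra}, with $X_\lambda$ playing the role of the right factor. Write $hX_\lambda=\sum_\mu X_\mu c_\mu(hX_\lambda)$. Then
\[ hX_{\lambda+\lambda'} = hX_\lambda X_{\lambda'} = \sum_\mu X_\mu \, c_\mu(hX_\lambda) X_{\lambda'} . \]
Since $\varphi(X_\mu y)=\varphi(y)$ for every $y\in\mathbb{H}_\mathrm{ext}$ (this follows by expanding $y$ in the PBW basis), we get
\[ \varphi(hX_{\lambda+\lambda'})=\sum_\mu \varphi\big(c_\mu(hX_\lambda)X_{\lambda'}\big). \]
This is where the obstacle lies. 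The elements $c_\mu(hX_\lambda)$ need not lie in $M$-type sets themselves, so the hypothesis $\lambda'\in M_h$ cannot be applied term by term.

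\textbf{Resolving the obstacle.} I will exploit linearity in $h$ instead. Define the $\mathcal{A}$-linear operator $T_\lambda\colon\mathcal{H}_\mathrm{ext}\to\mathcal{H}_\mathrm{ext}$ by $T_\lambda(g)=\varphi(gX_\lambda)$. Since $\varphi(X_\mu y)=\varphi(y)$ for all $\mu$, the computation above gives
\[ T_{\lambda+\lambda'}(h) = \varphi\big(\varphi(hX_\lambda)X_{\lambda'}\big) = T_{\lambda'}(T_\lambda(h)) . \]
The point is that $\varphi(hX_\lambda)X_{\lambda'}$ and $hX_\lambda X_{\lambda'}$ have the same image under $\varphi$, because each $X_\mu$ on the far left is simply discarded. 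Hence $\lambda\mapsto T_\lambda$ is a monoid homomorphism from $X$ into $\mathrm{End}_\mathcal{A}(\mathcal{H}_\mathrm{ext})$, and in fact a group homomorphism, since $T_0=\mathrm{id}$ and $T_\lambda T_{-\lambda}=T_0$.

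\textbf{Conclusion.} If $T_\lambda h=h$ and $T_{\lambda'}h=h$, then $T_{\lambda+\lambda'}h=T_{\lambda'}T_\lambda h=h$. If $T_\lambda h=h$, then $T_{-\lambda}h=T_{-\lambda}T_\lambda h=T_0h=h$. So $M_h$ is a subgroup of $X$ containing $\Gamma$, hence $M_h=X$, and $h\in\mathcal{B}_\mathrm{ext}$.
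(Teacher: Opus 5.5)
Your proof is correct and takes essentially the same approach as the paper. The paper also shows that $\{\lambda \in X \mid \varphi(hX_\lambda)=h\}$ is a subgroup of $X$, using the identity $\varphi(hX_{\lambda+\mu})=\varphi\big(\varphi(hX_\lambda)X_\mu\big)$ (obtained by expanding in the PBW basis and discarding the left factors $X_\nu$) for both closure under addition and under negation; your composition rule $T_{\lambda+\lambda'}=T_{\lambda'}\circ T_\lambda$ is a tidier packaging of that same coefficient computation.
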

\begin{proof}
	Let $\lambda,\mu \in X$ and $h \in \mathcal{H}_\mathrm{ext}$, and suppose that $\varphi(h X_\lambda) = h$ and $\varphi(h X_\mu) = h$.
	We claim that $\varphi(h X_{\lambda+\mu}) = h$ and $\varphi(h X_{-\lambda}) = h$.
	Indeed, observe that we have
	\[ h = h X_\lambda X_{-\lambda} = \sum_\nu X_\nu c_\nu(h X_\lambda) X_{-\lambda} = \sum_{\nu,\delta} X_{\nu+\delta} c_\delta\big( c_\nu(h X_\lambda) X_{-\lambda} \big) \]
	and therefore
	\[ \sum_{\nu+\delta = \vartheta} c_\delta\big( c_\nu( h X_\lambda) X_{-\lambda} \big) = \begin{cases} h & \text{if } \vartheta = 0 , \\ 0 & \text{otherwise} ,  \end{cases} \]
	for all $\vartheta \in X$.
	This implies that
	\begin{multline*}
		\qquad h = \sum_{\vartheta \in X} \sum_{\nu+\delta=\vartheta} c_\delta\big( c_\nu( h X_\lambda) X_{-\lambda} \big) = \sum_{\nu,\delta \in X} c_\delta\big( c_\nu( h X_\lambda ) X_{-\lambda} \big) \\
		 = \sum_{\delta \in X} c_\delta\big( \varphi( h X_\lambda) X_{-\lambda} \big) = \sum_{\delta \in X} c_\delta(h X_{-\lambda}) = \varphi( h X_{-\lambda} ) , \qquad
	\end{multline*}
	as claimed.
	Similarly, we have
	\[ \sum_\vartheta X_\vartheta c_\vartheta(h X_{\lambda+\mu}) = h X_{\lambda+\mu} = h X_\lambda X_\mu = \sum_\nu X_\nu c_\nu(h X_\lambda) X_\mu = \sum_{\nu,\delta} X_{\nu+\delta} c_\delta( c_\nu(h X_\lambda) X_\mu ) , \]
	and therefore
	\[ c_\vartheta( h X_{\lambda+\mu} ) = \sum_{\nu+\delta=\vartheta} c_\delta\big( c_\nu(h X_\lambda) X_\mu \big) . \]
	This implies that
	\begin{multline*}
		\quad \varphi(h X_{\lambda+\mu}) = \sum_\vartheta c_\vartheta(X_{\lambda+\mu}) = \sum_\vartheta \sum_{\nu+\delta=\vartheta} c_\delta\big( c_\nu(h X_\lambda) X_\mu \big) = \sum_{\nu,\delta} c_\delta\big( c_\nu(h X_\lambda) X_\mu \big) \\
		= \sum_\delta c_\delta\big( \varphi(h X_\lambda) X_\mu \big) = \sum_\delta c_\delta\big( h X_\mu \big) = \varphi(h X_\mu) = h , \quad
	\end{multline*}
	as claimed.
	We conclude that for all $h \in \mathcal{H}_\mathrm{ext}$, the set $X_h = \{ \lambda \in X \mid \varphi(h X_\lambda) = h \}$ is a subgroup of $X$.
	Since $\Gamma$ is a generating set of $X$, we have $\Gamma \subseteq X_h$ if and only if $X_h = X$, and it follows that
	\[ \Big\{ h \in \mathcal{H}_\mathrm{ext} \mathrel{\Big|} \varphi(h X_\lambda) = h \text{ for all } \lambda \in \Gamma \Big\} = \{ h \in \mathcal{H}_\mathrm{ext} \mid \Gamma \subseteq X_h \} = \{ h \in \mathcal{H}_\mathrm{ext} \mid X_h = X \} = \mathcal{B}_\mathrm{ext} , \]
	as required.
\end{proof}

Using the preceding lemma, we can exhibit some explicit elements of $\mathcal{B}$.
Recall that $\alpha_\mathrm{h} \in \Phi$ denotes the highest root, and for $\alpha \in \Pi$, let $c_\alpha = (\varpi_\alpha,\alpha_\mathrm{h}^\vee)$, so that $\alpha_\mathrm{h}^\vee = \sum_{\alpha \in \Pi} c_\alpha \alpha^\vee$.

\begin{Lemma} \label{lem:Bs0}
	We have
	\[ B_{s_0} \coloneqq H_{s_0} + \sum_{\alpha \in \Pi} c_\alpha H_{s_\alpha} \in \mathcal{B} . \]
\end{Lemma}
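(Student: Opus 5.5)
By Lemma \ref{lem:BgeneratingsetX}, it suffices to check that $\varphi(B_{s_0} X_\lambda) = B_{s_0}$ for all $\lambda$ in a generating set of $X$; I will take the fundamental weights $\varpi_\beta$ for $\beta \in \Pi$. The same computation works equally well for arbitrary $\lambda \in X$, so the lemma is not really needed. Since $\varphi$ is $\mathcal{A}$-linear and $B_{s_0}$ is a linear combination of the generators $H_s$, the plan is to compute $\varphi(H_s X_\lambda)$ for each $s \in S_\mathrm{aff}$ separately. I will use the Bernstein relation \eqref{eq:relationsDL} together with the explicit form of the quotient $\frac{X_\lambda - X_{s(\lambda)}}{X_{\alpha_s}-1}$ from Remark \ref{rem:Demazureoperator}.

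The key general formula is
\[ \varphi(H_s X_\lambda) = H_s + (v-v^{-1}) \cdot (\lambda,\alpha_s^\vee) \qquad \text{for all } s \in S_\mathrm{aff} \text{ and } \lambda \in X. \]
To see this, note that $H_s X_\lambda = X_{s(\lambda)} H_s + (v-v^{-1}) \cdot \frac{X_\lambda - X_{s(\lambda)}}{X_{\alpha_s}-1}$. The first term has $\varphi$-image $H_s$, because after specializing $q = X_\delta \mapsto 1$ the element $X_{s(\lambda)}$ is just a monomial $X_\mu$ with $\mu \in X$. For the second term, write $r = (\lambda,\alpha_s^\vee)$. By Remark \ref{rem:Demazureoperator}, the quotient is a sum of $r$ monomials $X_\mu$ with coefficient $+1$ if $r \geq 0$, and a sum of $-r$ monomials with coefficient $-1$ if $r \leq 0$. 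Each monomial $X_\mu$ is sent to $1$ by $\varphi$, so the quotient is sent to $r$ in both cases. For $s_0$ one has to interpret $X_{\alpha_0} = X_{\delta - \alpha_\mathrm{h}}$ as $X_{-\alpha_\mathrm{h}}$ after the specialization. This is compatible with Remark \ref{rem:Demazureoperator}, since that remark holds in $\mathcal{A}[\tilde X]$ before specializing, and specialization is a ring map.

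Applying the formula with $\alpha_{s_0}^\vee = \alpha_0^\vee = -\alpha_\mathrm{h}^\vee$ (see Subsection \ref{subsec:affineweights}) gives
\[ \varphi(B_{s_0} X_\lambda) = B_{s_0} + (v-v^{-1}) \Big( -(\lambda,\alpha_\mathrm{h}^\vee) + \sum_{\alpha \in \Pi} c_\alpha (\lambda,\alpha^\vee) \Big) . \]
The bracket vanishes because $\alpha_\mathrm{h}^\vee = \sum_\alpha c_\alpha \alpha^\vee$. Hence $\varphi(B_{s_0} X_\lambda) = B_{s_0}$ for all $\lambda \in X$, so $B_{s_0} \in \mathcal{B}$.

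The only delicate step is the treatment of $s_0$. One has to track that the pairing $r = (\lambda,\alpha_0^\vee) = -(\lambda,\alpha_\mathrm{h}^\vee)$ is nonpositive for dominant $\lambda$, so the negative-sign case of Remark \ref{rem:Demazureoperator} applies. One also has to check that setting $q=1$ does not collapse or cancel any of the monomials in the quotient. It does not matter if distinct monomials coincide after specialization, because $\varphi$ only sums coefficients, and this sum is unaffected.
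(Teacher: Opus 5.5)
Your proof is correct and is essentially the paper's argument: the paper also uses the Bernstein relation together with Remark~\ref{rem:Demazureoperator} to get $\varphi(H_{s_\alpha}X_{\varpi_\beta}) = H_{s_\alpha} + \delta_{\alpha,\beta}(v-v^{-1})$ and $\varphi(H_{s_0}X_{\varpi_\beta}) = H_{s_0} - c_\beta(v-v^{-1})$. The only difference is that the paper checks just the fundamental weights and then invokes Lemma~\ref{lem:BgeneratingsetX}, whereas your uniform formula $\varphi(H_sX_\lambda) = H_s + (v-v^{-1})(\lambda,\alpha_s^\vee)$ covers all $\lambda \in X$ at once and makes that lemma unnecessary.
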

\begin{proof}
	For all $\alpha \in \Pi$, we have
	\[ H_{s_\alpha} X_{\varpi_\alpha} = X_{\varpi_\alpha-\alpha} H_{s_\alpha} + (v-v^{-1}) \cdot X_{\varpi_\alpha-\alpha} \]
	by \eqref{eq:relationsDL} and Remark \ref{rem:Demazureoperator}, and it follows that $\varphi( H_{s_\alpha} X_{\varpi_\alpha} ) = H_{s_\alpha} + (v-v^{-1})$.
	Fur $\beta \in \Pi$ with $\beta \neq \alpha$, we have $(\beta,\alpha^\vee) = 0$, so $H_{s_\alpha} X_{\varpi_\beta} = X_{\varpi_\beta} H_{s_\alpha}$ and $\varphi(H_{s_\alpha} X_{\varpi_\beta}) = H_{s_\alpha}$.
	Furthermore, we have
	\[ H_{s_0} X_{\varpi_\beta} = X_{ \varpi_\beta - c_\beta \alpha_\mathrm{h} } H_{s_0} - (v-v^{-1}) \cdot X_{\varpi_\beta} \cdot ( 1 + X_{-\alpha_\mathrm{h}} + \cdots + X_{ - ( c_\beta - 1 ) \cdot \alpha_\mathrm{h}} ) \]
	and therefore $\varphi( H_{s_0} X_{\varpi_\beta} ) = H_{s_0} - c_\beta \cdot ( v - v^{-1} )$.
	Thus we have
	\[ \varphi( B_{s_0} X_{\varpi_\beta} ) = H_{s_0} - c_\beta \cdot ( v - v^{-1} ) + c_\beta \cdot \big( H_{s_\beta} + ( v - v^{-1} ) \big) + \sum_{\alpha \neq \beta} c_\alpha H_{s_\alpha} = B_{s_0} \]
	for all $\beta \in \Pi$.
	As the fundamental dominant weights generate $X$, Lemma \ref{lem:BgeneratingsetX} implies that $B_{s_0} \in \mathcal{B}$, as claimed.
\end{proof}

\subsection{}

Now we consider the centralizer
\[ \mathcal{C}_\mathrm{ext} = C_{\mathbb{H}_\mathrm{ext}}(X) = \{ h \in \mathbb{H}_\mathrm{ext} \mid h X_\lambda = X_\lambda h \text{ for all } \lambda \in X \} \]
of $\mathcal{A}[X]$ in $\mathbb{H}_\mathrm{ext}$.
Recall the map $\varphi \colon \mathbb{H}_\mathrm{ext} \to \mathcal{H}_\mathrm{ext}$ defined in \eqref{eq:phi}.

\begin{Lemma} \label{lem:Bcentralizer}
	The restriction of $\varphi$ to $\mathcal{C}_\mathrm{ext}$ is an $\mathcal{A}$-algebra homomorphism with $\varphi(\mathcal{C}_\mathrm{ext}) \subseteq \mathcal{B}_\mathrm{ext}$.
\end{Lemma}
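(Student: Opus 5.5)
The plan is to reduce everything to one identity: for $g \in \mathcal{C}_\mathrm{ext}$ and arbitrary $x \in \mathbb{H}_\mathrm{ext}$,
\[ \varphi(gx) = \varphi(g)\,\varphi(x) . \]
Both assertions of the lemma follow from it, together with the observation that $\varphi(X_\lambda h) = h$ for $h \in \mathbb{H}_\mathrm{ext}$.

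First I prove the identity. Write $g = \sum_\mu X_\mu c_\mu(g)$ with $c_\mu(g) \in \mathcal{H}_\mathrm{ext}$, and write $x = \sum_\nu X_\nu c_\nu(x)$. Since $g$ commutes with every $X_\nu$, we get
\[ gx = \sum_\nu X_\nu\, g\, c_\nu(x) = \sum_{\mu,\nu} X_{\nu+\mu}\, c_\mu(g)\, c_\nu(x) . \]
Here each $c_\mu(g) c_\nu(x)$ lies in $\mathcal{H}_\mathrm{ext}$, so this expression is already in the normal form of the PBW decomposition $\mathbb{H}_\mathrm{ext} \cong \mathcal{A}[X] \otimes \mathcal{H}_\mathrm{ext}$. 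Applying $\varphi$, which simply sums the $\mathcal{H}_\mathrm{ext}$-coefficients, gives
\[ \varphi(gx) = \sum_{\mu,\nu} c_\mu(g)\, c_\nu(x) = \varphi(g)\,\varphi(x) . \]
The only point requiring care is that commuting $g$ past $X_\nu$ is the one place the centralizer hypothesis is used. After that, no Bernstein relations are needed, because the $\mathcal{H}_\mathrm{ext}$-factors already sit on the right.

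Next, the homomorphism property. Since $\mathcal{C}_\mathrm{ext}$ is a subalgebra, taking $x = h \in \mathcal{C}_\mathrm{ext}$ in the identity gives $\varphi(gh) = \varphi(g)\varphi(h)$. Moreover $1 \in \mathcal{C}_\mathrm{ext}$ and $\varphi(1) = 1$, and $\mathcal{A}$-linearity is built into the definition of $\varphi$. So $\varphi|_{\mathcal{C}_\mathrm{ext}}$ is an $\mathcal{A}$-algebra homomorphism.

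Finally, the image lies in $\mathcal{B}_\mathrm{ext}$. For $g \in \mathcal{C}_\mathrm{ext}$ and $\lambda \in X$ we have $gX_\lambda = X_\lambda g$, hence
\[ \varphi(gX_\lambda) = \varphi(X_\lambda g) = g . \]
This alone does not prove the claim, since we need the statement for $\varphi(g)$ rather than $g$. To get it, compare two computations of $\varphi(gX_\lambda)$:
\begin{itemize}
\item by the identity with $x = X_\lambda$, we get $\varphi(gX_\lambda) = \varphi(g)\varphi(X_\lambda) = \varphi(g)$;
\item alternatively, write $gX_\lambda = \sum_\mu X_\mu\, c_\mu(g) X_\lambda$ and expand each $c_\mu(g)X_\lambda$ in normal form. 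Applying $\varphi$ then gives $\varphi(gX_\lambda) = \sum_\mu \varphi(c_\mu(g)X_\lambda) = \varphi(\varphi(g)X_\lambda)$.
\end{itemize}
The second computation holds because $\varphi(X_\mu y) = \varphi(y)$ for all $y \in \mathbb{H}_\mathrm{ext}$, as left multiplication by $X_\mu$ just shifts indices. Combining the two gives $\varphi(\varphi(g)X_\lambda) = \varphi(g)$ for all $\lambda \in X$, so $\varphi(g) \in \mathcal{B}_\mathrm{ext}$.

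I expect this last bookkeeping step, namely justifying $\varphi(X_\mu y) = \varphi(y)$ and $\mathcal{A}$-linearity in order to pull $\varphi$ through the sum, to be the only mildly delicate point, and it is routine.
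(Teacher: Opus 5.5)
Your proof is correct and is essentially the paper's argument: your multiplicativity identity $\varphi(gx)=\varphi(g)\varphi(x)$ is exactly the paper's second computation, and your two evaluations of $\varphi(gX_\lambda)$ are the paper's coefficient comparison in $X_\lambda g = gX_\lambda$, rephrased through the shift-invariance $\varphi(X_\mu y)=\varphi(y)$. One harmless slip: in your aside, $\varphi(X_\lambda g)$ equals $\varphi(g)$, not $g$, since $g$ need not lie in $\mathcal{H}_\mathrm{ext}$; with that correction the aside already gives your first bullet.
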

\begin{proof}
	For $x \in \mathcal{C}_\mathrm{ext}$ and $\lambda \in X$, we have
	\[ \sum_\mu X_{\lambda+\mu} c_\mu(x) = X_\lambda x = x X_\lambda = \sum_\nu X_\nu c_\nu(x) X_\lambda = \sum_{\nu,\delta} X_{\nu+\delta} c_\delta\big( c_\nu(x) X_\lambda \big) , \]
	and it follows that $c_\mu(x) = \sum_{\nu+\delta=\lambda+\mu} c_\delta\big( c_\nu(x) X_\lambda \big)$ for all $\lambda,\mu \in X$.
	This implies that
	\[ \varphi\big( \varphi(x) X_\lambda \big)  = \sum_\delta c_\delta\big( \varphi(x) X_\lambda \big) = \sum_{\nu,\delta} c_\delta\big( c_\nu(x) X_\lambda \big) = \sum_\mu c_\mu(x) = \varphi(x) , \]
	and we conclude that $\varphi(x) \in \mathcal{B}_\mathrm{ext}$.
	For $x \in \mathcal{C}_\mathrm{ext}$ and $y \in \mathbb{H}_\mathrm{ext}$, we have
	\[ xy = \sum_\nu x X_\nu c_\nu(y) = \sum_\nu X_\nu x c_\nu(y) = \sum_{\nu,\delta} X_{\nu+\delta} c_\delta(x) c_\nu(y) \]
	and therefore $c_\mu(xy) = \sum_{\nu+\delta=\mu} c_\delta(x) c_\nu(y)$ for all $\mu \in X$,
	and it follows that
	\[ \varphi(xy) = \sum_\mu c_\mu(xy) = \sum_{\nu,\delta} c_\delta(x) c_\nu(y) = \sum_\delta c_\delta(x) \cdot \sum_\nu c_\nu(y) = \varphi(x) \cdot \varphi(y) , \]
	as required.
\end{proof}

\begin{Remark}
	We can view $\mathcal{H}_\mathrm{ext}$ as a left $\mathcal{B}_\mathrm{ext}$-module via left-multiplication, and by Lemma \ref{lem:Bcentralizer}, this induces a left $\mathcal{C}_\mathrm{ext}$-module structure on $\mathcal{H}_\mathrm{ext}$ via the homomorphism $\varphi \colon \mathcal{C}_\mathrm{ext} \to \mathcal{B}_\mathrm{ext}$.
	The proof of Lemma \ref{lem:Bcentralizer} shows that $\varphi \colon \mathbb{H}_\mathrm{ext} \to \mathcal{H}_\mathrm{ext}$ is a homomorphism of left $\mathcal{C}_\mathrm{ext}$-modules.
\end{Remark}

\subsection{}
\label{subsec:Bcontainscenter}

To conclude this section, we show that the subalgebra $\mathcal{B}_\mathrm{ext}$ of $\mathcal{H}_\mathrm{ext}$ contains the center of $\mathcal{H}_\mathrm{ext}$.
This fact will be crucial for one of the main applications of $\mathcal{B}_\mathrm{ext}$, which we explain in Section \ref{sec:center} below.
We first recall a classical description of the center of $\mathcal{H}_\mathrm{ext}$ due to Bernstein.

Let us write
\[ Y^+ = \{ \lambda \in Y \mid (\alpha,\lambda) \geq 0 \text{ for all } \alpha \in \Pi \} \]
for the set of dominant coweights.
For every coweight $\lambda \in Y$, we can choose dominant coweights $\mu,\nu \in Y^+$ such that $\lambda = \mu - \nu$, and the element
\[ Y_\lambda \coloneqq H_{t_\mu} \cdot H_{t_\nu}^{-1} \in \mathcal{H}_\mathrm{ext} \]
is independent of the choice of $\mu$ and $\nu$.
Furthermore, we have $Y_\lambda \cdot Y_\mu = Y_{\lambda+\mu}$ for all $\lambda,\mu \in Y$, and so the elements $Y_\lambda$ with $\lambda \in Y$ span a commutative $\mathcal{A}$-subalgebra
\[ \mathcal{A}[Y] = \bigoplus_{\lambda \in Y} \mathcal{A} \cdot Y_\lambda \]
in $\mathcal{H}_\mathrm{ext}$.
Now the center $Z(\mathcal{H}_\mathrm{ext})$ of $\mathcal{H}_\mathrm{ext}$ can be described as the ring of $W_\mathrm{fin}$-invariants in $\mathcal{A}[Y]$, that is
\[ Z(\mathcal{H}_\mathrm{ext}) = \mathcal{A}[Y]^{W_\mathrm{fin}} = \Big\{ \sum\nolimits_\lambda a_\lambda \cdot Y_\lambda \in \mathcal{A}[Y] \mathop{\Big|} a_\lambda = a_{w(\lambda)} \text{ for all } w \in W_\mathrm{fin} \text{ and } \lambda \in Y \Big\} ; \]
see Proposition 3.11 in \cite{LusztigAffineHeckeGraded}.

\begin{Corollary} \label{cor:centerinB}
	The center $Z(\mathcal{H}_\mathrm{ext})$ of $\mathcal{H}_\mathrm{ext}$ is contained in $\mathcal{B}_\mathrm{ext}$.
\end{Corollary}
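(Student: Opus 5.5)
The natural route is to produce, for each central element $z \in Z(\mathcal{H}_\mathrm{ext})$, an element $\tilde z \in \mathcal{C}_\mathrm{ext}$ with $\varphi(\tilde z) = z$; Lemma \ref{lem:Bcentralizer} then gives $z \in \mathcal{B}_\mathrm{ext}$. The obvious candidate is $\tilde z = z$ itself, viewed inside $\mathbb{H}_\mathrm{ext}$ via $\mathcal{H}_\mathrm{ext} \subseteq \mathbb{H}_\mathrm{ext}$. Since $\varphi(h) = h$ for all $h \in \mathcal{H}_\mathrm{ext}$, we have $\varphi(z) = z$. So it suffices to show that every $z \in \mathcal{A}[Y]^{W_\mathrm{fin}}$ commutes with all $X_\lambda$, $\lambda \in X$, in the specialized algebra $\mathbb{H}_\mathrm{ext}$ (where $q' = 1$). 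Granting this, we get $z \in \mathcal{C}_\mathrm{ext}$, and hence $z = \varphi(z) \in \mathcal{B}_\mathrm{ext}$.

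For the commutation, I would use the specialization $q=1$. In the double affine Hecke algebra, the translations $t_\gamma$ act on $\tilde X_m$ by $t_\gamma(\lambda) = \lambda - (\lambda,\gamma)\delta$, so after setting $X_\delta = 1$ (and $X_{\delta/m}=1$) every translation in $W_\mathrm{ext}$ acts trivially on $X$. The plan is to establish two facts.

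First, for dominant $\mu \in Y^+$ we have $H_{t_\mu} X_\lambda = X_\lambda H_{t_\mu} + (\text{lower terms})$. I would aim for the sharper statement that the elements $Y_\mu$ generate, together with $\mathcal{A}[X]$, a copy of the \emph{commutative} Laurent polynomial algebra at $q = 1$ modulo the finite Hecke algebra twist. In Cherednik's setup the correct statement is that at $q=1$, $\mathbb{H}_\mathrm{ext}$ has center containing both $\mathcal{A}[X]^{W_\mathrm{fin}}$ and $\mathcal{A}[Y]^{W_\mathrm{fin}}$.

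Second, concretely, the element $z \in \mathcal{A}[Y]^{W_\mathrm{fin}}$ commutes with every $H_s$ for $s \in S_\mathrm{aff}$ and with $\Omega$, because it is central in $\mathcal{H}_\mathrm{ext}$. So it remains to show $[z, X_\lambda] = 0$. Here I would invoke Cherednik's duality (the Fourier/Cherednik involution), which at $q=1$ exchanges $X_\lambda$ and $Y_\lambda$ and fixes the $H_s$ for $s \in S_\mathrm{fin}$. Under this involution, the statement that $\mathcal{A}[X]^{W_\mathrm{fin}}$ is central in $\mathbb{H}_\mathrm{ext}$ — which follows directly from the Bernstein relations \eqref{eq:relationsDL} for the finite $H_s$, together with the triviality of translations on $X$ at $q=1$ for $H_{s_0}$ and $\Omega$ — transforms into centrality of $\mathcal{A}[Y]^{W_\mathrm{fin}}$.

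The main obstacle is justifying this duality step within the paper's conventions: the normalization and the existence of the involution at $q=1$ must be imported from \cite{CherednikDAHA}. As a fallback, I would prove $[z, X_\lambda]=0$ directly by another route. At $q=1$, $\mathbb{H}_\mathrm{ext}$ is the generic Hecke-type deformation of $\mathcal{A}[X] \rtimes W_\mathrm{ext}$ with $W_\mathrm{ext}$ acting on $X$ through $W_\mathrm{fin}$. I would check, via a faithful polynomial representation, that the Bernstein–Lusztig commutation $Y_\mu X_\lambda$ reduces to $X_\lambda Y_\mu$ plus terms which cancel after symmetrizing over $W_\mathrm{fin}$-orbits. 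Verifying that cancellation is the genuinely computational heart of the proof.
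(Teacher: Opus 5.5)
Your proposal is the paper's argument. The paper also obtains $Z(\mathcal{H}_\mathrm{ext}) = \mathcal{A}[Y]^{W_\mathrm{fin}} \subseteq Z(\mathbb{H}_\mathrm{ext}) \subseteq \mathcal{C}_\mathrm{ext}$, but it simply cites Lemma 5.1 of \cite{OblomkovDAHA} for the centrality statement you isolate, and then concludes with $\varphi|_{\mathcal{H}_\mathrm{ext}} = \mathrm{id}$ and Lemma~\ref{lem:Bcentralizer}.

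If you want to prove that centrality yourself, two corrections. First, the duality is in general an anti-isomorphism onto the double affine Hecke algebra of $\Phi^\vee$, not an involution of $\mathbb{H}_\mathrm{ext}$, because $X$ and $Y$ are different lattices outside the simply-laced case; it still transports centrality of the $W_\mathrm{fin}$-invariants. Second, drop your ``First'' paragraph, since individual $Y_\mu$ do not commute with $X_\lambda$ for generic $v$. For example, in type $\mathsf{A}_1$ with $\Omega = \{1,\omega\}$ one finds $Y_{\varpi_1^\vee} X_{\varpi_1} = X_{\varpi_1} \bigl( Y_{\varpi_1^\vee} + (v-v^{-1})\,\omega \bigr)$.
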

\begin{proof}
	The center $Z(\mathbb{H}_\mathrm{ext})$ of $\mathbb{H}_\mathrm{ext}$ contains $\mathcal{A}[X]^{W_\mathrm{fin}} \otimes_\mathcal{A} \mathcal{A}[Y]^{W_\mathrm{fin}}$ by Lemma 5.1 in \cite{OblomkovDAHA}.
	By the above discussion, we further have $Z(\mathcal{H}_\mathrm{ext}) = \mathcal{A}[Y]^{W_\mathrm{fin}}$, and so the embedding $\mathcal{H}_\mathrm{ext} \subseteq \mathbb{H}_\mathrm{ext}$ gives rise to an embedding
	\[ Z(\mathcal{H}_\mathrm{ext}) \subseteq Z(\mathbb{H}_\mathrm{ext}) \subseteq \mathcal{C}_\mathrm{ext} . \]
	As the map $\varphi \colon \mathbb{H}_\mathrm{ext} \to \mathcal{H}_\mathrm{ext}$ restricts to the identity on the subalgebra $\mathcal{H}_\mathrm{ext} \subseteq \mathbb{H}_\mathrm{ext}$, we conclude that
	\[ Z( \mathcal{H}_\mathrm{ext} ) = \varphi\big( Z(\mathcal{H}_\mathrm{ext}) \big) \subseteq \varphi( \mathcal{C}_\mathrm{ext} ) \subseteq \mathcal{B}_\mathrm{ext} \]
	by Lemma \ref{lem:Bcentralizer}, as claimed.
\end{proof}

\section{The standard basis}
\label{sec:standardbasis}

This section and the following are concerned with the question whether the subalgebra $\mathcal{B}$ of $\mathcal{H}$ admits a basis with an algebro-combinatorial characterization analogous to that of the Schubert basis of $\mathcal{B}_\mathrm{nil}$ in the introduction.
More precisely, we ask whether for $w \in W_\mathrm{aff}^+$, there is an element $B_w \in \mathcal{B}$ whose expansion
\[ B_w = \sum_{x \in W_\mathrm{aff}} P_{x,w} \cdot H_x \]
in terms of the standard basis of $\mathcal{H}$ satisfies $P_{w,w} = 1$ and $P_{x,w} = 0$ for $w \neq x \in W_\mathrm{aff}^+$.
While we cannot establish the existence of these elements in full generality, we prove below that an element $B_w \in \mathcal{B}$ with the properties listed above is necessarily unique (if it exists).
Furthermore, we show that the elements $\{ B_w \mid w \in W_\mathrm{aff}^+ \}$ form an $\mathcal{A}$-basis of $\mathcal{B}$ (if they exist), which we call the \emph{standard basis}.
These results are obtained by relating $\mathcal{B}$ to an analogously defined subalgebra $\mathcal{B}_0$ of the affine $0$-Hecke algebra and relating the sought after standard basis of $\mathcal{B}$ to the $K$-homological Schubert basis of $\mathcal{B}_0$ from \cite{LamSchillingShimozono}.
We conjecture that the standard basis of $\mathcal{B}$ exists for arbitrary root systems, and we prove its existence for root systems of type $\mathsf{A}_n$ in Section \ref{sec:typeA} below.

\subsection{}

Recall that the double affine Hecke algebra $\mathbb{H}$ and the affine Hecke algebra $\mathcal{H}$ are defined as algebras over the Laurent polynomial ring $\mathcal{A} = \Z[v,v^{-1}]$.
We now set $t = v^{-1}$ and define
\[ T_w = v^{-\ell(w)} \cdot H_w = t^{\ell(w)} \cdot H_w \in \mathcal{H} \]
for all $w \in W_\mathrm{aff}$, in particular $T_s = v^{-1} \cdot H_s = t \cdot H_s$ for $s \in S_\mathrm{aff}$.
With this renormalization, the quadratic relations \eqref{eq:relationsquadratic} become
\[ ( T_s + 1 ) ( T_s - t^2 ) = 0 \]
and the Bernstein relations \eqref{eq:relationsDL} become
\[ T_s X_\lambda = X_{s(\lambda)} T_s + ( 1 - t^2 ) \cdot \tfrac{ X_\lambda - X_{s(\lambda)} }{ X_{\alpha_s} - 1 } \]
for $s \in S_\mathrm{aff}$ and $\lambda \in X$.
This implies that the $\Z[t]$-submodules
\[ \mathcal{H}_t = \bigoplus_{w \in W_\mathrm{aff}} \Z[t] \cdot T_w , \hspace{2cm} \mathbb{H}_t = \bigoplus_{\lambda \in X, w \in W_\mathrm{aff}} \Z[t] \cdot X_\lambda T_w \]
are $\Z[t]$-subalgebras of $\mathcal{H}$ and $\mathbb{H}$, respectively.
By specializing $t$ to $0$, we obtain the affine $0$-Hecke algebra $\mathcal{H}_0$ and the double affine $0$-Hecke algebra $\mathbb{H}_0$,%
\footnote{More precisely, the double affine $0$-Hecke algebra is $\mathbb{H}_0 = \Z \otimes_{\Z[t]} \mathbb{H}_t$, where $\Z$ becomes a $\Z[t]$-algebra via the unique ring homomorphism $\Z[t] \to \Z$ with $t \mapsto 0$.
The definition of $\mathcal{H}_0$ is analogous.
By a slight abuse of notation, we write $T_w = 1 \otimes T_w \in \mathcal{H}_0$ and $X_\lambda = 1 \otimes X_\lambda \in \mathbb{H}_0$, for $w \in W_\mathrm{aff}$ and $\lambda \in X$.}
where the quadratic relations and the Bernstein relations become
\[ T_s^2 = - T_s , \hspace{2cm} T_s X_\lambda = X_{s(\lambda)} T_s + \tfrac{ X_\lambda - X_{s(\lambda)} }{ X_{\alpha_s} - 1 } \]
for $s \in S_\mathrm{aff}$ and $\lambda \in X$.
(This matches the relations (2.1), (2.3) and (2.6) in \cite{LamSchillingShimozono}.)
There is a unique ring homomorphism
\[ \mathrm{ev} \colon \mathbb{H}_t \longrightarrow \mathbb{H}_0 \hspace{2cm} \mathrm{ev}(f) = f(0) , \quad \mathrm{ev}(T_w) = T_w , \quad \mathrm{ev}(X_\lambda) = X_\lambda \]
for $f \in \Z[t]$, $w \in W_\mathrm{aff}$ and $\lambda \in X$, and we have $\mathrm{ev}( \mathcal{H}_t ) \subseteq \mathcal{H}_0$.

\subsection{}

The $\mathcal{A}$-linear map $\varphi \colon \mathbb{H} \to \mathcal{H}$ from \eqref{eq:phi} restricts to a $\Z[t]$-linear map $\varphi \colon \mathbb{H}_t \to \mathcal{H}_t$ and induces a $\Z$-linear map $\varphi_0 \colon \mathbb{H}_0 \to \mathcal{H}_0$ upon specialization at $0$.
It is straightforward to see that $\varphi_0 \circ \mathrm{ev} = \mathrm{ev} \circ \varphi$.
Alternatively, $\varphi_0 \colon \mathbb{H}_0 \to \mathcal{H}_0$ can be defined as the unique $\Z$-linear map with $\varphi_0(X_\lambda T_w) = T_w$ for all $\lambda \in X$ and $w \in W_\mathrm{aff}$.
We now consider the $\Z[t]$-subalgebra
\[ \mathcal{B}_t = \mathcal{B} \cap \mathcal{H}_t = \{ h \in \mathcal{H}_t \mid \varphi( h X_\lambda ) = h \text{ for all } \lambda \in X \} \]
of $\mathcal{H}_t$ and the $\Z$-subalgebra
\[ \mathcal{B}_0 = \{ h \in \mathcal{H}_0 \mid \varphi( h X_\lambda ) = h \text{ for all } \lambda \in X \} \]
of $\mathcal{H}_0$; see Section 6.1 in \cite{LamSchillingShimozono} or the proof of Lemma \ref{lem:Bsubalgebra}.

\begin{Lemma} \label{lem:B0evaluation}
	The ring homomorphism $\mathrm{ev} \colon \mathcal{H}_t \to \mathcal{H}_0$ satisfies $\mathrm{ev}( \mathcal{B}_t ) \subseteq \mathcal{B}_0$.
\end{Lemma}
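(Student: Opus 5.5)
The plan is to reduce everything to the compatibility $\varphi_0 \circ \mathrm{ev} = \mathrm{ev} \circ \varphi$ together with the fact that $\mathrm{ev} \colon \mathbb{H}_t \to \mathbb{H}_0$ is a ring homomorphism. Fix $h \in \mathcal{B}_t$ and $\lambda \in X$. Since $h \in \mathcal{H}_t \subseteq \mathbb{H}_t$ and $X_\lambda \in \mathbb{H}_t$, and $\mathbb{H}_t$ is a $\Z[t]$-subalgebra of $\mathbb{H}$, the product $h X_\lambda$ lies in $\mathbb{H}_t$. Hence $\mathrm{ev}$ can be applied to it, and $\varphi(h X_\lambda) \in \mathcal{H}_t$ because $\varphi$ restricts to $\mathbb{H}_t \to \mathcal{H}_t$.

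Next we compute
\[ \varphi_0\big( \mathrm{ev}(h) X_\lambda \big) = \varphi_0\big( \mathrm{ev}(h X_\lambda) \big) = \mathrm{ev}\big( \varphi(h X_\lambda) \big) = \mathrm{ev}(h) . \]
The first equality is multiplicativity of $\mathrm{ev}$ together with $\mathrm{ev}(X_\lambda) = X_\lambda$. The second is the relation $\varphi_0 \circ \mathrm{ev} = \mathrm{ev} \circ \varphi$. The third uses $h \in \mathcal{B}_t$. Since $\lambda$ is arbitrary, this gives $\mathrm{ev}(h) \in \mathcal{B}_0$.

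The only point needing care is the identity $\varphi_0 \circ \mathrm{ev} = \mathrm{ev} \circ \varphi$ on $\mathbb{H}_t$, which the paper calls straightforward. I would verify it on the $\Z[t]$-basis $\{ X_\mu T_w \}$ of $\mathbb{H}_t$. On one side, $\varphi(f X_\mu T_w) = f T_w$, and applying $\mathrm{ev}$ gives $f(0) T_w$. On the other side, $\mathrm{ev}(f X_\mu T_w) = f(0) X_\mu T_w$, and applying $\varphi_0$ gives $f(0) T_w$. The two agree.

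For this check to be legitimate, the elements $X_\mu T_w$ must still form a $\Z$-basis of $\mathbb{H}_0$, so that $\varphi_0$ is well defined by the stated formula. This holds because $\mathbb{H}_t$ is free over $\Z[t]$ on these elements by definition, and base change preserves freeness. With that in place the computation above is essentially formal.
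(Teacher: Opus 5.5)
Your proposal is correct and follows the paper's route: the paper's proof is the one-line appeal to $\varphi_0 \circ \mathrm{ev} = \mathrm{ev} \circ \varphi$. You have written out the resulting computation $\varphi_0(\mathrm{ev}(h) X_\lambda) = \mathrm{ev}(\varphi(h X_\lambda)) = \mathrm{ev}(h)$, and you have also checked on the basis $\{ X_\mu T_w \}$ the identity that the paper calls straightforward.
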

\begin{proof}
	This is easily verified using the identity $\varphi_0 \circ \mathrm{ev} = \mathrm{ev} \circ \varphi$.
\end{proof}

\subsection{}
\label{subsec:standardbasisB0}

By results of Lam--Schilling--Shimozono \cite{LamSchillingShimozono}, the subalgebra $\mathcal{B}_0$ of $\mathcal{H}_0$ admits a Schubert basis with an algebro-combinatorial characterization analogous to that of the Schubert basis of $\mathcal{B}_\mathrm{nil}$ from the introduction.
(The elements of this basis correspond to the Schubert classes in the $K$-homology of the affine Grassmannian $\Gr_G$, but this alternative characterization will not be relevant for us.)
To state these results, note that for all $x \in W_\mathrm{aff}$, the coset $W_\mathrm{fin} x$ has a unique element of minimal length, and recall that we write
\[ W_\mathrm{aff}^+ = \{ x \in W_\mathrm{aff} \mid x \text{ has minimal length in } W_\mathrm{fin} x \} . \]
For later use, we also define
\[ W_\mathrm{ext}^+ = \{ x \in W_\mathrm{ext} \mid x \text{ has minimal length in } W_\mathrm{fin} x \} . \]
We call $\{ T_x \mid x \in W_\mathrm{aff} \}$ the standard basis of $\mathcal{H}_0$.
The following result is Theorem 6.4 in \cite{LamSchillingShimozono}.

\begin{Theorem}[Lam--Schilling--Shimozono] \label{thm:basisB0}
	For all $w \in W_\mathrm{aff}^+$, there is a unique element $B_{w,0} \in \mathcal{B}_0$ whose expansion
	\[ B_{w,0} = \sum_{x \in W_\mathrm{aff}} P_{x,w,0} \cdot T_x \]
	in terms of the standard basis of $\mathcal{H}_0$ satisfies $P_{w,w,0}=1$ and $P_{x,w,0} = 0$ for all $w \neq x \in W_\mathrm{aff}^+$.
	Furthermore, the elements $\{ B_{w,0} \mid w \in W_\mathrm{aff}^+ \}$ form a $\Z$-basis of $\mathcal{B}_0$.
\end{Theorem}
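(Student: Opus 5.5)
The plan is to deduce the statement from the structure of the centralizer $\mathcal{C}_0 = C_{\mathbb{H}_0}(X)$ of $\Z[X]$ in $\mathbb{H}_0$, in the way Lemma \ref{lem:Bcentralizer} relates $\mathcal{C}_\mathrm{ext}$ to $\mathcal{B}_\mathrm{ext}$. The argument has three parts: uniqueness and linear independence, existence, and spanning.

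\emph{Uniqueness and linear independence.} I will show that the $\Z$-linear map
\[ \pi \colon \mathcal{B}_0 \to \prod_{w \in W_\mathrm{aff}^+} \Z, \qquad h \mapsto (\text{coefficient of } T_w \text{ in } h)_w , \]
is injective. Suppose $0 \neq h = \sum_x a_x T_x \in \mathcal{B}_0$ and $\pi(h) = 0$. Choose $x = uw$ in the support of $h$ with $u \in W_\mathrm{fin}$, $w \in W_\mathrm{aff}^+$ and $\ell(x) = \ell(u) + \ell(w)$ maximal. Since $\pi(h)=0$, we have $u \neq 1$.

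I will then pick $\lambda$ dominant and deep in the fundamental chamber and compare the coefficient of $T_{x'}$, for a suitable $x'$ in the coset $W_\mathrm{fin} w$, on both sides of $\varphi_0(hX_\lambda) = h$. The $0$-Hecke Bernstein relations give
\[ T_y X_\lambda = X_{y(\lambda)} T_y + \sum_{y' < y} (\text{elements of } \Z[X]) \, T_{y'} . \]
Hence $\varphi_0(T_yX_\lambda) - T_y$ is a combination of $T_{y'}$ with $y' < y$, and these coefficients depend on $\lambda$ in a controlled, polynomial-like way. A leading-term comparison, done via a descent $s \in S_\mathrm{fin}$ of $u$, should force $a_x = 0$, contradicting the choice of $x$. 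This argument is elementary, but it is not completely formal, and getting the leading-term bookkeeping right is the first technical point to check.

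\emph{Existence.} This is the main obstacle. Following Lam--Schilling--Shimozono, I would construct elements $j_w \in \mathcal{C}_0$ of the form
\[ j_w = \sum_x j_w^x T_x , \qquad j_w^x \in \Z[X], \qquad j_w^w = 1, \qquad j_w^{x} = 0 \text{ for } w \neq x \in W_\mathrm{aff}^+ . \]
These are the $K$-theoretic analogue of Peterson's $j$-basis. To produce them, identify $\mathbb{H}_0$, acting on $\mathrm{Frac}(\Z[X])$-valued functions on $W_\mathrm{aff}$, with the $K$-theoretic GKM / Kostant--Kumar ring, and obtain $j_w$ by duality with the Schubert basis of $K_T(\Gr_G)$. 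Centrality in $X$ then amounts to the functions being supported on $W_\mathrm{aff}/W_\mathrm{fin}$ in the appropriate sense. The integrality of $j_w^x$ is where the real input lies: Kostant--Kumar's theorem that $K_T(\Gr_G)$ has the Schubert basis with the expected triangularity.

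Once $j_w$ is available, set $B_{w,0} = \varphi_0(j_w)$. The analogue of Lemma \ref{lem:Bcentralizer} for $\mathbb{H}_0$ puts $B_{w,0}$ in $\mathcal{B}_0$. Since $\varphi_0(fT_x) = f(0)\,T_x$ for $f \in \Z[X]$, where $f(0)$ is the image of $f$ under the augmentation $X_\lambda \mapsto 1$, we get
\[ P_{x,w,0} = j_w^x(0) . \]
This gives $P_{w,w,0} = 1$ and $P_{x,w,0} = 0$ for all $w \neq x \in W_\mathrm{aff}^+$.

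\emph{Spanning.} Let $h \in \mathcal{B}_0$. The element $h - \sum_{w \in W_\mathrm{aff}^+} \pi(h)_w\, B_{w,0}$ is a finite sum, because $h$ has finite support. It lies in $\mathcal{B}_0$ and lies in the kernel of $\pi$, so by injectivity of $\pi$ it vanishes. This shows the $B_{w,0}$ span $\mathcal{B}_0$; together with uniqueness and linear independence, they form a $\Z$-basis.
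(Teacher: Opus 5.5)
The paper gives no proof of this statement; it imports it as Theorem 6.4 of \cite{LamSchillingShimozono}. Your existence step follows the same source: you take the $K$-theoretic Peterson basis of $\mathcal{C}_0$ and push it through $\varphi_0$, which lands in $\mathcal{B}_0$ by the argument of Lemma \ref{lem:Bcentralizer}. So that step stands or falls with the citation. Your spanning step is correct once $\pi$ is injective.

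The genuine gap is the injectivity of $\pi$ itself, on which both uniqueness and the basis property rest, and your sketch does not prove it. Take $x$ of maximal length $L$ in the support of $h$, and a descent $s \in S_\mathrm{fin}$ with $sx<x$. The coefficient of $T_{sx}$ in $\varphi_0(hX_\lambda)-h$ receives a contribution from \emph{every} $z$ in the support with $z>sx$. All such $z$ have length $L$, since only one-letter deletions reach length $L-1$. Because $\frac{X_\mu-X_{s(\mu)}}{X_{\alpha_s}-1}$ evaluates to $(\mu,\alpha_s^\vee)$ under $X\mapsto 1$, each such $z$ contributes $a_z\,(\lambda,\gamma_z^\vee)$ for some level-zero coroot $\gamma_z^\vee$.

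These contributions are all linear in $\lambda$, so choosing $\lambda$ deep in the chamber separates nothing. You only obtain the vector identity $\sum_z a_z\gamma_z^\vee=0$, from which $a_x$ cannot be isolated when several support elements cover $sx$. Comparing at the coset minimum $w$ instead fails for the same reason: support elements of length $L$ from other cosets also lie above $w$ and contribute in the same top degree in $\lambda$. Your sketch supplies no ordering that makes this system triangular.

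What this computation does show is the following. The top-length part $h_L=\sum_{\ell(x)=L}a_xA_x$ of any $h\in\mathcal{B}_0$ satisfies $\varphi_\mathrm{nil}(h_L\lambda)=0$ for all $\lambda\in X$. The condition for general $f\in S$ follows multiplicatively, so $h_L\in\mathcal{B}_\mathrm{nil}$.

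Hence injectivity for $\mathcal{B}_0$ reduces to the uniqueness statement for $\mathcal{B}_\mathrm{nil}$ in Theorem \ref{thm:basisBnil}. This parallels how the paper passes from $\mathcal{B}$ to $\mathcal{B}_0$ in Lemma \ref{lem:Bcoefficientnonzero}. But that uniqueness is itself a substantive theorem of Lam, building on Peterson. It amounts to saying the affine Fomin--Stanley algebra is no larger than the image of the Peterson centralizer, and you would need to cite it rather than derive it from a descent argument.

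As written, your proposal establishes existence (by citation) and the formal step from injectivity to the basis statement. The uniqueness half is missing.
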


In order to derive some consequences of Theorem \ref{thm:basisB0} for the subalgebra $\mathcal{B}$ of $\mathcal{H}$, we will use the following corollary.

\begin{Corollary} \label{cor:B0basiscoefficients}
	Let $b \in \mathcal{B}_0$ and write $b = \sum_{x \in W_\mathrm{aff}} a_x \cdot T_x$, with $a_x \in \Z$ for $x \in W_\mathrm{aff}$.
	Then we have
	\[ b = \sum_{w \in W_\mathrm{aff}^+} a_w \cdot B_{w,0} . \]
\end{Corollary}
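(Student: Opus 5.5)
We prove the corollary by comparing $b$ with the candidate element built from the Schubert basis and showing their difference vanishes. By Theorem \ref{thm:basisB0}, the elements $\{ B_{w,0} \mid w \in W_\mathrm{aff}^+ \}$ form a $\Z$-basis of $\mathcal{B}_0$. So there are unique integers $d_w$, almost all zero, with
\[ b = \sum_{w \in W_\mathrm{aff}^+} d_w \cdot B_{w,0} . \]
It then suffices to show that $d_w = a_w$ for every $w \in W_\mathrm{aff}^+$.

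To see this, fix $y \in W_\mathrm{aff}^+$ and compare the coefficients of $T_y$ on both sides, using that $\{ T_x \mid x \in W_\mathrm{aff} \}$ is a $\Z$-basis of $\mathcal{H}_0$. The coefficient of $T_y$ in $B_{w,0}$ is $P_{y,w,0}$. By the unitriangularity conditions in Theorem \ref{thm:basisB0}, this equals $1$ if $w = y$ and $0$ if $w \neq y$, since $y \in W_\mathrm{aff}^+$. Hence the coefficient of $T_y$ in $\sum_w d_w B_{w,0}$ is exactly $d_y$, while in $b$ it is $a_y$. This gives $d_y = a_y$ for all $y \in W_\mathrm{aff}^+$, which is the claimed expansion.

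Equivalently, the element $b - \sum_{w \in W_\mathrm{aff}^+} a_w B_{w,0}$ lies in $\mathcal{B}_0$, since the sum is finite and $\mathcal{B}_0$ is a $\Z$-module. By the same coefficient computation it has zero $T_y$-coefficient for every $y \in W_\mathrm{aff}^+$. Expanding it in the Schubert basis, its coefficients are precisely these $T_y$-coefficients, so all of them vanish and the element is zero.

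There is no real obstacle here. The only point needing care is that $a_w$ is nonzero for only finitely many $w$, so the sum is finite. This holds because $b$ is a finite $\Z$-linear combination of the $T_x$.
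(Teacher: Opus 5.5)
Your proposal is correct and follows essentially the same route as the paper: expand $b$ in the basis $\{B_{w,0}\}$ from Theorem \ref{thm:basisB0}, then compare $T_y$-coefficients for $y \in W_\mathrm{aff}^+$ using $P_{y,w,0} = \delta_{y,w}$ to conclude that the expansion coefficients equal $a_y$. The reformulation in your third paragraph and the finiteness remark are fine but not needed.
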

\begin{proof}
	By Theorem \ref{thm:basisB0}, we can write $b = \sum_{y \in W_\mathrm{aff}^+} a_y' \cdot B_{y,0}$, with $a_y' \in \Z$ for $y \in W_\mathrm{aff}^+$, and so
	\[ \sum_{x \in W_\mathrm{aff}} a_x \cdot T_x = b = \sum_{y \in W_\mathrm{aff}^+} a_y' \cdot B_{y,0} = \sum_{x \in W_\mathrm{aff}} \Big( \sum_{y \in W_\mathrm{aff}^+} P_{x,y,0} \cdot a_y' \Big) \cdot T_x . \]
	This implies that
	\[ a_x = \sum_{y \in W_\mathrm{aff}^+} P_{x,y,0} \cdot a_y' \]
	for all $x \in W_\mathrm{aff}$.
	As $P_{w,y,0} = \delta_{w,y}$ for $w,y \in W_\mathrm{aff}^+$, we conclude that $a_w = a_w'$ for $w \in W_\mathrm{aff}^+$ and
	\[ b = \sum_{w \in W_\mathrm{aff}^+} a_w' \cdot B_{w,0} = \sum_{w \in W_\mathrm{aff}^+} a_w \cdot B_{w,0} , \]
	as claimed.
\end{proof}

\subsection{}

Now we can prove our first result about $\mathcal{B}$ that is obtained via a comparison with $\mathcal{B}_0$.

\begin{Lemma} \label{lem:Bcoefficientnonzero}
	For every non-zero element $B \in \mathcal{B}$ with expansion $B = \sum_{x \in W_\mathrm{aff}} P_x \cdot H_x$ in terms of the standard basis of $\mathcal{H}$, there is an element $w \in W_\mathrm{aff}^+$ such that $P_w \neq 0$.
\end{Lemma}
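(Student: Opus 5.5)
The plan is to reduce to the $0$-Hecke algebra and apply Corollary \ref{cor:B0basiscoefficients}. Let $B \in \mathcal{B}$ be non-zero with $B = \sum_x P_x H_x$. Rewriting in the renormalized basis gives $B = \sum_x P_x v^{\ell(x)} T_x = \sum_x P_x t^{-\ell(x)} T_x$, with coefficients $R_x \coloneqq P_x t^{-\ell(x)} \in \mathcal{A} = \Z[t,t^{-1}]$. Since $B \neq 0$, only finitely many $R_x$ are non-zero, and not all vanish.

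First choose an integer $k$ so that $t^k B \in \mathcal{H}_t$ and $\mathrm{ev}(t^k B) \neq 0$. Concretely, let $k = -\min_x \operatorname{ord}_t(R_x)$, where $\operatorname{ord}_t$ is the lowest power of $t$ occurring; the minimum is over the finitely many $x$ with $R_x \neq 0$. Then every $t^k R_x$ lies in $\Z[t]$, and at least one has non-zero constant term. Since $\mathcal{B}$ is an $\mathcal{A}$-submodule, $t^k B \in \mathcal{B} \cap \mathcal{H}_t = \mathcal{B}_t$. By Lemma \ref{lem:B0evaluation}, $b \coloneqq \mathrm{ev}(t^k B) \in \mathcal{B}_0$. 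Writing $b = \sum_x a_x T_x$, we have $a_x = (t^k R_x)(0)$, so $b \neq 0$ by the choice of $k$.

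Next apply Corollary \ref{cor:B0basiscoefficients}: $b = \sum_{w \in W_\mathrm{aff}^+} a_w B_{w,0}$. If $a_w = 0$ for all $w \in W_\mathrm{aff}^+$, this would force $b = 0$, a contradiction. So there is $w \in W_\mathrm{aff}^+$ with $a_w \neq 0$. Then $t^k R_w$ has non-zero constant term, hence $R_w \neq 0$, and so $P_w = R_w t^{\ell(w)} \neq 0$, as required.

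The only step needing care is the normalization. We must check that multiplying by a power of $t$ keeps us in $\mathcal{B}$, which holds because $\varphi$ is $\mathcal{A}$-linear. We must also check that the lowest-order term survives specialization, which holds because the $T_x$ form a $\Z[t]$-basis of $\mathcal{H}_t$ specializing to a $\Z$-basis of $\mathcal{H}_0$. Neither point is a real obstacle; the substantive input is Theorem \ref{thm:basisB0}, via Corollary \ref{cor:B0basiscoefficients}.
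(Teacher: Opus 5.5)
Your proposal is correct and follows the paper's proof essentially step for step. The paper likewise rescales by the minimal power $t^k$ with $t^k B \in \mathcal{H}_t$, which is your $k$. It then passes to the non-zero element $\mathrm{ev}(t^k B) \in \mathcal{B}_0$ via Lemma \ref{lem:B0evaluation} and reads off a non-zero coefficient at some $w \in W_\mathrm{aff}^+$ from Corollary \ref{cor:B0basiscoefficients}.
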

\begin{proof}
	For $x \in W_\mathrm{aff}$, let $P_x' = t^{-\ell(x)} \cdot P_x$, so that $B = \sum_x P_x' \cdot T_x$.
	Since $B$ is non-zero, we can define
	\[ k = \min\{ i \in \Z \mid t^i \cdot B \in \mathcal{H}_t \} = \min\{ i \in \Z \mid t^i \cdot P_x' \in \Z[t] \text{ for all } x \in W_\mathrm{aff} \} . \]
	Then there exists some $x \in W_\mathrm{aff}$ such that $t^k \cdot P_x' \in \Z[t]$ and $t^{k-1} \cdot P_x' \notin \Z[t]$, and so $t^k \cdot P_x'$ is a polynomial with non-zero constant term.
	This implies that $\mathrm{ev}(t^k \cdot P_x') \neq 0$ and $\mathrm{ev}(t^k \cdot B) \neq 0$.
	Further observe that we have
	\[ \sum_{x \in W_\mathrm{aff}} \mathrm{ev}(t^k \cdot P_x') \cdot T_x = \mathrm{ev}(t^k \cdot B) \in \mathcal{B}_0 \]
	by Lemma \ref{lem:B0evaluation}, and so Corollary \ref{cor:B0basiscoefficients} yields
	\[ \mathrm{ev}(t^k \cdot B) = \sum_{w \in W_\mathrm{aff}^+} \mathrm{ev}(t^k \cdot P_w') \cdot b_w . \]
	As $\mathrm{ev}(t^k \cdot B) \neq 0$, we conclude that $\mathrm{ev}(t^k \cdot P_w') \neq 0$ for some $w \in W_\mathrm{aff}^+$, whence $P_w \neq 0$, as claimed. 
\end{proof}

\subsection{}

Motivated by Theorem \ref{thm:basisB0} and Lemma \ref{lem:Bcoefficientnonzero}, we propose the following conjecture.

\begin{Conjecture} \label{conj:standardbasis}
	For all $w \in W_\mathrm{aff}^+$, there exists an element $B_w \in \mathcal{B}$ whose expansion
	\[ B_w = \sum_{x \in W_\mathrm{aff}} P_{x,w} \cdot H_x \]
	in terms of the standard basis of $\mathcal{H}$ satisfies $P_{w,w}=1$ and $P_{x,w} = 0$ for all $w \neq x \in W_\mathrm{aff}^+$.
\end{Conjecture}

\begin{Remark}
	Note that Conjecture \ref{conj:standardbasis} is valid for the element $w = s_0 \in W_\mathrm{aff}^+$ by Lemma \ref{lem:Bs0}.
For root systems of type $\mathsf{A}_n$, we verify the conjecture in Section \ref{sec:typeA} below (see Theorem \ref{thm:BwexistencetypeA}), and for root systems of type $\mathsf{B}_2$ and $\mathsf{G}_2$, we verify it in Appendix \ref{app:rank2}.
\end{Remark}

\subsection{}

To conclude this section, we prove that the elements $B_w \in \mathcal{B}$ from Conjecture \ref{conj:standardbasis} are uniquely determined if they exist, and that they form an $\mathcal{A}$-basis of $\mathcal{B}_\mathrm{aff}$ if they exist.

\begin{Lemma} \label{lem:Bwunique}
	For all $w \in W_\mathrm{aff}^+$, there is at most one element $B_w \in \mathcal{B}$ whose expansion
	\[ B_w = \sum_{x \in W_\mathrm{aff}} P_{x,w} \cdot H_x \]
	in terms of the standard basis of $\mathcal{H}$ satisfies $P_{w,w}=1$ and $P_{x,w} = 0$ for all $w \neq x \in W_\mathrm{aff}^+$.
\end{Lemma}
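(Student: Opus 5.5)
Suppose $B_w$ and $B_w'$ are two elements of $\mathcal{B}$ with the stated properties, and set $D = B_w - B_w'$. Since $\mathcal{B}$ is an $\mathcal{A}$-submodule of $\mathcal{H}$ (indeed an $\mathcal{A}$-subalgebra by Lemma \ref{lem:Bsubalgebra}), we have $D \in \mathcal{B}$. We then compute the coefficients of $D$ in the standard basis of $\mathcal{H}$ at elements of $W_\mathrm{aff}^+$. Writing $D = \sum_{x \in W_\mathrm{aff}} (P_{x,w} - P'_{x,w}) \cdot H_x$, the coefficient at $x = w$ is $1 - 1 = 0$. The coefficient at any $x \in W_\mathrm{aff}^+$ with $x \neq w$ is $0 - 0 = 0$. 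So all coefficients of $D$ indexed by $W_\mathrm{aff}^+$ vanish.

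Now we apply Lemma \ref{lem:Bcoefficientnonzero}. It says that any non-zero element of $\mathcal{B}$ has a non-zero standard-basis coefficient at some element of $W_\mathrm{aff}^+$. Since $D$ has no such coefficient, $D = 0$, and hence $B_w = B_w'$.

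There is no real obstacle here. All the substance, namely the reduction to the $0$-Hecke algebra via $\mathrm{ev}$ and the Lam--Schilling--Shimozono basis (Theorem \ref{thm:basisB0}), is already contained in Lemma \ref{lem:Bcoefficientnonzero}. The only point to check is that the hypotheses on $B_w$ and $B_w'$ are linear conditions on the coefficients indexed by $W_\mathrm{aff}^+$, so that the difference satisfies the homogeneous version of these conditions.
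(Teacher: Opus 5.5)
Your proposal is correct and matches the paper's proof: the difference $B_w - B_w'$ lies in $\mathcal{B}$ and has vanishing standard-basis coefficients at every element of $W_\mathrm{aff}^+$. Lemma \ref{lem:Bcoefficientnonzero} therefore forces $B_w - B_w' = 0$.
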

\begin{proof}
	Suppose that $B_w,B_w' \in \mathcal{B}$ are two elements that satisfy the conditions of the lemma.
	Then Lemma \ref{lem:Bcoefficientnonzero} implies that $B_w - B_w' = 0$, and so $B_w = B_w'$, as required.
\end{proof}

\begin{Lemma} \label{lem:Bwbasis}
	If Conjecture \ref{conj:standardbasis} holds, then the elements $\{ B_w \mid w \in W_\mathrm{aff}^+ \}$ form an $\mathcal{A}$-basis of $\mathcal{B}$.
\end{Lemma}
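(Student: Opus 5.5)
Assuming Conjecture \ref{conj:standardbasis}, we must prove that the $B_w$ are linearly independent and span $\mathcal{B}$. Both parts reduce to Lemma \ref{lem:Bcoefficientnonzero}, in the same way that Corollary \ref{cor:B0basiscoefficients} was derived from Theorem \ref{thm:basisB0}.

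For linear independence, suppose that $\sum_{w \in W_\mathrm{aff}^+} a_w B_w = 0$, with finitely many non-zero $a_w \in \mathcal{A}$. Since $P_{x,w} = \delta_{x,w}$ for $x,w \in W_\mathrm{aff}^+$, the coefficient of $H_y$ in this sum, for $y \in W_\mathrm{aff}^+$, is exactly $a_y$. The coefficient must vanish, so $a_y = 0$ for all $y$.

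For spanning, let $B \in \mathcal{B}$ with $B = \sum_x P_x H_x$. The first point to check is that only finitely many $w \in W_\mathrm{aff}^+$ have $P_w \neq 0$; this holds because $B$ is a finite $\mathcal{A}$-linear combination of standard basis elements. We then set
\[ B' = B - \sum_{w \in W_\mathrm{aff}^+} P_w B_w , \]
which lies in $\mathcal{B}$ because $\mathcal{B}$ is an $\mathcal{A}$-submodule. For $y \in W_\mathrm{aff}^+$, the coefficient of $H_y$ in $B'$ is $P_y - \sum_w P_w \delta_{y,w} = 0$. So $B'$ is an element of $\mathcal{B}$ all of whose coefficients at $W_\mathrm{aff}^+$ vanish. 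By Lemma \ref{lem:Bcoefficientnonzero} this forces $B' = 0$, and hence $B = \sum_w P_w B_w$.

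There is no substantial obstacle, since all the real work is contained in Lemma \ref{lem:Bcoefficientnonzero}, which rests on the Lam--Schilling--Shimozono basis of $\mathcal{B}_0$. The only care needed is the finiteness of the sum in the spanning step, which is immediate because elements of $\mathcal{H}$ are finite combinations of the $H_x$.
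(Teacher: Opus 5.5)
Your proof is correct and follows the same route as the paper: linear independence by reading off the coefficients at $W_\mathrm{aff}^+$ in the standard basis of $\mathcal{H}$, and spanning by subtracting $\sum_{w \in W_\mathrm{aff}^+} P_w B_w$ from $B$ and applying Lemma \ref{lem:Bcoefficientnonzero} to the difference. You simply spell out the coefficient computations and the finiteness of the sum, which the paper leaves implicit.
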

\begin{proof}
	The linear independence of $\{ B_w \mid w \in W_\mathrm{aff}^+ \}$ is immediate from the fact that $\{ H_x \mid x \in W_\mathrm{aff} \}$ is an $\mathcal{A}$-basis of $\mathcal{H}$.
	For an arbitrary element $B \in \mathcal{B}$ with expansion
	\[ B = \sum_{x \in W_\mathrm{aff}} a_x \cdot H_x \]
	in terms of the standard basis of $\mathcal{H}$, one can use Lemma \ref{lem:Bcoefficientnonzero} to see that
	\[ B - \sum_{w \in W_\mathrm{aff}^+} a_w \cdot B_w = 0 , \]
	whence $\{ B_w \mid w \in W_\mathrm{aff}^+ \}$ spans $\mathcal{B}$ over $\mathcal{A}$, as required.
\end{proof}

\begin{Definition} \label{def:standardbasis}
	If Conjecture \ref{conj:standardbasis} holds, then we call $\{ B_w \mid w \in W_\mathrm{aff}^+ \}$ the \emph{standard basis} of $\mathcal{B}$.
\end{Definition}

\section{The standard basis in type \texorpdfstring{$\mathsf{A}_n$}{An}}
\label{sec:typeA}

The goal of this section is to prove Conjecture \ref{conj:standardbasis}, and thereby establish the existence of the standard basis of $\mathcal{B}$, for root systems of type $\mathsf{A}_n$.
Our strategy is to once again exploit an analogy between $\mathcal{B}$ and the subalgebra $\mathcal{B}_\mathrm{nil}$ of $\mathcal{H}_\mathrm{nil}$ from the introduction.
Namely, if $\Phi$ is of type $\mathsf{A}_n$, then $\mathcal{B}_\mathrm{nil}$ is isomorphic to a polynomial ring $\Z[b_1,\cdots,b_n]$ with generators of degree $\deg(b_i) = i$ that can be described explicitly in terms of the basis $\{ A_w \mid w \in W_\mathrm{aff} \}$ of $\mathcal{H}_\mathrm{nil}$, see \cite[Theorems 6.3 and 7.4]{LamSchubert}.
In Theorem \ref{thm:BrinB} below, we prove that the generators $b_1,\ldots,b_n$ of $\mathcal{B}_\mathrm{nil}$ can be lifted to elements $B_1,\ldots,B_n \in \mathcal{B}$, and in Theorem \ref{thm:BwexistencetypeA}, we construct elements $B_w \in \mathcal{B}$ for $w \in W_\mathrm{aff}^+$ as in Conjecture \ref{conj:standardbasis} as suitable polynomials in the elements $B_1,\ldots,B_r \in \mathcal{B}$.
Finally, we also prove in Theorem \ref{thm:Bcommutative} that $\mathcal{B}$ is commutative.

\subsection{}
\label{subsec:typeAsetup}

Throughout this section, we assume that $\Phi$ is of type $\mathsf{A}_n$ for some $n \geq 1$.
The Coxeter diagram of $W_\mathrm{aff}$ is a cycle of length $n+1$, as in Figure \ref{fig:CoxeterdiagramtypeAn}.
The subgroup $\Omega = \{ \omega \in W_\mathrm{ext} \mid \omega S_\mathrm{aff} \omega^{-1} = S_\mathrm{aff} \}$ of $W_\mathrm{ext}$ from Subsection \ref{subsec:WaffWext} is cyclic of order $n+1$, and the action of $\Omega$ on $S_\mathrm{aff}$ by conjugation corresponds to the graph automorphisms which rotate the Coxeter diagram.
\begin{figure}[htbp]
\begin{center}
\caption{The Coxeter diagram of $W_\mathrm{aff}$ in type $\mathsf{A}_n$}
\label{fig:CoxeterdiagramtypeAn}
\begin{tikzpicture}
	\node[circle,fill=black,inner sep=2pt] (A1) at (0,0) {};
	\node[circle,fill=black,inner sep=2pt] (A2) at (1,0) {};
	\node[circle,fill=black,inner sep=2pt] (A3) at (2.5,0) {};
	\node[circle,fill=black,inner sep=2pt] (A4) at (3.5,0) {};
	\node[circle,fill=black,inner sep=2pt] (A5) at (1.75,1) {};
	\node[below=.1cm of A1] {\scriptsize $1$};
	\node[below=.1cm of A2] {\scriptsize $2$};
	\node[below=.1cm of A3] {\scriptsize $n-1$};
	\node[below=.1cm of A4] {\scriptsize $n \vphantom{1}$};
	\node[above=.1cm of A5] {\scriptsize $0$};
	\draw (A1) -- (A2);
	\draw (A2) -- node[fill=white] {$\,\dots$} (A3);
	\draw (A3) -- (A4);
	\draw (A1) -- (A5);
	\draw (A4) -- (A5);
\end{tikzpicture}
\end{center}
\end{figure}
We identify the set of vertices in the Coxeter diagram with $I = \Z / (n+1) \Z$, and for $i \in I$, we write $s_i \in S_\mathrm{aff}$ for the corresponding simple reflection, $\alpha_i \in \Pi_\mathrm{aff}$ for the corresponding (affine) simple root and $\varpi_i$ for the corresponding fundamental dominant weight, with the convention that $\varpi_0 = 0$.
For an element $w \in W_\mathrm{aff}$ with reduced expression $w = s_{i_1} s_{i_2} \cdots s_{i_r}$, we will often write $H_w = H_{i_1 i_2 \cdots i_r}$.

\subsection{}

A proper subset $X \subsetneq I$ is called an \emph{interval} if
\[ X = \{ i , i-1 , i-2 , \ldots , j+1 , j \} \eqqcolon [i,j] \]
for some $i,j \in I$ with $j \neq i+1$, or equivalently, if the induced subgraph of the Coxeter diagram with vertex set $X$ is connected.
By convention, we set $[i,i+1] = \varnothing$ for $i \in I$.
For an interval $X = [i,j]$ as above, we define
\[ w_X \coloneqq s_i s_{i-1} \cdots s_{j+1} s_j \in W_\mathrm{aff} . \] 
Every proper subset $Y \subsetneq I$ can be written uniquely as a disjoint union
\[ Y = X_1 \sqcup \cdots \sqcup X_r \]
of intervals $X_1, \ldots , X_r \subsetneq I$ such that the number $r$ of intervals is minimal,%
\footnote{The intervals $X_1,\ldots,X_r$ in the disjoint union $Y = X_1 \sqcup \cdots \sqcup X_r$ are precisely the connected components of the induced subgraph of the Coxeter diagram with vertex set $Y$.
Note that the set $\{ X_1 , \ldots , X_r \}$ is uniquely determined, but the order of the intervals $X_1,\ldots,X_r$ may obviously be permuted.}
and we call $X_1,\ldots,X_r$ the \emph{interval decomposition} of $Y$.
For a proper subset $Y \subsetneq I$ with interval decomposition $Y = X_1 \sqcup \cdots \sqcup X_r$, we further define
\begin{equation} \label{eq:cycdecelements}
	w_Y \coloneqq w_{X_1} \cdots w_{X_r} \in W_\mathrm{aff} , \hspace{2cm} H_Y \coloneqq H_{w_Y} \in \mathcal{H} .
\end{equation}
Observe that the product $w_Y$ does not depend on the order of the intervals.
The elements $w_Y \in W_\mathrm{aff}$ for subsets $Y \subsetneq I$ are called \emph{cyclically decreasing} in \cite[Definition 6.2]{LamSchubert}.

\subsection{}

For $0 \leq r \leq n$, consider the element
\begin{equation} \label{eq:generatorsB}
	B_r \coloneqq \sum_{ \substack{ Y \subset I \\ \abs{Y} = r } } H_Y \in \mathcal{H} .
\end{equation}
In the following, we will show that
\begin{enumerate}
	\item the elements $B_1,\ldots,B_n \in \mathcal{H}$ belong to the subalgebra $\mathcal{B}$ of $\mathcal{H}$ (see Theorem \ref{thm:BrinB});
	\item the elements $B_1,\ldots,B_n \in \mathcal{B}$ generate $\mathcal{B}$ as an $\mathcal{A}$-algebra (see Corollary \ref{cor:Bgenerators});
	\item the elements $B_1,\ldots,B_n \in \mathcal{B}$ commute, whence $\mathcal{B}$ is commutative (see Theorem \ref{thm:Bcommutative}).
\end{enumerate}
Along the way, we will also show that 
\begin{enumerate}
	\setcounter{enumi}{3}
	\item the standard basis of $\mathcal{B}$ exists (see Theorems \ref{thm:BwexistencetypeA} and \ref{thm:BwbasistypeA}).
\end{enumerate}
The claim (1) will be established by a direct computation, whereas (2)--(4) will be obtained by relating $\mathcal{B}$ to $\mathcal{B}_\mathrm{nil}$ and using results of Lam \cite{LamSchubert} about the Schubert basis of $\mathcal{B}_\mathrm{nil}$.

\begin{Example}
	For $n=3$, we have
	\begin{align*}
		B_0 & = H_e , \\
		B_1 & = H_0 + H_2 + H_3 , \\
		B_2 & = H_{03} + H_{32} + H_{21} + H_{10} + H_{02} + H_{31} , \\
		B_3 & = H_{032} + H_{321} + H_{210} + H_{103} .
	\end{align*}
\end{Example}

\subsection{}

In order to show that $B_1,\ldots,B_n \in \mathcal{B}$, we need to prove some preliminary results that involve slightly technical computations in $\mathbb{H}$.

\begin{Lemma} \label{lem:commutationpi1interval}
	For all $i \in I$ with $i \neq 0$, we have
	\[ H_{[i,1]} X_{-\varpi_1} = X_{\varpi_i-\varpi_{i+1}} H_{[i,1]} - (v-v^{-1}) \cdot \sum_{\ell=1}^i X_{\varpi_{\ell-1} - \varpi_\ell} \cdot H_{[i,1] \setminus \{ \ell \}} . \]
\end{Lemma}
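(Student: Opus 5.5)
We prove the identity by induction on $i$, for $1 \leq i \leq n$ (so $i \neq 0$ and $[i,1]$ is a proper interval). Here $H_{[i,1]} = H_{s_i s_{i-1}\cdots s_1} = H_{s_i} H_{[i-1,1]}$ with $[0,1] = \varnothing$. The term $H_{[i,1]\setminus\{\ell\}}$ denotes $H_Y$ for $Y = [i,\ell+1] \sqcup [\ell-1,1]$. Since the two pieces are separated by the gap at $\ell$, they are distinct connected components. Moreover $H_Y = H_{[i,\ell+1]} H_{[\ell-1,1]}$, because the corresponding product of reflections is reduced.

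\textbf{Base case $i=1$.} We have $(-\varpi_1,\alpha_1^\vee) = -1 \leq 0$, so Remark \ref{rem:Demazureoperator} with $r=-1$ gives
\[ \frac{X_{-\varpi_1} - X_{s_1(-\varpi_1)}}{X_{\alpha_1}-1} = -X_{-\varpi_1} . \]
Also $s_1(-\varpi_1) = -\varpi_1+\alpha_1 = \varpi_1-\varpi_2$, where for $n=1$ we use $\varpi_2 = \varpi_0 = 0$. The Bernstein relation \eqref{eq:relationsDL} then yields
\[ H_1 X_{-\varpi_1} = X_{\varpi_1-\varpi_2} H_1 - (v-v^{-1}) X_{\varpi_0-\varpi_1} , \]
which is exactly the claim, since $H_{[1,1]\setminus\{1\}} = H_e$.

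\textbf{Inductive step.} Assume the formula for $i-1 \geq 1$ and multiply it on the left by $H_{s_i}$. We must commute $H_{s_i}$ past two kinds of weights.

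First, $\lambda = \varpi_{i-1}-\varpi_i$. Here $(\lambda,\alpha_i^\vee) = -1$, so $r=-1$ again, and $s_i(\lambda) = \lambda + \alpha_i$. In type $\mathsf{A}_n$ we have $\alpha_i = -\varpi_{i-1}+2\varpi_i-\varpi_{i+1}$, hence $s_i(\lambda) = \varpi_i - \varpi_{i+1}$. The Bernstein relation gives
\[ H_{s_i} X_{\varpi_{i-1}-\varpi_i} = X_{\varpi_i-\varpi_{i+1}} H_{s_i} - (v-v^{-1}) X_{\varpi_{i-1}-\varpi_i} . \]

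Second, $\lambda = \varpi_{\ell-1}-\varpi_\ell$ with $1 \leq \ell \leq i-1$. These weights pair to zero with $\alpha_i^\vee$, except when $\ell = i-1$, which needs a check. For $\ell = i-1$ the two fundamental weights are $\varpi_{i-2}$ and $\varpi_{i-1}$, neither equal to $\varpi_i$, so the pairing is still $0$; the same holds whenever $\ell-1 \neq i$ and $\ell \neq i$. Hence $H_{s_i}$ commutes with $X_{\varpi_{\ell-1}-\varpi_\ell}$ for all these $\ell$.

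Substituting both facts, the expression becomes
\[ X_{\varpi_i-\varpi_{i+1}} H_{s_i}H_{[i-1,1]} - (v-v^{-1}) X_{\varpi_{i-1}-\varpi_i} H_{[i-1,1]} - (v-v^{-1}) \sum_{\ell=1}^{i-1} X_{\varpi_{\ell-1}-\varpi_\ell} H_{s_i} H_{[i-1,1]\setminus\{\ell\}} . \]
The first term is $X_{\varpi_i-\varpi_{i+1}} H_{[i,1]}$. The second is the $\ell=i$ summand, since $[i,1]\setminus\{i\} = [i-1,1]$. For the third, we note that $H_{s_i}H_{[i-1,\ell+1]}H_{[\ell-1,1]} = H_{[i,\ell+1]}H_{[\ell-1,1]}$, because adding $s_i$ on the left keeps the word reduced. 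These give the remaining summands with $\ell < i$.

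\textbf{Edge cases.} The main point requiring care is the boundary at $i = n$, where $\varpi_{i+1} = \varpi_0 = 0$. This must be consistent both with the formula for $\alpha_n$ and with the convention $\varpi_0 = 0$, and we will check both explicitly. We also need to check that the expressions used for $H_Y$ are reduced, which is clear since they are products of distinct simple reflections.
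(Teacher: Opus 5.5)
Your proof is correct and follows essentially the same route as the paper: induction on $i$, with the base case $i=1$ read off from the Bernstein relation \eqref{eq:relationsDL} and Remark~\ref{rem:Demazureoperator}, and the inductive step obtained by multiplying on the left by $H_i$, using $H_i X_{\varpi_{i-1}-\varpi_i} = X_{\varpi_i-\varpi_{i+1}} H_i - (v-v^{-1}) X_{\varpi_{i-1}-\varpi_i}$ and the fact that $H_i$ commutes with $X_{\varpi_{\ell-1}-\varpi_\ell}$ for $\ell < i$. The boundary case $i=n$ that you flag as still to be checked is already covered by your computation of $s_i(\varpi_{i-1}-\varpi_i)$ under the convention $\varpi_{n+1}=\varpi_0=0$, so no further verification is needed.
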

\begin{proof}
	By \eqref{eq:relationsDL} and Remark \ref{rem:Demazureoperator}, we have
	\[ H_1 X_{-\varpi_1} = X_{\varpi_1-\varpi_2} H_1 - (v-v^{-1}) \cdot X_{-\varpi_1} \]
	and this established the claim for $i=1$.
	For $i \neq 0$, we similarly have
	\[ H_i X_{\varpi_{i-1}-\varpi_i} = X_{\varpi_i-\varpi_{i+1}} H_i - (v-v^{-1}) \cdot X_{\varpi_{i-1} - \varpi_i} \]
	and $H_i X_{\varpi_{\ell-1}-\varpi_\ell} = X_{\varpi_{\ell-1}-\varpi_\ell} H_i$ for $\ell=1,\ldots,i-1$, and by induction, it follows that
	\begin{align*}
		H_{[i,1]} X_{-\varpi_1} & = H_i H_{[i-1,1]} \cdot X_{-\varpi_1} \\
		& = H_i \cdot \Big( X_{\varpi_{i-1} - \varpi_i} H_{[i-1,1]} - ( v - v^{-1} ) \cdot \sum_{\ell=1}^{i-1} X_{\varpi_{\ell-1}-\varpi_\ell} H_{[i-1,1] \setminus \{ \ell \}} \Big) \\
		& = \big( X_{\varpi_i-\varpi_{i+1}} H_i - ( v - v^{-1} ) \cdot X_{\varpi_{i-1}-\varpi_i} \big) \cdot H_{[i-1,1]} - ( v - v^{-1} ) \cdot \sum_{\ell=1}^{i-1} X_{\varpi_{\ell-1} - \varpi_\ell} H_i H_{[i-1,1] \setminus \{ \ell \}} \\
		& = X_{\varpi_i-\varpi_{i+1}} H_{[i,1]} - ( v - v^{-1} ) \cdot \sum_{\ell=1}^i X_{\varpi_{\ell-1} - \varpi_\ell} \cdot H_{[i,1] \setminus \{ \ell \}} ,
	\end{align*}
	as claimed.
\end{proof}

\begin{Proposition} \label{prop:commutationpi1}
	Let $Y \subsetneq I$ be a proper subset.
	\begin{enumerate}
		\item If $Y \cap \{0,1\} = \varnothing$ then we have $H_Y X_{-\varpi_1} = X_{-\varpi_1} H_Y$.
		\item If $0 \in Y$ then we have $H_Y X_{-\varpi_1} = X_{\varpi_n} H_Y + (v-v^{-1}) \cdot X_{\varpi_n} H_{Y \setminus \{0\}}$.
		\item If $1 \in Y$ and $0 \notin Y$ then in the interval decomposition $Y = X_1 \sqcup \cdots \sqcup X_m$ of $Y$, the unique interval $X_k$ with $1 \in X_k$ is of the form $X_k = [i,1]$ for some $i \in I$.
		We have
		\[ H_Y X_{-\varpi_1} = X_{\varpi_i - \varpi_{i+1}} H_Y - (v-v^{-1}) \cdot \sum_{\ell=1}^{i} X_{\varpi_{\ell-1} - \varpi_\ell} H_{Y \setminus \{\ell\}} . \]
	\end{enumerate}	 
\end{Proposition}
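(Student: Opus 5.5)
The plan is to exploit the product structure $H_Y = H_{w_{X_1}} \cdots H_{w_{X_m}}$ from \eqref{eq:cycdecelements}. Since the intervals in the decomposition are pairwise non-adjacent, their simple reflections commute. So the factors may be ordered arbitrarily, and $H_Y = H_{w_{X_1}} \cdots H_{w_{X_m}}$ because the lengths add. We then move $X_{-\varpi_1}$ leftwards through the simple factors $H_k$ one at a time using the Bernstein relations \eqref{eq:relationsDL}. The basic fact is that $H_k X_\mu = X_\mu H_k$ whenever $(\mu,\alpha_k^\vee) = 0$, by Remark \ref{rem:Demazureoperator}. In particular $X_{-\varpi_1}$ commutes with $H_k$ for $k \notin \{0,1\}$, and $X_{\varpi_n}$ commutes with $H_k$ for $k \notin \{0,n\}$. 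This settles (1) at once.

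For (2), I first compute $H_0 X_{-\varpi_1}$. Since $\alpha_0 = \delta - \alpha_\mathrm{h}$ with $q = X_\delta = 1$, we have $(-\varpi_1, \alpha_0^\vee) = (\varpi_1, \alpha_\mathrm{h}^\vee) = 1$. Hence $s_0(-\varpi_1) = -\varpi_1 - \alpha_0 \equiv -\varpi_1 + \alpha_\mathrm{h} = \varpi_n$, using $\alpha_\mathrm{h} = \varpi_1 + \varpi_n$ in type $\mathsf{A}_n$; for $n = 1$ this reads $\alpha_\mathrm{h} = 2\varpi_1$. Remark \ref{rem:Demazureoperator} with $r = 1$ then gives
\[ H_0 X_{-\varpi_1} = X_{\varpi_n} H_0 + (v - v^{-1}) \cdot X_{\varpi_n} . \]

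Next, the interval containing $0$ has the form $[i,j]$, so $w_{[i,j]} = s_i \cdots s_1 s_0 s_n \cdots s_j$ (omitting the appropriate pieces). Properness forces $n \notin \{i, \ldots, 1\}$ and $1 \notin \{n, \ldots, j\}$.
\begin{itemize}
\item The factors to the right of $s_0$ lie in $\{n, \ldots, j\} \not\ni 1$, and they commute with $X_{-\varpi_1}$.
\item The factors to the left of $s_0$ lie in $\{i, \ldots, 1\} \not\ni n$, and they commute with $X_{\varpi_n}$.
\end{itemize}
Every other interval avoids $0$, $1$ and $n$, since $1$ and $n$ are adjacent to $0$ and would otherwise merge into the same interval. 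So those intervals commute with both $X_{-\varpi_1}$ and $X_{\varpi_n}$. Placing $w_{[i,j]}$ last in the product, the formula in (2) follows. The correction term is $H_{Y \setminus \{0\}}$ because deleting $s_0$ from $w_{[i,j]}$ leaves exactly $w_{[i,1]} w_{[n,j]}$, which is $w_{Y \setminus \{0\}}$ after splitting the interval.

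For (3), the interval containing $1$ but not $0$ must end at $1$, so it equals $[i,1]$ with $i \neq 0$. I place it last in the product and apply Lemma \ref{lem:commutationpi1interval}. It remains to push the weights $\varpi_i - \varpi_{i+1}$ and $\varpi_{\ell-1} - \varpi_\ell$, for $1 \leq \ell \leq i$, through the remaining intervals. These intervals avoid $0$, $1, \ldots, i$ and $i+1$, the last because $i+1$ is adjacent to $[i,1]$. So they involve only indices $k \in \{i+2, \ldots, n\}$, and such $s_k$ fix $\varpi_p$ for all $p \leq i+1$; this also covers $\varpi_0 = 0$. Hence all these weights commute with the remaining factors. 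Finally, deleting $\ell$ from $[i,1]$ yields exactly the interval decomposition of $Y \setminus \{\ell\}$, so the product is $H_{Y \setminus \{\ell\}}$.

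The main obstacle I expect is the bookkeeping in (2): correctly identifying $s_0(-\varpi_1)$ modulo $\delta$ under $q = 1$, and checking that the degenerate cases $n = 1$ and $[i,j] = \{0\}$ fit the same formula.
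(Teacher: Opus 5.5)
Your proof is correct and follows essentially the same route as the paper: factor $H_Y$ into its interval pieces, commute $X_{-\varpi_1}$ past the factors it commutes with, and apply the Bernstein relation for $H_0$ in (2) and Lemma \ref{lem:commutationpi1interval} in (3). The only difference is the order of the factors. The paper puts the interval containing $0$ (resp.\ $1$) first, so the newly created weights never have to pass through the other intervals. Your ordering puts it last, which needs the extra check that $X_{\varpi_n}$, resp.\ $X_{\varpi_i-\varpi_{i+1}}$ and $X_{\varpi_{\ell-1}-\varpi_\ell}$, commute with the remaining factors; you carry out that check correctly.
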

\begin{proof}
	We consider the three claims in turn.
	\begin{enumerate}
		\item If $Y \cap \{0,1\} = \varnothing$ then $H_i$ commutes with $X_{-\varpi_1}$ for all $i \in Y$.
		As $H_Y$ is a product of the $H_i$ with $i \in Y$, it follows that $H_Y X_{\varpi_1} = X_{\varpi_1} H_Y$.
		\item Suppose that $0 \in Y$ and fix an interval decomposition $Y = X_1 \sqcup \cdots \sqcup X_m$ such that $0 \in X_1$.
		Further let $i,j \in I$ such that
		\[ X_1 = [i,j] = \{ i , i-1 , \ldots , 1 , 0 , n , n-1 , \ldots , j+1 , j \} , \]
		and observe that $X_{-\varpi_1}$ commutes with $H_{[n,j]}$ and with $H_{X_k}$ for $1 < k \leq m$.
		We further have
		\[ H_0 X_{-\varpi_1} = X_{\varpi_n} H_0 + (v-v^{-1}) \cdot X_{\varpi_n} , \]
		and as $X_{\varpi_n}$ commutes with $H_{[i,1]}$, we conclude that
		\begin{align*}
			H_Y X_{-\varpi_1} & = H_{[i,1]} H_0 H_{[n-j]} H_{X_2} \cdots H_{X_m} X_{-\varpi_1} \\
			& = H_{[i,1]} H_0 X_{-\varpi_1} H_{[n-j]} H_{X_2} \cdots H_{X_m} \\
			& = H_{[i,1]} \cdot \big( X_{\varpi_n} H_0 + (v-v^{-1}) \cdot X_{\varpi_n} \big) \cdot H_{[n-j]} H_{X_2} \cdots H_{X_m} \\
			& = X_{\varpi_n} H_{[i,1]} H_0 H_{[n-j]} H_{X_2} \cdots H_{X_m} + (v-v^{-1}) \cdot X_{\varpi_n} H_{[i,1]} H_0 H_{[n-j]} H_{X_2} \cdots H_{X_m} \\
			& = X_{\varpi_n} H_Y + (v-v^{-1}) \cdot X_{\varpi_n} H_{Y \setminus \{0\}} ,
		\end{align*}
		as claimed.
		\item Suppose that $1 \in Y$ and $0 \notin Y$, and fix an interval decomposition $Y = X_1 \sqcup \cdots \sqcup X_m$ such that $1 \in X_1$.
		As $0 \notin Y$, we must have $X_1 = [i,1]$ for some $i \in I$, and $X_{-\varpi_1}$ commutes with $H_{X_k}$ for all $1 < k \leq m$ because $X_k \cap \{0,1\} = \varnothing$.
		By Lemma \ref{lem:commutationpi1interval}, we further have
		\[ H_{[i,1]} X_{-\varpi_1} = X_{\varpi_i-\varpi_{i+1}} H_{[i,1]} - (v-v^{-1}) \cdot \sum_{\ell=1}^i X_{\varpi_{\ell-1} - \varpi_\ell} H_{[i,1] \setminus \{ \ell \}} , \]
		and we conclude that
		\begin{align*}
			H_Y X_{-\varpi_1} & = H_{[i,1]} H_{X_2} \cdots H_{X_m} X_{-\varpi_1} \\
			& = H_{[i,1]} X_{-\varpi_1} H_{X_2} \cdots H_{X_m} \\
			& = \Big( X_{\varpi_i-\varpi_{i+1}} H_{[i,1]} - (v-v^{-1}) \cdot \sum_{\ell=1}^i X_{\varpi_{\ell-1} - \varpi_\ell} H_{[i,1] \setminus \{ \ell \}} \Big) \cdot H_{X_2} \cdots H_{X_m} \\
			& = X_{\varpi_i-\varpi_{i+1}} H_{[i,1]} H_{X_2} \cdots H_{X_m} - (v-v^{-1}) \cdot \sum_{\ell=1}^i X_{\varpi_{\ell-1} - \varpi_\ell} H_{[i,1] \setminus \{ \ell \}} H_{X_2} \cdots H_{X_m} \\
			& = X_{\varpi_i-\varpi_{i+1}} H_Y - (v-v^{-1}) \cdot \sum_{\ell=1}^i X_{\varpi_{\ell-1} - \varpi_\ell} H_{Y \setminus \{ \ell \}} ,
		\end{align*}
		as claimed.
		\qedhere
	\end{enumerate}
\end{proof}

\begin{Proposition} \label{prop:Brcommutationpi1}
	For $0 \leq r \leq n$, we have $\varphi( B_r X_{-\varpi_1} ) = B_r$.
\end{Proposition}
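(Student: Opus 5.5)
We apply $\varphi$ to each summand $H_Y X_{-\varpi_1}$ of $B_r X_{-\varpi_1}$, using Proposition \ref{prop:commutationpi1}, and then sum over all $Y \subset I$ with $\abs{Y} = r$. Since $\varphi(X_\mu h) = h$ for all $\mu \in X$ and $h \in \mathcal{H}$, the three cases of that proposition give:
\begin{itemize}
	\item if $Y \cap \{0,1\} = \varnothing$, then $\varphi(H_Y X_{-\varpi_1}) = H_Y$;
	\item if $0 \in Y$, then $\varphi(H_Y X_{-\varpi_1}) = H_Y + (v-v^{-1}) H_{Y \setminus \{0\}}$;
	\item if $1 \in Y$, $0 \notin Y$ and $[i,1]$ is the interval of $Y$ containing $1$, then $\varphi(H_Y X_{-\varpi_1}) = H_Y - (v-v^{-1}) \sum_{\ell=1}^{i} H_{Y \setminus \{\ell\}}$.
\end{itemize}
Summing, we get $\varphi(B_r X_{-\varpi_1}) = B_r + (v-v^{-1}) \cdot (S_0 - S_1)$, where
\[ S_0 = \sum_{\abs{Y}=r,\ 0 \in Y} H_{Y \setminus \{0\}} , \qquad S_1 = \sum_{\abs{Y}=r,\ 0 \notin Y,\ [i,1] \subseteq Y} \ \sum_{\ell=1}^{i} H_{Y \setminus \{\ell\}} . \]
Here $[i,1]$ denotes the interval component of $Y$ containing $1$. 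It remains to show $S_0 = S_1$.

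We prove $S_0 = S_1$ by a bijection on index sets. Both sums run over proper subsets $Z \subset I$ with $\abs{Z} = r-1$ and $0 \notin Z$, each term being $H_Z$, so it suffices to match multiplicities.
\begin{itemize}
	\item In $S_0$, each such $Z$ occurs exactly once, via $Y = Z \cup \{0\}$. This $Y$ is a proper subset, since $r \leq n$ forces $\abs{Y} \leq n < n+1$.
	\item In $S_1$, a term $H_Z$ comes from a pair $(Y,\ell)$ with $Z = Y \setminus \{\ell\}$, $0 \notin Y$, and $\ell \in \{1,\ldots,i\}$, where $[i,1]$ is the component of $Y$ containing $1$. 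Given $Z$, let $j \geq 0$ be the largest integer with $\{1,\ldots,j\} \subseteq Z$. Then $\ell$ must be a position with $\{1,\ldots,\ell-1\} \subseteq Z$ and $\ell \notin Z$. Hence $\ell = j+1$ is forced, and $\ell \neq 0$ holds because $\ell \leq n$ (as $0 \notin Z$ and $\abs{Z} \leq n-1$).
	\item For this choice, $Y = Z \cup \{j+1\}$ does contain $1$, has $0 \notin Y$, and the component of $Y$ containing $1$ contains $j+1$, so $\ell \leq i$ as required.
\end{itemize}
So each $Z$ also occurs exactly once in $S_1$, and therefore $S_0 = S_1$.

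The case $r = 0$ is trivial, since $B_0 = H_e$ and $\varphi(X_{-\varpi_1}) = 1$. For $r \geq 1$, we need to check two boundary issues:
\begin{itemize}
	\item Every $Y$ in the sums is proper, because $\abs{Y} = r \leq n$.
	\item $H_{Y \setminus \{\ell\}}$ really is the element $H_Z$ from \eqref{eq:cycdecelements}. Proposition \ref{prop:commutationpi1} already writes its terms in this form, so this only requires that removing $\ell$ from the interval $[i,1]$ splits it compatibly into intervals, which is clear.
\end{itemize}

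The main obstacle I expect is the combinatorial matching of $S_0$ and $S_1$, specifically the argument that $\ell$ is uniquely forced to be $j+1$ and the care needed at the wrap-around vertex $n$. The remaining steps are direct bookkeeping from Proposition \ref{prop:commutationpi1}.
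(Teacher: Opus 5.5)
Your proposal is correct and follows the paper's proof. You split $B_r X_{-\varpi_1}$ into the same three cases via Proposition~\ref{prop:commutationpi1}, and you cancel the error term with the same bijection, which the paper writes as $\Psi\colon (Y,\ell)\mapsto Y\cup\{0\}\setminus\{\ell\}$ onto $\{Y' \mid \abs{Y'}=r,\ 0\in Y'\}$ and which you factor through the common index $Z = Y\setminus\{\ell\} = \Psi((Y,\ell))\setminus\{0\}$. Your explicit check that $\ell=j+1$ is forced and that $j+1\neq 0$ (using $\abs{Z}\le n-1$) supplies the bijectivity that the paper only asserts by writing down the inverse map.
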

\begin{proof}
	Let us write $C_r = \{ Y \subseteq I \mid \abs{Y} = r \}$ and recall that $B_r = \sum_{Y \in C_r} H_Y$.
	We further define
	\begin{align*}
		C_r' & = \{ Y \in C_r \mid Y \cap \{ 0 , 1 \} = \varnothing \} , \\
		C_r^0 & = \{ Y \in C_r \mid 0 \in Y \} , \\
		D_r & = \{ Y \in C_r \mid 1 \in Y \text{ and } 0 \notin Y \} ,
	\end{align*}
	so that $C_r = C_r' \sqcup C_r^0 \sqcup D_r$, and for $Y \in D_r$, we let $i_Y \in I$ such that $[i_Y,1]$ is the unique interval containing $1$ in the interval decomposition of $Y$ (cf.\ part (3) of Proposition \ref{prop:commutationpi1}).
	By Proposition \ref{prop:commutationpi1}, we have
	\begin{align*}
		B_r X_{-\varpi_1} & = \sum_{Y \in C_r} H_Y X_{-\varpi_1} \\
		& = \sum_{Y \in C_r'} H_Y X_{-\varpi_1} + \sum_{Y \in C_r^0} H_Y X_{-\varpi_1} + \sum_{Y \in D_r} H_Y X_{-\varpi_1} \\
		& = \sum_{Y \in C_r'} X_{-\varpi_1} H_Y + \sum_{Y \in C_r^0} \big( X_{\varpi_n} H_Y + (v-v^{-1}) \cdot X_{\varpi_n} \cdot H_{Y \setminus \{ 0 \} } \\
		& \hspace{2cm} + \sum_{Y \in D_r} \Big( X_{\varpi_{i_Y}-\varpi_{i_Y+1}} H_Y - (v-v^{-1}) \cdot \sum_{\ell=1}^{i_Y} X_{\varpi_{\ell-1} - \varpi_\ell} H_{Y \setminus \{ \ell \}} \Big) .
	\end{align*}
	This implies that
	\[ \varphi(B_r X_{\varpi_1}) = \sum_{Y \in C_r} H_Y + R = B_r + R , \]
	where we set
	\[ R \coloneqq (v-v^{-1}) \cdot \Big( \sum_{Y \in C_r^0} H_{Y \setminus \{0\}} - \sum_{Y \in D_r} \sum_{\ell=1}^{i_Y} H_{Y \setminus \{ \ell \}} \Big) , \]
	and it remains to show that $R=0$.
	To that end, let
	\[ D_r' \coloneqq \{ (Y,\ell) \mid Y \in D_r , \, 1 \leq \ell \leq i_Y \} \]
	and observe that the map
	\[ \Psi \colon D_r' \longrightarrow C_r^0 , \qquad (Y,\ell) \mapsto Y  \cup \{0\} \setminus \{\ell\} \]
	is a bijection.
	The inverse map is given by $Y' \mapsto Y' \cup \{i+1\} \setminus \{0\}$ for $Y' \in C_r^0$ and $i,j \in I$ such that $[i,j]$ is the unique interval containing $0$ in the interval decomposition of $Y'$.
	For $(Y,\ell) \in D_r'$, we further have $Y \setminus \{\ell\} = \Psi( (Y,\ell) ) \setminus \{0\}$, and it follows that
	\[ R = (v-v^{-1}) \cdot \Big( \sum_{Y \in C_r^0} H_{Y \setminus \{0\}} - \sum_{Y \in D_r} \sum_{\ell=1}^{i_Y} H_{Y \setminus \{ \ell \}} \Big) = (v-v^{-1}) \cdot \sum_{(Y,\ell) \in D_r'} \big( H_{\Psi( (Y,\ell) ) \setminus \{0\}} - H_{Y \setminus \{ \ell \}} \big) = 0 , \]
	as required.
\end{proof}

\subsection{}

Using the technical results from the preceding subsection, we can now show that $B_r \in \mathcal{B}$ for all $1 \leq r \leq n$.
More precisely, we will prove this by a reduction to Proposition \ref{prop:Brcommutationpi1}, using the two following elementary observations about the subgroup $\Omega = \{ \omega \in W_\mathrm{ext} \mid \omega S_\mathrm{aff} \omega^{-1} = S_\mathrm{aff} \}$ of $W_\mathrm{ext}$ from Subsection \ref{subsec:WaffWext}.

\begin{Lemma} \label{lem:omegaBr}
	For all $\omega \in \Omega$ and $1 \leq r \leq n$, we have $\omega B_r \omega^{-1} = B_r$.
\end{Lemma}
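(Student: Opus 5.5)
The plan is to reduce to a statement about cyclically decreasing elements. Since $\Omega$ is cyclic of order $n+1$, generated by some $\omega_1$, it suffices to treat $\omega = \omega_1$. This generator acts on the Coxeter diagram by the rotation $s_i \mapsto s_{i+1}$ for $i \in I = \Z/(n+1)\Z$. By the $\Omega$-relations \eqref{eq:relationsomega}, for every reduced expression $w = s_{i_1} \cdots s_{i_k}$ we have
\[ \omega H_w \omega^{-1} = H_{i_1+1} \cdots H_{i_k+1} = H_{\omega w \omega^{-1}} . \]
Here $\omega w \omega^{-1} = s_{i_1+1} \cdots s_{i_k+1}$ is again reduced, because conjugation by $\omega$ is a Coxeter automorphism and so preserves length.

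Next, compute $\omega H_Y \omega^{-1}$ for a proper subset $Y \subsetneq I$. Let $\tau \colon I \to I$ be the rotation $i \mapsto i+1$. The induced subgraph on $\tau(Y)$ is the image of the induced subgraph on $Y$ under a diagram automorphism. Hence $\tau(Y) \subsetneq I$ is again proper, with interval decomposition $\tau(X_1) \sqcup \cdots \sqcup \tau(X_r)$. For an interval $X = [i,j]$ we have $\tau(X) = [i+1,j+1]$, so conjugating
\[ w_X = s_i s_{i-1} \cdots s_j \]
gives $s_{i+1} s_i \cdots s_{j+1} = w_{\tau(X)}$. It follows that $\omega w_Y \omega^{-1} = w_{\tau(Y)}$, using that the product $w_Y$ does not depend on the order of the intervals, and therefore $\omega H_Y \omega^{-1} = H_{\tau(Y)}$.

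Finally, $\tau$ restricts to a bijection of $\{ Y \subset I \mid \abs{Y} = r \}$. Summing over all such $Y$ in \eqref{eq:generatorsB} gives
\[ \omega B_r \omega^{-1} = \sum_{\abs{Y}=r} H_{\tau(Y)} = B_r . \]

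The only step needing any care is the identification of $\omega_1$'s action with the rotation $\tau$ rather than some other diagram automorphism. Even that is harmless: $\Omega$ acts on the cycle by rotations, as stated in Subsection \ref{subsec:typeAsetup}, and the argument above works verbatim for any rotation $i \mapsto i+k$. So one may take an arbitrary $\omega \in \Omega$ directly, with $\tau$ the corresponding rotation, and there is no real obstacle.
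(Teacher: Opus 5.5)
Your proposal is correct and follows the same route as the paper: conjugation by $\omega$ acts on $I$ by a rotation $\sigma_\omega$, so $\omega H_Y \omega^{-1} = H_{\sigma_\omega(Y)}$, and $\sigma_\omega$ permutes the $r$-element subsets of $I$. You also fill in the details the paper calls straightforward, namely that interval decompositions are preserved and that $w_{[i,j]} \mapsto w_{[i+k,j+k]}$ under conjugation.
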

\begin{proof}
	Recall from Subsection \ref{subsec:typeAsetup} that the action of $\omega$ on $S_\mathrm{aff}$ by conjugation is given by a permutation $\sigma_\omega$ of $I$ that correspond to a graph automorphisms which rotates the Coxeter diagram in Figure \ref{fig:CoxeterdiagramtypeAn}.
	In other words, we have $\omega s_i \omega^{-1} = s_{\sigma_\omega(i)}$ for all $i \in I$, where $\sigma(i) = i+j_\omega$ for some $j_\omega \in I$.
	For a proper subset $Y \subsetneq I$, it is straightforward to see that $\omega H_Y \omega^{-1} = H_{\sigma_\omega(Y)}$, and we conclude that
	\[ \omega B_r \omega^{-1} = \sum_{ \substack{ Y \subset I \\ \abs{Y} = r } } \omega H_Y \omega^{-1} = \sum_{ \substack{ Y \subset I \\ \abs{Y} = r } } H_{\sigma_\omega(Y)} = \sum_{ \substack{ Y \subset I \\ \abs{Y} = r } } H_Y = B_r , \]
	as claimed.
\end{proof}

\begin{Lemma} \label{lem:omegaorbitpi1}
	The orbit of the weight $-\varpi_1$ under the action of $\Omega$ on $X$ is given by
	\[ \Omega(-\varpi_1) = \{ -\varpi_1 , \varpi_1-\varpi_2 , \cdots , \varpi_{n-1} - \varpi_n , \varpi_n \} . \]
	In particular, $\Omega(-\varpi_1)$ is a generating set of the weight lattice $X$.
\end{Lemma}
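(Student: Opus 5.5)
The plan is to compute the action of $\Omega$ on $X$ explicitly, using the identification $\Omega \cong Y/Q^\vee$. In type $\mathsf{A}_n$ this group is cyclic of order $n+1$, and I would take as generator the element $\omega$ that rotates the Coxeter diagram by $i \mapsto i+1$. Each element of $\Omega \subseteq W_\mathrm{ext} = W_\mathrm{fin} \ltimes Y$ has the form $\omega = t_\gamma u$ with $u \in W_\mathrm{fin}$. By the definition of the action in Subsection \ref{subsec:affineweights}, $t_\gamma$ changes only the $\delta$-component. Since we specialize $q' = X_{\delta/m}$ to $1$, $\Omega$ acts on $X$ through the finite part $u$ alone.

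The key step is to identify this finite part. Realize $X$ as $\Z^{n+1}/\Z(1,\ldots,1)$ with basis images $\varepsilon_1,\ldots,\varepsilon_{n+1}$, so that $\varpi_i = \varepsilon_1 + \cdots + \varepsilon_i$ and $\alpha_i = \varepsilon_i - \varepsilon_{i+1}$.
\begin{itemize}
\item Take $\omega = t_{\varpi_n^\vee} u$, where $u$ is the longest element of the stabilizer of $\varpi_n^\vee$ composed with $w_0$. Equivalently, $u$ is the element permuting the affine simple roots cyclically.
\item The condition $\omega(\alpha_i) = \alpha_{i+1}$ on simple roots, with $\alpha_0 = \delta - \alpha_\mathrm{h}$ and $\delta$ ignored, forces $u(\varepsilon_k) = \varepsilon_{k+1}$ with indices read mod $n+1$.
\item It follows that $u$ is the Coxeter element cyclically permuting the $\varepsilon_k$.
\end{itemize}
Consistency with $\omega \alpha_n = \alpha_0$, whose finite part is $-\alpha_\mathrm{h} = \varepsilon_{n+1} - \varepsilon_1$, serves as a check.

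With this in hand the orbit is immediate. We have $-\varpi_1 = -\varepsilon_1$, and its images under powers of $u$ are $-\varepsilon_k$ for $k = 1, \ldots, n+1$. Now $-\varepsilon_k = \varpi_{k-1} - \varpi_k$ for $2 \le k \le n$. Using $\varepsilon_1 + \cdots + \varepsilon_{n+1} = 0$, we also get $-\varepsilon_{n+1} = \varpi_n$. This matches the listed set. For generation, the elements $-\varepsilon_k$ span the image of $\Z^{n+1}$, which is all of $X$. More directly, partial sums of $-\varpi_1, \varpi_1 - \varpi_2, \ldots$ give each $-\varpi_i$, and hence each $\varpi_i$.

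The main obstacle is sign and convention bookkeeping: checking that the diagram rotation matches the paper's conventions, and that the $\delta$-part really disappears after specialization. Even if the chosen generator rotates in the other direction, the orbit of a cyclic group is the same set, so the conclusion is robust.
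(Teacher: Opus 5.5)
Your proposal is correct and is essentially the paper's argument. The paper takes the explicit generator $\omega = t_{\varpi_1^\vee} s_1 s_2 \cdots s_n$ of $\Omega$, checks $\omega(\alpha_i) = \alpha_{i+1}$, and computes the orbit of $-\varpi_1$ under the finite part $s_1 \cdots s_n$. That finite part is exactly your forward cycle $u \colon \varepsilon_k \mapsto \varepsilon_{k+1}$; the paper just works in fundamental weights rather than $\varepsilon$-coordinates.

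There is one bookkeeping slip in your first bullet. With the paper's convention $t_\gamma(\lambda) = \lambda - (\lambda,\gamma)\delta$, one gets $t_{\varpi_n^\vee} u(\alpha_{n-1}) = \alpha_n - \delta$, so $t_{\varpi_n^\vee} u \notin \Omega$ for $n \geq 2$. The translation $t_{\varpi_n^\vee}$ belongs with $u^{-1}$, giving the rotation $i \mapsto i-1$. This is harmless, because you derive the finite part from the rotation condition alone and, as you note, both directions give the same orbit.
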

\begin{proof}
	For $\omega = t_{\varpi_1^\vee} s_1 s_2 \cdots s_n$ and $i \in I$, one easily checks that $\omega(\alpha_i) = \alpha_{i+1}$ with respect to the action of $W_\mathrm{ext}$ on $\tilde Q$ from Subsection \ref{subsec:affineweights}.
	In particular, we have $\omega s_i \omega^{-1} = s_{i+1}$ for all $i \in I$ and so $\omega \in \Omega$.
	As $\Omega$ is cyclic of order $n+1$, it also follows that $\omega$ generates $\Omega$.
	Now for the action of $W_\mathrm{ext}$ on $X$, the action of the translation lattice $Y$ in $W_\mathrm{ext}$ is trivial, so $\omega$ acts via $s_1 \cdots s_n$, and we compute
	\[ \omega(-\varpi_1) = \varpi_1 - \varpi_2 , \qquad \omega( \varpi_i - \varpi_{i+1} ) = \varpi_{i+1} - \varpi_{i+2} , \qquad \omega( \varpi_{n-1} - \varpi_n ) = \varpi_n , \qquad \omega(\varpi_n) = -\varpi_1 \]
	for $1 \leq i < n-1$.
	In particular, we have $\Omega(-\varpi_1) = \{ -\varpi_1 , \varpi_1-\varpi_2 , \cdots , \varpi_{n-1} - \varpi_n , \varpi_n \}$.
	The submodule of $X$ generated by $\Omega(-\varpi_1)$ clearly contains all fundamental dominant weights, so $\Omega(-\varpi_1)$ generates $X$, as claimed.
\end{proof}

Now we are ready to prove the first main result of this section.

\begin{Theorem} \label{thm:BrinB}
	For $0 \leq r \leq n$, we have $B_r \in \mathcal{B}$.
\end{Theorem}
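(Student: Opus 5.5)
The idea is to reduce to the single weight $-\varpi_1$, already handled in Proposition \ref{prop:Brcommutationpi1}, by transporting it around with the group $\Omega$. By Lemma \ref{lem:BgeneratingsetX} (which applies to $\mathcal{B}_\mathrm{ext}$, and $\mathcal{B} = \mathcal{B}_\mathrm{ext} \cap \mathcal{H}$ with $B_r \in \mathcal{H}$), it suffices to check $\varphi(B_r X_\lambda) = B_r$ for all $\lambda$ in a generating set of $X$. We take the generating set $\Gamma = \Omega(-\varpi_1)$ from Lemma \ref{lem:omegaorbitpi1}. The case $r = 0$ is trivial, since $B_0 = H_e$ and $\varphi(X_\lambda) = 1$, so assume $1 \leq r \leq n$.

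Fix $\omega \in \Omega$ and set $\lambda = \omega(-\varpi_1)$. By Remark \ref{rem:omegainHext} we have $X_\lambda = \omega X_{-\varpi_1} \omega^{-1}$, and by Lemma \ref{lem:omegaBr} we have $B_r = \omega B_r \omega^{-1}$. Hence
\[ B_r X_\lambda = \omega B_r \omega^{-1} \cdot \omega X_{-\varpi_1} \omega^{-1} = \omega \, ( B_r X_{-\varpi_1} ) \, \omega^{-1} . \]
Applying Lemma \ref{lem:phiomega} and then Proposition \ref{prop:Brcommutationpi1} gives
\[ \varphi( B_r X_\lambda ) = \omega \, \varphi( B_r X_{-\varpi_1} ) \, \omega^{-1} = \omega B_r \omega^{-1} = B_r , \]
using Lemma \ref{lem:omegaBr} once more in the last step.

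Thus $\varphi(B_r X_\lambda) = B_r$ for all $\lambda \in \Gamma$. Lemma \ref{lem:BgeneratingsetX} then gives $B_r \in \mathcal{B}_\mathrm{ext}$, and since $B_r \in \mathcal{H}$, we get $B_r \in \mathcal{B}$. The real work is in Proposition \ref{prop:Brcommutationpi1}, which is already available; the only remaining point is keeping track of the conjugation conventions in $\mathbb{H}_\mathrm{ext}$.
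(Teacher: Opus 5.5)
Your proof is correct and is essentially the paper's argument: you conjugate by $\omega \in \Omega$, using Lemma \ref{lem:omegaBr}, the $\Omega$-weight relations and Lemma \ref{lem:phiomega}, to transport Proposition \ref{prop:Brcommutationpi1} from $-\varpi_1$ to the whole orbit $\Omega(-\varpi_1)$. That orbit generates $X$ by Lemma \ref{lem:omegaorbitpi1}, so Lemma \ref{lem:BgeneratingsetX} finishes the proof. Treating $r=0$ separately is a harmless extra, since Lemma \ref{lem:omegaBr} is only stated for $r \geq 1$.
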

\begin{proof}
	First observe that for all $h \in \mathcal{H}$, $\omega \in \Omega$ and $\lambda \in X$, we have
	\[ \varphi( \omega h \omega^{-1} X_{\omega(\lambda)} ) = \varphi( \omega h  X_\lambda \omega^{-1} ) = \omega \varphi( h X_\lambda ) \omega^{-1} \]
	by \eqref{eq:relationsomegaweight} and Lemma \ref{lem:phiomega}.
	As $\omega B_r \omega^{-1} = B_r$ by Lemma \ref{lem:omegaBr} and $\varphi( B_r X_{-\varpi_1} ) = B_r$ by Proposition \ref{prop:Brcommutationpi1}, we conclude that
	\[ \varphi( B_r X_{\omega(-\varpi_1)} ) = \varphi( \omega B_r \omega^{-1} X_{\omega(-\varpi_1)} ) = \omega \varphi( B_r X_{-\varpi_1} ) \omega^{-1} = \omega B_r \omega^{-1} = B_r \]
	for all $\omega \in \Omega$, and as the orbit $\Omega(-\varpi_1)$ of $-\varpi_1$ under the action of $\Omega$ is a generating set of the weight lattice $X$ by Lemma \ref{lem:omegaorbitpi1}, Lemma \ref{lem:BgeneratingsetX} implies that $B_r \in \mathcal{B}$, as claimed. 
\end{proof}

\subsection{}

In order to show that the elements $B_0,\ldots,B_r$ generate $\mathcal{B}$, we will relate $\mathcal{B}$ to the subalgebra $\mathcal{B}_\mathrm{nil}$ of the affine nil-Hecke algebra $\mathcal{H}_\mathrm{nil}$ that was already discussed in the introduction.
To that end, we first need to introduce some additional notation relating to the (double) affine nil-Hecke algebra.
The \emph{affine nil-Hecke algebra} $\mathcal{H}_\mathrm{nil}$ is the $\Z$-algebra with generators $A_i$, for $i \in I$, subject to the quadratic relations $A_i^2 = 0$ and the braid relations of $W_\mathrm{aff}$.
For an element $w \in W_\mathrm{aff}$ with reduced expression $w = s_{i_1} \cdots s_{i_r}$, we write $A_w = A_{i_1} \cdots A_{i_r}$.
Then the elements $\{ A_w \mid w \in W_\mathrm{aff} \}$ form a $\Z$-basis of $\mathcal{H}_\mathrm{nil}$, which we call the \emph{standard basis}.

Now let us write $S = \mathrm{Sym}(X) = \Z[\varpi_1,\ldots,\varpi_n]$ for the symmetric algebra of the weight lattice $X$ over $\Z$.
The \emph{double affine nil-Hecke algebra} $\mathbb{H}_\mathrm{nil}$ (first introduced by Kostant--Kumar \cite{KostantKumarnilHecke}) is generated by $\mathcal{H}_\mathrm{nil}$ and $S$, subject to the commutation relation
\[ A_i \cdot \lambda = s_i(\lambda) \cdot A_i + (\lambda,\alpha_i^\vee) \cdot 1 \]
for $i \in I$ and $\lambda \in X$.
\begin{Remark}
	Our terminology differs from that of \cite{KostantKumarnilHecke,LamSchubert}, where $\mathcal{H}_\mathrm{nil}$ is called the \emph{affine nil-Coxeter ring} and and $\mathbb{H}_\mathrm{nil}$ is called the \emph{affine nil-Hecke ring}.
	We have chosen non-standard terminology to keep the names of $\mathcal{H}_\mathrm{nil}$ and $\mathbb{H}_\mathrm{nil}$ similar to those of $\mathcal{H}$ and $\mathbb{H}$ and thereby stress the analogies between these algebras and the roles they play in the definition of $\mathcal{B}_\mathrm{nil}$ and $\mathcal{B}$, respectively.
\end{Remark}
The multiplication map induces an isomorphism
\[ S \otimes_\Z \mathcal{H}_\mathrm{nil} \cong \mathbb{H}_\mathrm{nil} \]
of $\Z$-modules.
Using the canonical evaluation map $S \to \Z$, $f \mapsto f(0)$ with $\lambda \mapsto 0$ for all $\lambda \in X$, we can define a $\Z$-linear map
\[ \varphi_\mathrm{nil} \colon \mathbb{H}_\mathrm{nil} \longrightarrow \mathcal{H}_\mathrm{nil} \]
such that $\varphi(fh) = f(0) \cdot h$ for all $f \in S$ and $h \in \mathcal{H}_\mathrm{nil}$.
By Lemma 5.2 in \cite{LamSchubert}, the $\Z$-submodule
\[ \mathcal{B}_\mathrm{nil} = \{ h \in \mathcal{H}_\mathrm{nil} \mid \varphi_\mathrm{nil}( h \cdot f ) = f_0 \cdot h \text{ for all } f \in S(X) \} \]
is in fact a $\Z$-subalgebra of $\mathcal{H}_\mathrm{nil}$, called the \emph{affine Fomin--Stanley subalgebra}.
As explained in the introduction, the subalgebra $\mathcal{B}_\mathrm{nil}$ of $\mathcal{H}_\mathrm{nil}$ provides a combinatorial model for the affine Grassmannian homology $H_\bullet(\Gr_G)$, and the fundamental classes of the Schubert varieties afford a basis of $\mathcal{B}_\mathrm{nil}$ with the following algebro-combinatorial characterization (see \cite[Proposition 5.4]{LamSchubert}):

\begin{Theorem} \label{thm:basisBnil}
	For all $w \in W_\mathrm{aff}^+$, there is a unique element $b_w \in \mathcal{B}_\mathrm{nil}$ whose expansion
	\[ b_w = \sum_{x \in W_\mathrm{aff}} p_{x,w} \cdot A_x \]
	in terms of the standard basis of $\mathcal{H}_\mathrm{nil}$ satisfies $p_{w,w} = 1$ and $p_{x,w} = 0$ for all $w \neq x \in W_\mathrm{aff}^+$.
	The elements $\{ b_w \mid w \in W_\mathrm{aff}^+ \}$ form a $\Z$-basis of $\mathcal{B}_\mathrm{nil}$.
	\end{Theorem}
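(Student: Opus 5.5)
The plan follows Lam's argument via Kostant--Kumar duality. The idea is to realize $\mathcal{B}_\mathrm{nil}$ as the $\Z$-dual of a ring whose Schubert basis is known, and then read off the coefficients $p_{x,w}$ as values of Schubert classes.

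\emph{Duality setup.} Let $\mathrm{Fun}(W_\mathrm{aff},S)$ be the ring of functions $W_\mathrm{aff}\to S$. Let $\Lambda\subseteq \mathrm{Fun}(W_\mathrm{aff},S)$ be the Kostant--Kumar ring of GKM functions, i.e.\ $T$-equivariant cohomology of $\Fl_G$. I would use the perfect pairing $\langle -,-\rangle\colon \mathbb{H}_\mathrm{nil}\times\Lambda\to S$ determined by $\langle f\,w,\xi\rangle = f\cdot \xi(w)$. Here $\mathbb{H}_\mathrm{nil}$ is embedded in $\mathrm{Frac}(S)\otimes\Z[W_\mathrm{aff}]$ via $A_i\mapsto \alpha_i^{-1}(1-s_i)$. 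Under this pairing the equivariant Schubert classes $\{\xi^x\}$ are $S$-dual to $\{A_x\}$, and they are characterized by support on $\{y\geq x\}$, $\xi^x(x)=\prod$ of inversion roots, and the degree condition. Specializing at $0$ gives a perfect $\Z$-pairing between $\mathcal{H}_\mathrm{nil}$ and $H^*(\Fl_G)=\Z\otimes_S\Lambda$, with $\{A_x\}$ dual to $\{\xi^x_0\}$.

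\emph{Dual characterization.} Next I would show that $h\in\mathcal{B}_\mathrm{nil}$ if and only if $h$, viewed as a functional on $H^*(\Fl_G)$, factors through the quotient $H^*(\Fl_G)\to H^*(\Gr_G)$. Algebraically, this quotient kills the ideal generated by the classes $\xi^x_0$ with $x\notin W_\mathrm{aff}^+$. Equivalently, since $H^*(\Gr_G)\cong H^*(\Fl_G)/\langle \text{positive-degree } W_\mathrm{fin}\text{-invariants}\rangle$, I need: $\varphi_\mathrm{nil}(hf)=f(0)h$ for all $f$ holds exactly when $h$ annihilates the ideal generated by $\mathrm{Sym}^{>0}$ acting through the $S$-module structure on $\Lambda$ coming from the \emph{right} action. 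To prove this I would compute $\langle \varphi_\mathrm{nil}(hf),\xi_0\rangle=\langle h,(f\cdot\xi)_0\rangle$, where $f\cdot\xi$ is the right multiplication $(f\cdot\xi)(w)=w(f)\,\xi(w)$. The defining condition of $\mathcal{B}_\mathrm{nil}$ then says that $h$ kills $(f-f(0))\cdot\xi$ for every $\xi$. The quotient by these elements is $\Z\otimes_{S}\Lambda\otimes_{S}\Z$. By the (Kostant--Kumar/Peterson) identification, this quotient is $H^*(\Gr_G)$, with $\Z$-basis the images of $\xi^w$, $w\in W_\mathrm{aff}^+$, while $\xi^x$ with $x\notin W_\mathrm{aff}^+$ map to $0$.

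\emph{Conclusion.} Given this, $\mathcal{B}_\mathrm{nil}\cong\mathrm{Hom}_\Z(H^*(\Gr_G),\Z)$ as a $\Z$-submodule of $\mathcal{H}_\mathrm{nil}=\mathrm{Hom}(H^*(\Fl_G),\Z)$. Define $b_w$ as the element dual to the image of $\xi^w$. Then $p_{x,w}=\langle b_w,\xi^x_0\rangle$, which for $x\in W_\mathrm{aff}^+$ equals $\delta_{x,w}$. This gives existence and the basis property at once. Uniqueness follows because an element of $\mathcal{B}_\mathrm{nil}$ is determined by its values on the spanning set $\{\xi^x_0\mid x\in W_\mathrm{aff}^+\}$ of the quotient, and these values are precisely its coefficients $p_{x,\cdot}$ for $x\in W_\mathrm{aff}^+$.

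\emph{Main obstacle.} The hard step is the identification of $\Z\otimes_S\Lambda\otimes_S\Z$ with the quotient having basis $\{\xi^w\mid w\in W_\mathrm{aff}^+\}$, in particular that the non-Grassmannian $\xi^x$ vanish and that no torsion appears. I would first try to get this from the fact that $\Lambda$ is free over $S\otimes S^{W_\mathrm{fin}}$-type subrings with explicit Schubert bases. If that fails, I would fall back on the geometric fibration $G/B\to\Fl_G\to\Gr_G$ together with Leray--Hirsch, which gives $H^*(\Fl_G)\cong H^*(\Gr_G)\otimes H^*(G/B)$ compatibly with Schubert classes.
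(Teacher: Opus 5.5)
The duality reduction is sound. Since $\langle hf,\xi\rangle=\langle h,f\cdot\xi\rangle$, $\mathcal{B}_\mathrm{nil}$ is exactly the graded $\Z$-dual of $Q=\Z\otimes_S\Lambda\otimes_S\Z$. So the theorem is equivalent to: the classes $[\xi^x_0]$, $x\in W_\mathrm{aff}^+$, form a $\Z$-basis of $Q$ modulo torsion.

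All of the content sits in the description of $Q$, and yours is wrong: the non-Grassmannian Schubert classes do \emph{not} vanish in $Q$.
\begin{itemize}
\item Take $\lambda=\varpi_\alpha$ and pair the function $v\mapsto v(\lambda)-\lambda$ with the $A_{s_i}$ (recall $\alpha_0=-\alpha_\mathrm{h}$ at level zero). This shows the function equals $\xi^{s_\alpha}-c_\alpha\xi^{s_0}$.
\item Hence the relation saying that $1\cdot\lambda$ dies in $Q$ reads $[\xi^{s_\alpha}_0]=c_\alpha[\xi^{s_0}_0]\neq 0$.
\item If all $[\xi^x_0]$ with $x\notin W_\mathrm{aff}^+$ vanished, your formula $p_{x,w}=\langle b_w,\xi^x_0\rangle$ would force $b_w=A_w$. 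But $A_{s_0}\notin\mathcal{B}_\mathrm{nil}$, since $\varphi_\mathrm{nil}(A_{s_0}\varpi_\beta)=-c_\beta$. The actual element is $b_{s_0}=A_{s_0}+\sum_\alpha c_\alpha A_{s_\alpha}$ (the nil analogue of Lemma~\ref{lem:Bs0}), whose non-Grassmannian coefficients are exactly these $c_\alpha$.
\item Functionals vanishing on the non-Grassmannian classes lie in the span of the Grassmannian $A_w$, which is not $\mathcal{B}_\mathrm{nil}$.
\end{itemize}
What you actually need is that the composite $H^\bullet(\Gr_G)\xrightarrow{\pi^*}H^\bullet(\Fl_G)\to Q$ is an isomorphism modulo torsion. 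Then $\pi^*\xi^w_{\Gr}=\xi^w_0$ for Grassmannian $w$ gives the basis, and $p_{x,w}$ for $x\notin W_\mathrm{aff}^+$ are the (generally non-zero) coordinates of $[\xi^x_0]$ in it.

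Your expectation that no torsion appears also fails. Restricting functions to $W_\mathrm{fin}$ (the fibre $G/B\subseteq\Fl_G$) induces a surjection $Q\twoheadrightarrow H^\bullet(G/B;\Z)/(c_1(L_\lambda))$. This map kills $[\xi^w_0]$ for every Grassmannian $w\neq e$. For $\mathsf{G}_2$, the Chevalley formula gives $H^6(G/B;\Z)/H^2\cdot H^4\cong\Z/2$ in the target (torsion index $2$), so $Q$ is not free on the Grassmannian classes.

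This is harmless after dualizing, but it shows neither of your routes to the hard step works as stated.
\begin{itemize}
\item Leray--Hirsch is only a module statement. It says nothing about how the ideal generated by the $c_1(L_\lambda)$ sits relative to $\pi^*H^\bullet(\Gr_G)$: positive-degree $W_\mathrm{fin}$-invariant polynomials in the $c_1(L_\lambda)$ could a priori be non-zero pullbacks from $\Gr_G$.
\item $H^\bullet(G/B;\Z)$ is generated by the $c_1(L_\lambda)$ only when the torsion index is $1$.
\end{itemize}
A ring-level splitting does work. With $K\subseteq G$ maximal compact, use the homeomorphism $LK/T\cong\Omega K\times K/T$, $\gamma T\mapsto(\gamma\gamma(0)^{-1},\gamma(0)T)$. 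It identifies $\pi$ with the first projection and the $L_\lambda$ with pullbacks from $K/T$. Hence $Q\cong H^\bullet(\Omega K)\otimes_\Z\big(H^\bullet(K/T)/(c_1(L_\lambda))\big)$, and $Q/\mathrm{tors}\cong H^\bullet(\Gr_G)$ via $\pi^*$.

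Two further remarks.
\begin{itemize}
\item In your normalization, the classes pulled back from $\Gr_G$ are the $\xi^x$ with $x$ minimal in $xW_\mathrm{fin}$ (Lam's convention). Here $W_\mathrm{aff}^+$ means minimal in $W_\mathrm{fin}x$, so you also need a symmetry argument to reach the stated version.
\item The paper gives no proof of its own. It quotes Lam's Proposition~5.4, which is derived there in the framework of Peterson's centralizer $\mathcal{C}_\mathrm{nil}\cong H^T_\bullet(\Gr_G)$ followed by $\varphi_\mathrm{nil}$.
\end{itemize}
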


Now for $0 \leq r \leq n$, let
\begin{equation} \label{eq:generatorsBnil}
	b_r \coloneqq \sum_{ \substack{ Y \subset I \\ \abs{Y} = r } } A_{w_Y} \in \mathcal{H}_\mathrm{nil} ,
\end{equation}
as in \eqref{eq:generatorsB}, where $w_Y$ is as in \eqref{eq:cycdecelements}.
Then we have $b_r \in \mathcal{B}_\mathrm{nil}$ for $0 \leq r \leq n$, and the elements $b_1,\ldots,b_n$ are algebraically independent and generate $\mathcal{B}_\mathrm{nil}$ as a $\Z$-algebra by Theorems 6.3 and 7.4 in \cite{LamSchubert}.%
\footnote{The elements $b_i$ are denoted by $h_i$ in \cite{LamSchubert}, and the root system is of type $\mathrm{A}_{n-1}$ there.}
In particular, $\mathcal{B}_\mathrm{nil}$ is isomorphic to a polynomial ring $\mathcal{B} \cong \Z[b_1,\ldots,b_n]$.

\subsection{}

Observe that $\mathcal{H}_\mathrm{nil}$ is a graded $\Z$-algebra, with graded pieces defined by
\[ \mathcal{H}_\mathrm{nil}^i = \bigoplus_{ \substack{ x \in W_\mathrm{aff} \\ \ell(x) = i } } \Z \cdot A_x \]
for $i \geq 0$.
As the subalgebra $\mathcal{B}_\mathrm{nil}$ of $\mathcal{H}_\mathrm{nil}$ is generated by the homogeneous elements $b_1,\ldots,b_n$, it inherits a grading from $\mathcal{H}_\mathrm{nil}$, with graded pieces defined by $\mathcal{B}_\mathrm{nil}^i = \mathcal{B}_\mathrm{nil} \cap \mathcal{H}_\mathrm{nil}^i$ for $i \geq 0$.
It is straightforward to see that we must have $b_w \in \mathcal{B}_{\mathrm{nil}}^{\ell(w)}$ for all $w \in W_\mathrm{aff}^+$,%
\footnote{In other words, the coefficients $p_{x,w} \in \Z$ in $b_w = \sum_x p_{x,w} \cdot A_x$ satisfy $p_{x,w} = 0$ unless $\ell(x) = \ell(w)$.}
and therefore
\begin{equation} \label{eq:BnilgradingWaff}
	\mathcal{B}_\mathrm{nil}^i = \bigoplus_{ \substack{ w \in W_\mathrm{aff}^+ \\ \ell(w) = i } } \Z \cdot b_w
\end{equation}
for $i \geq 0$.

A different description of the grading pieces $\mathcal{B}_\mathrm{nil}^i$ for $i \geq 0$ is as follows:
Recall that a partition is a sequence $\lambda = (\lambda_1,\lambda_2,\ldots)$ of non-negative integers with $\lambda_i \geq \lambda_{i+1}$ for all $i$ and $\lambda_r = 0$ for some $r > 0$.
We write $|\lambda| = \lambda_1 + \lambda_2 + \cdots$ for the size of $\lambda$ and call $\lambda$ \emph{$n$-bounded} if $\lambda_1 \leq n$, and we write $\mathcal{P}_{\leq n}$ for the set of $n$-bounded partitions.
For $\lambda \in \mathcal{P}_{\leq n}$, we can uniquely write $\lambda = (n^{i_n},\ldots,2^{i_2},1^{i_1})$ with $i_1,\ldots,i_n \geq 0$ (i.e.\ $\lambda$ has $i_j$ parts equal to $j$), and we define
\begin{equation} \label{eq:Blambda}
	b_\lambda \coloneqq b_1^{i_1} b_2^{i_2} \cdots b_n^{i_n} \in \mathcal{B}_\mathrm{nil} , \hspace{2cm} B_\lambda \coloneqq B_1^{i_1} B_2^{i_2} \cdots B_n^{i_n} \in \mathcal{B} .
\end{equation}
Then we clearly have $b_\lambda \in \mathcal{B}_\mathrm{nil}^{\abs{\lambda}}$ for $\lambda \in \mathcal{P}_{\leq n}$, and as the elements $b_1,\ldots,b_r \in \mathcal{B}_\mathrm{nil}$ are algebraically independent and generate $\mathcal{B}_\mathrm{nil}$, it follows that
\begin{equation} \label{eq:Bnilgradingpartitions}
	\mathcal{B}_\mathrm{nil}^i = \bigoplus_{ \substack{ \lambda \in \mathcal{P}_{\leq n} \\ \abs{\lambda} = i } } \Z \cdot b_\lambda .
\end{equation}
for all $i \geq 0$.

\begin{Remark}
	The equations \eqref{eq:BnilgradingWaff} and \eqref{eq:Bnilgradingpartitions} are related via the fact that there is a canonical bijection $\vartheta \colon W_\mathrm{aff}^+ \longrightarrow \mathcal{P}_{\leq n}$ such that $\ell(w) = \abs{\vartheta(w)}$ for all $w \in W_\mathrm{aff}^+$, see \cite[Corollary 40]{LapointeMorseCoresAffinePermutations}.
\end{Remark}

\subsection{}
\label{subsec:filtrationHaff}

The affine Hecke algebra $\mathcal{H}$ is not graded, but it is still filtered, with filtration pieces defined by
\[ \mathcal{H}^{\leq i} = \bigoplus_{ \substack{ w \in W_\mathrm{aff} \\ \ell(w) \leq i } } \mathcal{A} \cdot H_w \]
for $i \geq 0$.
Let us set $\mathcal{H}^{\leq -1} = \{0\}$ and write
	\[ \mathrm{Gr}(\mathcal{H}) = \bigoplus_{i \geq 0} \mathrm{Gr}_i(\mathcal{H}) , \qquad \mathrm{Gr}_i(\mathcal{H}) = \mathcal{H}^{\leq i} / \mathcal{H}^{\leq i-1} , \]
	for the associated graded $\mathcal{A}$-algebra of $\mathcal{H}$ with respect to the filtration $\mathcal{H}^{\leq 0} \subseteq \mathcal{H}^{\leq 1} \subseteq \cdots$.
For all $x \in W_\mathrm{aff}$, we have $H_x \in \mathcal{H}^{\leq \ell(x)}$, and we define
\[ [H_x] \coloneqq H_x + \mathcal{H}^{\leq \ell(x)-1} \in \mathrm{Gr}_{\ell(x)}(\mathcal{H}) . \]
It is straightforward to see that for $i \geq 0$, we have
\[ \mathrm{Gr}_i(\mathcal{H}) = \bigoplus_{ \substack{ x \in W_\mathrm{aff} \\ \ell(x) = i } } \mathcal{A} \cdot [H_x] . \]

\begin{Lemma} \label{lem:homomorphismHnilGrH}
	There is a unique homomorphism
	\[ \psi \colon \mathcal{H}_\mathrm{nil} \longrightarrow \mathrm{Gr}(\mathcal{H}) \]
	such that $\psi(A_i) = [H_i]$ for all $i \in I$, and we have $\psi(A_w) = [H_w]$ for all $w \in W_\mathrm{aff}$.
\end{Lemma}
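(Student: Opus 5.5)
We prove the lemma using the presentation of $\mathcal{H}_\mathrm{nil}$ by generators and relations. Recall that $\mathcal{H}_\mathrm{nil}$ is the $\Z$-algebra generated by $A_i$, $i \in I$, subject to the quadratic relations $A_i^2 = 0$ and the braid relations of $W_\mathrm{aff}$. It therefore suffices to do two things. First, we check that the elements $[H_i] \in \mathrm{Gr}_1(\mathcal{H})$ satisfy these relations in $\mathrm{Gr}(\mathcal{H})$, viewed as a $\Z$-algebra via restriction of scalars. Then $\psi$ exists, and it is unique because the $A_i$ generate $\mathcal{H}_\mathrm{nil}$. Second, we compute $\psi(A_w)$ from a reduced expression.

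Before this, we confirm that $\mathrm{Gr}(\mathcal{H})$ is an algebra, i.e.\ that the filtration is multiplicative: $\mathcal{H}^{\leq i}\mathcal{H}^{\leq j} \subseteq \mathcal{H}^{\leq i+j}$. This follows from the standard multiplication rule
\[ H_s H_x = H_{sx} \text{ if } sx > x , \qquad H_s H_x = H_{sx} + (v^{-1}-v) H_x \text{ if } sx < x , \]
which itself follows from the quadratic relation $H_s^2 = 1 + (v^{-1}-v)H_s$. By induction on $\ell(y)$, the product $H_y H_x$ is an $\mathcal{A}$-combination of $H_z$ with $\ell(z) \leq \ell(x)+\ell(y)$.

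\textbf{Quadratic relations.} We have $H_i^2 = 1 + (v^{-1}-v)H_i \in \mathcal{H}^{\leq 1}$. Hence $[H_i][H_i] = H_i^2 + \mathcal{H}^{\leq 1} = 0$ in $\mathrm{Gr}_2(\mathcal{H})$.

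\textbf{Braid relations.} For $s,t \in S_\mathrm{aff}$ with $m_{st} < \infty$, the braid relation \eqref{eq:relationsbraid} already holds in $\mathcal{H}$, so it holds for the images in $\mathrm{Gr}_{m_{st}}(\mathcal{H})$. This case does occur: for $n=1$ we have $m_{01} = \infty$ and there is no braid relation to check. This settles existence and uniqueness.

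For the formula $\psi(A_w) = [H_w]$, choose a reduced expression $w = s_{i_1}\cdots s_{i_r}$. Then
\[ \psi(A_w) = [H_{i_1}]\cdots[H_{i_r}] = H_{i_1}\cdots H_{i_r} + \mathcal{H}^{\leq r-1} = H_w + \mathcal{H}^{\leq \ell(w)-1} = [H_w] , \]
since the product of classes of degree $1$ is the class of the product in degree $r = \ell(w)$.

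None of these steps poses a real obstacle. The only point needing care is multiplicativity of the filtration, which makes $\mathrm{Gr}(\mathcal{H})$ an algebra in which products of classes are computed as classes of products in the correct degree. This reduces to the displayed multiplication rule for $H_s H_x$.
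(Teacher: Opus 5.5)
Your proof is correct and follows the paper's argument: use the quadratic relation to get $[H_i]^2=0$, inherit the braid relations from $\mathcal{H}$, invoke the presentation of $\mathcal{H}_\mathrm{nil}$, and then evaluate on a reduced expression. You also verify that $\mathcal{H}^{\leq i}\mathcal{H}^{\leq j} \subseteq \mathcal{H}^{\leq i+j}$, so that $\mathrm{Gr}(\mathcal{H})$ really is an algebra; the paper takes this for granted, and checking it is a reasonable addition.
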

\begin{proof}
	For all $i \in I$, the quadratic relation $H_i^2 = (v^{-1}-v) \cdot H_i + 1$ implies that $H_i^2 \in \mathcal{H}^{\leq 1}$ and so $[H_i]^2 = 0$.
	The elements $\{ [H_i] \mid i \in I \}$ of $\mathrm{Gr}(\mathcal{H})$ satisfy the braid relations because the elements $\{ H_i \mid i \in I \}$ of $\mathcal{H}$ satisfy the braid relations, and it follows that there is a unique homomorphism $\psi \colon \mathcal{H}_\mathrm{nil} \to \mathrm{Gr}(\mathcal{H})$ such that $\psi(A_i) = [H_i]$ for all $i \in I$.
	For $w \in W_\mathrm{aff}$ with reduced expression $w = s_{i_1} \cdots s_{i_r}$, we have
	\[ \psi(A_w) = \psi(A_{i_1}) \cdots \psi(A_{i_r}) = [H_{i_1}] \cdots [H_{i_r}] = [H_w] , \]
	as claimed.
\end{proof}

For all $\lambda \in \mathcal{P}_{\leq n}$, we have $B_\lambda \in \mathcal{H}^{\leq \abs{\lambda}}$, and we define
\[ [B_\lambda] \coloneqq B_\lambda + \mathcal{H}^{\leq \abs{\lambda}-1} \in \mathrm{Gr}_{\abs{\lambda}}(\mathcal{H}) . \]

\begin{Lemma} \label{lem:BlambdaHnil}
	For all $\lambda \in \mathcal{P}_{\leq n}$, the homomorphism $\psi \colon \mathcal{H}_\mathrm{nil} \to \mathrm{Gr}(\mathcal{H})$ from Lemma \ref{lem:homomorphismHnilGrH} satisfies
	\[ \psi(b_\lambda) = [B_\lambda] . \]
\end{Lemma}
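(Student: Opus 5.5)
First I will check the statement on generators, i.e.\ that $\psi(b_r) = [B_r]$ for $0 \leq r \leq n$. By Lemma \ref{lem:homomorphismHnilGrH} and the definitions \eqref{eq:generatorsB} and \eqref{eq:generatorsBnil}, we have
\[ \psi(b_r) = \sum_{\substack{Y \subset I \\ \abs{Y}=r}} \psi(A_{w_Y}) = \sum_{\substack{Y \subset I \\ \abs{Y}=r}} [H_{w_Y}] . \]
The one genuine point is that each $[H_{w_Y}]$ lies in degree $r$, i.e.\ that $\ell(w_Y) = \abs{Y} = r$, so that $H_Y + \mathcal{H}^{\leq r-1} = [H_{w_Y}]$ in $\mathrm{Gr}_r(\mathcal{H})$. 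For this I will use the fact from \cite[Section 6]{LamSchubert} that cyclically decreasing elements have reduced expressions $w_{X_1} \cdots w_{X_k}$ of length $\abs{Y}$. Alternatively, one can argue directly: $w_Y$ is a product of $\abs{Y}$ distinct simple reflections generating a proper parabolic subgroup, which is finite of type $\mathsf{A}$, and any product of distinct simple reflections is reduced. Summing then gives $\psi(b_r) = B_r + \mathcal{H}^{\leq r-1} = [B_r]$.

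Next I will pass to monomials. For $\lambda = (n^{i_n},\ldots,1^{i_1})$, $\psi$ is an algebra homomorphism, so
\[ \psi(b_\lambda) = \psi(b_1)^{i_1} \cdots \psi(b_n)^{i_n} = [B_1]^{i_1} \cdots [B_n]^{i_n} . \]
It then remains to show that the product in $\mathrm{Gr}(\mathcal{H})$ of the classes $[B_r] \in \mathrm{Gr}_r(\mathcal{H})$ equals $[B_\lambda]$. This follows from the general fact that multiplication in the associated graded algebra is given by $(a + \mathcal{H}^{\leq i-1})(b + \mathcal{H}^{\leq j-1}) = ab + \mathcal{H}^{\leq i+j-1}$ for $a \in \mathcal{H}^{\leq i}$ and $b \in \mathcal{H}^{\leq j}$. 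That fact in turn rests on the filtration being multiplicative, $\mathcal{H}^{\leq i}\mathcal{H}^{\leq j} \subseteq \mathcal{H}^{\leq i+j}$. I will check multiplicativity by induction from $H_s H_x \in \{H_{sx},\, H_{sx} + (v^{-1}-v)H_x\}$, which shows that $H_x H_y$ is a combination of $H_z$ with $\ell(z) \leq \ell(x)+\ell(y)$. Iterating this over the factors gives $\psi(b_\lambda) = B_\lambda + \mathcal{H}^{\leq \abs{\lambda}-1} = [B_\lambda]$.

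The only step needing care is the length identity $\ell(w_Y) = \abs{Y}$, and it is routine once it is phrased as a product of distinct generators of a finite parabolic subgroup. Everything else is formal.
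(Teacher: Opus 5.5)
Your proof is correct and follows the paper's argument: you check $\psi(b_r) = [B_r]$ on the generators, then use that $\psi$ is a homomorphism and that $[B_1]^{i_1}\cdots[B_n]^{i_n} = [B_\lambda]$ in $\mathrm{Gr}(\mathcal{H})$. You also make explicit two points the paper leaves implicit: that $\ell(w_Y) = \abs{Y}$, since a word in distinct simple reflections is reduced, and that the filtration is multiplicative, $\mathcal{H}^{\leq i}\mathcal{H}^{\leq j} \subseteq \mathcal{H}^{\leq i+j}$.
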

\begin{proof}
	For $0 \leq r \leq n$, we have $\psi(b_r) = [B_r]$ by the definitions of $b_r$ and $B_r$, see \eqref{eq:generatorsB} and \eqref{eq:generatorsBnil}.
	With $\lambda = (n^{i_n},\ldots,2^{i_2},1^{i_1})$, it follows that
	\[ \psi( b_\lambda ) = \psi( b_1 )^{i_1} \cdots \psi( b_n )^{i_n} = [B_1]^{i_1} \cdots [B_n]^{i_n} = [B_\lambda] , \]
	as claimed.
\end{proof}

\subsection{}

Now we can establish more of the main results of this section, namely, that the standard basis of $\mathcal{B}$ exists, and that the elements $B_1,\ldots,B_r \in \mathcal{B}$ generate $\mathcal{B}$.
The following theorem verifies Conjecture \ref{conj:standardbasis} for $\Phi$ of type $\mathsf{A}_n$.

\begin{Theorem} \label{thm:BwexistencetypeA}
	For all $w \in W_\mathrm{aff}^+$, there is a unique element $B_w \in \mathcal{B}$ whose expansion
	\[ B_w = \sum_{x \in W_\mathrm{aff}} P_{x,w} \cdot H_x \]
	in terms of the stadard basis of $\mathcal{H}$ satisfies $P_{w,w} = 1$ and $P_{x,w} = 0$ for all $w \neq x \in W_\mathrm{aff}^+$.
\end{Theorem}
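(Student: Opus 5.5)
Uniqueness is already Lemma \ref{lem:Bwunique}, so only existence needs proof. I will build $B_w$ as an $\mathcal{A}$-linear combination of the products $B_\lambda$ from \eqref{eq:Blambda}; these lie in $\mathcal{B}$ by Theorem \ref{thm:BrinB} and Lemma \ref{lem:Bsubalgebra}. The argument is an induction on $\ell(w)$ that uses the graded comparison map $\psi$ to control the top-length part.

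Fix $w \in W_\mathrm{aff}^+$ with $\ell(w) = i$. By Theorem \ref{thm:basisBnil} together with \eqref{eq:BnilgradingWaff} and \eqref{eq:Bnilgradingpartitions}, $b_w \in \mathcal{B}_\mathrm{nil}^i$, so we can write $b_w = \sum_{\abs{\lambda} = i} c_\lambda b_\lambda$ with $c_\lambda \in \Z$. Set $B' \coloneqq \sum_{\abs{\lambda}=i} c_\lambda B_\lambda \in \mathcal{B} \cap \mathcal{H}^{\leq i}$. By Lemmas \ref{lem:BlambdaHnil} and \ref{lem:homomorphismHnilGrH},
\[ [B'] = \psi(b_w) = \sum_{x} p_{x,w} [H_x] . \]
Hence the coefficient of $H_x$ in $B'$ equals $p_{x,w}$ whenever $\ell(x) = i$, and vanishes when $\ell(x) > i$. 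In particular, among $x \in W_\mathrm{aff}^+$ with $\ell(x) \geq i$, the coefficient is $1$ for $x = w$ and $0$ for every other such $x$.

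It remains to remove the coefficients at $x \in W_\mathrm{aff}^+$ with $\ell(x) < i$. By induction on length, assume that for every $y \in W_\mathrm{aff}^+$ with $\ell(y) < i$ there is an element $B_y \in \mathcal{B}$ with the required properties, and that moreover $B_y \in \mathcal{H}^{\leq \ell(y)}$. This extra filtration condition is kept as part of the induction hypothesis; the base case $B_e = H_e$ satisfies it. Writing $a_y$ for the coefficient of $H_y$ in $B'$, define
\[ B_w \coloneqq B' - \sum_{y \in W_\mathrm{aff}^+,\ \ell(y) < i} a_y B_y . \]
This is a finite sum because $B'$ has finite support. To check the coefficients, let $x \in W_\mathrm{aff}^+$ and proceed downward in length.

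For $x$ with $\ell(x) \geq i$, the subtracted terms do not contribute, because each $B_y$ lies in $\mathcal{H}^{\leq \ell(y)}$ with $\ell(y) < i$. So these coefficients are unchanged from $B'$: $1$ at $x = w$ and $0$ elsewhere.

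For $x$ with $\ell(x) < i$, the coefficient of $H_x$ in $B_w$ is $a_x - \sum_y a_y P_{x,y} = a_x - a_x = 0$, since $P_{x,y} = \delta_{x,y}$ for $x, y \in W_\mathrm{aff}^+$.

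Finally, $B_w \in \mathcal{H}^{\leq i}$, so the induction hypothesis carries forward to length $i$.

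The main point requiring care is the filtration bookkeeping. We must know that $B_\lambda \in \mathcal{H}^{\leq \abs{\lambda}}$ (as already noted before Lemma \ref{lem:BlambdaHnil}), and that the correction terms never affect coefficients of length $\geq i$; this is exactly why the induction carries the condition $B_y \in \mathcal{H}^{\leq \ell(y)}$ along. An alternative route is to produce any element of $\mathcal{B}$ that has the correct coefficients on all of $W_\mathrm{aff}^+$, and then appeal to Lemma \ref{lem:Bcoefficientnonzero}. The downward-length argument above already accomplishes this, so that lemma is not needed here.
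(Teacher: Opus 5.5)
Your proposal is correct and is essentially the paper's proof: you lift $b_w$ to $\sum_\lambda c_\lambda B_\lambda \in \mathcal{B} \cap \mathcal{H}^{\leq \ell(w)}$, identify its top-length coefficients with $p_{x,w}$ via $\psi$, subtract $\sum_{\ell(y)<\ell(w)} a_y B_y$, and get uniqueness from Lemma \ref{lem:Bwunique}. The extra induction hypothesis $B_y \in \mathcal{H}^{\leq \ell(y)}$ is harmless but not needed, since $P_{x,y}=\delta_{x,y}$ for all $x,y \in W_\mathrm{aff}^+$ already shows the correction terms leave the coefficients at $x \in W_\mathrm{aff}^+$ with $\ell(x) \geq \ell(w)$ unchanged; the filtration property then follows as a consequence, as in Remark \ref{rem:Bwproperties}.
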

\begin{proof}
	We prove the claim by induction on $\ell(w)$.
	If $\ell(w) = 0$ then we have $w = e$ and the claim follows with $B_e = H_e \in \mathcal{B}$.
	Now suppose that $\ell(w) \geq 1$ and that there are elements $B_u \in \mathcal{B}$ as in the statement of the theorem for all $u \in W_\mathrm{aff}^+$ with $\ell(u) < \ell(w)$.
	By \eqref{eq:BnilgradingWaff} and \eqref{eq:Bnilgradingpartitions}, we can choose integers $\tilde{p}_{\lambda,w} \in \Z$ for $\lambda \in \mathcal{P}_{\leq n}$ with $\abs{\lambda} = \ell(w)$ such that
	\[ b_w = \sum_{ \substack{ \lambda \in \mathcal{P}_{\leq n} \\ \abs{\lambda} = \ell(w) } } \tilde{p}_{\lambda,w} \cdot b_\lambda , \]
	and we consider the element
	\[ \hat B_w = \sum_{ \substack{ \lambda \in \mathcal{P}_{\leq n} \\ \abs{\lambda} = \ell(w) } } \tilde{p}_{\lambda,w} \cdot B_\lambda \in \mathcal{B} \cap \mathcal{H}^{\leq \ell(w)} . \]
	Further recall that $b_w = \sum_x p_{x,w} \cdot A_x$ with $p_{x,w} \in \Z$ for $x \in W_\mathrm{aff}$ (and $p_{x,w}=0$ unless $\ell(x) = \ell(w)$), and define $\hat P_{x,w} \in \mathcal{A}$ for $x \in W_\mathrm{aff}$ via $\hat B_w = \sum_x \hat P_{x,w} \cdot H_x$.
	For the element
	\[ [\hat B_w] = \hat B_w + \mathcal{H}^{\leq \ell(w)-1} \in \mathrm{Gr}_{\ell(w)}(\mathcal{H}) , \]
	we compute using Lemmas \ref{lem:homomorphismHnilGrH} and \ref{lem:BlambdaHnil} that
	\begin{multline*}
		\sum_{ \substack{x \in W_\mathrm{aff} \\ \ell(x) = \ell(w) } } \hat P_{x,w} \cdot [ H_x ] = [\hat B_w] = \sum_\lambda \tilde p_{\lambda,w} \cdot [B_\lambda] \\[-2pt]
		= \sum_\lambda \tilde p_{\lambda,w} \cdot \psi( b_\lambda ) = \psi( b_w ) = \sum_x p_{x,w} \cdot \psi(A_x) = \sum_x p_{x,w} \cdot [ H_x ] ,
	\end{multline*}
	and it follows that $\hat P_{x,w} = p_{x,w}$ for all $x \in W_\mathrm{aff}$ with $\ell(x) = \ell(w)$.
	In particular, we have $\hat P_{w,w} = 1$ and $\hat P_{x,w} = 0$ for all $w \neq x \in W_\mathrm{aff}^+$ with $\ell(x) = \ell(w)$, by the definition of $b_w$ (see Theorem \ref{thm:basisBnil}).
	We further have $\hat P_{x,w} = 0$ for all $x \in W_\mathrm{aff}$ with $\ell(x) > \ell(w)$ because $\hat B_w \in \mathcal{H}^{\leq \ell(w)}$, and it is straightforward to check that the element
	\[ B_w \coloneqq \hat B_w - \sum_{ \substack{ u \in W_\mathrm{aff}^+ \\ \ell)(u) < \ell(w) } } \hat P_{u,w} \cdot B_u \in \mathcal{B} \]
	satisfies the conditions stated in the theorem.
	The uniqueness of $B_w$ follows from Lemma \ref{lem:Bwunique}.
\end{proof}

\begin{Remark} \label{rem:Bwproperties}
	The proof of Theorem \ref{thm:BwexistencetypeA} implies that for $w \in W_\mathrm{aff}^+$, the element $B_w \in \mathcal{B}$ is an $\mathcal{A}$-linear combination of the elements $B_\lambda \in \mathcal{B}$, for $\lambda \in \mathcal{P}_{\leq n}$ with $\abs{\lambda} \leq \ell(w)$.
	In particular, we have $B_w \in \mathcal{H}^{\leq \ell(w)}$ and $P_{x,w} = 0$ for all $x \in W_\mathrm{aff}$ with $\ell(x)>\ell(w)$.
	Furthermore, the proof implies that we have $P_{x,w} = p_{x,w} \in \Z$ for all $x \in W_\mathrm{aff}$ with $\ell(x) = \ell(w)$.
\end{Remark}

\begin{Remark} \label{rem:BrasBw}
	Using the notations from Theorem \ref{thm:BwexistencetypeA}, we have $B_0 = B_e$ and $B_r = B_{w_{[0,n-r+1]}}$ for $1 \leq r \leq n$, where $w_{[0,n-r+1]} = s_0 s_n s_{n-1} \cdots s_{n-r+1}$ as in \eqref{eq:cycdecelements}.
\end{Remark}

\begin{Theorem} \label{thm:BwbasistypeA}
	The elements $\{ B_w \mid w \in W_\mathrm{aff}^+ \}$ form an $\mathcal{A}$-basis of $\mathcal{B}$.
\end{Theorem}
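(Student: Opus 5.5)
This follows by combining Theorem \ref{thm:BwexistencetypeA} with Lemma \ref{lem:Bwbasis}. Theorem \ref{thm:BwexistencetypeA} says that for $\Phi$ of type $\mathsf{A}_n$, an element $B_w \in \mathcal{B}$ with $P_{w,w} = 1$ and $P_{x,w} = 0$ for $w \neq x \in W_\mathrm{aff}^+$ exists for every $w \in W_\mathrm{aff}^+$. In other words, Conjecture \ref{conj:standardbasis} holds in type $\mathsf{A}_n$, and Lemma \ref{lem:Bwbasis} then gives that $\{ B_w \mid w \in W_\mathrm{aff}^+ \}$ is an $\mathcal{A}$-basis of $\mathcal{B}$.

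For completeness, the argument behind Lemma \ref{lem:Bwbasis} runs as follows.

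\emph{Linear independence.} Suppose $\sum_{w} a_w B_w = 0$ is a finite relation. Take the coefficient of $H_u$ for $u \in W_\mathrm{aff}^+$. Since $P_{u,w} = \delta_{u,w}$ for $u,w \in W_\mathrm{aff}^+$, this coefficient is exactly $a_u$, so $a_u = 0$ for all $u$.

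\emph{Spanning.} Let $B = \sum_x a_x H_x \in \mathcal{B}$ and set $B' = B - \sum_{w \in W_\mathrm{aff}^+} a_w B_w$. This sum is finite, because only finitely many $a_w$ are nonzero. Then $B' \in \mathcal{B}$, because $\mathcal{B}$ is an $\mathcal{A}$-submodule. For each $u \in W_\mathrm{aff}^+$, the coefficient of $H_u$ in $B'$ is $a_u - a_u = 0$, again because $P_{u,w} = \delta_{u,w}$. Lemma \ref{lem:Bcoefficientnonzero} says every nonzero element of $\mathcal{B}$ has a nonzero coefficient at some $H_u$ with $u \in W_\mathrm{aff}^+$, so $B' = 0$. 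Hence $B$ lies in the $\mathcal{A}$-span of the $B_w$.

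No step here is difficult, since everything reduces to earlier results. The real input is Lemma \ref{lem:Bcoefficientnonzero}, which rests on the Lam--Schilling--Shimozono basis of $\mathcal{B}_0$ (Theorem \ref{thm:basisB0}) via the specialization $t \mapsto 0$.
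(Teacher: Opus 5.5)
Your proposal is correct and matches the paper's proof, which likewise deduces the theorem directly from Lemma~\ref{lem:Bwbasis} together with the existence result Theorem~\ref{thm:BwexistencetypeA}. Your unpacking of Lemma~\ref{lem:Bwbasis} is also sound: linear independence comes from reading off the coefficients at $H_u$ for $u \in W_\mathrm{aff}^+$, and spanning comes from applying Lemma~\ref{lem:Bcoefficientnonzero} to $B - \sum_{w \in W_\mathrm{aff}^+} a_w B_w$.
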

\begin{proof}
	This follows from Lemma \ref{lem:Bwbasis} and Theorem \ref{thm:BwexistencetypeA}.
\end{proof}

As in Section \ref{sec:standardbasis}, we call $\{ B_w \mid w \in W_\mathrm{aff}^+ \}$ the \emph{standard basis} of $\mathcal{B}$.
For $w \in W_\mathrm{aff}^+$ with reduced expression $w = s_{i_1} s_{i_2} \cdots s_{i_r}$, we sometimes write $B_w = B_{i_1 i_2 \cdots i_r}$.

\begin{Corollary} \label{cor:Bgenerators}
	The elements $B_1,\ldots,B_n \in \mathcal{B}$ generate $\mathcal{B}$ as an $\mathcal{A}$-algebra.
\end{Corollary}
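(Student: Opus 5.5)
The plan is to deduce the corollary from the basis statement in Theorem \ref{thm:BwbasistypeA}, combined with Remark \ref{rem:Bwproperties}. Let $\mathcal{B}' \subseteq \mathcal{B}$ be the $\mathcal{A}$-subalgebra generated by $B_1,\ldots,B_n$. By Theorem \ref{thm:BrinB}, we have $B_r \in \mathcal{B}$ for all $r$, and $\mathcal{B}$ is a subalgebra by Lemma \ref{lem:Bsubalgebra}, so $\mathcal{B}' \subseteq \mathcal{B}$. For the reverse inclusion, Theorem \ref{thm:BwbasistypeA} tells us that $\{ B_w \mid w \in W_\mathrm{aff}^+ \}$ is an $\mathcal{A}$-basis of $\mathcal{B}$. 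It therefore suffices to show that $B_w \in \mathcal{B}'$ for every $w \in W_\mathrm{aff}^+$.

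I would prove $B_w \in \mathcal{B}'$ by induction on $\ell(w)$, following the construction in the proof of Theorem \ref{thm:BwexistencetypeA}. For $\ell(w) = 0$ we have $B_e = H_e = 1 \in \mathcal{B}'$. For $\ell(w) \geq 1$, the proof of Theorem \ref{thm:BwexistencetypeA} constructs
\[ \hat B_w = \sum_{\abs{\lambda} = \ell(w)} \tilde p_{\lambda,w} \cdot B_\lambda , \]
which is a polynomial in $B_1,\ldots,B_n$ by \eqref{eq:Blambda}, and hence lies in $\mathcal{B}'$. It then sets $B_w = \hat B_w - \sum_{u} \hat P_{u,w} \cdot B_u$, where the sum runs over $u \in W_\mathrm{aff}^+$ with $\ell(u) < \ell(w)$. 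By the induction hypothesis each such $B_u$ lies in $\mathcal{B}'$, so $B_w \in \mathcal{B}'$.

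One point needs care. The element produced by this construction must be the $B_w$ of the basis, and the sum over $u$ must be finite. Uniqueness is guaranteed by Lemma \ref{lem:Bwunique}. Finiteness holds because $\hat B_w$ is a finite combination of the $H_x$, so only finitely many coefficients $\hat P_{u,w}$ are nonzero. Remark \ref{rem:Bwproperties} already records the conclusion: each $B_w$ is an $\mathcal{A}$-linear combination of the $B_\lambda$ with $\abs{\lambda} \leq \ell(w)$.

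This argument therefore gives $\mathcal{B} = \operatorname{span}_\mathcal{A}\{ B_w \} \subseteq \mathcal{B}'$, which proves the corollary. I expect no real obstacle. The only subtle step is making the inductive bookkeeping explicit, namely that the correction terms involve only shorter elements $u$. This is exactly what the proof of Theorem \ref{thm:BwexistencetypeA} provides.
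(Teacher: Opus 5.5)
Your proposal is correct and follows the paper's proof. The paper likewise uses Remark \ref{rem:Bwproperties}: each $B_w$ is an $\mathcal{A}$-linear combination of the elements $B_\lambda$, which are monomials in $B_1,\ldots,B_n$, and it concludes with Theorem \ref{thm:BwbasistypeA}. Your explicit induction on $\ell(w)$ simply unpacks what that remark records.
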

\begin{proof}
	The subalgebra generated by $B_1,\ldots,B_n$ contains $\{ B_w \mid w \in W_\mathrm{aff} \}$ by Remark \ref{rem:Bwproperties} and the definition of the elements $B_\lambda \in \mathcal{B}$ for $\lambda \in \mathcal{P}_{\leq n}$ in \eqref{eq:Blambda}.
	The claim follows because $\{ B_w \mid w \in W_\mathrm{aff} \}$ is an $\mathcal{A}$-basis of $\mathcal{B}$ by Theorem \ref{thm:BwbasistypeA}.
\end{proof}

\subsection{}

In order to prove that $\mathcal{B}$ is commutative, we will again pass to the associated graded $\mathcal{A}$-algebra $\mathrm{Gr}(\mathcal{H})$ of $\mathcal{H}$ and use the following result of Lam; see Corollary 5.6 in \cite{LamSchubert}.

\begin{Proposition} \label{prop:Bnilcommutative}
	The $\Z$-subalgebra $\mathcal{B}_\mathrm{nil}$ of $\mathcal{H}_\mathrm{nil}$ is commutative.
\end{Proposition}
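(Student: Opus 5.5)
The plan is to deduce commutativity of $\mathcal{B}_\mathrm{nil}$ from commutativity of the centralizer $\mathcal{C}_\mathrm{nil} = C_{\mathbb{H}_\mathrm{nil}}(S)$, mirroring Lemma \ref{lem:Bcentralizer}. First I will verify the nil-Hecke analogue of Lemma \ref{lem:Bcentralizer}: the restriction $\varphi_\mathrm{nil} \colon \mathcal{C}_\mathrm{nil} \to \mathcal{H}_\mathrm{nil}$ is a ring homomorphism with image in $\mathcal{B}_\mathrm{nil}$. The argument is identical to the one given there. Write $x = \sum f_i h_i$ with $f_i \in S$ and $h_i \in \mathcal{H}_\mathrm{nil}$. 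If $x$ commutes with $S$, then $\varphi_\mathrm{nil}(xy) = \varphi_\mathrm{nil}(x)\varphi_\mathrm{nil}(y)$, because $x$ can be moved past the $S$-coefficients of $y$ before evaluating at $0$. Taking $y = f \in S$ then gives $\varphi_\mathrm{nil}(\varphi_\mathrm{nil}(x) f) = f(0)\,\varphi_\mathrm{nil}(x)$.

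Second, I will show that $\mathcal{C}_\mathrm{nil}$ is commutative by localizing. Let $F = \mathrm{Frac}(S_\mathbb{Q})$. In $F \otimes_S \mathbb{H}_\mathrm{nil}$ the elements $\sigma_i = 1 - \alpha_i A_i$ (up to normalization) satisfy $\sigma_i f = s_i(f)\sigma_i$ and the Coxeter relations. This gives an identification $F \otimes_S \mathbb{H}_\mathrm{nil} = \bigoplus_{w \in W_\mathrm{aff}} F\, \sigma_w$ with multiplication $f\sigma_w \cdot g\sigma_u = f\,w(g)\,\sigma_{wu}$, where $W_\mathrm{aff}$ acts on $S = \mathrm{Sym}(X)$ through the level-zero action, which factors through $W_\mathrm{fin}$. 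An element $\sum_w f_w \sigma_w$ commutes with all of $S$ if and only if $f_w(\lambda - w\lambda) = 0$ for all $\lambda$, that is, $f_w = 0$ unless $w$ acts trivially on $X$, which means $w = t_\gamma$ with $\gamma \in Q^\vee$. Hence the centralizer of $S$ in the localization is $\bigoplus_{\gamma} F\,\sigma_{t_\gamma}$. Since the $t_\gamma$ commute with each other and fix $F$, this algebra is commutative. The PBW isomorphism $S \otimes \mathcal{H}_\mathrm{nil} \cong \mathbb{H}_\mathrm{nil}$ shows that $\mathbb{H}_\mathrm{nil}$ is $S$-torsion-free, so it embeds into the localization, and therefore $\mathcal{C}_\mathrm{nil}$ is commutative.

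Third, I need surjectivity $\varphi_\mathrm{nil}(\mathcal{C}_\mathrm{nil}) = \mathcal{B}_\mathrm{nil}$. This is the main obstacle. My first attempt will be an upward induction on length, in the spirit of the proof of Theorem \ref{thm:BwexistencetypeA}. For $b_w$, I will seek $x = \sum_{y} g_y A_y \in \mathcal{C}_\mathrm{nil}$ with $g_y(0) = p_{y,w}$ by solving the commutation equations $[x,\lambda] = 0$ degree by degree in $S$. Here the condition $b_w \in \mathcal{B}_\mathrm{nil}$ should be precisely the obstruction at degree $0$. Alternatively, in type $\mathsf{A}_n$ it suffices to lift the generators $b_1,\ldots,b_n$ (Theorems 6.3 and 7.4 of \cite{LamSchubert}), since a homomorphism whose image contains a generating set is surjective. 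For these generators I would try explicit lifts, such as $\varphi_\mathrm{nil}$-preimages of the $W_\mathrm{fin}$-symmetric functions in the $\sigma_{t_\gamma}$ (Peterson's $j$-map). Once surjectivity is established, $\mathcal{B}_\mathrm{nil}$ is a homomorphic image of the commutative ring $\mathcal{C}_\mathrm{nil}$ and is therefore commutative.
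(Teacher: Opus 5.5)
Your steps 1 and 2 are fine. The nil analogue of Lemma \ref{lem:Bcentralizer} goes through verbatim, using $\varphi_\mathrm{nil}(yf) = \varphi_\mathrm{nil}(\varphi_\mathrm{nil}(y)f)$ for all $y \in \mathbb{H}_\mathrm{nil}$ and $f \in S$. The localization argument is Peterson's standard proof that $\mathcal{C}_\mathrm{nil}$ is commutative. The overall architecture is also the right one. The paper does not prove this proposition; it quotes Corollary 5.6 of \cite{LamSchubert}, where commutativity is deduced from the realization of $\mathcal{B}_\mathrm{nil}$ as $\varphi_\mathrm{nil}(\mathcal{C}_\mathrm{nil})$ (equivalently, as $H_\bullet(\Gr_G)$).

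However, there is a genuine gap exactly where you say the main obstacle is: step 3 is not proved. Without it you only have $\varphi_\mathrm{nil}(\mathcal{C}_\mathrm{nil}) \subseteq \mathcal{B}_\mathrm{nil}$, which says nothing about commutativity of $\mathcal{B}_\mathrm{nil}$.

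Your degree-by-degree lifting gives no argument that the obstructions in positive $S$-degree vanish. Membership $b_w \in \mathcal{B}_\mathrm{nil}$ controls only the constant terms. The vanishing of all higher obstructions is precisely the nontrivial content of Peterson's theorem (geometrically: $H^T_\bullet(\Gr_G)$ is free over $S$ and forgetting equivariance is surjective).

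Your suggested explicit lifts do not work either. Each $w \in W_\mathrm{aff}$ already lies in $\mathbb{H}_\mathrm{nil}$ as a product of the elements $1-\alpha_i A_i$. It is homogeneous of degree $0$ for $\deg \lambda = 1$, $\deg A_i = -1$. Through the Demazure operators it acts on $S$ by $f \mapsto w(f)$, so its $A_e$-coefficient is $1$ and $\varphi_\mathrm{nil}(w) = 1$. Hence $W_\mathrm{fin}$-symmetric integral combinations of translations, although central, are sent to integers, not to the $b_r$. Peterson's elements are $F$-linear combinations of translations, and showing they lie in $\mathbb{H}_\mathrm{nil}$ is again the hard part.

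The missing ingredient is Peterson's theorem on the $j$-basis (Section 4 of \cite{LamSchubert}). It says that for every $w \in W_\mathrm{aff}^+$ there is $j_w \in \mathcal{C}_\mathrm{nil}$ of the form $j_w = A_w + \sum_{x \notin W_\mathrm{aff}^+} j_w^x A_x$ with $j_w^x \in S$. Granting it, $\varphi_\mathrm{nil}(j_w) \in \mathcal{B}_\mathrm{nil}$ has coefficient $\delta_{x,w}$ at every $x \in W_\mathrm{aff}^+$. So $\varphi_\mathrm{nil}(j_w) = b_w$ by the uniqueness in Theorem \ref{thm:basisBnil}, and surjectivity follows because the $b_w$ span $\mathcal{B}_\mathrm{nil}$.

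In the type $\mathsf{A}_n$ setting of this section there is also a shortcut. If you are willing to use that $b_1,\ldots,b_n$ generate $\mathcal{B}_\mathrm{nil}$, you need no lifts at all. It then suffices that the $b_r$ commute pairwise, which is a purely combinatorial statement about cyclically decreasing elements.
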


In order to prove an analogous result for the $\mathcal{A}$-subalgebra $\mathcal{B}$ of $\mathcal{H}$, note that there is an involutive anti-automorphism
\[ \imath \colon \mathcal{H} \longrightarrow \mathcal{H} , \hspace{1.5cm} \imath(H_x) = H_{x^{-1}} \quad \text{for } x \in W_\mathrm{aff} , \]
see the proof of Theorem 2.7 in \cite{SoergelKL}.
Furthermore, there is an involutive automorphism
\[ \zeta \colon \mathcal{H} \longrightarrow \mathcal{H} , \hspace{1.5cm} \zeta(H_i) = H_{-i} \quad \text{for } i \in I = \Z / (n+1) \Z , \]
corresponding to the diagram automorphism that reflects the Coxeter diagram in Figure \ref{fig:CoxeterdiagramtypeAn} in a vertical axis through the vertex labeled by $0$.

\begin{Lemma} \label{lem:zetaiBr}
	The anti-automorphism $\zeta \imath \colon \mathcal{H} \to \mathcal{H}$ satisfies $\zeta \imath(B_r) = B_r$ for all $0 \leq r \leq n$.
\end{Lemma}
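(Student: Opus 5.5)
The plan is to check the identity term by term on the defining sum $B_r = \sum_{|Y|=r} H_Y$. Since $\zeta\imath$ is an anti-automorphism with $\imath(H_x) = H_{x^{-1}}$ and $\zeta(H_x) = H_{\zeta(x)}$, where we also write $\zeta$ for the diagram automorphism $s_i \mapsto s_{-i}$ of $W_\mathrm{aff}$, we have
\[ \zeta\imath(H_x) = H_{\zeta(x^{-1})} \qquad \text{for all } x \in W_\mathrm{aff} . \]
Indeed, $\imath$ and $\zeta$ map standard basis elements to standard basis elements, because both preserve lengths and reduced expressions. It therefore suffices to show that $Y \mapsto -Y$ is a bijection on $r$-element proper subsets of $I$ with
\[ \zeta(w_Y^{-1}) = w_{-Y} . \]
Since $\abs{-Y} = \abs{Y}$, this reindexes the sum and gives $\zeta\imath(B_r) = B_r$.

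The first step is the case of a single interval $X = [i,j]$, so $w_X = s_i s_{i-1} \cdots s_j$. Then
\[ w_X^{-1} = s_j s_{j+1} \cdots s_i , \]
and applying $\zeta$ gives $s_{-j} s_{-j-1} \cdots s_{-i}$. This is exactly $w_{[-j,-i]}$, and the set $[-j,-i] = \{-j, -j-1, \ldots, -i\}$ equals $-X$. Note also that $-X$ is again an interval, since negation is a graph automorphism of the cyclic Coxeter diagram.

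For general $Y$ with interval decomposition $Y = X_1 \sqcup \cdots \sqcup X_m$, the set $-Y$ has interval decomposition $-X_1 \sqcup \cdots \sqcup -X_m$, because negation preserves connected components of induced subgraphs. We then compute
\[ \zeta(w_Y^{-1}) = \zeta(w_{X_m}^{-1}) \cdots \zeta(w_{X_1}^{-1}) = w_{-X_m} \cdots w_{-X_1} = w_{-Y} . \]
The last equality uses the remark after \eqref{eq:cycdecelements} that $w_Y$ does not depend on the order of the intervals: distinct components are non-adjacent, so their interval elements commute.

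The only step needing care is that $\zeta$ and $\imath$ really act on $H_x$ by the stated group-level maps. For $\imath$ this is stated in the paper. For $\zeta$ it holds because a diagram automorphism maps reduced expressions to reduced expressions, so $\zeta(H_x) = H_{\zeta(x)}$. The rest is bookkeeping, and I expect no genuine obstacle.
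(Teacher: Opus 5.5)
Your proof is correct and is the paper's argument. The paper simply asserts $\zeta\imath(H_Y) = H_{Y^-}$ for every proper subset $Y \subsetneq I$ and reindexes the sum defining $B_r$. You additionally justify that identity, via $\zeta\imath(H_x) = H_{\zeta(x^{-1})}$, the single-interval computation $\zeta(w_{[i,j]}^{-1}) = w_{[-j,-i]}$, and the commutation of the interval factors, which is exactly the verification the paper leaves implicit.
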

\begin{proof}
	For a proper subset $Y \subsetneq I$, let $Y^- = \{-i \mid i \in Y \}$.
	We have $\zeta \imath(H_Y) = H_{Y^-}$ for all $Y \subsetneq I$, and using the definition of $B_r$ in \eqref{eq:generatorsB}, it follows that
	\[ \zeta \imath( B_r ) = \sum_{ \substack{ Y \subset I \\ \abs{Y} = r } } \zeta \imath( H_Y ) = \sum_{ \substack{ Y \subset I \\ \abs{Y} = r } } H_{Y^-} = \sum_{ \substack{ Y \subset I \\ \abs{Y} = r } } H_Y = B_r , \]
	as claimed.
\end{proof}

\begin{Lemma} \label{lem:zetaiBw}
	Let $w \in W_\mathrm{aff}^+$ and suppose that $B_i$ and $B_j$ commute for $1 \leq i,j \leq n$ with $i+j \leq \ell(w)$.
	Then we have $\zeta \imath(B_w) = B_w$.
\end{Lemma}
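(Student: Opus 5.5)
The plan is to prove this by induction on $\ell(w)$, following the construction in the proof of Theorem \ref{thm:BwexistencetypeA}. That proof writes
\[ B_w = \hat B_w - \sum_{u \in W_\mathrm{aff}^+, \, \ell(u) < \ell(w)} \hat P_{u,w} \cdot B_u , \qquad \hat B_w = \sum_{\abs{\lambda} = \ell(w)} \tilde p_{\lambda,w} \cdot B_\lambda . \]
The induction hypothesis applies to each $B_u$ with $\ell(u) < \ell(w)$, since the hypothesis on commuting pairs $B_i, B_j$ with $i+j \leq \ell(w)$ implies the same hypothesis for $\ell(u)$. So it suffices to show that $\hat B_w$ is $\zeta\imath$-invariant and that the coefficients $\hat P_{u,w}$ can be controlled.

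Rather than tracking coefficients, a cleaner route is to use uniqueness (Lemma \ref{lem:Bwunique}). First we check that $\zeta\imath$ preserves $\mathcal{B}$ to the extent needed, namely on $\zeta\imath(B_w)$. Since $\zeta\imath$ is an anti-automorphism and $\zeta\imath(B_r) = B_r$ by Lemma \ref{lem:zetaiBr}, we get
\[ \zeta\imath(B_\lambda) = B_n^{i_n} \cdots B_1^{i_1} . \]
This equals $B_\lambda$ whenever the relevant factors commute. Only $\abs{\lambda} \leq \ell(w)$ occurs (Remark \ref{rem:Bwproperties}), so every pair of factors $B_i, B_j$ appearing satisfies $i+j \leq \abs{\lambda} \leq \ell(w)$ and commutes by hypothesis. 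If a single factor is repeated, there is nothing to check. Hence $\zeta\imath(B_\lambda) = B_\lambda$ for all such $\lambda$.

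Since $B_w$ is an $\mathcal{A}$-linear combination of these $B_\lambda$, and $\zeta\imath$ is $\mathcal{A}$-linear (it is defined on the standard basis and fixes scalars), it follows directly that $\zeta\imath(B_w) = B_w$. This makes the induction unnecessary: the expression of $B_w$ as a combination of the $B_\lambda$ with $\abs{\lambda} \leq \ell(w)$ already suffices.

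The main point to verify is the commutation bookkeeping: reversing a word $B_{j_1} \cdots B_{j_k}$ with $j_1 + \cdots + j_k \leq \ell(w)$ uses only adjacent transpositions of factors $B_i, B_j$ with $i + j \leq \ell(w)$, which is clear. A second point is that $\zeta\imath$ is $\mathcal{A}$-linear, as opposed to the bar involution. This holds because $\imath$ and $\zeta$ are defined $\mathcal{A}$-linearly on the standard basis. As a consistency check, one can confirm that $\zeta\imath$ maps $W_\mathrm{aff}^+$ to itself. The reason is that $x \mapsto x^{-1}$ followed by the diagram flip fixing $0$ preserves the property of being minimal in $W_\mathrm{fin} x$ only after composing appropriately. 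This compatibility is not actually needed for the argument above.
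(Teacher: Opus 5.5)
Your final argument is the same as the paper's. By Remark~\ref{rem:Bwproperties}, $B_w$ is an $\mathcal{A}$-linear combination of the $B_\lambda$ with $\abs{\lambda} \leq \ell(w)$, and the $\mathcal{A}$-linear anti-automorphism $\zeta\imath$ fixes each such $B_\lambda$ by Lemma~\ref{lem:zetaiBr}, together with the commutation of $B_i$ and $B_j$ for distinct parts $i,j$ of $\lambda$ (since $i+j \leq \abs{\lambda} \leq \ell(w)$). The detours through induction and uniqueness are unnecessary, and your closing aside is false: in type $\mathsf{A}_1$, $\zeta$ is the identity and $\imath$ sends $s_0 s_1 \in W_\mathrm{aff}^+$ to $s_1 s_0 \notin W_\mathrm{aff}^+$. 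Since you never use that aside, the proof stands.
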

\begin{proof}
	Recall from Remark \ref{rem:Bwproperties} that $B_w$ is an $\mathcal{A}$-linear combination of elements $B_\lambda \in \mathcal{B}$, for $\lambda \in \mathcal{P}_{\leq n}$ with $\abs{\lambda} \leq \ell(w)$.
	Therefore, it suffices to prove that $\zeta \imath(B_\lambda) = B_\lambda$ for all $\lambda \in \mathcal{P}_{\leq n}$ with $\abs{\lambda} \leq \ell(w)$.
	Indeed, let $\lambda = ( \lambda_1 , \lambda_2 , \ldots ) \in \mathcal{P}_{\leq n}$ with $\abs{\lambda} \leq \ell(w)$, and observe that $B_{\lambda_i}$ and $B_{\lambda_j}$ commute for all $i > j \geq 1$ because $\lambda_i + \lambda_j \leq \abs{\lambda} \leq \ell(w)$.
	Using Lemma \ref{lem:zetaiBr} and the fact that $\zeta \imath$ is an anti-automorphism, it follows that
	\[ \zeta \imath( B_\lambda ) = \zeta \imath( \cdots B_{\lambda_3} B_{\lambda_2} B_{\lambda_1} ) = \zeta \imath( B_{\lambda_1} ) \zeta \imath( B_{\lambda_2} ) \zeta \imath( B_{\lambda_3} ) \cdots = B_{\lambda_1} B_{\lambda_2} B_{\lambda_3} \cdots = \cdots B_{\lambda_3} B_{\lambda_2} B_{\lambda_1} = B_\lambda , \]
	as required.
\end{proof}

\begin{Lemma} \label{lem:BicommuteGrH}
	For all $1 \leq i,j \leq n$, we have $B_i B_j - B_j B_i \in \mathcal{H}^{\leq i+j-1}$.
\end{Lemma}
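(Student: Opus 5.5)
The plan is to pass to the associated graded algebra $\mathrm{Gr}(\mathcal{H})$ and use Lemma \ref{lem:homomorphismHnilGrH} together with the commutativity of $\mathcal{B}_\mathrm{nil}$ from Proposition \ref{prop:Bnilcommutative}.

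First, since each $H_Y$ with $\abs{Y} = r$ satisfies $\ell(w_Y) = r$ (cyclically decreasing elements attached to $Y$ have reduced expressions of length $\abs{Y}$), we have $B_i \in \mathcal{H}^{\leq i}$ and $B_j \in \mathcal{H}^{\leq j}$. The filtration is multiplicative: for $\ell(x) \leq a$ and $\ell(y) \leq b$, the product $H_x H_y$ lies in $\mathcal{H}^{\leq a+b}$. This follows by induction on $\ell(y)$ from the formula $H_x H_s \in \{ H_{xs} , H_{xs} + (v^{-1}-v) H_x \}$. Hence $B_i B_j$ and $B_j B_i$ both lie in $\mathcal{H}^{\leq i+j}$, and their commutator lies in $\mathcal{H}^{\leq i+j-1}$ exactly when its class vanishes in $\mathrm{Gr}_{i+j}(\mathcal{H})$.

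Second, compute that class. The multiplication on $\mathrm{Gr}(\mathcal{H})$ is defined by $[a][b] = [ab]$ for $a \in \mathcal{H}^{\leq i}$ and $b \in \mathcal{H}^{\leq j}$, with the product taken in $\mathrm{Gr}_{i+j}$. Using this and Lemma \ref{lem:BlambdaHnil} (equivalently, $\psi(b_r) = [B_r]$), we get
\[ B_i B_j - B_j B_i + \mathcal{H}^{\leq i+j-1} = [B_i][B_j] - [B_j][B_i] = \psi(b_i)\psi(b_j) - \psi(b_j)\psi(b_i) = \psi( b_i b_j - b_j b_i ) . \]
By Proposition \ref{prop:Bnilcommutative} the elements $b_i, b_j \in \mathcal{B}_\mathrm{nil}$ commute, because $b_r \in \mathcal{B}_\mathrm{nil}$ by \cite[Theorem 6.3]{LamSchubert}. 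So the right-hand side is $\psi(0) = 0$, and the commutator lies in $\mathcal{H}^{\leq i+j-1}$.

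The main point requiring care is that $\psi$ is a ring homomorphism into the graded ring, and that the product structure on $\mathrm{Gr}(\mathcal{H})$ is well defined. Both rest on the multiplicativity of the filtration shown in the first step. Lemma \ref{lem:homomorphismHnilGrH} implicitly already uses this, so I expect no real obstacle beyond stating it cleanly.
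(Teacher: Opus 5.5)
Your proposal is correct and is essentially the paper's own proof. You pass to $\mathrm{Gr}(\mathcal{H})$ and use $[B_i][B_j]-[B_j][B_i]=\psi(b_ib_j-b_jb_i)$ via Lemma \ref{lem:BlambdaHnil}, then conclude from the commutativity of $\mathcal{B}_\mathrm{nil}$ (Proposition \ref{prop:Bnilcommutative}); you just spell out more explicitly than the paper why the filtration is multiplicative and why $\ell(w_Y)=\abs{Y}$.
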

\begin{proof}
	In the associated graded $\mathcal{A}$-algebra $\mathrm{Gr}(\mathcal{H})$ of $\mathcal{H}$, we have
	\[ [B_i] \cdot [B_j] - [B_j] \cdot [B_i] = \psi( b_i b_j - b_j b_i ) = 0 \]
	in $\mathrm{Gr}_{i+j}(\mathcal{H}) = \mathcal{H}^{i+j} / \mathcal{H}^{i+j-1}$, by Lemma \ref{lem:BlambdaHnil} and the fact that $\mathcal{B}_\mathrm{nil}$ is commutative by Proposition \ref{prop:Bnilcommutative}.
	This implies that $B_i B_j - B_j B_i \in \mathcal{H}^{\leq i+j-1}$, as claimed.
\end{proof}

\begin{Theorem} \label{thm:Bcommutative}
	The $\mathcal{A}$-subalgebra $\mathcal{B}$ of $\mathcal{H}$ is commutative.
\end{Theorem}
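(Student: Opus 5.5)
By Corollary \ref{cor:Bgenerators}, the elements $B_1,\ldots,B_n$ generate $\mathcal{B}$ as an $\mathcal{A}$-algebra. It therefore suffices to show that $B_iB_j = B_jB_i$ for all $1 \leq i,j \leq n$. I would prove this by strong induction on $k = i+j$, establishing at stage $k$ that $B_i$ and $B_j$ commute whenever $i+j \leq k$. The base case $k \leq 1$ is vacuous; $k = 2$ forces $i = j$ and is trivial.

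For the inductive step, fix $i,j$ with $i+j = k$ and assume commutativity for all pairs with sum at most $k-1$. Set $C = B_iB_j - B_jB_i$; since $\mathcal{B}$ is a subalgebra, $C \in \mathcal{B}$. By Lemma \ref{lem:BicommuteGrH}, $C \in \mathcal{H}^{\leq k-1}$. Now expand $C$ in the standard basis of $\mathcal{B}$ (Theorem \ref{thm:BwbasistypeA}), writing $C = \sum_{w} c_w B_w$. Lemma \ref{lem:Bcoefficientnonzero} (as in the proof of Lemma \ref{lem:Bwbasis}) shows that $c_w$ is the coefficient of $H_w$ in $C$. Since $C \in \mathcal{H}^{\leq k-1}$, we get $c_w = 0$ whenever $\ell(w) \geq k$. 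Hence
\[ C = \sum_{\substack{w \in W_\mathrm{aff}^+ \\ \ell(w) \leq k-1}} c_w B_w . \]

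Next, apply the anti-automorphism $\zeta\imath$. On one side, Lemma \ref{lem:zetaiBr} gives $\zeta\imath(C) = B_jB_i - B_iB_j = -C$. On the other side, for each $w$ with $\ell(w) \leq k-1$, the induction hypothesis says $B_a, B_b$ commute whenever $a+b \leq \ell(w)$. Lemma \ref{lem:zetaiBw} then gives $\zeta\imath(B_w) = B_w$. We must also check that $\zeta\imath$ is $\mathcal{A}$-linear: it is, since $\imath$ and $\zeta$ are $\mathcal{A}$-linear. Consequently $\zeta\imath(C) = C$. Combining the two sides yields $2C = 0$, and since $\mathcal{H}$ is a free $\mathcal{A}$-module, hence torsion-free, $C = 0$.

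The main obstacle is making the bookkeeping in the induction line up. Applying Lemma \ref{lem:zetaiBw} to the $B_w$ occurring in $C$ requires commutativity only for pairs with sum at most $\ell(w) \leq k-1$, which is exactly what the induction supplies. The essential point is that Lemma \ref{lem:BicommuteGrH} lowers the filtration degree by one, so that $C$ involves only $B_w$ with $\ell(w) < k$.
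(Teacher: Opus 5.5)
Your proposal is correct and follows the paper's proof essentially step for step. You use induction on $i+j$, use Lemma~\ref{lem:BicommuteGrH} to place $B_iB_j - B_jB_i$ in $\mathcal{H}^{\leq i+j-1}$ and hence in the span of the $B_w$ with $\ell(w) < i+j$, and then use Lemmas~\ref{lem:zetaiBr} and~\ref{lem:zetaiBw} to show that $\zeta\imath$ both fixes and negates this commutator. You also spell out two points the paper leaves implicit: that $\zeta\imath$ is $\mathcal{A}$-linear, and that $\mathcal{H}$ is torsion-free, which is what lets you pass from $2C=0$ to $C=0$.
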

\begin{proof}
	By Corollary \ref{cor:Bgenerators}, it suffices to show that $B_i$ commutes with $B_j$ for $1 \leq i,j \leq n$.
	We prove this claim by induction on $i+j$.
	If $i+j=2$ then $i=j=1$, and $B_1$ clearly commutes with $B_1$.
	Now suppose that $i+j \geq 3$ and that $B_k$ commutes with $B_\ell$ for all $k,\ell \geq 1$ with $k+\ell < i+j$.
	Then we have $\zeta \imath(B_w) = B_w$ for all $w \in W_\mathrm{aff}^+$ with $\ell(w) < i+j$ by Lemma \ref{lem:zetaiBw}.
	We can write
	\[ B_i B_j - B_j B_i = \sum_{w \in W_\mathrm{aff}^+} a_{i,j}^w \cdot B_w \]
	with $a_{i,j}^w \in \mathcal{A}$ for $w \in W_\mathrm{aff}^+$, and we claim that $a_{i,j}^w = 0$ for all $w \in W_\mathrm{aff}^+$ with $\ell(w) \geq i+j$.
	Indeed, if we further write
	\[ B_i B_j - B_j B_i = \sum_{x \in W_\mathrm{aff}} \tilde a_{i,j}^x \cdot H_x \]
	with $\tilde a_{i,j}^w \in \mathcal{A}$ for $x \in W_\mathrm{aff}$ then we must have $\tilde a_{i,j}^w = a_{i,j}^w$ for all $w \in W_\mathrm{aff}^+$ by the definition of $B_w$, and as $B_i B_j - B_j B_i \in \mathcal{H}^{\leq i+j-1}$ by Lemma \ref{lem:BicommuteGrH}, we obtain $a_{i,j}^w = \tilde a_{i,j}^w = 0$ for all $w \in W_\mathrm{aff}^+$ with $\ell(w) \geq i+j$.
	Now using Lemma \ref{lem:zetaiBr} and the fact that $\zeta \imath$ is an anti-automorphism, we compute
	\[ B_i B_j - B_j B_i = \sum_{ \substack{ w \in W_\mathrm{aff}^+ \\ \ell(w) < i+j } } a_{i,j}^w \cdot B_w = \sum_{ \substack{ w \in W_\mathrm{aff}^+ \\ \ell(w) < i+j } } a_{i,j}^w \cdot \zeta \imath( B_w ) = \zeta \imath( B_i B_j - B_j B_i) = B_j B_i - B_i B_j , \]
	and it follows that $B_i B_j = B_j B_i$, as required.
\end{proof}

\subsection{}

Let us briefly explain how our results about $\mathcal{B}$ can be extended to the subalgebra $\mathcal{B}_\mathrm{ext}$ of $\mathcal{H}_\mathrm{ext}$.
Recall from Lemma \ref{lem:Bomega} that we have $\mathcal{B}_\mathrm{ext} = \bigoplus_{\omega \in \Omega} \mathcal{B}_\mathrm{aff} \omega$, so $\mathcal{B}_\mathrm{ext}$ is generated by $\Omega$ and $B_1,\ldots,B_n$ by Corollary \ref{cor:Bgenerators}, and $\mathcal{B}_\mathrm{ext}$ is commutative by Lemma \ref{lem:omegaBr} and Theorem \ref{thm:Bcommutative}.
For $x \in W_\mathrm{ext}^+$, we can uniquely write $x = y \omega$ with $y \in W_\mathrm{aff}^+$ and $\omega \in \Omega$, and we define $B_x = B_y \omega$.
Then by Theorem \ref{thm:BwbasistypeA}, the elements $\{ B_x \mid x \in W_\mathrm{ext}^+ \}$ form an $\mathcal{A}$-basis of $\mathcal{B}_\mathrm{aff}$.
Furthermore, if we write
\[ B_x = \sum_{z \in W_\mathrm{aff}} P_{z,x} \cdot H_z , \]
with $P_{z,x} \in \mathcal{A}$ for all $z \in W_\mathrm{aff}$, then $B_x$ is uniquely determined by the requirements that $P_{x,x} = 1$ and $P_{y,x} = 0$ for $x \neq y \in W_\mathrm{ext}^+$, again by Theorem \ref{thm:BwexistencetypeA}.
We have $P_{z,x} = 0$ for all $z \in W_\mathrm{ext}$ with $\ell(z) > \ell(x)$ and $P_{z,x} \in \Z$ for all $z \in W_\mathrm{ext}$ with $\ell(z) = \ell(x)$ by Remark \ref{rem:Bwproperties}.

\subsection{}
\label{subsec:examplesstandardbasis}

To conclude this section, we provide examples of the standard basis of $\mathcal{B}$ for the root systems of type $\mathsf{A}_1$ and $\mathsf{A}_2$.

\begin{Example} \label{ex:A1standardbasis}
	Let $\Phi$ be of type $\mathsf{A}_1$, and for $m \geq 0$, consider the products $x_m = s_0 s_1 s_0 \cdots$ and $y_m = s_1 s_0 s_1 \cdots$ with $m$ factors.
	Then $x_m$ is the unique element of length $m$ in $W_\mathrm{aff}^+$, and we have
	\[ W_\mathrm{aff}^+ = \{ x_m \mid m \geq 0 \} , \qquad W_\mathrm{aff} = \{ e , x_m , y_m \mid m \geq 1 \} \] 
	We claim that $B_{x_m} = H_{x_m} + H_{y_m}$ for all $m \geq 1$.
	Indeed, we have $B_{x_1} = B_{s_0} = B_1 = H_0 + H_1$ by Remark \ref{rem:BrasBw}.
	Now let $m \geq 2$ and suppose that $B_{x_k} = H_{x_k} + H_{y_k}$ for $1 \leq k \leq m-1$.
	Then we have $B_1 \cdot B_{x_{m-1}} \in \mathcal{B}$, and we compute
	\begin{align*}
		B_1 \cdot B_{x_{m-1}} & = ( H_0 + H_1 ) \cdot ( H_{x_{m-1}} + H_{y_{m-1}} ) \\
		& = H_{x_m} + H_{y_m} + (v^{-1}-v) \cdot H_{x_{m-1}} + H_{y_{m-2}} + (v^{-1}-v) \cdot H_{y_{m-1}} +  H_{x_{m-2}} \\
		& = ( H_{x_m} + H_{y_m} ) + (v^{-1}-v) \cdot B_{x_{m-1}} + (1+\delta_{m,2}) \cdot B_{x_{m-2}} .
	\end{align*}
	In particular, we have $H_{x_m} + H_{y_m} \in \mathcal{B}$, and the uniqueness of $B_{x_m}$ implies that $B_{x_m} = H_{x_m} + H_{y_m}$.
\end{Example}

\begin{Example} \label{ex:A2standardbasis}
	Let $\Phi$ be of type $\mathsf{A}_2$.
	The elements $B_w$ for $w \in W_\mathrm{aff}^+$ with $\ell(w) \leq 4$ are given by
	\[
	\begin{gathered}
		B_e = H_e , \hspace{3cm}
		B_{s_0} = H_0 + H_1 + H_2 , \\
		B_{01} = H_{01} + H_{12} + H_{20} , \hspace{2cm}
		B_{02} = H_{02} + H_{21} + H_{10} , \\
		B_{012} = H_{012} + H_{120} + H_{201} + H_{010} + H_{121} + H_{202} , \\
		B_{021} = H_{021} + H_{210} + H_{102} + H_{010} + H_{121} + H_{202} , \\
		B_{0120} = H_{0120} + H_{1201} + H_{2012} , \hspace{2cm}
		B_{0210} = H_{0210} + H_{2102} + H_{1021} , \\
		B_{0121} = H_{0121} + H_{1202} + H_{2010} + H_{1210} + H_{2021} + H_{0102} + (v^{-1}-v) \cdot ( H_{010} + H_{121} + H_{202} ) .
	\end{gathered}
	\]
	\end{Example}

\section{The canonical basis}
\label{sec:canonicalbases}

\subsection{}

In this section, we discuss the existence of a \emph{canonical basis} in the subalgebra $\mathcal{B}$ of $\mathcal{H}$, which is stable under the bar involution of $\mathcal{H}$, assuming the existence of the standard basis $\{B_w \mid w \in W_\mathrm{aff}^+\}$ of $\mathcal{B}$ (see Conjecture \ref{conj:standardbasis}).
In fact, we impose a slightly stronger assumption.
Recall that the affine Hecke algebra $\mathcal{H}$ has a filtration with filtration pieces
\[ \mathcal{H}^{\leq i} = \mathrm{span}_\mathcal{A}\{ H_w \mid w \in W_\mathrm{aff} \text{ with } \ell(w) \leq i \} . \]
For the entire section, we work under the following hypothesis:
\smallskip

\begin{Hypothesis} \label{hyp:standardbasisfiltration}
	We assume that Conjecture \ref{conj:standardbasis} holds and that $B_w \in \mathcal{H}^{\leq \ell(w)}$ for all $w \in W_\mathrm{aff}^+$.
\end{Hypothesis}

The hypothesis is satisfied for root systems of type $\mathsf{A}_n$ by Remark \ref{rem:Bwproperties} and for root systems of type $\mathsf{B}_2$ and $\mathsf{G}_2$ by Appendix \ref{app:rank2}.

\subsection{}
\label{subsec:KLbasis}

The \emph{bar involution} on $\mathcal{H}$ is the unique involutive ring automorphism $\overline{\phantom{A}} \colon \mathcal{H} \to \mathcal{H}$ such that $\overline{v} = v^{-1}$ and $\overline{H_x} = H_{x^{-1}}^{-1}$ for all $x \in W_\mathrm{aff}$, and we say that $h \in H$ is \emph{self-dual} if $\overline{h} = h$.
For all $x \in W_\mathrm{aff}$, there is a unique self-dual element
\[ \underline{H}_x = \sum_{y \in W_\mathrm{aff}} h_{y,x} \cdot H_x \in \mathcal{H} \]
such that $h_{x,x} = 1$ and $h_{y,x} \in v \cdot \Z[v]$ for all $y \in W_\mathrm{aff}$ with $y \neq x$, and the elements $\{ \underline{H}_x \mid x \in W_\mathrm{aff} \}$ form the \emph{Kazhdan--Lusztig basis} of $\mathcal{H}$, see \cite{SoergelKL}.
Furthermore, the base change coefficients $h_{y,x}$ are the \emph{Kazhdan--Lusztig polynomials}, and they satisfy $h_{y,x} = 0$ unless $y \leq x$.
The key observation which will enable us to define a similar canonical basis in $\mathcal{B}$ is the following lemma.

\begin{Lemma} \label{lem:Bbarinvolution}
	The subalgebra $\mathcal{B} \subseteq \mathcal{H}$ is stable under the bar involution $\overline{\phantom{A}} \colon \mathcal{H} \to \mathcal{H}$.
\end{Lemma}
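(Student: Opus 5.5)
The plan is to extend the bar involution to the double affine Hecke algebra $\mathbb{H}$ (specialized at $q=1$) in a way that is compatible with $\varphi$, up to a twist on the weight lattice. Concretely, I would define a ring involution $\overline{\phantom{A}}\colon \mathbb{H}\to\mathbb{H}$ by $\overline{v}=v^{-1}$, $\overline{H_s}=H_s^{-1}$ and $\overline{X_\lambda}=X_{\lambda}$ (or possibly $X_{-\lambda}$, whichever choice preserves the relations). Then I check that it respects the defining relations. The quadratic and braid relations are preserved exactly as in $\mathcal{H}$, and the weight relations are trivially preserved. The only real work is the Bernstein relation \eqref{eq:relationsDL}.

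Since $H_s^{-1}=H_s+(v-v^{-1})$, applying the bar map to \eqref{eq:relationsDL} gives
\[ H_s^{-1}X_\lambda = X_{s(\lambda)}H_s^{-1} + (v^{-1}-v)\frac{X_\lambda-X_{s(\lambda)}}{X_{\alpha_s}-1}. \]
Substituting $H_s^{-1}=H_s+(v-v^{-1})$ and rearranging, this reduces to the identity
\[ H_sX_\lambda = X_{s(\lambda)}H_s + (v-v^{-1})\Big(X_{s(\lambda)}-X_\lambda-\frac{X_\lambda-X_{s(\lambda)}}{X_{\alpha_s}-1}\Big). \]
The bracketed term equals $-X_{\alpha_s}\frac{X_\lambda-X_{s(\lambda)}}{X_{\alpha_s}-1}$, which is the Bernstein correction for the opposite root. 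So with $\overline{X_\lambda}=X_\lambda$ the relation is not literally preserved. I would therefore try $\overline{X_\lambda}=X_{-\lambda}$ composed with a suitable twist, or equivalently, check which of $\lambda\mapsto\pm\lambda$ or $\lambda\mapsto -w_0\lambda$ works. My guess is that $X_\lambda\mapsto X_{-\lambda}$ works: replacing $\lambda$ by $-\lambda$ turns $\frac{X_{-\lambda}-X_{-s\lambda}}{X_{-\alpha_s}-1}$ into exactly the needed expression. Pinning down this sign is the main (though routine) obstacle, and I would verify it carefully on $\lambda=\varpi_\alpha$ using Remark \ref{rem:Demazureoperator}.

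Granting such an involution with $\overline{X_\lambda}=X_{\epsilon(\lambda)}$ for some automorphism $\epsilon$ of $X$, which restricts to the usual bar involution on $\mathcal{H}$, compatibility with $\varphi$ is immediate. For $x=\sum_\mu X_\mu c_\mu(x)$ we have $\overline{x}=\sum_\mu X_{\epsilon(\mu)}\overline{c_\mu(x)}$, so by the PBW decomposition $c_{\epsilon(\mu)}(\overline x)=\overline{c_\mu(x)}$, and hence $\varphi(\overline x)=\overline{\varphi(x)}$.

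Now let $h\in\mathcal{B}$ and $\lambda\in X$. Then
\[ \varphi(\overline h X_\lambda)=\varphi\big(\overline{hX_{\epsilon^{-1}(\lambda)}}\big)=\overline{\varphi(hX_{\epsilon^{-1}(\lambda)})}=\overline h, \]
using the defining property of $\mathcal{B}$ for the weight $\epsilon^{-1}(\lambda)$. Hence $\overline h\in\mathcal{B}$.
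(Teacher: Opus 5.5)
Your proposal is correct and follows the paper's proof. Like the paper, you extend the bar involution to $\mathbb{H}$ with $\overline{X_\lambda}=X_{-\lambda}$, use the PBW decomposition to get $c_{-\mu}(\overline{x})=\overline{c_\mu(x)}$ and hence $\varphi(\overline{x})=\overline{\varphi(x)}$, and apply this to $\overline{b}X_\lambda=\overline{bX_{-\lambda}}$. The only difference is that the paper cites Cherednik (Section 3.2.2) for the existence of the extension, whereas you check the Bernstein relation by hand; your computation does confirm $\epsilon=-\mathrm{id}$ for every $s\in S_\mathrm{aff}$, since $W_\mathrm{aff}$ acts linearly on $\tilde X$ and $\overline{X_\delta}=1$ is compatible with $q=1$, so you can state the sign rather than leave it as a guess.
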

\begin{proof}
	By Section 3.2.2 in \cite{CherednikDAHA}, the bar involution $\overline{\phantom{A}} \colon \mathcal{H} \to \mathcal{H}$ extends to an involutive ring automorphism $\overline{\phantom{A}} \colon \mathbb{H} \to \mathbb{H}$ such that $\overline{ X_\lambda } = X_{-\lambda}$ for all $\lambda \in X$, and it is straightforward to see that we have $c_\mu(\overline{x}) = \overline{c_{-\mu}(x)}$ and $\varphi(\overline{x}) = \overline{\varphi(x)}$ for all $x \in \mathbb{H}$ and $\mu \in X$.
	In particular, for $b \in \mathcal{B}$ and $\lambda \in X$, we have
	\[ \varphi( \overline{b} X_\lambda ) = \varphi( \overline{b X_{-\lambda}} ) = \overline{ \varphi( b X_{-\lambda} ) } = \overline{b} , \]
	and it follows that $\overline{b} \in \mathcal{B}$, as claimed.
\end{proof}

\subsection{}

We now prove the existence of certain elements $\underline{\tilde B}_w \in \mathcal{B}$ for $w \in W_\mathrm{aff}^+$ whose definition mimics the definition of the Kazhdan--Lusztig basis of $\mathcal{H}$.
Note that these elements are not what we call the \emph{canonical basis} of $\mathcal{B}$; see Corollary \ref{cor:canonicalbasisexistence} and Definition \ref{def:canonicalbasis} below. 
We consider the partial order $\leq_\ell$ on $W_\mathrm{aff}$ that is defined by $x \leq y$ if and only if $x=y$ or $\ell(x) < \ell(y)$.

\begin{Theorem} \label{thm:KLbasisB}
	For all $w \in W_\mathrm{aff}^+$, there is a unique self-dual element $\underline{\tilde B}_w \in \mathcal{B}$ such that
	\[ \underline{\tilde B}_w = \sum_{x \in W_\mathrm{aff}^+} \tilde R_{x,w} \cdot B_x \]
	with $\tilde R_{x,w} = 0$ unless $x \leq_\ell w$ and $\tilde R_{w,w} = 1$ and $\tilde R_{x,w} \in v \cdot \Z[v]$ for $x <_\ell w$.
\end{Theorem}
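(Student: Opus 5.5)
This is the standard Kazhdan--Lusztig/Lusztig argument for canonical bases of a based module with a unitriangular bar involution. The plan is to apply it to $\mathcal{B}$ with basis $\{B_w\}$ and the order $\leq_\ell$.

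\textbf{Step 1: triangularity of the bar involution.} By Lemma \ref{lem:Bbarinvolution} and Lemma \ref{lem:Bwbasis}, we can write
\[ \overline{B_w} = \sum_{x \in W_\mathrm{aff}^+} r_{x,w} \cdot B_x \]
with $r_{x,w} \in \mathcal{A}$. I claim that $r_{w,w} = 1$ and $r_{x,w} = 0$ unless $x \leq_\ell w$. Hypothesis \ref{hyp:standardbasisfiltration} gives $B_w = H_w + \sum_{\ell(y) \leq \ell(w),\, y \neq w} P_{y,w} H_y$. We also know $\overline{H_y} = H_{y^{-1}}^{-1} \in H_y + \sum_{z<y} \mathcal{A} H_z$. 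Together these give $\overline{B_w} \in H_w + \sum_{\ell(y)\le \ell(w),\, y\ne w}\mathcal{A} H_y$, so the coefficient of $H_w$ in $\overline{B_w}$ is $1$. For $x\in W_\mathrm{aff}^+$ with $x \neq w$ and $\ell(x) \geq \ell(w)$, the coefficient of $H_x$ in $\overline{B_w}$ vanishes. By the defining property of the $B_x$ (their $H$-coefficients at elements of $W_\mathrm{aff}^+$ are $\delta$'s, as in the proof of Lemma \ref{lem:Bwbasis}), we have $r_{x,w}$ equal to the coefficient of $H_x$ in $\overline{B_w}$. This yields the claim.

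\textbf{Step 2: existence.} Proceed by induction on $\ell(w)$, using the finiteness of $\{x \in W_\mathrm{aff}^+ : \ell(x) < \ell(w)\}$. Suppose $\underline{\tilde B}_x$ has been constructed for all $x \in W_\mathrm{aff}^+$ with $\ell(x) < \ell(w)$. These are self-dual and unitriangular with respect to $\{B_y\}$, so $\{\underline{\tilde B}_x\}_{\ell(x)<\ell(w)} \cup \{B_w\}$ spans the same module as $\{B_x : x \leq_\ell w\}$. Write
\[ \overline{B_w} - B_w = \sum_{\ell(x)<\ell(w)} a_x \underline{\tilde B}_x . \]
Applying the bar involution gives $\overline{a_x} = -a_x$. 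Hence $a_x = \sum_{k\ne0} c_k v^k$ with $c_{-k} = -c_k$, and we set $p_x = \sum_{k>0} c_{-k} v^k$, so that $a_x = p_x - \overline{p_x}$. Define
\[ \underline{\tilde B}_w = B_w + \sum_{\ell(x) < \ell(w)} p_x \underline{\tilde B}_x . \]
Then $\overline{\underline{\tilde B}_w} = \overline{B_w} + \sum \overline{p_x}\,\underline{\tilde B}_x = B_w + \sum (a_x + \overline{p_x})\underline{\tilde B}_x = \underline{\tilde B}_w$. Its expansion in the $B_y$ has leading coefficient $1$ and lower coefficients in $v\Z[v]$, because $p_x \in v\Z[v]$ and the $\tilde R$ of the lower elements lie in $\Z[v]$.

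\textbf{Step 3: uniqueness.} The difference $D$ of two candidates is self-dual and is a combination $\sum_{x<_\ell w} d_x B_x$ with $d_x \in v\Z[v]$. Suppose $D \neq 0$ and choose $x$ of maximal length with $d_x\ne0$. By Step 1, the coefficient of $B_x$ in $\overline{D}$ is $\overline{d_x}$, since all other $B_y$ with $\ell(y) \geq \ell(x)$ contribute nothing at $B_x$. Thus $d_x = \overline{d_x} \in v\Z[v] \cap v^{-1}\Z[v^{-1}] = 0$, a contradiction.

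\textbf{Main obstacle.} The only genuinely new point is Step 1, namely the triangularity of the bar involution on $\{B_w\}$ with respect to $\leq_\ell$. There one must carefully combine the filtration hypothesis with the fact that the bar involution preserves $\mathcal{H}^{\leq i}$ and is unitriangular on $\{H_x\}$. Note that elements of equal length other than $w$ are incomparable under $\leq_\ell$; this is exactly what the vanishing of coefficients at $x\in W_\mathrm{aff}^+$ with $\ell(x)=\ell(w)$, $x\ne w$, handles. Everything after Step 1 is the standard Lusztig argument.
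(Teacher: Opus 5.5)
Your proposal is correct and follows the paper's route. Step 1 is the paper's key verification that $\overline{B_w}-B_w$ is an $\mathcal{A}$-combination of the $B_x$ with $x<_\ell w$, argued via the filtration hypothesis, unitriangularity of the bar involution on $\{H_x\}$, and the fact that the $H_x$-coefficient of $\overline{B_w}$ is $\overline{P_{x,w}}=0$ for $x\in W_\mathrm{aff}^+\setminus\{w\}$ with $\ell(x)=\ell(w)$. Your Steps 2--3 write out the standard existence/uniqueness argument that the paper instead imports from \cite[Proposition 2.1]{LaniniRamSobaje}.

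There is one sign slip to fix in Step 2. Since $c_{-k}=-c_k$, your choice $p_x=\sum_{k>0}c_{-k}v^k$ gives $p_x-\overline{p_x}=-a_x$, not $a_x$. Take instead $p_x=\sum_{k>0}c_kv^k$, the $v\Z[v]$-part of $a_x$; this satisfies $a_x=p_x-\overline{p_x}$ as your self-duality computation requires.
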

\begin{proof}
	By \cite[Proposition 2.1]{LaniniRamSobaje}, it suffices to show that $\mathcal{B}$ is a \emph{KL-module} in the sense of \cite[Section 2]{LaniniRamSobaje} with respect to the bar involution, the poset $(W_\mathrm{aff}^+,\leq_\ell)$, and the basis $\{ B_w \mid w \in W_\mathrm{aff} \}$.
	In other words, we need to verify that $\overline{B_w} - B_w$ is an $\mathcal{A}$-linear combination of $\{ B_x \mid x \in W_\mathrm{aff}^+ \text{ with } x <_\ell w \}$, for all $w \in W_\mathrm{aff}^+$.
	To that end, observe that $\overline{B_w} - B_w = \sum_{y \in W_\mathrm{aff}} \overline{P_{y,w} \cdot H_y} - P_{y,w} \cdot H_y$, and let us write
	\[ \overline{P_{y,w} \cdot H_y} - P_{y,w} \cdot H_y = \sum_{z \in W_\mathrm{aff}} q_{z,y} \cdot H_y \]
	with $q_{z,y} \in \mathcal{A}$ for all $z \in W_\mathrm{aff}$.
	Writing $q_z = \sum_{y \in W_\mathrm{aff}} q_{z,y}$, we have $\overline{B_w} - B_w = \sum_{z \in W_\mathrm{aff}} q_z \cdot H_z$, and the definition of the standard basis in Conjecture \ref{conj:standardbasis} implies that
	\[ \overline{B_w} - B_w = \sum_{z \in W_\mathrm{aff}^+} q_z \cdot B_z . \]
	We claim that $q_z = 0$ for all $z \in W_\mathrm{aff}^+$ with $\ell(z) \geq \ell(w)$.
	To prove the claim, we consider the Laurent polynomials $q_{z,y}$ for $y \in W_\mathrm{aff}$.
	\begin{itemize}
		\item If $\ell(y) > \ell(w)$, then we have $P_{y,w} = 0$ by Hypothesis \ref{hyp:standardbasisfiltration}, and therefore $q_{z,y} = 0$ for all $z \in W_\mathrm{aff}$.
		\item If $\ell(y) < \ell(w)$, then we have $H_y , \overline{H}_y \in \mathcal{H}^{\leq \ell(w)-1}$, whence $q_{z,y} = 0$ for all $z \in W_\mathrm{aff}$ with $z \in W_\mathrm{aff}$ with $\ell(z) \geq \ell(w)$.
		\item If $\ell(y) = \ell(w)$ and $y \in W_\mathrm{aff}^+$, then we have $P_{y,w} = \delta_{y,w}$.
		For $y \neq w$, we have $P_{y,w} = 0$ and therefore $q_{z,y} = 0$ for all $z \in W_\mathrm{aff}$.
		For $y \neq w$, we have $P_{w,w} = 1$ and $\overline{H_w} - H_w \in \mathcal{H}^{\leq \ell(w)-1}$ by \cite[Subsection 2.1]{LaniniRamSobaje}, and it follows that $q_{z,w} = 0$ for all $z \in W_\mathrm{aff}$ with $\ell(z) \geq \ell(w)$.
		\item If $\ell(y) = \ell(w)$ and $y \in W_\mathrm{aff} \setminus W_\mathrm{aff}^+$, then we have $\overline{H_y} - H_y \in \mathcal{H}^{\ell(w)-1}$ (again by \cite[Subsection 2.1]{LaniniRamSobaje}) and therefore
		\[ \overline{P_{y,w} \cdot H_y} - P_{y,w} \cdot H_y \in \mathcal{A} H_y + \mathcal{H}^{\leq \ell(w)-1} \]
		in particular, we have $q_{z,y} = 0$ for all $z \in W_\mathrm{aff}$ with $\ell(z) \geq \ell(w)$ and $z \neq y$.
	\end{itemize}
	In conclusion, we have $q_{z,y} = 0$ for all $y \in W_\mathrm{aff}$ and $z \in W_\mathrm{aff}^+$ with $\ell(z) \geq \ell(w)$, and therefore
	\[ q_z = \sum_{y \in W_\mathrm{aff}} q_{z,y} = 0 \]
	for all $z \in W_\mathrm{aff}^+$ with $\ell(z) \geq \ell(w)$, as claimed.
	As
	\[ \overline{B_w} - B_w = \sum_{z \in W_\mathrm{aff}^+} q_z \cdot B_z , \]
	we conclude that $\overline{B_w} - B_w$ is an $\mathcal{A}$-linear combination of $\{ B_x \mid x \in W_\mathrm{aff}^+ \text{ with } x <_\ell w \}$, as required.
\end{proof}

\begin{Remark} \label{rem:KLbasisB}
	The elements $\{ \underline{\tilde B}_w \mid w \in W_\mathrm{aff}^+ \}$ form an $\mathcal{A}$-basis of $\mathcal{B}$ because $\{ B_w \mid w \in W_\mathrm{aff}^+ \}$ is an $\mathcal{A}$-basis of $\mathcal{B}$ and the matrix $(\tilde R_{x,w})_{x,w}$ is triangular with all diagonal entries equal to $1$.
\end{Remark}

\begin{Definition}
	We call $\{ \underline{\tilde B}_w \mid w \in W_\mathrm{aff}^+ \}$ the \emph{Kazhdan--Lusztig basis} of $\mathcal{B}$.
\end{Definition}

\subsection{}

Our next goal is to establish certain properties of the coefficients that arise when we write elements of the Kazhdan--Luszitg basis of $\mathcal{B}$ as linear combinations of the Kazhdan--Lusztig basis of $\mathcal{H}$.
We will use the \emph{inverse Kazhdan--Lusztig polynomials}, which we recall below.

\begin{Remark} \label{rem:inverseKLpolynomials}
	The \emph{inverse Kazhdan--Lusztig polynomials} $h^{x,y} \in \mathcal{A}$ are defined via
\[ H_x = \sum_{y \in W_\mathrm{aff}} (-1)^{\ell(y)+\ell(x)} \cdot h^{x,y} \cdot \underline{H}_y \]
for all $x \in W_\mathrm{aff}$, see \cite[Section 3]{SoergelKL}.
It is straightforward to see that we have $h^{x,y} = 0$ unless $x \geq y$ and $h^{x,x} = 1$ and $h^{x,y} \in v \cdot \Z[v]$ for $x > y$.
\end{Remark}

\begin{Proposition} \label{prop:KLbasisproperties}
	Let $w \in W_\mathrm{aff}^+$ and write
	\[ \underline{\tilde B}_w = \sum_{x \in W_\mathrm{aff}} \tilde Q_{x,w} \cdot \underline{H}_x , \]
	with $\tilde Q_{x,w} \in \mathcal{A}$ for $x \in W_\mathrm{aff}$.
	Then the Laurent polynomials $\tilde Q_{x,w}$ have the following properties.
	\begin{enumerate}
		\item $\tilde Q_{x,w}$ is self-dual for all $x \in W_\mathrm{aff}$, that is, we have $\overline{\tilde Q_{x,w}} = \tilde Q_{x,w}$;
		\item we have $\tilde Q_{x,w} = 0$ for all $x \in W_\mathrm{aff}$ with $\ell(x) > \ell(w)$;
		\item we have $\tilde Q_{w,w} = 1$ and $\tilde Q_{x,w} = 0$ for all $x \in W_\mathrm{aff}^+$ with $x \neq w$ and $\ell(x) = \ell(y)$.
	\end{enumerate}
\end{Proposition}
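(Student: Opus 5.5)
Each of the three properties follows from one of the structural facts already established. For (1) I use bar-invariance. Since $\underline{\tilde B}_w$ is self-dual by Theorem \ref{thm:KLbasisB} and every $\underline{H}_x$ is self-dual, applying the bar involution to the expansion gives
\[ \sum_x \overline{\tilde Q_{x,w}} \cdot \underline{H}_x = \sum_x \tilde Q_{x,w} \cdot \underline{H}_x . \]
Since $\{\underline{H}_x\}$ is an $\mathcal{A}$-basis of $\mathcal{H}$, comparing coefficients yields $\overline{\tilde Q_{x,w}} = \tilde Q_{x,w}$.

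For (2) I argue with the length filtration. By Theorem \ref{thm:KLbasisB}, $\underline{\tilde B}_w$ is an $\mathcal{A}$-linear combination of elements $B_x$ with $x \leq_\ell w$, so $\ell(x) \leq \ell(w)$. Hypothesis \ref{hyp:standardbasisfiltration} gives $B_x \in \mathcal{H}^{\leq \ell(w)}$, hence $\underline{\tilde B}_w \in \mathcal{H}^{\leq \ell(w)}$. The Kazhdan--Lusztig basis is unitriangular with respect to the standard basis and the Bruhat order: $h_{y,x} = 0$ unless $y \leq x$, and $y < x$ implies $\ell(y) < \ell(x)$. Hence $\mathcal{H}^{\leq i}$ is also spanned by $\{\underline{H}_x \mid \ell(x) \leq i\}$. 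It follows that $\tilde Q_{x,w} = 0$ whenever $\ell(x) > \ell(w)$.

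For (3) I compare top-degree coefficients. I work modulo $\mathcal{H}^{\leq \ell(w)-1}$, in $\mathrm{Gr}_{\ell(w)}(\mathcal{H})$, where $[\underline{H}_x] = [H_x]$ for $\ell(x) = \ell(w)$ by the same unitriangularity. So for $\ell(x) = \ell(w)$, the coefficient $\tilde Q_{x,w}$ equals the coefficient of $H_x$ in $\underline{\tilde B}_w$. Writing $\underline{\tilde B}_w = B_w + \sum_{x <_\ell w} \tilde R_{x,w} B_x$, the correction terms lie in $\mathcal{H}^{\leq \ell(w)-1}$, again by Hypothesis \ref{hyp:standardbasisfiltration}. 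Thus this coefficient equals $P_{x,w}$, which is $\delta_{x,w}$ for $x \in W_\mathrm{aff}^+$ with $\ell(x) = \ell(w)$ by Conjecture \ref{conj:standardbasis}. The condition ``$\ell(x) = \ell(y)$'' in the statement is presumably a typo for $\ell(x) = \ell(w)$.

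The only step needing care is the claim that $\mathcal{H}^{\leq i}$ equals the span of $\{\underline{H}_x \mid \ell(x) \leq i\}$ and that $\underline{H}_x \equiv H_x$ modulo lower filtration. This follows from the triangularity of the Kazhdan--Lusztig polynomials, with $h_{x,x}=1$ and $h_{y,x}=0$ unless $y \leq x$, together with the fact that Bruhat-smaller elements have strictly smaller length.
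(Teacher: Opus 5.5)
Your proof is correct and follows essentially the same route as the paper. Part (1) is proved by comparing coefficients after applying the bar involution, and parts (2)--(3) by the length-triangularity of $\tilde R_{y,w}$, of $P_{z,y}$ and of the Kazhdan--Lusztig basis. The only difference is bookkeeping: the paper writes out $\tilde Q_{x,w}=\sum_{y,z}(-1)^{\ell(x)+\ell(z)}\tilde R_{y,w}P_{z,y}h^{z,x}$ with inverse Kazhdan--Lusztig polynomials and checks which terms survive, whereas you express the same triangularity through the filtration $\mathcal{H}^{\leq i}$ and the polynomials $h_{y,x}$. You are also right that ``$\ell(x)=\ell(y)$'' in (3) should read $\ell(x)=\ell(w)$.
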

\begin{proof}
	As $\underline{\tilde B}_w$ and $\underline{H}_x$ are self-dual for all $w \in W_\mathrm{aff}^+$ and $x \in W_\mathrm{aff}$, we have
	\[ \sum_{x \in W_\mathrm{aff}} \tilde Q_{x,w} \cdot \underline{H}_x = \tilde B_w = \overline{\underline{\tilde B}_w} = \sum_{x \in W_\mathrm{aff}} \overline{ \tilde Q_{x,w} \cdot \underline{H}_x } = \sum_{x \in W_\mathrm{aff}} \overline{ \tilde Q_{x,w} } \cdot \underline{H}_x , \]
	and this implies (1) because $\{ \underline{H}_x \mid x \in W_\mathrm{aff} \}$ is an $\mathcal{A}$-basis of $\mathcal{H}$.
	In order to prove (2) and (3), observe that
	\begin{multline*}
		\sum_{x \in W_\mathrm{aff}} \tilde Q_{x,w} = \underline{\tilde B}_w = \sum_{y \in W_\mathrm{aff}^+} \tilde R_{y,w} \cdot B_y = \sum_{y \in W_\mathrm{aff}^+} \sum_{z \in W_\mathrm{aff}} \tilde R_{y,w} \cdot P_{z,y} \cdot H_z \\
		 = \sum_{y \in W_\mathrm{aff}^+} \sum_{z \in W_\mathrm{aff}} \sum_{x \in W_\mathrm{aff}} (-1)^{\ell(x)+\ell(z)} \tilde R_{y,w} \cdot P_{z,y} \cdot h^{z,x} \cdot \underline{H}_x ,
	\end{multline*}
	and therefore
	\[ \tilde Q_{x,w} = \sum_{y \in W_\mathrm{aff}^+} \sum_{z \in W_\mathrm{aff}} (-1)^{\ell(x)+\ell(z)} \tilde R_{y,w} \cdot P_{z,y} \cdot h^{z,x} \]
	for all $x \in W_\mathrm{aff}$.
	Recall from Theorem \ref{thm:KLbasisB} and Remarks \ref{rem:Bwproperties} and \ref{rem:inverseKLpolynomials} that we have $\tilde R_{y,w} = 0$ unless $\ell(y) \leq \ell(w)$ and $P_{z,y} = 0$ unless $\ell(z) \leq \ell(y)$ and $h^{z,x} = 0$ unless $z \geq x$.
	If $\ell(x) > \ell(w)$ then it follows that $\tilde Q_{x,w} = 0$, as claimed in (2).
	If $\ell(x) = \ell(w)$, then for all $y \in W_\mathrm{aff}^+$ and $z \in W_\mathrm{aff}$ such that $\tilde R_{y,w} \cdot P_{z,y} \cdot h^{z,x} \neq 0$, we must have $\ell(x) = \ell(y) = \ell(z)$ and $z \geq x$, and it follows that $z=x$.
	Furthermore, we must have $y=w$ by Theorem \ref{thm:KLbasisB}, and we conclude that $\tilde Q_{x,w} = \tilde R_{w,w} \cdot P_{x,w} \cdot h^{x,x} = P_{x,w}$.
	Now (3) follows from the definition of the Laurent polynomials $P_{x,w}$ in Conjecture \ref{conj:standardbasis}.
\end{proof}

The following corollary is an analogue of Lemma \ref{lem:Bcoefficientnonzero}, with the standard basis of $\mathcal{H}$ replaced by the Kazhdan--Lusztig basis.

\begin{Corollary} \label{cor:BcoefficientnonzeroKL}
	Let $b \in \mathcal{B}$ and write
	\[ b = \sum_{x \in W_\mathrm{aff}} a_x \cdot \underline{H}_x , \]
	with $a_x \in \mathcal{A}$ for $x \in W_\mathrm{aff}$.
	If $a_x = 0$ for all $x \in W_\mathrm{aff}^+$ then $b=0$.
\end{Corollary}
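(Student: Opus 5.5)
We argue by contradiction, using the Kazhdan--Lusztig basis $\{ \underline{\tilde B}_w \mid w \in W_\mathrm{aff}^+ \}$ of $\mathcal{B}$ from Remark \ref{rem:KLbasisB}, which exists under Hypothesis \ref{hyp:standardbasisfiltration}. Suppose $b \neq 0$ and expand
\[ b = \sum_{w \in W_\mathrm{aff}^+} c_w \cdot \underline{\tilde B}_w \]
with $c_w \in \mathcal{A}$. Only finitely many $c_w$ are nonzero, so we may set $m = \max\{ \ell(w) \mid c_w \neq 0 \}$ and choose some $w_0 \in W_\mathrm{aff}^+$ with $\ell(w_0) = m$ and $c_{w_0} \neq 0$.

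Writing $\underline{\tilde B}_w = \sum_x \tilde Q_{x,w} \cdot \underline{H}_x$, we obtain $a_x = \sum_w c_w \tilde Q_{x,w}$ for all $x \in W_\mathrm{aff}$. We evaluate this at $x = w_0$. Any $w$ with $c_w \neq 0$ satisfies $\ell(w) \leq m$, and there are two cases.
\begin{itemize}
\item[(i)] If $\ell(w) < m = \ell(w_0)$, then $\tilde Q_{w_0,w} = 0$ by Proposition \ref{prop:KLbasisproperties}(2).
\item[(ii)] If $\ell(w) = m$, then $\tilde Q_{w_0,w} = \delta_{w_0,w}$ by Proposition \ref{prop:KLbasisproperties}(3), since both $w$ and $w_0$ lie in $W_\mathrm{aff}^+$ and have the same length.
\end{itemize}
Hence $a_{w_0} = c_{w_0} \neq 0$, contradicting the assumption that $a_x = 0$ for all $x \in W_\mathrm{aff}^+$. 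Therefore $b = 0$.

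The only point requiring care is to take $w_0$ of maximal length among the $w$ with $c_w \neq 0$. This choice is exactly what makes the triangularity statements (2) and (3) of Proposition \ref{prop:KLbasisproperties} apply, via the filtration condition in Hypothesis \ref{hyp:standardbasisfiltration}.

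As an alternative route, one could first convert the KL-basis expansion into the standard basis and then apply Lemma \ref{lem:Bcoefficientnonzero}. That approach needs a similar leading-length argument, using $h^{z,x}=0$ unless $z \geq x$, and so offers no advantage over the argument above.
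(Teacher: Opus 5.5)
Your proof is correct and is essentially the paper's argument: you expand $b$ in the Kazhdan--Lusztig basis $\{\underline{\tilde B}_w \mid w \in W_\mathrm{aff}^+\}$ of $\mathcal{B}$ from Remark~\ref{rem:KLbasisB}, choose $w_0$ of maximal length with $c_{w_0}\neq 0$, and apply parts (2) and (3) of Proposition~\ref{prop:KLbasisproperties} to get $a_{w_0}=c_{w_0}\neq 0$. The only difference is that you separate the cases $\ell(w)<m$ and $\ell(w)=m$ explicitly, which the paper does in a single step.
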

\begin{proof}
	By Remark \ref{rem:KLbasisB}, we can write $b = \sum_y c_y \cdot \underline{\tilde B}_y$, with $c_y \in \mathcal{A}$ for $y \in W_\mathrm{aff}^+$, and it follows that
	\[ a_x = \sum_{y \in W_\mathrm{aff}^+} c_y \cdot \tilde Q_{x,y} \]
	for all $x \in W_\mathrm{aff}$.
	If $b \neq 0$, then we can choose $x \in W_\mathrm{aff}^+$ of maximal length with the property that $c_x \neq 0$, so $c_y = 0$ for all $y \in W_\mathrm{aff}^+$ with $\ell(y) > \ell(x)$.
	For $y \in W_\mathrm{aff}^+$ with $\ell(y) \leq \ell(x)$, we have $\tilde Q_{x,y} = 0$ if $x \neq y$ and $\tilde Q_{x,x} = 1$ by Proposition \ref{prop:KLbasisproperties}, and we conclude that $a_x = c_x \neq 0$, as required.
\end{proof}

\subsection{}

Now we are ready to establish the existence of what we call the \emph{canonical basis} of $\mathcal{B}$.

\begin{Corollary} \label{cor:canonicalbasisexistence}
	For all $w \in W_\mathrm{aff}^+$, there is a unique element $\underline{B}_w \in \mathcal{B}$ whose expansion
	\[ \underline{B}_w = \sum_{x \in W_\mathrm{aff}} Q_{x,w} \cdot \underline{H}_x \]
	in terms of the Kazhdan--Lusztig basis of $\mathcal{H}$ satisfies $Q_{w,w} = 1$ and $Q_{x,w} = 0$ for all $w \neq x \in W_\mathrm{aff}^+$.
	Furthermore, $\underline{B}_w$ is self-dual.
\end{Corollary}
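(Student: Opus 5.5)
Uniqueness follows at once from Corollary \ref{cor:BcoefficientnonzeroKL}. If $\underline{B}_w$ and $\underline{B}_w'$ both satisfy the conditions, then their difference lies in $\mathcal{B}$. Its coefficient at $\underline{H}_x$ vanishes for every $x \in W_\mathrm{aff}^+$: at $x=w$ it is $1-1$, and at $x \neq w$ it is $0-0$. Hence the difference is $0$.

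For existence, I would start from the Kazhdan--Lusztig basis $\{ \underline{\tilde B}_y \mid y \in W_\mathrm{aff}^+ \}$ of $\mathcal{B}$ (Theorem \ref{thm:KLbasisB} and Remark \ref{rem:KLbasisB}) and correct it by a triangular change of basis. Consider the matrix $M = (\tilde Q_{x,y})_{x,y \in W_\mathrm{aff}^+}$. By Proposition \ref{prop:KLbasisproperties}(2) and (3), the entry $\tilde Q_{x,y}$ vanishes unless $x=y$ or $\ell(x) < \ell(y)$, and $\tilde Q_{y,y}=1$. So $M$ is unitriangular with respect to $\leq_\ell$. Each column has finitely many nonzero entries, since only finitely many $x$ have bounded length.

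I would then define $\underline{B}_w = \sum_{y} N_{y,w} \underline{\tilde B}_y$, where $N = M^{-1}$. This inverse exists as a unitriangular matrix with entries in $\mathcal{A}$, and its column $w$ is supported on the finite set $\{ y \mid y \leq_\ell w \}$. To compute it, one solves recursively downward in length: set $N_{w,w}=1$, and for $\ell(x)<\ell(w)$ take $N_{x,w} = -\sum_{x <_\ell y \leq_\ell w} \tilde Q_{x,y} N_{y,w}$. The recursion terminates because the poset below $w$ is finite. Then the coefficient of $\underline{H}_x$ in $\underline{B}_w$, for $x \in W_\mathrm{aff}^+$, equals $(MN)_{x,w} = \delta_{x,w}$, as required.

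For self-duality, the entries $\tilde Q_{x,y}$ are bar-invariant by Proposition \ref{prop:KLbasisproperties}(1). The recursion therefore produces bar-invariant $N_{x,w}$, by induction along the recursion. The elements $\underline{\tilde B}_y$ are self-dual, so $\underline{B}_w$ is self-dual. Alternatively, $\overline{\underline{B}_w}$ lies in $\mathcal{B}$ by Lemma \ref{lem:Bbarinvolution}, and its $\underline{H}_x$-coefficients are the bars of those of $\underline{B}_w$, so the same conditions hold. Uniqueness then gives $\overline{\underline{B}_w} = \underline{B}_w$; this argument avoids the recursion altogether.

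The only real point needing care is the finiteness and triangularity of $M$. This is where Hypothesis \ref{hyp:standardbasisfiltration} enters, through Proposition \ref{prop:KLbasisproperties}(2). It guarantees that the recursion involves finitely many terms, so $\underline{B}_w$ is a genuine finite element of $\mathcal{B}$.
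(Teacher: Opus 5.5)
Your proof is correct and is essentially the paper's argument. The paper defines $\underline{B}_w = \underline{\tilde B}_w - \sum_{x \in W_\mathrm{aff}^+,\, \ell(x)<\ell(w)} \tilde Q_{x,w} \cdot \underline{B}_x$ by induction on $\ell(w)$, which is exactly the recursive inversion of your unitriangular matrix $(\tilde Q_{x,y})$; it gets uniqueness from Corollary~\ref{cor:BcoefficientnonzeroKL} and self-duality from the bar-invariance of $\underline{\tilde B}_w$ and $\tilde Q_{x,w}$, as you do (your alternative self-duality argument via Lemma~\ref{lem:Bbarinvolution} plus uniqueness is also valid).
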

\begin{proof}
	By Proposition \ref{prop:KLbasisproperties}, we can define $\underline{B}_w$ by induction on $\ell(w)$, via $\underline{B}_e = \underline{\tilde B}_e = H_e$ and
	\[ \underline{B}_w = \underline{\tilde B}_w - \sum_{ \substack{ x \in W_\mathrm{aff}^+ \\ \ell(x) < \ell(w) }  }  \tilde Q_{x,w} \cdot \underline{B}_x \]
	for $w \in W_\mathrm{aff}^+$ with $\ell(w)>0$.
	The uniqueness of $\underline{B}_w$ follows from Corollary \ref{cor:BcoefficientnonzeroKL} (compare the proof of Lemma \ref{lem:Bwunique}), and $\underline{B}_w$ is self-dual because $\underline{\tilde B}_w$ and $\tilde Q_{x,w}$ are self-dual for $x \in W_\mathrm{aff}$, by Theorem \ref{thm:KLbasisB} and Proposition \ref{prop:KLbasisproperties}.
\end{proof}

\begin{Corollary} \label{cor:canonicalbasisproperties}
	For $w \in W_\mathrm{aff}^+$ and $x \in W_\mathrm{aff}$, we have $Q_{x,w} = 0$ unless $\ell(x) \leq \ell(w)$.
\end{Corollary}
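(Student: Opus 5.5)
We argue by induction on $\ell(w)$, following the recursive construction of $\underline{B}_w$ in the proof of Corollary \ref{cor:canonicalbasisexistence}. Recall that
\[ \underline{B}_w = \underline{\tilde B}_w - \sum_{ \substack{ x \in W_\mathrm{aff}^+ \\ \ell(x) < \ell(w) } } \tilde Q_{x,w} \cdot \underline{B}_x , \]
with $\underline{B}_e = H_e = \underline{H}_e$, so the base case $\ell(w) = 0$ is immediate. Taking the coefficient of $\underline{H}_y$ for an arbitrary $y \in W_\mathrm{aff}$ gives
\[ Q_{y,w} = \tilde Q_{y,w} - \sum_{ \substack{ x \in W_\mathrm{aff}^+ \\ \ell(x) < \ell(w) } } \tilde Q_{x,w} \cdot Q_{y,x} . \]

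Now fix $y$ with $\ell(y) > \ell(w)$ and look at the two pieces of this formula.

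The first term vanishes: $\tilde Q_{y,w} = 0$ by part (2) of Proposition \ref{prop:KLbasisproperties}.

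Each summand also vanishes. Every $x$ in the sum satisfies $\ell(x) < \ell(w) < \ell(y)$, so the induction hypothesis applied to $x$ gives $Q_{y,x} = 0$.

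Hence $Q_{y,w} = 0$, which completes the induction. There is no serious obstacle here; the only ingredient beyond bookkeeping is part (2) of Proposition \ref{prop:KLbasisproperties}, which itself rests on Hypothesis \ref{hyp:standardbasisfiltration}.

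Alternatively, one can avoid the recursion and argue directly. Expand $\underline{B}_w$ in the basis $\{ \underline{\tilde B}_x \}$ of Remark \ref{rem:KLbasisB}. The recursion shows that only $x$ with $\ell(x) \leq \ell(w)$ occur, and each such $\underline{\tilde B}_x$ has no $\underline{H}_y$-component with $\ell(y) > \ell(x)$, again by part (2) of Proposition \ref{prop:KLbasisproperties}.
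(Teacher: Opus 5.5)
Your proof is correct and is essentially the paper's argument: induction on $\ell(w)$ via the recursion $\underline{B}_w = \underline{\tilde B}_w - \sum_{x} \tilde Q_{x,w} \cdot \underline{B}_x$ from the proof of Corollary~\ref{cor:canonicalbasisexistence}. Part (2) of Proposition~\ref{prop:KLbasisproperties} kills $\tilde Q_{y,w}$, and the induction hypothesis kills each $Q_{y,x}$; the paper states this in a single line.
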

\begin{proof}
	By the proof of Corollary \ref{cor:canonicalbasisexistence}, we have
	\[ \underline{B}_w = \underline{\tilde B}_w - \sum_{ \substack{ x \in W_\mathrm{aff}^+ \\ \ell(x) < \ell(w) }  }  \tilde Q_{x,w} \cdot \underline{B}_x , \]
	and the claim follows by induction on $\ell(w)$ using Proposition \ref{prop:KLbasisproperties}.
\end{proof}

\begin{Corollary} \label{cor:canonicalbasis}
	The elements $\{ \underline{B}_w \mid w \in W_\mathrm{aff}^+ \}$ form an $\mathcal{A}$-basis of $\mathcal{B}$.
\end{Corollary}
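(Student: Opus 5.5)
The plan is to deduce this from the corresponding statement for the Kazhdan--Lusztig basis $\{ \underline{\tilde B}_w \mid w \in W_\mathrm{aff}^+ \}$, which is an $\mathcal{A}$-basis of $\mathcal{B}$ by Remark \ref{rem:KLbasisB}. We compare it with the elements $\underline{B}_w$ from Corollary \ref{cor:canonicalbasisexistence} via a unitriangular change of basis, and use Corollary \ref{cor:BcoefficientnonzeroKL} for spanning.

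\emph{Linear independence.} Suppose $\sum_{w} c_w \underline{B}_w = 0$ is a finite relation with $c_w \in \mathcal{A}$. Fix $x \in W_\mathrm{aff}^+$ and take the coefficient of $\underline{H}_x$. Since $Q_{x,w} = \delta_{x,w}$ for $x,w \in W_\mathrm{aff}^+$, this coefficient equals $c_x$, so $c_x = 0$. This step is immediate and needs no triangularity at all.

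\emph{Spanning.} Let $b \in \mathcal{B}$ and write $b = \sum_{x \in W_\mathrm{aff}} a_x \underline{H}_x$. The set of $x \in W_\mathrm{aff}^+$ with $a_x \neq 0$ is finite, because only finitely many $a_x$ are nonzero. Consider
\[ b' \coloneqq b - \sum_{w \in W_\mathrm{aff}^+} a_w \cdot \underline{B}_w , \]
which lies in $\mathcal{B}$ because $\mathcal{B}$ is an $\mathcal{A}$-submodule. For $x \in W_\mathrm{aff}^+$, the $\underline{H}_x$-coefficient of $b'$ is $a_x - \sum_w a_w Q_{x,w} = a_x - a_x = 0$, again using $Q_{x,w} = \delta_{x,w}$ on $W_\mathrm{aff}^+$. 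Corollary \ref{cor:BcoefficientnonzeroKL} then gives $b' = 0$, so $b$ lies in the span of the $\underline{B}_w$.

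I do not expect any real obstacle. The only point to check is that Corollary \ref{cor:BcoefficientnonzeroKL} applies, and it does because $b'$ is an honest element of $\mathcal{B}$: a finite combination of elements of $\mathcal{B}$. If one prefers an argument independent of that corollary, the recursion in the proof of Corollary \ref{cor:canonicalbasisexistence} shows that the transition matrix from $\{\underline{\tilde B}_w\}$ to $\{\underline{B}_w\}$ is unitriangular with respect to length. Together with Remark \ref{rem:KLbasisB}, this would also give the result.
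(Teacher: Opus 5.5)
Your proof is correct. Linear independence is argued exactly as in the paper, but the spanning step takes a slightly different route.

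For spanning, the paper uses the argument you only mention at the end. Rearranging the recursion in the proof of Corollary~\ref{cor:canonicalbasisexistence} gives $\underline{\tilde B}_x = \sum_{w} \tilde Q_{w,x} \cdot \underline{B}_w$ for all $x \in W_\mathrm{aff}^+$. Hence the span of the $\underline{B}_w$ contains the Kazhdan--Lusztig basis of $\mathcal{B}$ from Remark~\ref{rem:KLbasisB}.

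Your main argument instead subtracts $\sum_w a_w \underline{B}_w$ and kills the remainder with Corollary~\ref{cor:BcoefficientnonzeroKL}, mirroring the proof of Lemma~\ref{lem:Bwbasis}. Both routes rest on the same input, namely Remark~\ref{rem:KLbasisB} and Proposition~\ref{prop:KLbasisproperties}, from which Corollary~\ref{cor:BcoefficientnonzeroKL} is itself proved. So neither is more elementary; they differ in what they record:
\begin{itemize}
\item The paper's version exhibits the transition matrix $(\tilde Q_{w,x})$ between the two bases of $\mathcal{B}$.
\item Yours uses only the defining property $Q_{x,w} = \delta_{x,w}$ on $W_\mathrm{aff}^+$, not the particular construction of $\underline{B}_w$. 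It yields the explicit expansion $b = \sum_{w \in W_\mathrm{aff}^+} a_w \cdot \underline{B}_w$, where $a_w$ is the $\underline{H}_w$-coefficient of $b$. This is the Kazhdan--Lusztig analogue of Corollary~\ref{cor:B0basiscoefficients}.
\end{itemize}
Applied to $b = \underline{\tilde B}_x$, your formula recovers the paper's identity.
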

\begin{proof}
	The elements $\{ \underline{B}_w \mid w \in W_\mathrm{aff}^+ \}$ are linearly independent by the linear independence of the Kazhdan--Lusztig basis $\{ \underline{H}_x \mid x \in W_\mathrm{aff} \}$ of $\mathcal{H}$, and they span $\mathcal{B}$ because the elements of the Kazhdan--Lusztig basis $\{ \underline{\tilde B}_x \mid x \in W_\mathrm{aff}^+ \}$ of $\mathcal{B}$ can be written as $\underline{\tilde B}_x = \sum_w \tilde Q_{w,x} \cdot \underline{B}_w$ for all $x \in W_\mathrm{aff}^+$ by the proof of Corollary \ref{cor:canonicalbasisexistence}.
\end{proof}

\begin{Definition} \label{def:canonicalbasis}
	We call $\{ \underline{B}_x \mid x \in W_\mathrm{aff}^+ \}$ the \emph{canonical basis} of $\mathcal{B}$.
\end{Definition}

For $w \in W_\mathrm{aff}^+$ with reduced expression $w = s_{i_1} s_{i_2} \cdots s_{i_r}$, we sometimes write $\underline{B}_w = \underline{B}_{i_1 i_2 \cdots i_r}$.

\subsection{}
\label{subsec:canonicalbasisBext}

The existence of the Kazhdan--Lusztig basis and the canonical basis in $\mathcal{B}$ implies that there are analogous bases in the subalgebra $\mathcal{B}_\mathrm{ext}$ of $\mathcal{H}_\mathrm{ext}$.
Indeed the bar involution on $\mathcal{H}$ readily extends to a bar involution on $\mathcal{H}_\mathrm{ext}$, and the Kazhdan--Lusztig basis of $\mathcal{H}$ gives rise to a Kazhdan--Lusztig basis in $\mathcal{H}_\mathrm{ext}$ via $\underline{H}_{x \omega} = \underline{H}_x \omega$ for $x \in W_\mathrm{aff}$ and $\omega \in \Omega$.
As in Lemma \ref{lem:Bbarinvolution}, one sees that $\mathcal{B}_\mathrm{ext}$ is stable under the bar involution, and the Kazhdan--Lusztig basis $\{ \underline{\tilde B}_x \mid x \in W_\mathrm{ext}^+ \}$ and the canonical basis $\{ \underline{B}_x \mid x \in W_\mathrm{ext}^+ \}$ of $\mathcal{B}_\mathrm{ext}$ can be defined via $\underline{\tilde B}_{y \omega} = \underline{\tilde B}_y \omega$ and $\underline{B}_{y\omega} = \underline{B}_y \omega$ for $y \in W_\mathrm{aff}^+$ and $\omega \in \Omega$ (see Lemma \ref{lem:Bomega}).
One easily checks that the Kazhdan--Lusztig basis satisfies the conditions from Theorem \ref{thm:KLbasisB}, and that the canonical basis satisfies the conditions from Corollary \ref{cor:canonicalbasisexistence}, with $W_\mathrm{aff}$ replaced by $W_\mathrm{ext}$.
In particular, if
\[ \underline{B}_x = \sum_{y \in W_\mathrm{ext}} Q_{y,x} \cdot \underline{H}_x , \]
with $Q_{y,x} \in \mathcal{A}$ for $y \in W_\mathrm{ext}$, then we have $Q_{x,x} = 1$ and $Q_{y,x} = 0$ for $x neq y \in W_\mathrm{aff}^+$.

\subsection{}

Next we want to demonstrate that the canonical basis does not in general coincide with the Kazhdan--Lusztig basis of $\mathcal{B}$.
This will follow from Corollary \ref{cor:canonicalbasisequalsKLbasiscondition} and Example \ref{ex:A2canonicalbasis} below.
For $w \in W_\mathrm{aff}^+$, let us write
\begin{equation} \label{eq:canonicalbasistostandardbasis}
	\underline{B}_w = \sum_{y \in W_\mathrm{aff}^+} R_{y,w} \cdot B_y
\end{equation}
with $R_{y,w} \in \mathcal{A}$ for all $y \in W_\mathrm{aff}^+$.

\begin{Lemma} \label{lem:canonicalbasistostandardbasis}
	For all $w,y \in W_\mathrm{aff}^+$, we have
	\[ R_{y,w} = \sum_{x \in W_\mathrm{aff}} h_{y,x} \cdot Q_{x,w} \]
	In particular, we have $R_{y,w} = 0$ unless $y \leq_\ell w$ and $R_{w,w} = 1$.
\end{Lemma}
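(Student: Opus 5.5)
Two computations in the standard basis of $\mathcal{H}$ are needed for the formula. First expand $\underline{B}_w$ through the Kazhdan--Lusztig basis: substituting $\underline{H}_x = \sum_z h_{z,x} \cdot H_z$ gives
\[ \underline{B}_w = \sum_{x \in W_\mathrm{aff}} Q_{x,w} \cdot \underline{H}_x = \sum_{z \in W_\mathrm{aff}} \Big( \sum_{x \in W_\mathrm{aff}} h_{z,x} \cdot Q_{x,w} \Big) \cdot H_z . \]
Second, by \eqref{eq:canonicalbasistostandardbasis} we have
\[ \underline{B}_w = \sum_{y \in W_\mathrm{aff}^+} R_{y,w} \cdot B_y = \sum_{z \in W_\mathrm{aff}} \Big( \sum_{y \in W_\mathrm{aff}^+} R_{y,w} \cdot P_{z,y} \Big) \cdot H_z . \]
Both sums are finite, because $\underline{B}_w$ is a finite combination of basis elements. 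For $z \in W_\mathrm{aff}^+$, the defining property of the standard basis gives $P_{z,y} = \delta_{z,y}$, so the coefficient of $H_z$ in the second expansion is exactly $R_{z,w}$. Comparing the coefficients of $H_y$ for $y \in W_\mathrm{aff}^+$ then yields $R_{y,w} = \sum_x h_{y,x} Q_{x,w}$. This is essentially the argument of Corollary \ref{cor:B0basiscoefficients}, and it needs no inductive uniqueness statement.

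For the vanishing claim, use that $h_{y,x} = 0$ unless $y \leq x$, and that $y \leq x$ in the Bruhat order implies $\ell(y) \leq \ell(x)$, with equality only when $y = x$. By Corollary \ref{cor:canonicalbasisproperties}, $Q_{x,w} = 0$ unless $\ell(x) \leq \ell(w)$. Hence a nonzero term $h_{y,x} Q_{x,w}$ forces $\ell(y) \leq \ell(x) \leq \ell(w)$.

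If $\ell(y) < \ell(w)$, then $y <_\ell w$ and there is nothing to prove. If $\ell(y) = \ell(w)$, both inequalities are equalities, so $y = x$ and the sum reduces to $R_{y,w} = h_{y,y} Q_{y,w} = Q_{y,w}$. For $y \in W_\mathrm{aff}^+$ with $y \neq w$, Corollary \ref{cor:canonicalbasisexistence} gives $Q_{y,w} = 0$, while $Q_{w,w} = 1$. Therefore $R_{y,w} = 0$ unless $y \leq_\ell w$, and $R_{w,w} = 1$.

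The only step requiring care is the coefficient comparison, namely checking that the $H_y$-coefficient for $y \in W_\mathrm{aff}^+$ isolates $R_{y,w}$. This follows directly from $P_{z,y} = \delta_{z,y}$ on $W_\mathrm{aff}^+$, so there is no real obstacle.
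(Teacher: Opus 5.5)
Your proof is correct and is essentially the paper's argument: you expand $\underline{B}_w$ in the standard basis in two ways, isolate $R_{y,w}$ via $P_{z,y} = \delta_{z,y}$ for $y,z \in W_\mathrm{aff}^+$, and then combine $h_{y,x} = 0$ unless $y \leq x$ with Corollary~\ref{cor:canonicalbasisproperties} and the defining property $Q_{x,w} = \delta_{x,w}$ for $x \in W_\mathrm{aff}^+$. Your case split on $\ell(y) = \ell(w)$ versus $\ell(y) < \ell(w)$ is a slightly more explicit version of the paper's step ``$x = w$ or $x \notin W_\mathrm{aff}^+$''.
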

\begin{proof}
	For all $w \in W_\mathrm{aff}^+$, we have
	\[ \sum_{x,z \in W_\mathrm{aff}} h_{z,x} \cdot Q_{x,w} \cdot H_z = \sum_{x \in W_\mathrm{aff}} Q_{x,w} \cdot \underline{H}_x = \underline{B}_w = \sum_{y \in W_\mathrm{aff}^+} R_{y,w} \cdot B_y = \sum_{z \in W_\mathrm{aff}} \sum_{y \in W_\mathrm{aff}^+} P_{z,y} \cdot R_{y,w} \cdot H_z , \]
	and it follows that
	\[ \sum_{y \in W_\mathrm{aff}^+} P_{z,y} \cdot R_{y,w} = \sum_{x \in W_\mathrm{aff}} h_{z,x} \cdot Q_{x,w} \]
	for all $z \in W_\mathrm{aff}$.
	For $y,z \in W_\mathrm{aff}^+$, we have $P_{z,y} = \delta_{y,z}$ by the definition in Conjecture \ref{conj:standardbasis}, and so $R_{y,w} = \sum_x h_{y,x} \cdot Q_{x,w}$, as claimed.
	If $R_{y,w} \neq 0$ then there is an element $x \in W_\mathrm{aff}$ such that $h_{y,x} \neq 0$ and $Q_{x,w} \neq 0$, and it follows that $y \leq x$ and $\ell(x) \leq \ell(w)$ by Corollary \ref{cor:canonicalbasisproperties}.
	By Corollary \ref{cor:canonicalbasisexistence}, we further have $x = w$ or $x \notin W_\mathrm{aff}^+$, and as $y \in W_\mathrm{aff}^+$, we conclude that $y \leq_\ell w$.	
	For $y = w$, we obtain $R_{w,w} = h_{w,w} \cdot Q_{w,w} = 1$.
\end{proof}

\begin{Corollary} \label{cor:canonicalbasisequalsKLbasiscondition}
	For $w \in W_\mathrm{aff}^+$, we have $\underline{B}_w = \underline{\tilde B}_w$ if and only if $R_{y,w} \in v \cdot \Z[v]$ for all $y \in W_\mathrm{aff}^+ \setminus \{ w \}$.	
\end{Corollary}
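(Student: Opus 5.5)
The plan is to compare the two characterizations through the uniqueness statement of Theorem \ref{thm:KLbasisB}.

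For the forward direction, suppose $\underline{B}_w = \underline{\tilde B}_w$. Then the expansion \eqref{eq:canonicalbasistostandardbasis} is an expansion of $\underline{\tilde B}_w$ in the basis $\{ B_y \mid y \in W_\mathrm{aff}^+ \}$. Since this is an $\mathcal{A}$-basis by Lemma \ref{lem:Bwbasis}, the coefficients are uniquely determined, so $R_{y,w} = \tilde R_{y,w}$ for all $y$. Theorem \ref{thm:KLbasisB} gives $\tilde R_{y,w} \in v \cdot \Z[v]$ for $y <_\ell w$ and $\tilde R_{y,w} = 0$ unless $y \leq_\ell w$. Hence $R_{y,w} \in v \cdot \Z[v]$ for all $y \neq w$.

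For the converse, assume $R_{y,w} \in v\cdot\Z[v]$ for all $y \in W_\mathrm{aff}^+ \setminus \{w\}$. We check that $\underline{B}_w$ satisfies every condition that characterizes $\underline{\tilde B}_w$ in Theorem \ref{thm:KLbasisB}.
\begin{enumerate}
\item It lies in $\mathcal{B}$ and is self-dual, by Corollary \ref{cor:canonicalbasisexistence}.
\item By Lemma \ref{lem:canonicalbasistostandardbasis}, its coefficients satisfy $R_{y,w} = 0$ unless $y \leq_\ell w$, and $R_{w,w} = 1$.
\item By hypothesis, $R_{y,w} \in v\cdot\Z[v]$ for $y <_\ell w$.
\end{enumerate}
The uniqueness part of Theorem \ref{thm:KLbasisB} then forces $\underline{B}_w = \underline{\tilde B}_w$.

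There is no serious obstacle. The only point to check is that the hypothesis ranges over all $y \neq w$, whereas Theorem \ref{thm:KLbasisB} only needs $y <_\ell w$. This is harmless: for $y \neq w$ not satisfying $y <_\ell w$, Lemma \ref{lem:canonicalbasistostandardbasis} gives $R_{y,w} = 0 \in v\cdot\Z[v]$, so the two formulations of the condition agree.
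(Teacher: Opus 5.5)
Your proof is correct and matches the paper's argument. The converse is exactly the paper's appeal to the uniqueness in Theorem \ref{thm:KLbasisB}, combined with the self-duality of $\underline{B}_w$ and the support and normalization properties of $R_{y,w}$ from Lemma \ref{lem:canonicalbasistostandardbasis}; your forward direction just spells out the coefficient comparison in the basis $\{ B_y \mid y \in W_\mathrm{aff}^+ \}$ that the paper leaves implicit.
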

\begin{proof}
	This follows from the uniqueness of $\underline{\tilde B}_w$ in Theorem \ref{thm:KLbasisB}, together with the facts that $\underline{B}_w$ is self dual (see Corollary \ref{cor:canonicalbasisexistence}) and that $R_{y,w} = 0$ unless $y \leq_\ell w$ and $R_{w,w} = 1$ (see Lemma \ref{lem:canonicalbasistostandardbasis}).
\end{proof}

\subsection{}
\label{subsec:examplescanonicalbasis}

For affine Hecke algebras of type $\mathsf{A}_1$, we can explicitly compute the canonical basis and the Kazhdan--Lusztig basis of $\mathcal{B}$.
We also compute some example in type $\mathsf{A}_2$ below.

\begin{Example} \label{ex:A1canonicalbasis}
	Let $\Phi$ be of type $\mathsf{A}_1$, and recall from Example \ref{ex:A1standardbasis} that we write
	\[ W_\mathrm{aff}^+ = \{ x_m \mid m \geq 0 \} , W_\mathrm{aff} = \{ e , x_m , y_m \mid m \geq 1 \} \]
	with $\ell(x_m) = \ell(y_m) = m$ for all $m \geq 0$.
	The Kazhdan--Lusztig basis for $W_\mathrm{aff}$ is given by
	\begin{align*}
		\underline{H}_{x_m} & = H_{x_m} + \sum_{i=1}^{m-1} v^i \cdot ( H_{x_{m-i}} + H_{y_{m-i}} ) + v^m \cdot H_e , \\
		\underline{H}_{y_m} & = H_{y_m} + \sum_{i=1}^{m-1} v^i \cdot ( H_{x_{m-i}} + H_{y_{m-i}} ) + v^m \cdot H_e .
	\end{align*}
	Now we have
	\begin{multline*}
		\qquad\qquad
		\underline{H}_{x_m} + \underline{H}_{y_m} = ( H_{x_m} + H_{y_m} ) + \sum_{i=1}^{m-1} 2 v^i \cdot ( H_{x_{m-i}} + H_{y_{m-i}} ) + 2 v^m \cdot H_e \\
		 = B_{x_m} + \sum_{i=1}^m 2 v^i \cdot B_{x_{m-i}} \in \mathcal{B} ,
		\qquad\qquad
	\end{multline*}
	and this implies that $\underline{B}_{x_m} = \underline{\tilde B}_{x_m} = \underline{H}_{x_m} + \underline{H}_{y_m}$ for all $m \geq 1$.
\end{Example}

\begin{Example} \label{ex:A2canonicalbasis}
	Let $\Phi$ be of type $\mathsf{A}_2$.
	We compute the elements $\underline{B}_w$ of the canonical basis of $\mathcal{B}$ and the elements $\underline{\tilde B}_w$ of the Kazhdan--Lusztig basis of $\mathcal{B}$ for $w \in W_\mathrm{aff}^+$ with $\ell(w) \leq 4$.
	First observe that we have
	\[ \underline{H}_0 + \underline{H}_1 + \underline{H}_2 = ( H_0 + H_1 + H_2 ) + 3v \cdot H_e = B_0 + 3v \cdot B_e \in \mathcal{B} \]
	by Example \ref{ex:A2standardbasis}, and so
	\[ \underline{B}_0 = \underline{\tilde B}_0 = \underline{H}_0 + \underline{H}_1 + \underline{H}_2 . \]
	Analogously, we obtain
	\begin{multline*}
		\qquad \underline{H}_{01} + \underline{H}_{12} + \underline{H}_{20} = ( H_{01} + H_{12} + H_{20} ) + 2v \cdot ( H_0 + H_1 + H_2 ) + 3v^2 \cdot H_e \\ = B_{01} + 2v \cdot B_0 + 3 v^2 \cdot B_e \in \mathcal{B} , \qquad
	\end{multline*}
	and it follows that
	\[ \underline{B}_{01} = \underline{\tilde B}_{01} = \underline{H}_{01} + \underline{H}_{12} + \underline{H}_{20} , \]
	and by symmetry
	\[ \underline{B}_{02} = \underline{\tilde B}_{02} = \underline{H}_{02} + \underline{H}_{21} + \underline{H}_{10} . \]
	Alternatively, one easily checks that $\underline{B}_{02} = \underline{B}_0 \cdot \underline{B}_0 - \underline{B}_{01} - (v+v^{-1}) \cdot \underline{B}_0$.
	Next, an explicit computation shows that
	\[ \underline{B}_{01} \cdot \underline{B}_0 - (v+v^{-1}) \cdot \underline{B}_{01} - \underline{B}_0 = \underline{H}_{012} + \underline{H}_{120} + \underline{H}_{201} + \underline{H}_{010} + \underline{H}_{121} + \underline{H}_{202} , \]
	and we conclude that
	\begin{align*}
		\underline{B}_{012} & = \underline{H}_{012} + \underline{H}_{120} + \underline{H}_{201} + \underline{H}_{010} + \underline{H}_{121} + \underline{H}_{202} , \\
		\underline{B}_{021} & = \underline{H}_{021} + \underline{H}_{210} + \underline{H}_{102} + \underline{H}_{010} + \underline{H}_{121} + \underline{H}_{202} .
	\end{align*}
	We further have $\underline{B}_{012} = B_{012} + 3v \cdot B_{01} + 5v^2 \cdot B_0 + 6v^3 \cdot B_e$, whence $\underline{B}_{012} = \underline{\tilde B}_{012}$ and $\underline{B}_{021} = \underline{\tilde B}_{021}$.
	Analogously, we compute
	\[ \underline{B}_{01} \cdot \underline{B}_{01} = ( \underline{H}_{0120} + \underline{H}_{1201} + \underline{H}_{2012} ) + (v+v^{-1}) \cdot B_{012} + B_{01} , \]
	and it follows that
	\begin{align*}
		\underline{B}_{0120} & = \underline{H}_{0120} + \underline{H}_{1201} + \underline{H}_{2012} , \\
		\underline{B}_{0210} & = \underline{H}_{0210} + \underline{H}_{1021} + \underline{H}_{2102} .
	\end{align*}
	By computing the Kazhdan--Lusztig basis elements in these formulas, one sees that
	\[ \underline{B}_{0120} = B_{0120} + 2v \cdot B_{012} + 3 v^2 \cdot B_{01} + 2 v^2 \cdot B_{02} + (3v^3+3v) \cdot B_0 + 3 \cdot (v^4+v^2) \cdot B_e , \]
	whence $\underline{B}_{0120} = \underline{\tilde B}_{0120}$ and similarly $\underline{B}_{0210} = \underline{\tilde B}_{0210}$.
	Finally, we compute
	\begin{align*}
		\underline{B}_{012} \cdot \underline{B}_{0} & = \underline{H}_{0121} + \underline{H}_{1202} + \underline{H}_{2010} + \underline{H}_{1210} + \underline{H}_{2021} + \underline{H}_{0102} + \underline{H}_{0120} + \underline{H}_{1201} + \underline{H}_{2012} \\
		& \hspace{2cm} + (v+v^{-1}) \cdot ( \underline{H}_{012} + \underline{H}_{120} + \underline{H}_{201} ) + 2 \cdot (v+v^{-1}) \cdot ( \underline{H}_{010} + \underline{H}_{121} + \underline{H}_{202} ) \\
		& \hspace{2cm} + ( \underline{H}_{01} + \underline{H}_{12} + \underline{H}_{20} ) \\
		& = \underline{H}_{0121} + \underline{H}_{1202} + \underline{H}_{2010} + \underline{H}_{1210} + \underline{H}_{2021} + \underline{H}_{0102} + (v+v^{-1}) \cdot ( \underline{H}_{010} + \underline{H}_{121} + \underline{H}_{202} ) \\
		& \hspace{2cm} + \underline{B}_{0120} + (v+v^{-1}) \cdot \underline{B}_{012} + \underline{B}_{01} ,
	\end{align*}
	and it follows that
	\[ \underline{B}_{0121} = \underline{H}_{0121} + \underline{H}_{1202} + \underline{H}_{2010} + \underline{H}_{1210} + \underline{H}_{2021} + \underline{H}_{0102} + (v+v^{-1}) \cdot ( \underline{H}_{010} + \underline{H}_{121} + \underline{H}_{202} ) . \]
	We want to compute the Laurent polynomial $R_{y,w}$ from \eqref{eq:canonicalbasistostandardbasis} for $w = 0120$ and $y = 01$.
	One easily checks that
	\begin{alignat*}{2}
		h_{y,x} & = v^2 \qquad && \text{for } x \in \{ 0121 , 2010 , 2021 , 0102 \} , \\
		h_{y,x} & = 0 \qquad && \text{for } x \in \{ 1202 , 1210 , 121 , 202 \} , \\
		h_{y,x} & = v \qquad && \text{for } x = 010 ,
	\end{alignat*}
	and using Lemma \ref{lem:canonicalbasistostandardbasis}, it follows that
	\[ R_{y,w} = 4 \cdot v^2 + v \cdot (v+v^{-1}) = 5 v^2 + 1 . \]
	As $R_{y,w} \notin v \cdot \Z[v]$, Corollary \ref{cor:canonicalbasisequalsKLbasiscondition} implies that $\underline{B}_{0121} \neq \underline{\tilde B}_{0121}$.
	More precisely, we have
	\[ \underline{B}_{0121} = B_{0121} + 2v \cdot ( B_{012} + B_{021} ) + (5v^2+1) \cdot ( B_{01} + B_{02} ) + ( 7v^3 + v ) \cdot B_0 + ( 9 v^4 + 3 v^2 ) \cdot B_e , \]
	and it follows that
	\[ \underline{\tilde B}_{0121} = \underline{B}_{0121} - \underline{B}_{01} - \underline{B}_{02} . \]
	Further note that this implies that the Laurent polynomials in an expansion of $\underline{\tilde B}_w$ in terms of the Kazhdan--Lusztig basis of $\mathcal{H}$ need not have non-negative coefficients.
\end{Example}

\subsection{}
\label{subsec:positivityconjectures}

Motivated by the computations in Examples \ref{ex:A1canonicalbasis} and \ref{ex:A2canonicalbasis}, we propose the following positivity conjectures (the first and third of which have already been stated in the introduction).

\begin{Conjecture} \label{conj:positivity}
\begin{enumerate}
	\item The Laurent polynomials $Q_{x,w} \in \mathcal{A}$ in the expansion
	\[ \underline{B}_w = \sum_{x \in W_\mathrm{aff}} Q_{x,w} \cdot \underline{H}_x \]
	have non-negative coefficients for all $x \in W_\mathrm{aff}$ and $w \in W_\mathrm{aff}^+$.
	\item The polynomials $\tilde R_{y,w} \in \Z[v]$ in the expansion
	\[ \underline{\tilde B}_w = \sum_{y \in W_\mathrm{aff}^+} \tilde R_{y,w} \cdot B_y \]
	have non-negative coefficients for all $w,y \in W_\mathrm{aff}^+$.
	\item The Laurent polynomials $\mu_{w,w'}^z \in \mathcal{A}$ in the expansion
	\[ \underline{B}_w \cdot \underline{B}_{w'} = \sum_{z \in W_\mathrm{aff}^+} \mu_{w,w'}^z \cdot \underline{B}_z \]
	have non-negative coefficients for all $w,w',z \in W_\mathrm{aff}^+$.
\end{enumerate}
\end{Conjecture}

Observe that parts (1) and (2) of Conjecture \ref{conj:positivity} are valid
\begin{itemize}
	\item in type $\mathrm{A}_1$ by Example \ref{ex:A1canonicalbasis},
	\item in type $\mathrm{A}_2$ for elements $w \in W_\mathrm{aff}^+$ with $\ell(w) \leq 4$ by Example \ref{ex:A2canonicalbasis}.%
	\footnote{In fact, we have checked part (1) of Conjecture \ref{conj:positivity} in type $\mathrm{A}_2$ for elements $w \in W_\mathrm{aff}^+$ with $\ell(w) \leq 12$ using a computer.}
\end{itemize}
By the following proposition, this implies that part (3) of Conjecture \ref{conj:positivity} is also valid in type $\mathrm{A}_1$ (and in type $\mathrm{A}_2$ for elements $w,w' \in W_\mathrm{aff}$ with $\ell(w) , \ell(w') \leq 4$).

\begin{Proposition}
	Let $w,w' \in W_\mathrm{aff}^+$ and suppose that the Laurent polynomials $Q_{x,w}$ and $Q_{y,w'}$ have non-negative coefficients for all $x,y \in W_\mathrm{aff}$.
	Then the Laurent polynomial $\mu_{w,w'}^z$ has non-negative coefficients for all $z \in W_\mathrm{aff}^+$.
	In particular, part (1) of Conjecture \ref{conj:positivity} implies part (3) of Conjecture \ref{conj:positivity}.
\end{Proposition}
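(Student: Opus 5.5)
The idea is to expand the product in the Kazhdan--Lusztig basis of $\mathcal{H}$ and then read off the coefficients $\mu_{w,w'}^z$ from the coefficients at the elements of $W_\mathrm{aff}^+$. Writing $\underline{B}_w = \sum_x Q_{x,w} \underline{H}_x$ and $\underline{B}_{w'} = \sum_y Q_{y,w'} \underline{H}_y$, we get
\[ \underline{B}_w \cdot \underline{B}_{w'} = \sum_{x,y \in W_\mathrm{aff}} Q_{x,w} Q_{y,w'} \cdot \underline{H}_x \underline{H}_y = \sum_{u \in W_\mathrm{aff}} \Big( \sum_{x,y} Q_{x,w} Q_{y,w'} \, h_{x,y}^u \Big) \cdot \underline{H}_u , \]
where $h_{x,y}^u \in \mathcal{A}$ are the structure constants of the Kazhdan--Lusztig basis, $\underline{H}_x \underline{H}_y = \sum_u h_{x,y}^u \underline{H}_u$. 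The key external input is the positivity of these structure constants: they have non-negative coefficients. This holds for affine Weyl groups by Kazhdan--Lusztig/Lusztig via perverse sheaves, or in general by Elias--Williamson \cite{EliasWilliamsonHodgetheory}. Under the hypothesis on $Q_{x,w}$ and $Q_{y,w'}$, every coefficient $c_u \coloneqq \sum_{x,y} Q_{x,w} Q_{y,w'} h_{x,y}^u$ is therefore a Laurent polynomial with non-negative coefficients. The sum is finite, since only finitely many $Q_{x,w}$ are non-zero, so there is no convergence issue.

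Next we compare this expansion with $\sum_z \mu_{w,w'}^z \underline{B}_z$. The product $\underline{B}_w \underline{B}_{w'}$ lies in $\mathcal{B}$ because $\mathcal{B}$ is a subalgebra (Lemma \ref{lem:Bsubalgebra}). By Corollary \ref{cor:canonicalbasis} it can be written as $\sum_{z \in W_\mathrm{aff}^+} \mu_{w,w'}^z \underline{B}_z$. By Corollary \ref{cor:canonicalbasisexistence}, the coefficient of $\underline{H}_u$ in $\underline{B}_z$ for $u \in W_\mathrm{aff}^+$ is $\delta_{u,z}$. Taking the $\underline{H}_u$-coefficient on both sides for $u \in W_\mathrm{aff}^+$ therefore gives $\mu_{w,w'}^u = c_u$. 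This is exactly the argument of Corollary \ref{cor:B0basiscoefficients}, transported to the Kazhdan--Lusztig basis. Equivalently, $b - \sum_{u \in W_\mathrm{aff}^+} c_u \underline{B}_u \in \mathcal{B}$ has vanishing coefficients at all of $W_\mathrm{aff}^+$, so it is zero by Corollary \ref{cor:BcoefficientnonzeroKL}. Hence every $\mu_{w,w'}^z = c_z$ has non-negative coefficients.

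The final assertion follows immediately. If part (1) of Conjecture \ref{conj:positivity} holds, its hypothesis is satisfied for all $w,w'$, so part (3) holds.

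The only non-formal step, and thus the main point, is the positivity of the structure constants $h_{x,y}^u$. I would cite it rather than prove it. The normalization of \cite{SoergelKL} used here, with $\underline{H}_s = H_s + v$, agrees with the normalization in which these constants lie in $\Z_{\geq 0}[v,v^{-1}]$, so no sign twist arises. I would check this against the quadratic relation, which gives $\underline{H}_s^2 = (v+v^{-1})\underline{H}_s$, consistent with positivity.
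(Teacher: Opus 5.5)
Your proposal is correct and follows the paper's proof. Both arguments expand $\underline{B}_w \underline{B}_{w'}$ in the Kazhdan--Lusztig basis, invoke Elias--Williamson positivity of the structure constants $h_{x,y}^z$, and identify $\mu_{w,w'}^z$ with the $\underline{H}_z$-coefficient for $z \in W_\mathrm{aff}^+$ via the defining property of the canonical basis.
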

\begin{proof}
	For $x,y \in W_\mathrm{aff}$, let us write $\underline{H}_x \cdot \underline{H}_y = \sum_z h_{x,y}^z \cdot \underline{H}_z$, with $h_{x,y}^z \in \mathcal{A}$ for all $z \in W_\mathrm{aff}$.
	The Laurent polynomials $h_{x,y}^z$ have non-negative coefficients for all $x,y,z \in W_\mathrm{aff}$ by Corollary 1.2 in \cite{EliasWilliamsonHodgetheory}.
	For $w , w' \in W_\mathrm{aff}^+$, we have
	\[ \underline{B}_w \cdot \underline{B}_{w'} = \sum_{ x,y \in W_\mathrm{aff} } Q_{x,w} \cdot Q_{y,w'} \cdot \underline{H}_x \cdot \underline{H}_y = \sum_{z \in W_\mathrm{aff}} \Big( \sum_{ x,y \in W_\mathrm{aff} } Q_{x,w} \cdot Q_{y,w'} \cdot h_{x,y}^z \Big) \cdot \underline{H}_z , \]
	and using the definition of the canonical basis (see Corollary \ref{cor:canonicalbasisexistence}), it follows that
	\[ \mu_{w,w'}^z = \sum_{ x,y \in W_\mathrm{aff} } Q_{x,w} \cdot Q_{y,w'} \cdot h_{x,y}^z \]
	for all $z \in W_\mathrm{aff}^+$.
	In particular, if the Laurent polynomials $Q_{x,w}$ and $Q_{y,w'}$ have non-negative coefficients for all $x,y \in W_\mathrm{aff}$ then the Laurent polynomial $\mu_{w,w'}^z$ has non-negative coefficients for all $z \in W_\mathrm{aff}^+$.
\end{proof}

\section{The center of the affine Hecke algebra}
\label{sec:center}

Recall that the center $Z(\mathcal{H}_\mathrm{ext})$ of the extended affine Hecke algebra $\mathcal{H}_\mathrm{ext}$ is contained in the subalgebra $\mathcal{B}_\mathrm{ext}$ of $\mathcal{H}_\mathrm{ext}$ by Corollary \ref{cor:centerinB}.
In this section, we explain how the chain of inclusions
\[ Z(\mathcal{H}_\mathrm{ext}) \subseteq \mathcal{B}_\mathrm{ext} \subseteq \mathcal{H}_\mathrm{ext} \]
can be used to study combinatorial properties of the center.
More precisely, we will show that the problem of expressing the elements of a certain canonical basis of $Z(\mathcal{H}_\mathrm{ext})$ in terms of the Kazhdan--Lusztig basis of $\mathcal{H}_\mathrm{ext}$ can be reduced to computing the canonical basis of $\mathcal{B}$ (if it exists).
In order to state the problem more explicitly, we first recall some results about $Z(\mathcal{H}_\mathrm{ext})$.

\subsection{}
\label{subsec:centerHext}

Recall from Subsection \ref{subsec:Bcontainscenter} that the center $Z(\mathcal{H}_\mathrm{ext})$ of $\mathcal{H}_\mathrm{ext}$ can be described as the ring of $W_\mathrm{fin}$-invariants in $\mathcal{A}[Y]$, that is
\[ Z(\mathcal{H}_\mathrm{ext}) = \mathcal{A}[Y]^{W_\mathrm{fin}} = \Big\{ \sum\nolimits_\lambda a_\lambda \cdot Y_\lambda \in \mathcal{A}[Y] \mathop{\Big|} a_\lambda = a_{w(\lambda)} \text{ for all } w \in W_\mathrm{fin} \text{ and } \lambda \in Y \Big\} . \]
Thus, the characters of finite-dimensional representations of the complex simple Lie algebra $\mathfrak{g}^\vee$ with root system $\Phi^\vee$ afford canonical elements of $Z(\mathcal{H}_\mathrm{ext})$.
More precisely, for a dominant coweight $\lambda \in Y^+$, let $V(\lambda)$ be the simple $\mathfrak{g}^\vee$-module of highest weight $\lambda$ and write $V(\lambda) = \bigoplus_\mu V(\lambda)_\mu$ for the weight space decomposition of $V(\lambda)$.
Then we have
\[ \chi_\lambda \coloneqq \sum_{\mu \in X} \dim V(\lambda)_\mu \cdot Y_\mu \in Z(\mathcal{H}_\mathrm{ext}) . \]
The elements $\{ \chi_\lambda \mid \lambda \in Y^+ \}$ form an $\mathcal{A}$-basis of $Z(\mathcal{H}_\mathrm{ext})$, and they are self-dual with respect to the bar involution on $\mathcal{H}_\mathrm{ext}$ (see Corollary 8.8 in \cite{LusztigSpherical} and its proof).
Furthermore, they are related to the Kazhdan--Lusztig basis of $\mathcal{H}_\mathrm{ext}$ via the fact that
\begin{equation} \label{eq:chilambdaKLbasis}
	\underline{H}_{w_0} \cdot \chi_\lambda = \underline{H}_{w_0 t_\lambda}
\end{equation}
where $\lambda \in Y^+$ and $w_0 \in W_\mathrm{fin}$ denotes the longest element; see Theorem 6.2 in \cite{KnopSpherical}.

For $\lambda \in X^+$ and $x \in W_\mathrm{ext}$, we can define a symmetric Laurent polynomial $a_{x,\lambda} \in \mathcal{A}$ via
\[ \chi_\lambda = \sum_{x \in W_\mathrm{ext}} a_{x,\lambda} \cdot \underline{H}_x . \]
As explained in \cite[Section 5.3.4]{AcharRichecentral}, computing the Laurent polynomials $a_{x,\lambda}$ is equivalent to the problem of finding the graded composition multiplicities in (a mixed version of) Gaitsgory's \emph{central sheaves} on affine flag varieties \cite{Gaitsgorycentral}.

\subsection{}
\label{subsec:chilambdaviacanonicalbasis}

For the rest of this section, we assume the existence of the canonical basis of $\mathcal{B}$.
More precisely, we work under the following hypothesis:

\begin{Hypothesis} \label{hyp:canonicalbasis}
	We assume that for all $w \in W_\mathrm{aff}^+$, there is an element $\underline{B}_w \in \mathcal{B}$ whose expansion
	\[ \underline{B}_w = \sum_{x \in W_\mathrm{aff}} Q_{x,w} \cdot \underline{H}_x \]
	in terms of the Kazhdan--Lusztig basis of $\mathcal{H}$ satisfies $Q_{w,w} = 1$ and $Q_{x,w} = 0$ for all $w \neq x \in W_\mathrm{aff}^+$.
\end{Hypothesis}

Note that Hypothesis \ref{hyp:standardbasisfiltration} implies Hypothesis \ref{hyp:canonicalbasis} by Corollary \ref{cor:canonicalbasisexistence}.
In particular, Hypothesis \ref{hyp:canonicalbasis} holds for root systems of type $\mathsf{A}_n$ and $\mathsf{B}_2$ and $\mathsf{G}_2$.

The elements $\{ \underline{B}_w \mid w \in W_\mathrm{aff}^+ \}$ form the \emph{canonical basis} of $\mathcal{B}$ (see Corollary \ref{cor:canonicalbasis} and Definition \ref{def:canonicalbasis}).
As in Subsection \ref{subsec:canonicalbasisBext}, we can also define a canonical basis $\{ \underline{B}_w \mid w \in W_\mathrm{ext}^+ \}$ of $\mathcal{B}_\mathrm{ext}$ with $\underline{B}_{w\omega} = \underline{B}_w \omega$ for all $w \in W_\mathrm{ext}^+$ and $\omega \in \Omega$.
By Corollary \ref{cor:centerinB}, we have $Z(\mathcal{H}_\mathrm{ext}) \subseteq \mathcal{B}_\mathrm{ext}$, and so we can define Laurent polynomials $b_{w,\lambda} \in \mathcal{A}$, for $w \in W_\mathrm{ext}^+$ and $\lambda \in X^+$, via
\[ \chi_\lambda = \sum_{w \in W_\mathrm{ext}^+} b_{w,\lambda} \cdot \underline{B}_w . \]
Then we have
\[ \chi_\lambda = \sum_{w \in W_\mathrm{ext}^+} b_{w,\lambda} \cdot \underline{B}_w = \sum_{x \in W_\mathrm{ext}} \Big( \sum_{w \in W_\mathrm{ext}^+} Q_{x,w} \cdot b_{w,\lambda} \Big) \cdot \underline{H}_x \]
for all $\lambda \in X^+$, whence the Laurent polynomials $a_{x,\lambda}$ from Subsection \ref{subsec:centerHext} can be computed via
\[ a_{x,\lambda} = \sum_{w \in W_\mathrm{ext}^+} Q_{x,w} \cdot b_{w,\lambda} \]
for $x \in W_\mathrm{ext}$ and $\lambda \in X^+$.
Note that for $w' \in W_\mathrm{ext}^+$, we have $Q_{w',w} = \delta_{w,w'}$ by the above hypothesis, and therefore $a_{w',\lambda} = b_{w',\lambda}$.
We will prove in Theorem \ref{thm:centercanonicalbasiscoefficients} below that the Laurent polynomials $b_{w,\lambda} = a_{w,\lambda}$ (for $w \in W_\mathrm{ext}^+$) can be expressed explicitly in terms of parabolic Kazhdan--Lusztig polynomials, so that the problem of computing the Laurent polynomials $a_{x,\lambda}$ (for $x \in W_\mathrm{ext}$) can be reduced to the problem of computing the Laurent polynomials $Q_{x,w}$.
This, in our view, underlines the importance of understanding the canonical basis of $\mathcal{B}_\mathrm{ext}$.

\subsection{}
\label{subsec:negativeKL}

Before we can explain how the Laurent polynomials $b_{w,\lambda} = a_{w,\lambda}$ can be computed for $w \in W_\mathrm{ext}^+$ and $\lambda \in X^+$, we need to introduce additional notation and recall some classical results about Hecke algebras.
Let us write $d = \overline{\phantom{A}} \colon \mathcal{H}_\mathrm{ext} \to \mathcal{H}_\mathrm{ext}$ for the bar involution and consider the involutive anti-automorphisms $i$ and $a$ of $\mathcal{H}_\mathrm{ext}$ with
\[ i(v) = a(v) = v , \qquad i(H_x) = H_{x^{-1}} , \qquad a(H_x) = (-1)^{\ell(x)} \cdot H_x^{-1} \]
for all $x \in W_\mathrm{ext}$, as in the proof of Theorem 2.7 in \cite{SoergelKL}.
The maps $d$, $i$ and $a$ commute with one another pairwise, and the composite $dia$ satisfies
\[ dia(H_x) = (-1)^{\ell(x)} \cdot H_x . \]
Apart from the Kazhdan--Lusztig basis, there is a second canonical $\mathcal{A}$-basis $\{ \underline{\tilde H}_x \mid x \in W_\mathrm{ext} \}$ in $\mathcal{H}_\mathrm{ext}$, which is uniquely determined by the properties that $\underline{\tilde H}_x$ is self dual and
\[ \underline{\tilde H}_x \in H_x + \sum_{y<x} v^{-1} \cdot \Z[v^{-1}] \cdot H_y \]
for all $x \in W_\mathrm{ext}$.
It is related to the Kazhdan-Lusztig basis and the standard basis of $_\mathrm{ext}$ via
\begin{equation} \label{eq:negativeKL}
	\underline{\tilde H}_x = (-1)^{\ell(x)} \cdot dia( \underline{H}_x ) = \sum_y (-1)^{\ell(x)+\ell(y)} \cdot \overline{h_{y,x}} \cdot H_y ,
\end{equation}
see again the proof of Theorem 2.7 in \cite{SoergelKL}.

Next, for a weight $\lambda \in Y$ and $\mu,\nu \in Y^+$ such that $\lambda = \mu - \nu$, we have
\[ dia( Y_\lambda ) = dia( H_{t_\mu} \cdot H_{t_\nu}^{-1} ) = (-1)^{\ell(t_\mu) - \ell(t_\nu)} H_{t_\mu} \cdot H_{t_\nu}^{-1} = (-1)^{\ell(t_\lambda)} Y_\lambda ; \]
see Subsection \ref{subsec:Bcontainscenter} for the definition of $Y_\lambda$.
Since for $\lambda \in Y^+$, all weights of the simple $\mathfrak{g}^\vee$-module $V(\lambda)$ belong to the same $\Z\Phi^\vee$-coset in $Y$, it follows that $dia(\chi_\lambda) = (-1)^{\ell(t_\lambda)} \cdot \chi_\lambda$, and we obtain
\begin{equation} \label{eq:chilambdanegativeKLbasis}
	\chi_\lambda = (-1)^{\ell(t_\lambda)} \cdot dia(\chi_\lambda) = \sum_{x \in W_\mathrm{ext}} (-1)^{\ell(t_\lambda)+\ell(x)} \cdot a_{x,\lambda} \cdot \underline{\tilde H}_x .
\end{equation}

\subsection{}
\label{subsec:sphericalandantisphericalmodule}

By the quadratic relation $(H_s + v) (H_s - v^{-1}) = 0$, there are two distinct $\mathcal{H}$-module structures on $\mathcal{A} = \Z[v,v^{-1}]$, namely the trivial module $\mathcal{A}_\mathrm{triv} = \mathcal{A}$, where $H_s$ acts by multiplication with $v^{-1}$ for all $s \in S$, and the sign module $\mathcal{A}_\mathrm{sign} = \mathcal{A}$, where $H_s$ acts by multiplication with $-v$ for all $s \in S$.
The \emph{spherical module} and the \emph{anti-spherical module} are the right $\mathcal{H}_\mathrm{ext}$-modules defined via
\[ \mathcal{M} \coloneqq \mathcal{A}_\mathrm{triv} \otimes_{\mathcal{H}_\mathrm{fin}} \mathcal{H}_\mathrm{ext} , \hspace{2cm} \mathcal{N} \coloneqq \mathcal{A}_\mathrm{sign} \otimes_{\mathcal{H}_\mathrm{fin}} \mathcal{H}_\mathrm{ext} , \]
respectively, where $\mathcal{H}_\mathrm{fin}$ denotes the finite Hecke algebra, generated by $H_s$ for $s \in S_\mathrm{fin}$.
Writing $M_w = 1 \otimes H_w \in \mathcal{M}$ and $N_w = 1 \otimes H_w \in \mathcal{N}$ for $w \in W_\mathrm{ext}^+$, the standard bases of $\mathcal{M}$ and $\mathcal{N}$ are given by $\{ M_w \mid w \in W_\mathrm{ext}^+ \}$ and $\{ N_w \mid w \in W_\mathrm{ext}^+ \}$, respectively.
The bar involution on $\mathcal{H}_\mathrm{ext}$ induces a bar involution on $\mathcal{M}$ and on $\mathcal{N}$, and there are Kazhdan--Luszitg bases $\{ \underline{M}_w \mid w \in W_\mathrm{ext}^+ \}$ of $\mathcal{M}$ and $\{ \underline{N}_w \mid w \in W_\mathrm{ext}^+ \}$ of $\mathcal{N}$, analogous to the Kazhdan--Lusztig basis of $\mathcal{H}_\mathrm{ext}$.
For $x \in W_\mathrm{ext}$, we have
\begin{equation} \label{eq:antisphericalKLmultiplication}
	N_e \cdot \underline{H}_x = \begin{cases} \underline{N}_x & \text{if } x \in W_\mathrm{ext}^+ , \\ 0 & \text{otherwise} \end{cases}
\end{equation}
by the proof of Proposition 3.4 in \cite{SoergelKL}.
The spherical Kazhdan--Lusztig polynomials $m_{y,x}$ and the anti-sperical Kazhdan--Lusztig polynomials $n_{y,x}$ are defined via
\[ \underline{M}_x = \sum_{y \in W_\mathrm{ext}^+} m_{y,x} \cdot M_y , \hspace{2cm} \underline{N}_x = \sum_{y \in W_\mathrm{ext}^+} n_{y,x} \cdot N_y \]
for all $x \in W_\mathrm{ext}^+$, and the inverse anti-spherical Kazhdan--Lusztig polynomials $n^{x,y}$ are defined via
\begin{equation} \label{eq:inverseantisphericalKLpolynomials}
	N_x = \sum_{y \in W_\mathrm{ext}^+} (-1)^{\ell(y)+\ell(x)} \cdot n^{x,y} \cdot \underline{N}_y
\end{equation}
for all $x \in W_\mathrm{ext}^+$.

Like the Hecke algebra $\mathcal{H}_\mathrm{ext}$, the spherical module $\mathcal{M}$ and the anti-spherical module $\mathcal{N}$ each admit a second canonical basis (besides the Kazhdan--Lusztig basis), denoted by $\{ \underline{\tilde M}_x \mid x \in W_\mathrm{ext}^+ \} \subseteq \mathcal{M}$ and $\{ \underline{\tilde N}_x \mid x \in W_\mathrm{ext}^+ \} \subseteq \mathcal{N}$ and defined by the requirement that the base change coefficients with the standard basis are in $\Z[v^{-1}]$ (rather than $\Z[v]$); see Theorem 3.5 in \cite{SoergelKL}.
According to the proof of Theorem 3.5 in \cite{SoergelKL}, there is a $\Z$-linear bijection $\vartheta \colon \mathcal{M} \to \mathcal{N}$ such that
\begin{equation} \label{eq:thetadialinear}
	\vartheta(m \cdot h) = \vartheta(m) \cdot dia(h)
\end{equation}
for all $m \in \mathcal{M}$ and $h \in \mathcal{H}$, where $d$ is the bar involution and $i$ and $a$ are as in Subsection \ref{subsec:negativeKL}.
The bijection $\varphi$ satisfies
\begin{equation} \label{eq:thetapropertiesbases}
	\vartheta(M_x) = (-1)^{\ell(x)} \cdot N_x , \qquad \vartheta( \underline{M}_x ) = (-1)^{\ell(x)} \cdot \underline{\tilde N}_x , \qquad \vartheta( \underline{\tilde M}_x ) = (-1)^{\ell(x)} \cdot \underline{N}_x
\end{equation}
for all $x \in W_\mathrm{ext}^+$.
Furthermore, there is an isomorphism of right $\mathcal{H}$-modules $\psi \colon \mathcal{M} \to \underline{H}_{w_0} \cdot \mathcal{H}$ such that
\begin{equation} \label{eq:psipropertiesbases}
	\psi( M_x ) = \underline{H}_{w_0} H_x , \qquad \psi( \underline{M}_x ) = \underline{H}_{w_0x}
\end{equation}
for all $x \in W_\mathrm{ext}^+$, where $w_0 \in W_\mathrm{fin}$ denotes the longest element, see the proof of \cite[Proposition 3.4]{SoergelKL}.

\subsection{}

Now we explain how the Laurent polynomials $a_{w,\lambda} = b_{w,\lambda}$, for $w \in W_\mathrm{aff}^+$ and $\lambda \in Y^+$, can be computed in terms of parabolic Kazhdan--Lusztig polynomials.

\begin{Theorem} \label{thm:centercanonicalbasiscoefficients}
	For $w \in W_\mathrm{ext}^+$ and $\lambda \in Y^+$, we have
	\[ a_{w,\lambda} = b_{w,\lambda} = (-1)^{\ell(t_\lambda)+\ell(w)} \cdot \sum_{y \in W_\mathrm{ext}^+} \overline{ m_{y,t_\lambda} } \cdot n^{y,w} . \]
\end{Theorem}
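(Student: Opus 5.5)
The idea is to push the expansion $\chi_\lambda = \sum_{x} a_{x,\lambda} \cdot \underline{H}_x$ into the anti-spherical module $\mathcal{N}$. There all Kazhdan--Lusztig basis elements indexed by non-minimal coset representatives vanish, so only the coefficients $a_{w,\lambda}$ with $w \in W_\mathrm{ext}^+$ survive. We then compute $N_e \cdot \chi_\lambda$ a second time, via the spherical module $\mathcal{M}$ and the bijection $\vartheta$, and compare coefficients. The equality $a_{w,\lambda} = b_{w,\lambda}$ for $w \in W_\mathrm{ext}^+$ was already observed in Subsection \ref{subsec:chilambdaviacanonicalbasis}, so only the formula for $a_{w,\lambda}$ remains.

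\emph{Step 1.} Multiply $\chi_\lambda = \sum_{x \in W_\mathrm{ext}} a_{x,\lambda} \cdot \underline{H}_x$ on the left by $N_e$. By \eqref{eq:antisphericalKLmultiplication} this gives
\[ N_e \cdot \chi_\lambda = \sum_{w \in W_\mathrm{ext}^+} a_{w,\lambda} \cdot \underline{N}_w . \]
Since $\{ \underline{N}_w \}$ is a basis of $\mathcal{N}$, it suffices to show that the coefficient of $\underline{N}_w$ in $N_e \chi_\lambda$ equals the claimed expression.

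\emph{Step 2.} Compute $M_e \cdot \chi_\lambda$ in $\mathcal{M}$. The map $\psi$ is a right $\mathcal{H}_\mathrm{ext}$-module isomorphism with $\psi(M_e) = \underline{H}_{w_0}$. Using \eqref{eq:chilambdaKLbasis} and \eqref{eq:psipropertiesbases}, we get
\[ \psi(M_e \chi_\lambda) = \underline{H}_{w_0} \chi_\lambda = \underline{H}_{w_0 t_\lambda} = \psi(\underline{M}_{t_\lambda}) . \]
Hence $M_e \chi_\lambda = \underline{M}_{t_\lambda} = \sum_{y} m_{y,t_\lambda} \cdot M_y$. Here I must check that $t_\lambda \in W_\mathrm{ext}^+$ for dominant $\lambda$, so that $\underline{M}_{t_\lambda}$ and $\psi(\underline{M}_{t_\lambda}) = \underline{H}_{w_0 t_\lambda}$ make sense with the paper's conventions. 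This should follow from the length formula for $t_\lambda$ and $\ell(w_0 t_\lambda) = \ell(w_0) + \ell(t_\lambda)$.

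\emph{Step 3.} Transport to $\mathcal{N}$ via $\vartheta$. By \eqref{eq:thetapropertiesbases}, $N_e = \vartheta(M_e)$. By \eqref{eq:thetadialinear} and $dia(\chi_\lambda) = (-1)^{\ell(t_\lambda)} \chi_\lambda$, we get
\[ N_e \chi_\lambda = (-1)^{\ell(t_\lambda)} \vartheta(M_e \chi_\lambda) = (-1)^{\ell(t_\lambda)} \vartheta(\underline{M}_{t_\lambda}) . \]
The map $\vartheta$ is only $\Z$-linear. Applying \eqref{eq:thetadialinear} to scalars shows it is bar-semilinear, since $dia(v) = v^{-1}$; thus $\vartheta(f m) = \overline{f}\, \vartheta(m)$ for $f \in \mathcal{A}$. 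Therefore
\[ \vartheta(\underline{M}_{t_\lambda}) = \sum_y \overline{m_{y,t_\lambda}} \, (-1)^{\ell(y)} N_y . \]
Expanding $N_y$ via \eqref{eq:inverseantisphericalKLpolynomials}, the signs $(-1)^{\ell(y)}$ cancel. We obtain
\[ N_e \chi_\lambda = \sum_{w \in W_\mathrm{ext}^+} (-1)^{\ell(t_\lambda)+\ell(w)} \Big( \sum_{y \in W_\mathrm{ext}^+} \overline{m_{y,t_\lambda}} \, n^{y,w} \Big) \underline{N}_w . \]
Comparing coefficients with Step 1 gives the theorem.

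I expect the main obstacle to be bookkeeping rather than any conceptual step. One point is verifying the semilinearity of $\vartheta$ on scalars, and that $\vartheta(M_e) = N_e$ with sign $+1$. The other is ensuring that the $\psi$-identity with $\underline{M}_{t_\lambda}$ holds in the extended setting, where elements of $\Omega$ may appear in $t_\lambda$. For the latter I would extend \eqref{eq:psipropertiesbases} using $\underline{H}_{x\omega} = \underline{H}_x \omega$.
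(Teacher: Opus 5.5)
Your proposal is correct and is essentially the paper's argument. Both proofs identify the element $N_e\chi_\lambda = \underline{\tilde N}_{t_\lambda} = (-1)^{\ell(t_\lambda)}\vartheta(\underline{M}_{t_\lambda})$ once via the Kazhdan--Lusztig expansion of $\chi_\lambda$ and \eqref{eq:antisphericalKLmultiplication}, and once via $\psi$, \eqref{eq:chilambdaKLbasis} and $\vartheta$, and then expand in $\{\underline{N}_w\}$. The differences are cosmetic: you use $dia(\chi_\lambda) = (-1)^{\ell(t_\lambda)}\chi_\lambda$ directly instead of the $\underline{\tilde H}_x$-expansion \eqref{eq:chilambdanegativeKLbasis}, and you get the $N_y$-expansion of $\vartheta(\underline{M}_{t_\lambda})$ from the bar-semilinearity of $\vartheta$ rather than citing Soergel's Theorem 3.5; as a result you never need the self-duality of $a_{x,\lambda}$, which the paper uses tacitly when it pulls these coefficients through $\vartheta$.
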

\begin{proof}
	The equality $a_{w,\lambda} = b_{w,\lambda}$ was explained in Subsection \ref{subsec:chilambdaviacanonicalbasis}.
	Combining the observations from Subsections \ref{subsec:negativeKL} and \ref{subsec:sphericalandantisphericalmodule}, we compute
	{\allowdisplaybreaks
	\begin{alignat*}{2}
	\underline{\tilde N}_{t_\lambda} & = (-1)^{\ell(t_\lambda)} \cdot \vartheta( \underline{M}_{t_\lambda} ) \hspace{2cm} && \text{ by \eqref{eq:thetapropertiesbases}} \\
	& = (-1)^{\ell(t_\lambda)} \cdot \vartheta \psi^{-1}( \underline{H}_{w_0 t_\lambda} ) \hspace{2cm} && \text{ by \eqref{eq:psipropertiesbases}} \\
	& = (-1)^{\ell(t_\lambda)} \cdot \vartheta \psi^{-1}( \underline{H}_{w_0} \cdot \chi_\lambda ) \hspace{2cm} && \text{ by \eqref{eq:chilambdaKLbasis}} \\
	& = \vartheta \psi^{-1}\Big( \sum_{x \in W_\mathrm{ext}} (-1)^{\ell(x)} \cdot a_{x,\lambda} \cdot \underline{H}_{w_0} \cdot \underline{\tilde H}_{x} \Big) \hspace{2cm} && \text{ by \eqref{eq:chilambdanegativeKLbasis}} \\
	& = \vartheta\Big( \sum_{x \in W_\mathrm{ext}} (-1)^{\ell(x)} \cdot a_{x,\lambda} \cdot \psi^{-1}( \underline{H}_{w_0} ) \cdot \underline{\tilde H}_{x} \Big) \\
	& = \vartheta\Big( \sum_{x \in W_\mathrm{ext}} (-1)^{\ell(x)} \cdot a_{x,\lambda} \cdot M_e \cdot \underline{\tilde H}_{x} \Big) \hspace{2cm} && \text{ by \eqref{eq:psipropertiesbases}} \\
	& = \sum_{x \in W_\mathrm{ext}} (-1)^{\ell(x)} \cdot a_{x,\lambda} \cdot N_e \cdot dia( \underline{\tilde H}_{x} ) \hspace{2cm} && \text{ by \eqref{eq:thetadialinear} and \eqref{eq:thetapropertiesbases}} \\
	& = \sum_{x \in W_\mathrm{ext}} a_{x,\lambda} \cdot N_e \cdot \underline{H}_{x} \hspace{2cm} && \text{ by \eqref{eq:negativeKL}} \\
	& = \sum_{x \in W_\mathrm{ext}^+} a_{x,\lambda} \cdot \underline{N}_x \hspace{2cm} && \text{ by \eqref{eq:antisphericalKLmultiplication}} .
\end{alignat*}
	}
	We further have
	\[ \underline{\tilde N}_{t_\lambda} = \sum_{y \in W_\mathrm{ext}^+} (-1)^{\ell(t_\lambda) + \ell(y)} \cdot \overline{ m_{y,t_\lambda} } \cdot N_y = \sum_{x \in W_\mathrm{ext}^+} \Big( \sum_{y \in W_\mathrm{ext}^+} (-1)^{\ell(t_\lambda) + \ell(x)} \cdot \overline{ m_{y,t_\lambda} } \cdot n^{y,x} \Big) \cdot \underline{N}_x , \]
	 by Theorem 3.5 in \cite{SoergelKL} and by \eqref{eq:inverseantisphericalKLpolynomials}, and it follows that
	\[ a_{w,\lambda} = (-1)^{\ell(t_\lambda)+\ell(w)} \cdot \sum_{y \in W_\mathrm{ext}^+} \overline{ m_{y,t_\lambda} } \cdot n^{y,w} \]
	for all $w \in W_\mathrm{ext}^+$, as claimed.
\end{proof}

\begin{Remark}
	For $w \in W_\mathrm{ext}^+$ and $\lambda \in Y^+$, the equality
	\[ a_{w,\lambda} = (-1)^{\ell(t_\lambda)+\ell(w)} \cdot \sum_{y \in W_\mathrm{ext}^+} \overline{ m_{y,t_\lambda} } \cdot n^{y,w} \]
	still holds without assuming the existence of the canonical basis of $\mathcal{B}$.
\end{Remark}

\appendix

\section{The standard basis in rank 2}
\label{app:rank2}

In this appendix, we prove the existence of the standard basis of $\mathcal{B}$ (see Conjecture \ref{conj:standardbasis}) and the validity of Hypothesis \ref{hyp:standardbasisfiltration} for root systems of type $\mathsf{B}_2$ and $\mathsf{G}_2$.
We start with some additional notation pertaining to the affine Weyl group and with some considerations that reduce Conjecture \ref{conj:standardbasis} to a finite computational problem for any given root system.

\subsection{Alcove geometry}

The extended affine Weyl group $W_\mathrm{ext} = W_\mathrm{fin} \ltimes Y$ acts on $Y_\R = Y \otimes_\Z \R$ via
\[ t_\gamma w(y) = w(y) + \gamma \]
for $w \in W_\mathrm{fin}$, $\gamma \in Y$ and $y \in Y_\R$, and the fixed points of a reflection $s_{\alpha,r} = t_{r\alpha^\vee} s_\alpha$ form an affine hyperplane $H_{\alpha,r} = \{ y \in Y_\R \mid (\alpha,y) = r \}$ for $\alpha \in \Phi$ and $r \in \Z$.
The connected components of
\[ Y_\R \setminus \bigcup_{\alpha,r} H_{\alpha,r} \]
are called \emph{alcoves}.
Every alcove $A \subseteq Y_\R$ can be described as
\[ A = \{ y \in Y_\R \mid n_\alpha \leq (\alpha,y) \leq n_\alpha+1 \text{ for all } \alpha \in \Phi^+ \} \]
for uniquely determined integers $n_\alpha = n_\alpha(A)$ with $\alpha \in \Phi^+$, and the alcove
\[ A_\mathrm{fund} = \{ y \in Y_\R \mid 0 \leq (\alpha,y) \leq 1 \text{ for all } \alpha \in \Phi^+ \} \]
is called the \emph{fundamental alcove}.
The set of alcoves is acted upon by $W_\mathrm{ext}$ since the action of $W_\mathrm{ext}$ on $Y_\R$ permutes the reflection hyperplanes $H_{\alpha,r}$.
It is well known that $W_\mathrm{aff}$ acts freely on the set of alcoves, so that the map $x \mapsto x(A_\mathrm{fund})$ gives rise to a bijection between $W_\mathrm{aff}$ and the set of alcoves.
For the action of $W_\mathrm{ext}$ on the set of alcoves, the stabilizer of $A_\mathrm{fund}$ is the subgroup $\Omega \subseteq W_\mathrm{ext}$ defined in Subsection \ref{subsec:WaffWext}:
\[ \Omega = \{ \omega \in W_\mathrm{ext} \mid \omega( A_\mathrm{fund} ) = A_\mathrm{fund} \} \]

\subsection{Dominant and restricted alcoves}

An element $y \in Y_\R$ is called dominant if $(y,\alpha) > 0$ for all $\alpha \in \Phi^+$, and an alcove $A \subseteq Y_\R$ is called \emph{dominant} if it contains a dominant element.
Using the fact that every $W_\mathrm{fin}$-orbit in $Y_\R$ contains a unique dominant element, it is straightforward to see that an element $w \in W_\mathrm{ext}$ belongs to the subset $W_\mathrm{ext}^+ \subseteq W_\mathrm{ext}$ of minimal $W_\mathrm{fin}$-coset representatives if and only if the alcove $w(A_\mathrm{fund})$ is dominant.

An alcove $A \subseteq Y_\R$ is called \emph{restricted} if $n_\alpha(A) = 0$ for all simple roots $\alpha \in \Pi$, or equivalently, if $0<(y,\alpha)<1$ for all $y \in A$ and $\alpha \in \Pi$.
We write $\mathcal{P}$ for the set of restricted alcoves and note that these are precisely the alcoves that are contained in the parallelepiped spanned by the fundamental dominant coweights $\varpi_\alpha^\vee$ with $\alpha \in \Pi$.
Then $\mathcal{P}$ is a set of representatives for the action of the translation subgroup $Y \subseteq W_\mathrm{ext}$ on the set of alcoves.
Writing
\[ \mathcal{P}_\mathrm{ext} = \{ w \in W_\mathrm{ext} \mid w(A_\mathrm{fund}) \subseteq \mathcal{P} \} , \hspace{2cm} \mathcal{P}_\mathrm{aff} = \{ w \in W_\mathrm{aff} \mid w(A_\mathrm{fund}) \subseteq \mathcal{P} \} , \]
we have $\mathcal{P}_\mathrm{ext} = \mathcal{P}_\mathrm{aff} \cdot \Omega$, and every element $x \in W_\mathrm{ext}$ can be written uniquely as a product $x = t_\gamma w$ with $\gamma \in Y$ and $w \in \mathcal{P}_\mathrm{ext}$.
Also note that $\mathcal{P}_\mathrm{ext} \subseteq W_\mathrm{ext}^+$ and $\mathcal{P}_\mathrm{aff} \subseteq W_\mathrm{aff}^+$.

\subsection{Length function and minimal coset representatives}

In order to prove our reduction theorem for Conjecture \ref{conj:standardbasis}, we need to establish some (presumably well-known) auxiliary results on the length function $\ell \colon W_\mathrm{ext} \to \Z_{\geq 0}$ and the set of minimal $W_\mathrm{fin}$-coset representatives $W_\mathrm{ext}^+$.

\begin{Lemma} \label{lem:lengthadditiveminimalcosetrep}
	Let $\gamma \in Y$ and $x \in W_\mathrm{ext}$ such that $t_\gamma x \in W_\mathrm{ext}^+$ and $\ell(t_\gamma x) = \ell(t_\gamma) + \ell(x)$.
	Then we have $\gamma \in Y^+$ and $x \in W_\mathrm{ext}^+$.
\end{Lemma}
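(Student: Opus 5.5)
The plan is to argue with inversion sets, that is, with separating hyperplanes between alcoves. For $z \in W_\mathrm{ext}$ the length $\ell(z)$ counts the hyperplanes $H_{\alpha,r}$ separating $A_\mathrm{fund}$ from $z(A_\mathrm{fund})$. By the alcove description just recalled, $z \in W_\mathrm{ext}^+$ if and only if $z(A_\mathrm{fund})$ is dominant. Equivalently, no hyperplane $H_{\alpha,0}$ with $\alpha \in \Pi$ (or $\alpha \in \Phi^+$) separates $A_\mathrm{fund}$ from $z(A_\mathrm{fund})$.

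Suppose $\ell(t_\gamma x) = \ell(t_\gamma) + \ell(x)$. Then the set of hyperplanes separating $A_\mathrm{fund}$ from $t_\gamma x(A_\mathrm{fund})$ is the disjoint union of two sets: those separating $A_\mathrm{fund}$ from $t_\gamma(A_\mathrm{fund})$, and the $t_\gamma$-translates of those separating $A_\mathrm{fund}$ from $x(A_\mathrm{fund})$. This is the standard fact that length additivity means a minimal gallery through the intermediate alcove.

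First I show $x \in W_\mathrm{ext}^+$. If not, some $H_{\alpha,0}$ with $\alpha \in \Phi^+$ separates $A_\mathrm{fund}$ from $x(A_\mathrm{fund})$, so $H_{\alpha,(\alpha,\gamma)}$ separates $t_\gamma(A_\mathrm{fund})$ from $t_\gamma x(A_\mathrm{fund})$. In that case the hyperplanes $H_{\alpha,r}$ crossed from $A_\mathrm{fund}$ to $t_\gamma x(A_\mathrm{fund})$ include all $H_{\alpha,r}$ strictly between level $0$ and level $(\alpha,\gamma)$. They also include $H_{\alpha,(\alpha,\gamma)}$ crossed in the negative direction.

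\begin{itemize}
\item If $(\alpha,\gamma) \le 0$, the whole path moves in the negative $\alpha$-direction. Then $H_{\alpha,0}$ itself separates $A_\mathrm{fund}$ from $t_\gamma x(A_\mathrm{fund})$, contradicting dominance of $t_\gamma x(A_\mathrm{fund})$.
\item If $(\alpha,\gamma) > 0$, the gallery crosses $H_{\alpha,(\alpha,\gamma)}$ twice in total if we count the $t_\gamma$ part. More precisely, the hyperplane $H_{\alpha,(\alpha,\gamma)}$ lies on the far side of $A_\mathrm{fund}$ from level $0$, so $t_\gamma(A_\mathrm{fund})$ sits in the strip $(\alpha,\gamma) \le (\alpha,\cdot) \le (\alpha,\gamma)+1$ and was not yet past it. I need to be careful with this case.
\end{itemize}

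A cleaner route for the second case is to track the coordinate $n_\alpha$. Along a reduced gallery, each $n_\alpha$ changes monotonically. The first segment moves $n_\alpha$ from $0$ to $(\alpha,\gamma)$, and the second segment from $(\alpha,\gamma)$ to $(\alpha,\gamma)-1-k$ with $k \ge 0$, since $x$ has $n_\alpha(x(A_\mathrm{fund})) \le -1$. If $(\alpha,\gamma)\ne 0$ these two moves go in opposite directions, which violates monotonicity. If $(\alpha,\gamma)=0$, the final alcove has $n_\alpha<0$, which is not dominant.

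Second, I show $\gamma \in Y^+$. Suppose $(\alpha,\gamma)<0$ for some $\alpha \in \Pi$. Monotonicity of $n_\alpha$ along the reduced gallery forces $n_\alpha(t_\gamma x(A_\mathrm{fund})) \le (\alpha,\gamma) < 0$, contradicting dominance.

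The main obstacle is justifying this monotonicity statement cleanly. It says that for a reduced expression, hence a minimal gallery, each hyperplane is crossed at most once, so for each $\alpha$ the sequence $n_\alpha$ along the gallery is monotone. I would cite it as the standard characterization of length as the number of separating hyperplanes, extended to $W_\mathrm{ext}$ because $\Omega$ stabilizes $A_\mathrm{fund}$. I also need the fact that length additivity of $t_\gamma \cdot x$ means concatenating minimal galleries gives a minimal gallery.
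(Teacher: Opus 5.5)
Your argument is correct, but it takes a different route from the paper's.

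\textbf{The paper's route.} The paper argues purely with lengths. Its key observation is that if $yz \in W_\mathrm{ext}^+$ and $\ell(yz)=\ell(y)+\ell(z)$, then $y \in W_\mathrm{ext}^+$, since an $s \in S_\mathrm{fin}$ shortening $y$ would also shorten $yz$. Taking $y = t_\gamma$ gives $\gamma \in Y^+$. For $x$, it writes $x = t_\nu w$ and moves the translation to the right, $t_\gamma x = x\, t_{w^{-1}(\gamma)}$. Since $\ell(t_{w^{-1}(\gamma)}) = \ell(t_\gamma)$, this makes $x$ a length-additive left factor, and the same observation applies.

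\textbf{Your route.} You replace both steps by alcove geometry. Length additivity gives the disjoint decomposition $\mathrm{Sep}(A_\mathrm{fund}, t_\gamma x(A_\mathrm{fund})) = \mathrm{Sep}(A_\mathrm{fund}, t_\gamma(A_\mathrm{fund})) \sqcup t_\gamma\, \mathrm{Sep}(A_\mathrm{fund}, x(A_\mathrm{fund}))$ of separating hyperplanes. Your $\gamma$-step is just the geometric form of the paper's prefix observation. For $x$, you avoid the commutation trick and the $W_\mathrm{fin}$-invariance of translation lengths: if $(\alpha,\gamma)>0$ and $n_\alpha(x(A_\mathrm{fund})) \le -1$, then $H_{\alpha,(\alpha,\gamma)}$ would lie in both pieces.

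\textbf{Comparison.} The paper's proof is shorter and uses only standard Coxeter-group facts. Yours needs the separating-hyperplane description of $\ell$ on $W_\mathrm{ext}$. In exchange, it makes visible why the lemma holds: a reduced gallery cannot reverse direction in any coordinate $n_\alpha$.

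\textbf{Two things to clean up.}
\begin{enumerate}
\item The sentence ``if $(\alpha,\gamma)\neq 0$ these two moves go in opposite directions'' is false for $(\alpha,\gamma)<0$, where both segments decrease $n_\alpha$. That case is covered by your first bullet, since the final alcove has $n_\alpha \le (\alpha,\gamma)-1 < 0$. Alternatively, prove $\gamma \in Y^+$ first; then $(\alpha,\gamma) \ge 0$ for all $\alpha \in \Phi^+$ and the case never arises.
\item Drop the unfinished second bullet in favour of the $n_\alpha$-monotonicity argument.
\end{enumerate}
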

\begin{proof}
	For all $y,z \in W_\mathrm{ext}$ with $yz \in W_\mathrm{ext}^+$ and $\ell(yz) = \ell(y) + \ell(z)$, we must have $y \in W_\mathrm{ext}^+$ by the definition of $W_\mathrm{ext}^+$ in Subsection \ref{subsec:standardbasisB0}.
	In particular, we have $t_\gamma \in W_\mathrm{ext}^+$ and therefore $\gamma \in Y^+$.
	Next let us write $x = t_\nu w$ with $\nu \in Y$ and $w \in W_\mathrm{fin}$, and observe that
	\[ t_\gamma x = t_{\gamma + \nu} w = t_\nu w t_{w^{-1}(\gamma)} = x t_{w^{-1}(\gamma)} , \]
	where $\ell(x t_{w^{-1}(\gamma)}) = \ell(t_\gamma x) = \ell(t_\gamma) + \ell(x) = \ell(x) + \ell(t_{w^{-1}(\gamma)})$.
	As $x t_{w^{-1}(\gamma)} = t_\gamma x \in W_\mathrm{ext}^+$, we conclude that $x \in W_\mathrm{ext}^+$, as required.
\end{proof}

We also have the following converse of Lemma \ref{lem:lengthadditiveminimalcosetrep}:

\begin{Lemma} \label{lem:dominantweightandminimalcosetrep}
	Let $\gamma \in Y^+$ and $x \in W_\mathrm{ext}^+$.
	Then $t_\gamma x \in W_\mathrm{ext}^+$ and $\ell(t_\gamma x) = \ell(t_\gamma) + \ell(x)$.
\end{Lemma}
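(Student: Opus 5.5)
Use the alcove description of length and of $W_\mathrm{ext}^+$ set up above. For $x \in W_\mathrm{ext}$, $\ell(x)$ is the number of reflection hyperplanes $H_{\alpha,r}$ separating $A_\mathrm{fund}$ from $x(A_\mathrm{fund})$. Equivalently, writing $x(A_\mathrm{fund}) = A$ with integers $n_\alpha(A)$, we have $\ell(x) = \sum_{\alpha \in \Phi^+} \abs{n_\alpha(A)}$ with the standard convention: the hyperplanes $H_{\alpha,r}$ strictly between $A_\mathrm{fund}$ and $A$ number $n_\alpha$ if $n_\alpha \geq 0$ and $-n_\alpha$ if $n_\alpha < 0$. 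Also, $x \in W_\mathrm{ext}^+$ if and only if $A$ is dominant, i.e.\ $n_\alpha(A) \geq 0$ for all $\alpha \in \Phi^+$.

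The plan is as follows. Let $A = x(A_\mathrm{fund})$. Since $x \in W_\mathrm{ext}^+$, $A$ is dominant, so $n_\alpha(A) \geq 0$ for all $\alpha \in \Phi^+$ and $\ell(x) = \sum_{\alpha > 0} n_\alpha(A)$. The alcove $t_\gamma x(A_\mathrm{fund}) = A + \gamma$ satisfies $n_\alpha(A+\gamma) = n_\alpha(A) + (\alpha,\gamma)$, because translation by $\gamma$ shifts the value of $(\alpha,-)$ by the integer $(\alpha,\gamma)$. As $\gamma \in Y^+$, we have $(\alpha,\gamma) \geq 0$ for all positive $\alpha$. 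Hence $n_\alpha(A+\gamma) \geq 0$ for all positive $\alpha$, so $A + \gamma$ is dominant and $t_\gamma x \in W_\mathrm{ext}^+$. Moreover,
\[ \ell(t_\gamma x) = \sum_{\alpha \in \Phi^+} \big( n_\alpha(A) + (\alpha,\gamma) \big) = \ell(x) + \sum_{\alpha \in \Phi^+} (\alpha,\gamma) . \]
Applying the same formula with $x = e$ (so $A = A_\mathrm{fund}$ and $n_\alpha = 0$) gives $\ell(t_\gamma) = \sum_{\alpha > 0} (\alpha,\gamma)$ for dominant $\gamma$. This is the classical formula $\ell(t_\gamma) = (2\rho,\gamma)$, and it yields the additivity $\ell(t_\gamma x) = \ell(t_\gamma) + \ell(x)$.

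The only step requiring care is the length formula in terms of the $n_\alpha$. It follows from the standard fact that $\ell(x)$ equals the number of hyperplanes separating $A_\mathrm{fund}$ and $x(A_\mathrm{fund})$; for $x \in W_\mathrm{aff}$ this is classical. The extension to $W_\mathrm{ext}$ uses that $\Omega$ stabilizes $A_\mathrm{fund}$ and that $\ell(x\omega) = \ell(x)$. For fixed $\alpha$, the hyperplanes $H_{\alpha,r}$ separating $A_\mathrm{fund}$ (where $0 < (\alpha,y) < 1$) from an alcove with $n_\alpha \leq (\alpha,y) \leq n_\alpha + 1$ are exactly those with $1 \leq r \leq n_\alpha$ when $n_\alpha \geq 0$. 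Summing over $\alpha \in \Phi^+$ (each hyperplane counted once via its positive root) gives the formula. The main thing to double-check is the characterization of $W_\mathrm{ext}^+$ via dominant alcoves, which the paper already records above, so it can simply be invoked.
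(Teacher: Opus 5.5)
Your proof is correct. The dominance half ($x(A_\mathrm{fund})+\gamma$ is again dominant, so $t_\gamma x \in W_\mathrm{ext}^+$) is exactly the paper's argument.

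For the length additivity, the paper gives no argument and simply cites \cite[Lemma 4.6]{GruberGeneric}. You instead prove it directly. You identify $\ell$ with the number of separating hyperplanes, i.e.\ $\ell(x)=\sum_{\alpha\in\Phi^+}|n_\alpha(x(A_\mathrm{fund}))|$. You then observe that translation by $\gamma\in Y^+$ shifts each $n_\alpha$, for $\alpha\in\Phi^+$, by the nonnegative integer $(\alpha,\gamma)$. For dominant alcoves all the absolute values can therefore be dropped.

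Your route makes the lemma self-contained. It also yields the explicit increment $\ell(t_\gamma x)-\ell(x)=\sum_{\alpha\in\Phi^+}(\alpha,\gamma)=\ell(t_\gamma)$.

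The price is relying on the classical hyperplane-counting description of length, which does apply here for two reasons:
\begin{itemize}
\item With the paper's conventions, $s_0=t_{\alpha_\mathrm{h}^\vee}s_{\alpha_\mathrm{h}}$ and the $s_\alpha$ for $\alpha\in\Pi$ are precisely the reflections in the walls $H_{\alpha_\mathrm{h},1}$ and $H_{\alpha,0}$ of $A_\mathrm{fund}$.
\item $\Omega$ stabilizes $A_\mathrm{fund}$ and has length zero.
\end{itemize}

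As a bonus, the same computation applied to $w'x$ with $w'\in W_\mathrm{fin}$ shows $\ell(w'x)=\ell(w')+\ell(x)$ whenever $x(A_\mathrm{fund})$ is dominant. This would justify the characterization of $W_\mathrm{ext}^+$ by dominant alcoves that both you and the paper take for granted.
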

\begin{proof}
	Since $x(A_\mathrm{fund})$ is a dominant alcove and $\gamma \in Y^+$, we have that $x(A_\mathrm{fund}) + \gamma$ is a dominant alcove and hence $t_\gamma x \in W_\mathrm{ext}^+$.
	The claim that $\ell(t_\gamma x) = \ell(t_\gamma) + \ell(x)$ is proven in \cite[Lemma 4.6]{GruberGeneric}.
\end{proof}

\subsection{The reduction theorem}

As in Subsection \ref{subsec:filtrationHaff}, we consider the filtration
\[ \{0\} = \mathcal{H}^{\leq -1} \subseteq \mathcal{H}^{\leq 0} \subseteq \mathcal{H}^{\leq 1} \subseteq \cdots \subseteq \mathcal{H} \]
defined via $\mathcal{H}^{\leq i} = \mathrm{span}_\mathcal{A}\{ H_w \mid w \in W_\mathrm{aff} \text{ with } \ell(w) \leq i \}$ and the analogous filtration
\[ \{0\} = \mathcal{H}_\mathrm{ext}^{\leq -1} \subseteq \mathcal{H}_\mathrm{ext}^{\leq 0} \subseteq \mathcal{H}_\mathrm{ext}^{\leq 1} \subseteq \cdots \subseteq \mathcal{H}_\mathrm{ext} \]
with $\mathcal{H}_\mathrm{ext}^{\leq i} = \mathrm{span}_\mathcal{A}\{ H_w \mid w \in W_\mathrm{ext} \text{ with } \ell(w) \leq i \} = \mathcal{H}^{\leq i} \cdot \Omega$.
Recall that in Conjecture \ref{conj:standardbasis}, we have conjectured the existence of certain elements $B_w \in \mathcal{B}_\mathrm{aff}$ for $w \in W_\mathrm{aff}^+$ which (assuming their existence) form the standard basis $\{ B_w \mid w \in W_\mathrm{aff}^+ \}$ of $\mathcal{B}$, see Lemma \ref{lem:Bwbasis} and Definition \ref{def:standardbasis}.
If we additionally impose the condition $B_w \in \mathcal{H}^{\leq \ell(w)}$ on the elements from Conjecture \ref{conj:standardbasis} (as in Hypothesis \ref{hyp:standardbasisfiltration}), then the following reduction theorem reduces the conjecture to proving the existence of $B_w$ for finitely many $w \in W_\mathrm{aff}^+$.
(Note that in type $\mathsf{A}_n$, the condition $B_w \in \mathcal{H}^{\leq \ell(w)}$ is satisfied for all $w \in W_\mathrm{aff}^+$ by Remark \ref{rem:Bwproperties}.)

\begin{Theorem} \label{thm:reductionstandardbasis}
	Suppose that for all $w \in \mathcal{P}_\mathrm{aff}$, there is an element $B_w \in \mathcal{B}$ such that
	\begin{enumerate}
		\item the expansion
	\[ B_w = \sum_{x \in W_\mathrm{aff}} P_{x,w} \cdot H_x \]
	in terms of the standard basis of $\mathcal{H}$ satisfies $P_{w,w} = 1$ and $P_{x,w} = 0$ for all $w \neq x \in W_\mathrm{aff}^+$;
		\item we have $B_w \in \mathcal{H}^{\leq \ell(w)}$.
	\end{enumerate}
	Then, for all $w \in W_\mathrm{aff}^+$, there is an element $B_w \in \mathcal{B}$ that satisfies (1) and (2).
	In particular, Conjecture \ref{conj:standardbasis} holds and the Hypothesis \ref{hyp:standardbasisfiltration} is satisfied.
\end{Theorem}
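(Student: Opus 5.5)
The plan is to argue by induction on $\ell(w)$, building $B_w$ for general $w \in W_\mathrm{aff}^+$ from the given elements $B_u$ for restricted $u$, multiplied by suitable central elements of $\mathcal{H}_\mathrm{ext}$. Centrality is what guarantees membership in $\mathcal{B}_\mathrm{ext}$, by Corollary \ref{cor:centerinB} and Lemma \ref{lem:Bsubalgebra}.

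\emph{Decomposition of $w$.} Write $w = t_\gamma x$ uniquely with $\gamma \in Y$ and $x \in \mathcal{P}_\mathrm{ext}$, so that $x = u\omega$ with $u \in \mathcal{P}_\mathrm{aff}$ and $\omega \in \Omega$. The first step is to show $\gamma \in Y^+$ and $\ell(w) = \ell(t_\gamma) + \ell(x)$. Since $w(A_\mathrm{fund}) = x(A_\mathrm{fund}) + \gamma$ is dominant and $x(A_\mathrm{fund})$ lies in the parallelepiped spanned by the $\varpi_\alpha^\vee$, we get $\gamma \in Y^+$. Length additivity then follows from Lemma \ref{lem:dominantweightandminimalcosetrep}, using $x \in \mathcal{P}_\mathrm{ext} \subseteq W_\mathrm{ext}^+$.

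\emph{Base case.} If $\gamma = 0$, then $w = x \in W_\mathrm{aff} \cap \mathcal{P}_\mathrm{ext} = \mathcal{P}_\mathrm{aff}$. In this case $B_w$ is given by hypothesis.

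\emph{Candidate element for $\gamma \neq 0$.} Let $z_\gamma = \sum_{\mu \in W_\mathrm{fin}\gamma} Y_\mu \in Z(\mathcal{H}_\mathrm{ext})$, and set $\hat B_w = z_\gamma \cdot B_u \omega$. By Lemma \ref{lem:Bomega} we have $B_u\omega \in \mathcal{B}_\mathrm{ext}$, so $\hat B_w \in \mathcal{B}_\mathrm{ext}$. Moreover $\hat B_w$ lies in the graded piece $\mathcal{H}\,\omega_w$ with $\omega_w$ trivial, because $w \in W_\mathrm{aff}$. Hence $\hat B_w \in \mathcal{B}_\mathrm{ext} \cap \mathcal{H} = \mathcal{B}$.

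\emph{Key step: the top-degree part of $\hat B_w$.} I will pass to the associated graded $\mathrm{Gr}(\mathcal{H}_\mathrm{ext})$ for the length filtration. I will use the standard fact (Bernstein--Lusztig) that $Y_\mu \in \mathcal{H}_\mathrm{ext}^{\leq \ell(t_\mu)}$ with image $\pm[H_{t_\mu}]$ in $\mathrm{Gr}_{\ell(t_\mu)}$, where the sign is $+$ for dominant $\mu$. I also use that $[H_a][H_b] = [H_{ab}]$ if $\ell(ab) = \ell(a)+\ell(b)$, and $[H_a][H_b] = 0$ otherwise.

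It follows that $\hat B_w \in \mathcal{H}^{\leq \ell(w)}$. Its degree-$\ell(w)$ part is $\sum_{\mu} \sum_{y} \pm P_{y,u}\,[H_{t_\mu y\omega}]$, where $\mu$ runs over $W_\mathrm{fin}\gamma$ and $y$ runs over elements with $\ell(y) = \ell(u)$ and $\ell(t_\mu y\omega) = \ell(t_\mu)+\ell(y\omega)$. Suppose such a term is indexed by an element of $W_\mathrm{aff}^+$. Then Lemma \ref{lem:lengthadditiveminimalcosetrep} forces $\mu \in Y^+$, hence $\mu = \gamma$, and forces $y\omega \in W_\mathrm{ext}^+$, hence $y \in W_\mathrm{aff}^+$. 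By property (1) of $B_u$ this gives $y = u$. Consequently the coefficient of $H_w$ in $\hat B_w$ is $1$, and the coefficients of all other $H_{w'}$ with $w' \in W_\mathrm{aff}^+$ and $\ell(w') = \ell(w)$ vanish.

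\emph{Correction step.} Define
\[ B_w = \hat B_w - \sum_{w' \in W_\mathrm{aff}^+,\ \ell(w') < \ell(w)} \hat P_{w',w}\, B_{w'} , \]
where $\hat P_{w',w}$ is the coefficient of $H_{w'}$ in $\hat B_w$ and the $B_{w'}$ are supplied by the induction hypothesis. This $B_w$ lies in $\mathcal{B}$ and satisfies (2), since each $B_{w'}$ lies in $\mathcal{H}^{\leq \ell(w')}$. It satisfies (1) because each subtracted $B_{w'}$ has coefficient $1$ at $H_{w'}$ and $0$ at every other element of $W_\mathrm{aff}^+$.

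\emph{Main obstacle.} The delicate point is the leading-term claim for $Y_\mu$ with $\mu$ non-dominant. One must check that $Y_\mu = H_{t_{\mu_1}} H_{t_{\mu_2}}^{-1}$ really has top component $\pm[H_{t_\mu}]$ of degree exactly $\ell(t_\mu)$. I would verify this by writing $t_\mu$ as a reduced product adapted to the decomposition $\mu = \mu_1 - \mu_2$ and checking that $H_s^{-1} = H_s + (v - v^{-1})$ differs from $H_s$ only in lower degree. If only the bound $Y_\mu \in \mathcal{H}_\mathrm{ext}^{\leq \ell(t_\mu)}$ holds cleanly, the argument above still works: the Lemma \ref{lem:lengthadditiveminimalcosetrep} step only needs the top components supported on elements $t_\mu y\omega$ with additive length, and needs the $\mu = \gamma$ term to be exactly $[H_{t_\gamma}]$.
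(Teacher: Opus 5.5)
Your proposal is correct and is essentially the paper's proof: multiply the restricted element $B_u$ by a central element, identify the top-degree part via Lemma~\ref{lem:lengthadditiveminimalcosetrep}, then correct triangularly by the lower $B_{w'}$. The differences are cosmetic: you use the orbit sum $z_\gamma$ in place of $\chi_\lambda$ (same leading part $\sum_{\mu\in W_\mathrm{fin}\gamma}[H_{t_\mu}]$), you stay inside $\mathcal{H}$ via the $\Omega$-grading instead of building $B_w$ for all of $W_\mathrm{ext}^+$, and you justify $\gamma\in Y^+$, which the paper leaves implicit.

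On your flagged obstacle, the leading term of $Y_\mu$ is always exactly $+[H_{t_\mu}]$, but a dominant splitting $\mu=\mu_1-\mu_2$ will not show this directly. In general no splitting has $\ell(t_{\mu_1})+\ell(t_{\mu_2})=\ell(t_\mu)$, for example $\mu=\varpi_2^\vee-\varpi_1^\vee$ in type $\mathsf{A}_2$, so genuine cancellation is needed. Use instead the alcove-walk formula $Y_\mu=H_{s_{i_1}}^{\pm1}\cdots H_{s_{i_k}}^{\pm1}\,\omega$ for a reduced expression $t_\mu=s_{i_1}\cdots s_{i_k}\omega$, together with $H_s^{-1}=H_s+(v-v^{-1})$. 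Drop your fallback, since the bare bound $Y_\mu\in\mathcal{H}_\mathrm{ext}^{\leq\ell(t_\mu)}$ would not force the top component onto translations.
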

\begin{proof}
	First observe that since $\mathcal{P}_\mathrm{ext} = \mathcal{P}_\mathrm{aff} \cdot \Omega$ and $\mathcal{B}_\mathrm{ext} = \mathcal{B}_\mathrm{aff} \cdot \Omega$ (see Lemma \ref{lem:Bomega}), we may assume that for all $w \in \mathcal{P}_\mathrm{ext}$, there is an element $B_w \in \mathcal{B}_\mathrm{ext}$ such that
	\begin{itemize}
		\item[$(1')$] the expansion
		\[ B_w = \sum_{x \in W_\mathrm{ext}} P_{x,w} \cdot H_x \]
		in terms of the standard basis of $\mathcal{H}_\mathrm{ext}$ satisfies $P_{w,w} = 1$ and $P_{x,w} = 0$ for all $w \neq x \in W_\mathrm{ext}^+$;
		\item[$(2')$] we have $B_w \in \mathcal{H}_\mathrm{ext}^{\leq \ell(w)}$.
	\end{itemize}
	For an arbitrary element $w \in W_\mathrm{ext}^+$, we will prove the existence of $B_w \in \mathcal{B}_\mathrm{ext}$ satisfying $(1')$ and $(2')$ by induction on $\ell(w)$, starting from the observation that $B_\omega = \omega$ satisfies these conditions for all $\omega \in \Omega$.
	Now suppose that $\ell(w)>0$ and let $\lambda \in Y^+$ and $u \in \mathcal{P}_\mathrm{ext}$ such that $w = t_\lambda u$.
	Consider the element $B_u \in \mathcal{B}_\mathrm{ext}$ satisfying $(1')$ and $(2')$ given by the assumption and the element $\chi_\lambda \in Z( \mathcal{H}_\mathrm{ext} )$ defined in Subsection \ref{subsec:centerHext}, and observe that $\chi_\lambda \in \mathcal{B}_\mathrm{ext}$ by Corollary \ref{cor:centerinB}.
	We can write
	\[ \chi_\lambda \cdot B_u = \sum_{x \in W_\mathrm{ext}} P'_{x,w} \cdot H_x \]
	for certain Laurent polynomials $P'_{x,w} \in \mathcal{A}$.
	For all $y \in W_\mathrm{ext}^+$ with $\ell(y)<\ell(w)$, we may assume by induction that there exists an element $B_y = \sum_{x \in W_\mathrm{ext}} P_{y,x} \cdot H_y \in \mathcal{B}_\mathrm{ext}$ satisfying the conditions $(1')$ and $(2')$.
	We claim that the element
	\begin{equation} \label{eq:constructionBw}
		B_w \coloneqq \chi_\lambda \cdot B_u - \sum_{ \substack{ y \in W_\mathrm{ext}^+ \\ \ell(y) < \ell(w) } } P'_{y,w} \cdot B_y \in \mathcal{B}_\mathrm{ext}
	\end{equation}
	also satisfies the conditions $(1')$ and $(2')$.
	
	Indeed, we have $\chi_\lambda \in \mathcal{H}_\mathrm{ext}^{\leq \ell(t_\lambda)}$ by construction, whereas $B_u \in \mathcal{H}_\mathrm{ext}^{\leq \ell(u)}$ and $\mathcal{B}_y \in \mathcal{H}_\mathrm{ext}^{\leq \ell(y)}$ for all $y \in W_\mathrm{ext}^+$ with $\ell(y) < \ell(w)$ by $(2')$.
	As $\ell(w) = \ell(t_\lambda) + \ell(u)$ by Lemma \ref{lem:dominantweightandminimalcosetrep}, we conclude that $B_w \in \smash{\mathcal{H}_\mathrm{ext}^{\leq \ell(w)}}$, and so $B_w$ satisfies $(2')$.
	Now consider the expansion
	\[ B_w = \sum_{x \in W_\mathrm{aff}} P_{x,w} \cdot H_x \]
	in terms of the standard basis of $\mathcal{H}_\mathrm{ext}$.
	By the definition of $B_w$ in \eqref{eq:constructionBw}, we have $P_{y,w} = 0$ for all $y \in W_\mathrm{ext}^+$ with $\ell(y) < \ell(w)$ and $P_{x,w} = P'_{x,w}$ for all $x \in W_\mathrm{ext}$ with $\ell(x) = \ell(w)$, and so it remains to show that $P'_{x,w} = \delta_{x,w}$ for $x \in W_\mathrm{ext}^+$ with $\ell(x) = \ell(w)$.
	To that end, observe that for $y,z \in W_\mathrm{ext}$, we have $H_y \cdot H_z = H_{yz}$ if $\ell(yz) = \ell(y) + \ell(z)$ and $H_y \cdot H_z \in \smash{\mathcal{H}_\mathrm{ext}^{\leq \ell(y)+\ell(z)-1}}$ if $\ell(yz) < \ell(y) + \ell(z)$.
	It is straightforward to see from the definition in Subsection \ref{subsec:centerHext} that we can write
	\[ \chi_\lambda = \sum_{\lambda' \in W_\mathrm{fin}(\lambda)} H_{t_{\lambda'}} + R \]
	with $R \in \mathcal{H}_\mathrm{ext}^{\leq \ell(t_\lambda)-1}$, and so for $x \in W_\mathrm{ext}^+$ with $\ell(x) = \ell(w) = \ell(t_\lambda) + \ell(u)$ and $P'_{x,w} \neq 0$, there must be elements $\lambda' \in W_\mathrm{fin}(\lambda)$ and $u' \in W_\mathrm{ext}$ such that $P_{u',u} \neq 0$ and $x = t_{\lambda'} u'$ with $\ell(x) = \ell(t_{\lambda'})+\ell(u')$.
	By Lemma \ref{lem:lengthadditiveminimalcosetrep}, this implies that $\lambda' \in Y^+$ and $u' \in W_\mathrm{ext}^+$, and we conclude that $\lambda' = \lambda$ and $u' = u$ because $B_u$ satisfies $(1')$.
	Thus $x = t_\lambda u = w$, and we further obtain $P'_{w,w} = 1$ because $P_{u,u} = 1$.
	In conclusion, we have $P'_{x,w} = \delta_{x,w}$ for $x \in W_\mathrm{ext}^+$ with $\ell(x) = \ell(w)$, and it follows that $B_w$ satisfies $(1')$.
	
	Finally, it remains to note that for $w \in W_\mathrm{aff}^+$, the element $B_w \in \mathcal{B}_\mathrm{ext}$ constructed above actually belongs to $\mathcal{B}_\mathrm{aff}$ by Lemmas \ref{lem:Bomega} and \ref{lem:Bcoefficientnonzero}, and so $B_w$ satisfies (1) and (2).
\end{proof}

\subsection{Type \texorpdfstring{$\mathsf{B}_2$}{B2}}
\label{subsec:standardbasisB2}

Let $\Phi$ be the root system of type $\mathsf{B}_2$, with simple roots $\Pi = \{ \alpha , \beta \}$ such that $\alpha$ is short.
The positive roots are given by $\Phi^+ = \{ \alpha , \beta , \alpha+\beta , 2\alpha+\beta \}$, and the affine Weyl group $W_\mathrm{aff}$ is generated by the simple reflections $S_\mathrm{aff} = \{ s , t , u \}$ with $u = s_\alpha$, $t = s_\beta$ and $s = s_0$ the affine simple root.
The braid relations in $W_\mathrm{aff}$ are as follows:
\[ st = ts , \hspace{2cm} susu=usus , \hspace{2cm} tutu=utut \]
We have $\mathcal{P}_\mathrm{aff} = \{ e , s , su , sut \}$, and the corresponding set $\mathcal{P}$ of restricted alcoves is displayed in Figure \ref{fig:alcovesB2}.
\begin{figure}[htbp]
	\begin{tikzpicture}[scale=1.5]
	\draw[fill,gray!30] (0,0) -- (0,2) -- (1,3) -- (1,1) -- cycle;
	\draw[thick] (0,0) -- (0,3);
	\draw[thick] (0,0) -- (3,3);
	\draw[thick] (0,1) -- (1,1);
	\draw[thick] (1,1) -- (0,2);
	\draw[thick] (1,1) -- (1,3);
	\draw[thick] (0,2) -- (2,2);
	\draw[thick] (0,2) -- (1,3);
	\draw[thick] (2,2) -- (1,3);
	\draw[thick] (2,2) -- (2,3);
	\draw[thick] (0,3) -- (3,3);
	
	\node at (.325,.675) {\footnotesize $e$};
	\node at (.325,1.325) {\footnotesize $s$};
	\node at (.675,1.675) {\footnotesize $su$};
	\node at (.675,2.325) {\footnotesize $sut$};
	\node at (.325,2.675) {\footnotesize $sutu$};
	\node at (1.325,1.675) {\footnotesize $sus$};
	\node at (1.325,2.325) {\footnotesize $suts$};
	\end{tikzpicture}
	\caption{Some dominant alcoves for $\Phi$ of type $\mathsf{B}_2$, labeled by the corresponding elements of $W_\mathrm{aff}$.
	The set $\mathcal{P}$ of restricted alcoves is shaded in gray.}
	\label{fig:alcovesB2}
\end{figure}
The highest root $\alpha_\mathrm{h} = 2\alpha+\beta$ has coroot $\alpha_\mathrm{h}^\vee = \alpha^\vee + \beta^\vee$, and so Lemma \ref{lem:Bs0} implies that
\[ B_s \coloneqq H_s + H_t + H_u \in \mathcal{B}_\mathrm{aff} . \]
Next, we compute that
\[ B_s \cdot B_s = H_{su} + 2 H_{st} + H_{us} + H_{tu} + H_{ut} + (v^{-1}-v) \cdot ( H_s + H_t + H_u ) + 3 H_e , \]
and it follows that
\[ B_{su} \coloneqq B_s \cdot B_s - (v^{-1}-v) \cdot B_s - 3 H_e = H_{su} + 2 H_{st} + H_{us} + H_{tu} + H_{ut} \in \mathcal{B}_\mathrm{aff} . \]
In order to define $B_{sut}$, we first construct the element $B_{sus}$ using the element $\chi_{\varpi_\beta^\vee} \in Z(\mathcal{H}_\mathrm{ext})$ defined in Subsection \ref{subsec:centerHext}.
Note that $\Omega = \{ 1 , \omega \}$ is a cyclic group of order $2$ and that $t_{\varpi_\beta^\vee} = sus \omega$.
We compute
\begin{multline*}
	\chi_{\varpi_\beta^\vee} \cdot \omega = H_{sus} + H_{tut} + H_{uts} + H_{tsu} + (v-v^{-1}) \cdot ( H_{su} + H_{st} + H_{us} + H_{tu} + H_{ut} ) \\ + (v^2 - 2 + v^{-2}) \cdot (H_s + H_t + H_u) + (v^3 - 2v + 2v^{-1} - v^{-3}) \cdot H_e ,
\end{multline*}
and since $\chi_{\varpi_\beta^\vee} \in \mathcal{B}_{\mathrm{ext}}$ by Corollary \ref{cor:centerinB} and $\omega \in \mathcal{B}_\mathrm{ext}$ by Lemma \ref{lem:Bomega}, we conclude that
\begin{multline*}
	B_{sus} \coloneqq \chi_{\varpi_\beta^\vee} \cdot \omega - (v-v^{-1}) B_{su} - (v^2-2+v^{-2}) \cdot B_s - (v^3-2v+2v^{-1}-v^{-3}) \cdot H_e \\
	= H_{sus} + H_{tut} + H_{uts} + H_{tsu} - (v-v^{-1}) \cdot H_{st} \in \mathcal{B}_\mathrm{aff} .
\end{multline*}
Finally, we compute
\begin{multline*}
	 B_{su} \cdot B_s = H_{sut} + H_{sus} + H_{tus} + 2 H_{uts} + H_{usu} + 2 H_{tsu} + H_{utu} + H_{tut} \\ + (v^{-1}-v) \cdot ( H_{us} + 4 H_{st} + H_{su} + H_{tu} + H_{ut} ) + 3 H_s + 3 H_t + 2 H_u ,
\end{multline*}
and it follows that
\begin{align*}
	B_{sut} \coloneqq & B_{su} \cdot B_s - B_{sus} - (v^{-1}-v) \cdot B_{su} - 3 B_s \\
	= & H_{sut} + H_{tus} + H_{uts} + H_{usu} + H_{tsu} + H_{utu} + (v^{-1}-v) \cdot H_{st} - H_u \in \mathcal{B}_\mathrm{aff} .
\end{align*}
The elements $B_s,B_{su},B_{sut} \in \mathcal{B}_\mathrm{aff}$ clearly satisfy the conditions (1) and (2) from Theorem \ref{thm:reductionstandardbasis}, and we conclude that Conjecture \ref{conj:standardbasis} holds and that Hypothesis \ref{hyp:standardbasisfiltration} is satisfied for the root system of type $\mathsf{B}_2$.

\subsection{Type \texorpdfstring{$\mathsf{G}_2$}{G2}}

Let $\Phi$ be the root system of type $\mathsf{G}_2$, with simple roots $\Pi = \{ \alpha , \beta \}$ such that $\alpha$ is short.
The positive roots are given by $\Phi^+ = \{ \alpha , \beta , \alpha+\beta , 2\alpha+\beta , 3\alpha+\beta , 3\alpha + 2\beta \}$, and the affine Weyl group $W_\mathrm{aff}$ is generated by the simple reflections $S_\mathrm{aff} = \{ s , t , u \}$ with $u = s_\alpha$, $t = s_\beta$ and $s = s_0$ the affine simple reflection.
The braid relations in $W_\mathrm{aff}$ are as follows:
\[ sts = tst , \hspace{2cm} su=us , \hspace{2cm} tututu=ututut \]
The set $\mathcal{P}$ of restricted alcoves is displayed in Figure \ref{fig:alcovesG2}, and the corresponding subset of $W_\mathrm{aff}$ is
\[ \mathcal{P}_\mathrm{aff} = \{ e , s , st , stu ,stut , stutu , stuts , stutsu , stutsut , stutsutu , stutsutut , stutsututs \} . \]
\begin{figure}[htbp]%
\begin{tikzpicture}[scale=2.5]
	\pgftransformcm{cos(60)}{sin(60)}{0}{sqrt(3)/2}{\pgfpoint{0cm}{0cm}}
	\draw[fill,gray!30] (0,0) -- (3,0) -- (3,2) -- (0,2) -- cycle;
	
	\draw[thick] (0,0) -- (5,0);
	\draw[thick] (0,0) -- (0,5);
	\draw[thick] (5,0) -- (0,5);
	
	\draw[thick] (1,0) -- (0,2);
	\draw[thick] (2,0) -- (0,4);
	\draw[thick] (3,0) -- (1,4);
	\draw[thick] (4,0) -- (3,2);
	
	\draw[thick] (1,0) -- (0,1);
	\draw[thick] (2,0) -- (0,2);
	\draw[thick] (3,0) -- (0,3);
	\draw[thick] (4,0) -- (0,4);
	
	\draw[thick] (0,2) -- (1.5,0);
	\draw[thick] (0,2) -- (3,0);
	
	\draw[thick] (0,4) -- (3,0);
	\draw[thick] (0,4) -- (3,2);
	
	\draw[thick] (0,2) -- (3,2);
	\draw[thick] (0,4) -- (1,4);
	
	\draw[thick] (3,0) -- (3,2);
	\draw[thick] (3,2) -- (4.5,0);
	
	\node at (.325,.325) {\tiny $e$};
	\node at (.325,1) {\tiny $s$};
	\node at (.9,.6) {\tiny $st$};
	\node at (1.3,.5) {\tiny $stu$};
	\node at (1.3,.9) {\tiny $stut$};
	\node[rotate=30] at (2.1,.4) {\tiny $stuts$};
	\node[rotate=30] at (.9,1.6) {\tiny $stutu$};
	\node at (1.75,1.1) {\tiny $stutsu$};
	\node at (1.75,1.4) {\tiny $stutsut$};
	\node[rotate=-30] at (2.1,1.4) {\tiny $stutsutu$};
	\node[rotate=-60] at (2.65,1) {\tiny $stutsutut$};
	\node[rotate=60] at (2.65,1.7) {\tiny $stutsututs$};
	
	\node[rotate=60] at (.35,2.3) {\tiny $stutut$};
	\node[rotate=-60] at (.35,3) {\tiny $stututs$};
	\node[rotate=-30] at (.9,2.6) {\tiny $stututst$};
	\node at (1.1,2.8) {\tiny $stututstu$};
	\node[rotate=60] at (3.4,.25) {\tiny $stutsututu$};
\end{tikzpicture}
	\caption{Some dominant alcoves for $\Phi$ of type $\mathrm{G}_2$, labeled by the corresponding elements of $W_\mathrm{aff}$.
	The set $\mathcal{P}$ of restricted alcoves is shaded in gray.}
	\label{fig:alcovesG2}
\end{figure}%
In order to prove the existence of elements $B_w \in \mathcal{B}_\mathrm{aff}$, for $w \in \mathcal{P}_\mathrm{aff}$, satisfying the conditions (1) and (2) in Theorem \ref{thm:reductionstandardbasis}, we proceed as in Subsection \ref{subsec:standardbasisB2}.
\begin{itemize}
	\item Since $\alpha_\mathrm{h}^\vee = \alpha^\vee + 2 \beta^\vee$, we have
	\[ B_s \coloneqq H_s + 2 H_t + H_u \in \mathcal{B}_\mathrm{aff} \]
	by Lemma \ref{lem:Bs0}.
	\item One computes that
	\[ B_s \cdot B_s - (v^{-1}-v) \cdot B_s - 6 H_e = 2 \cdot \big( H_{st} + H_{ts} + H_{su} + H_{tu} + H_{ut} + (v^{-1}-v) \cdot H_t \big) , \]
	and it follows that
	\[ B_{st} \coloneqq H_{st} + H_{ts} + H_{su} + H_{tu} + H_{ut} + (v^{-1}-v) \cdot H_t \in \mathcal{B}_\mathrm{aff} . \]
	Similarly, we have
	\begin{align*}
		B_{stu} & \coloneqq B_s B_{st} + 2 \cdot (v-v^{-1}) B_{st} - 3 B_3 + 2 \cdot (v-v^{-1}) \cdot B_e \\
		& = 2 H_{ t u t } + 2 H_{ u s t } + 3 H_{ t s t } + 1 H_{ u t u } + 
  1 H_{ s t u } + 2 H_{ t u s } + 1 H_{ u t s } -4 H_{ t }
	\end{align*}
	and
	\begin{align*}
		B_s B_{stu} & + (v-v^{-1}) \cdot B_{stu} - 4 B_{st} + 8 B_e  \\
		 & = 2 \cdot \big( H_{ s t u t } + H_{ u t u t } +  2 H_{ t u s t } + 2 H_{ u t s t } + H_{ t u t u } + H_{ u s t u } + 2 H_{ t s t u } + H_{ u t u s } + H_{ t u t s } \\
		 & \hspace{1cm} - (v-v^{-1}) \cdot H_{ t u t } - (v-v^{-1}) \cdot H_{ u s t } - (3v-3v^{-1}) \cdot H_{ t s t } - (v-v^{-1}) \cdot H_{ t u s } - H_{ u t } \big) ,
	\end{align*}
	and it follows that
	\begin{align*}
		B_{stut} & \coloneqq   H_{ s t u t } + H_{ u t u t } +  2 H_{ t u s t } + 2 H_{ u t s t } + H_{ t u t u } + H_{ u s t u } + 2 H_{ t s t u } + H_{ u t u s } + H_{ t u t s } \\
		 & \hspace{1cm} - (v-v^{-1}) \cdot H_{ t u t } - (v-v^{-1}) \cdot H_{ u s t } - (3v-3v^{-1}) \cdot H_{ t s t } - (v-v^{-1}) \cdot H_{ t u s } - H_{ u t } \in \mathcal{B}_\mathrm{aff} .
	\end{align*}
	\item Next, one checks by brute-force computation that the following element of $\mathcal{H}$ belongs to $\mathcal{B}_\mathrm{aff}$:
	\begin{align} \label{eq:G2Bstuts}
	\begin{split}
	B_{stuts} & \coloneqq H_{ s t u t s } + H_{ t s t u t } + H_{ t u s t u } +  H_{ t u t s t } + H_{ t u t u s } + H_{ u t u t u } + H_{ u t u s t } + H_{ u t s t u } + H_{ u s t u t } \\
	& \hspace{2cm} + (-v+v^{-1}) \cdot H_{ t u s t } + (-v+v^{-1}) \cdot H_{ u t s t } + (-v+v^{-1}) \cdot H_{ t s t u } \\
	& \hspace{2cm} -2 H_{ t u t } -2 H_{ u s t } + (v^2-5+v^{-2}) \cdot H_{ t s t } - H_{ u t u } -2 H_{ t u s } \\
	& \hspace{2cm} + (v-v^{-1}) \cdot H_{ u t } + (v-v^{-1}) \cdot H_{ t u } + (-v^2+7-v^{-2}) \cdot H_{ t }
	\end{split}
	\end{align}
	More details of the computation are provided in Subsection \ref{subsec:detailsG2Bstuts}.
	We do not know any conceptual way of constructing $B_{stuts}$, e.g.\ from a central element of $\mathcal{H}$ as in Subsection \ref{subsec:standardbasisB2}, since the filtration piece $\mathcal{H}^{\leq 5}$ does not contain any central elements besides multiples of $1$.
\end{itemize}
Having found $B_{stuts}$, the remaining elements $B_x$ for $x \in \mathcal{P}_\mathrm{aff}$ can easily be computed, following the same strategy as above.
We omit the details of the computations and only outline the main steps.
\begin{itemize}
	\item $B_{stutu}$ is as a linear combination of $B_{stut} B_s$ and the elements $B_x$ for $x \in \{ e , s , st , stu , stut , stuts \}$.
	\item $B_{stutsu}$ is a linear combination of $B_{stuts} B_s$ and $B_x$ for $x \in W_\mathrm{aff}^+$ with $\ell(x) \leq 5$.
	\item $B_{stutut}$ is as a linear combination of $B_{stutu} B_s$ and $B_x$ for $x \in W_\mathrm{aff}^+$ with $\ell(x) \leq 5$ or $x = stutsu$.
	\item $B_{stututs}$ can be constructed from $B_{stutut} B_s$, $B_{stutsut}$ can be constructed from $B_{stutsu} B_s$, and $B_{stututst}$ can be constructed from $B_{stututs} B_s$.
	\item $B_{stutsutu}$ is a linear combination of $B_{stutsut} B_s$ and $B_x$ for $x \in W_\mathrm{aff}^+$ with $x \leq stututst$.
	\item $B_{stututstu}$ can be constructed from $B_{stututst} \cdot B_s$.
	\item $B_{stutsutut}$ is a linear combination of $B_{stutsutu} B_s$ and $B_x$ for $x \in W_\mathrm{aff}^+$ with $x \leq stututstu$.
	\item $B_{stutsututu}$ is a linear combination of the central element $\chi_{\varpi_{\alpha}^\vee}$ with the elements $B_x$ for $x \in W_\mathrm{aff}^+$ with $\ell(x) \leq 9$.
	(Note that $t_{\varpi_{\alpha}^\vee} = stutsututu$.)
	\item $B_{stutsututs}$ is a linear combination of $B_{stutsutut} B_s$ and the elements $B_x$ for $x \in W_\mathrm{aff}^+$ with $\ell(x) \leq 9$ or $x = stutsututu$.
\end{itemize}
In summary, we have constructed for all $w \in \mathcal{P}_\mathrm{aff}$ an element $B_w \in \mathcal{B}_\mathrm{aff}$ that satisfies the conditions (1) and (2) from Theorem \ref{thm:reductionstandardbasis}.
We conclude that Conjecture \ref{conj:standardbasis} holds and Hypothesis \ref{hyp:standardbasisfiltration} is satisfied for the root system of type $\mathsf{G}_2$.
\clearpage

\begin{landscape}

\subsection{Details on the computation of \texorpdfstring{$B_{stuts}$}{Bstuts}}
\label{subsec:detailsG2Bstuts}

On the following pages, we compute $\varphi(H_x X_{\varpi_\alpha})$ and $\varphi(H_x X_{\varpi_\beta})$ for certain elements $x \in W_\mathrm{aff}$ with $\ell(x) \leq 5$.
Since the element $B_{stuts} \in \mathcal{H}$ from \eqref{eq:G2Bstuts} is a linear combination of the standard basis elements $H_x$ for these $x \in W_\mathrm{aff}$, one can directly check that $\varphi(B_{stuts} X_{\varpi_\alpha}) = 0$ and $\varphi(B_{stuts} X_{\varpi_\beta}) = 0$, and therefore $B_{stuts} \in \mathcal{B}_\mathrm{aff}$ by Lemma \ref{lem:BgeneratingsetX}.
\scriptsize

\begingroup
\scriptsize
\allowdisplaybreaks

\begin{align*}
\varphi( H_{ s t u t s } X_{\varpi_\alpha} ) & = H_{ s t u t s } + (v-v^{-1}) \cdot H_{ s t u t } + (v-v^{-1}) \cdot H_{ t s t u } + (v-v^{-1}) \cdot H_{ u t s t } + (v-v^{-1}) \cdot H_{ t u t s } \\
& \qquad + (v^2-2+v^{-2}) \cdot H_{ s t u } + (v^2-2+v^{-2}) \cdot H_{ u s t } + (v^2-2+v^{-2}) \cdot H_{ t u t } + (v^2-2+v^{-2}) \cdot H_{ t s t } + (v^2-2+v^{-2}) \cdot H_{ t u s } + (v^2-2+v^{-2}) \cdot H_{ u t s } \\
& \qquad + (v^3-3v+3v^{-1}-v^{-3}) \cdot H_{ s t } + (v^3-3v+3v^{-1}-v^{-3}) \cdot H_{ u s } + (v^3-3v+3v^{-1}-v^{-3}) \cdot H_{ t u } + (v^3-3v+3v^{-1}-v^{-3}) \cdot H_{ u t } + (v^3-3v+3v^{-1}-v^{-3}) \cdot H_{ t s } \\
& \qquad + (v^4-3v^2+4-3v^{-2}+v^{-4}) \cdot H_{ s } + (v^4-4v^2+6-4v^{-2}+v^{-4}) \cdot H_{ t } + (v^4-3v^2+4-3v^{-2}+v^{-4}) \cdot H_{ u } + (v^5-3v^3+6v-6v^{-1}+3v^{-3}-v^{-5}) \cdot H_{ e } \\
\varphi( H_{ t s t u t } X_{\varpi_\alpha} ) & = H_{ t s t u t } + (-v+v^{-1}) \cdot H_{ t u s t } + (v-v^{-1}) \cdot H_{ s t u t } \\
& \qquad + (v^2-2+v^{-2}) \cdot H_{ t s t } + (-v^2+2-v^{-2}) \cdot H_{ t u t } + (v^3-3v+3v^{-1}-v^{-3}) \cdot H_{ s t } + (-v+v^{-1}) \cdot H_{ t s } \\
& \qquad + (-v^2+2-v^{-2}) \cdot H_{ t } + (-v^2+2-v^{-2}) \cdot H_{ s } \\
\varphi( H_{ t u s t u } X_{\varpi_\alpha} ) & =  H_{ t u s t u } + (-v+v^{-1}) \cdot H_{ t u s t } + (-2v+2v^{-1}) \cdot H_{ t s t u } + (-v+v^{-1}) \cdot H_{ u s t u } \\
& \qquad + (-v^2+2-v^{-2}) \cdot H_{ t u t } + (v^2-2+v^{-2}) \cdot H_{ t s t } + (-v^2+2-v^{-2}) \cdot H_{ s t u } \\
& \qquad + (v^3-3v+3v^{-1}-v^{-3}) \cdot H_{ s t } + (-v+v^{-1}) \cdot H_{ t s } + (-v^2+2-v^{-2}) \cdot H_{ t } + (-v^2+2-v^{-2}) \cdot H_{ s } \\
\varphi( H_{ t u t s t } X_{\varpi_\alpha} ) & = H_{ t u t s t } + (v-v^{-1}) \cdot H_{ t u s t } + (v-v^{-1}) \cdot H_{ u t s t } \\
& \qquad + (v^2-2+v^{-2}) \cdot H_{ t s t } + (v^2-2+v^{-2}) \cdot H_{ u s t } + (v-v^{-1}) \cdot H_{ t u } + (v^3-v+v^{-1}-v^{-3}) \cdot H_{ s t } \\
& \qquad + (2v^2-4+2v^{-2}) \cdot H_{ t } + (v^2-2+v^{-2}) \cdot H_{ u } + (v^3-3v+3v^{-1}-v^{-3}) \cdot H_{ e } \\
\varphi( H_{ t u t u s } X_{\varpi_\alpha} ) & = H_{ t u t u s } + (v-v^{-1}) \cdot H_{ t u t u } + (-v+v^{-1}) \cdot H_{ t u t s } + (v-v^{-1}) \cdot H_{ u t u s } \\
& \qquad + (-v^2+2-v^{-2}) \cdot H_{ t u t } + (v^2-2+v^{-2}) \cdot H_{ u t u } + (-v^2+2-v^{-2}) \cdot H_{ t u s } + (-v^2+2-v^{-2}) \cdot H_{ u t s } \\
& \qquad + (-v^3+3v-3v^{-1}+v^{-3}) \cdot H_{ t u } + (-v^3+3v-3v^{-1}+v^{-3}) \cdot H_{ u t } + (-v^3+3v-3v^{-1}+v^{-3}) \cdot H_{ t s } + (-v^3+4v-4v^{-1}+v^{-3}) \cdot H_{ u s } \\
& \qquad + (-v^4+4v^2-6+4v^{-2}-v^{-4}) \cdot H_{ t } + (-v^4+5v^2-8+5v^{-2}-v^{-4}) \cdot H_{ u } + (-v^4+2v^2-2+2v^{-2}-v^{-4}) \cdot H_{ s } \\
& \qquad + (-v^5+3v^3-4v+4v^{-1}-3v^{-3}+v^{-5}) \cdot H_{ e } \\
\varphi( H_{ u t u t u } X_{\varpi_\alpha} ) & = H_{ u t u t u } + (-v+v^{-1}) \cdot H_{ u t u t } + (-v+v^{-1}) \cdot H_{ t u t u } \\
& \qquad + (2v^2-4+2v^{-2}) \cdot H_{ t u t } + (-v^3+2v-2v^{-1}+v^{-3}) \cdot H_{ u t } + (-v+v^{-1}) \cdot H_{ t u } \\
& \qquad + (v^4-2v^2+2-2v^{-2}+v^{-4}) \cdot H_{ t } + (2v^2-4+2v^{-2}) \cdot H_{ u } + (-v^3+v-v^{-1}+v^{-3}) \cdot H_{ e } \\
\varphi( H_{ u t u s t } X_{\varpi_\alpha} ) & = H_{ u t u s t } + (v-v^{-1}) \cdot H_{ u t u t } + (-v+v^{-1}) \cdot H_{ u t s t } + (v-v^{-1}) \cdot H_{ t u s t } \\
& \qquad + (v^2-2+v^{-2}) \cdot H_{ t u t } + (-v^2+2-v^{-2}) \cdot H_{ u s t } + (-2v^2+4-2v^{-2}) \cdot H_{ t s t } \\
& \qquad + (-2v^3+6v-6v^{-1}+2v^{-3}) \cdot H_{ s t } + (-v^2+2-v^{-2}) \cdot H_{ u } + (-2v^3+6v-6v^{-1}+2v^{-3}) \cdot H_{ e } \\
\varphi( H_{ u t s t u } X_{\varpi_\alpha} ) & = H_{ u t s t u } + (-v+v^{-1}) \cdot H_{ u t s t } + (-v+v^{-1}) \cdot H_{ u t u s } + (v-v^{-1}) \cdot H_{ u s t u } + (v-v^{-1}) \cdot H_{ t s t u } \\
& \qquad + (-v^2+2-v^{-2}) \cdot H_{ u s t } + (-2v^2+4-2v^{-2}) \cdot H_{ t s t } + (-v^2+2-v^{-2}) \cdot H_{ u t u } + (-v^2+2-v^{-2}) \cdot H_{ t u s } + (v^2-2+v^{-2}) \cdot H_{ s t u } \\
& \qquad + (-2v^3+6v-6v^{-1}+2v^{-3}) \cdot H_{ s t } + (-v^3+3v-3v^{-1}+v^{-3}) \cdot H_{ t u } + (-v^2+2-v^{-2}) \cdot H_{ u } + (-2v^3+6v-6v^{-1}+2v^{-3}) \cdot H_{ e } \\
\varphi( H_{ u s t u t } X_{\varpi_\alpha} ) & = H_{ u s t u t } + (-2v+2v^{-1}) \cdot H_{ s t u t } + (v^2-2+v^{-2}) \cdot H_{ u s t } \\
& \qquad + (v^3-3v+3v^{-1}-v^{-3}) \cdot H_{ u t } + (-v^3+2v-2v^{-1}+v^{-3}) \cdot H_{ s t } + (-v+v^{-1}) \cdot H_{ u s } \\
& \qquad + (-v^4+3v^2-4+3v^{-2}-v^{-4}) \cdot H_{ t } + (-v^2+2-v^{-2}) \cdot H_{ u } + (v^2-2+v^{-2}) \cdot H_{ s } + (v^3-3v+3v^{-1}-v^{-3}) \cdot H_{ e } \\
\varphi( H_{ t u s t } X_{\varpi_\alpha} ) & =  H_{ t u s t } + (v-v^{-1}) \cdot H_{ t u t } + (-v+v^{-1}) \cdot H_{ t s t } \\
& \qquad + (-v^2+2-v^{-2}) \cdot H_{ s t } + (-v^2+2-v^{-2}) \cdot H_{ e } \\
\varphi( H_{ u t s t } X_{\varpi_\alpha} ) & =  H_{ u t s t } + (v-v^{-1}) \cdot H_{ u s t } + (2v-2v^{-1}) \cdot H_{ t s t } \\
& \qquad + (2v^2-4+2v^{-2}) \cdot H_{ s t } + (v-v^{-1}) \cdot H_{ u } + (2v^2-4+2v^{-2}) \cdot H_{ e } \\
\varphi( H_{ t s t u } X_{\varpi_\alpha} ) & = H_{ t s t u } + (-v+v^{-1}) \cdot H_{ t s t } + (-v+v^{-1}) \cdot H_{ t u s } + (v-v^{-1}) \cdot H_{ s t u } \\
& \qquad + (-v^2+2-v^{-2}) \cdot H_{ s t } + (-v^2+2-v^{-2}) \cdot H_{ t u } + (-v^2+2-v^{-2}) \cdot H_{ e } \\
\varphi( H_{ u s t } X_{\varpi_\alpha} ) & =  H_{ u s t } + (v-v^{-1}) \cdot H_{ u t } + (-v+v^{-1}) \cdot H_{ s t } + (-v^2+2-v^{-2}) \cdot H_{ t } \\
\varphi( H_{ u t u } X_{\varpi_\alpha} ) & =  H_{ u t u } + (-v+v^{-1}) \cdot H_{ u t } + (-2v+2v^{-1}) \cdot H_{ t u } + (v^2-2+v^{-2}) \cdot H_{ t } + (-v+v^{-1}) \cdot H_{ e } \\
\varphi( H_{ t u t } X_{\varpi_\alpha} ) & =  H_{ t u t } + (-v+v^{-1}) \cdot H_{ u t } + (v^2-2+v^{-2}) \cdot H_{ t } + (-v+v^{-1}) \cdot H_{ e } \\
\varphi( H_{ t u s } X_{\varpi_\alpha} ) & =  H_{ t u s } + (v-v^{-1}) \cdot H_{ t u } + (-v+v^{-1}) \cdot H_{ t s } + (-v^2+2-v^{-2}) \cdot H_{ t } + (-v^2+2-v^{-2}) \cdot H_{ s } + (-v^3+3v-3v^{-1}+v^{-3}) \cdot H_{ e } \\
\varphi( H_{ t s t } X_{\varpi_\alpha} ) & =  H_{ t s t } + (v-v^{-1}) \cdot H_{ s t } + (v-v^{-1}) \cdot H_{ e } \\
\varphi( H_{ u t } X_{\varpi_\alpha} ) & =  H_{ u t } + (-v+v^{-1}) \cdot H_{ t } \\
\varphi( H_{ t u } X_{\varpi_\alpha} ) & =  H_{ t u } + (-v+v^{-1}) \cdot H_{ t } + (-v+v^{-1}) \cdot H_{ u } \\
\varphi( H_{ t } X_{\varpi_\alpha} ) & =  H_{ t } \\
\end{align*}

\begin{align*}
\varphi( H_{ s t u t s } X_{\varpi_\beta} ) & = H_{ s t u t s } + (2v-2v^{-1}) \cdot H_{ s t u t } + (v-v^{-1}) \cdot H_{ t s t u } + (2v-2v^{-1}) \cdot H_{ u t s t } + (v-v^{-1}) \cdot H_{ t u t s } \\
& \qquad + (v^2-2+v^{-2}) \cdot H_{ s t u } + (2v^2-4+2v^{-2}) \cdot H_{ u s t } + (v^2-2+v^{-2}) \cdot H_{ t u t } + (3v^2-6+3v^{-2}) \cdot H_{ t s t } + (v^2-2+v^{-2}) \cdot H_{ t u s } + (v^2-2+v^{-2}) \cdot H_{ u t s } \\
& \qquad + (3v^3-9v+9v^{-1}-3v^{-3}) \cdot H_{ s t } + (v^3-3v+3v^{-1}-v^{-3}) \cdot H_{ u s } + (v^3-3v+3v^{-1}-v^{-3}) \cdot H_{ t u } \\
& \qquad + (v^3-3v+3v^{-1}-v^{-3}) \cdot H_{ u t } + (v^3-3v+3v^{-1}-v^{-3}) \cdot H_{ t s } \\
& \qquad + (v^4-4v^2+6-4v^{-2}+v^{-4}) \cdot H_{ t } + (v^4-4v^2+6-4v^{-2}+v^{-4}) \cdot H_{ s } + (v^4-2v^2+2-2v^{-2}+v^{-4}) \cdot H_{ u } + (v^5-2v^3+4v-4v^{-1}+2v^{-3}-v^{-5}) \cdot H_{ e } \\
\varphi( H_{ t s t u t } X_{\varpi_\beta} ) & = H_{ t s t u t } + (-v+v^{-1}) \cdot H_{ t s t u } + (-2v+2v^{-1}) \cdot H_{ t u s t } + (v-v^{-1}) \cdot H_{ s t u t } \\
& \qquad + (v^2-2+v^{-2}) \cdot H_{ t u s } + (-2v^2+4-2v^{-2}) \cdot H_{ s t u } + (3v^2-6+3v^{-2}) \cdot H_{ t s t } + (-2v^2+4-2v^{-2}) \cdot H_{ t u t } + (v^2-2+v^{-2}) \cdot H_{ u s t } \\
& \qquad + (v^3-3v+3v^{-1}forSi-v^{-3}) \cdot H_{ t u } + (v^3-3v+3v^{-1}-v^{-3}) \cdot H_{ u s } + (2v^3-6v+6v^{-1}-2v^{-3}) \cdot H_{ s t } + (-3v+3v^{-1}) \cdot H_{ t s } + (v^3-4v+4v^{-1}-v^{-3}) \cdot H_{ u t } \\
& \qquad + (v^4-5v^2+8-5v^{-2}+v^{-4}) \cdot H_{ u } + (-v^4+v^2+v^{-2}-v^{-4}) \cdot H_{ t } + (-3v^2+6-3v^{-2}) \cdot H_{ s } \\
\varphi( H_{ t u s t u } X_{\varpi_\beta} ) & = H_{ t u s t u } + (v-v^{-1}) \cdot H_{ t u t u } + (-3v+3v^{-1}) \cdot H_{ t s t u } + (-v+v^{-1}) \cdot H_{ u s t u } \\
& \qquad + (v^2-2+v^{-2}) \cdot H_{ t u s } + (-v^2+2-v^{-2}) \cdot H_{ u t u } + (-3v^2+6-3v^{-2}) \cdot H_{ s t u } \\
& \qquad + (2v^3-6v+6v^{-1}-2v^{-3}) \cdot H_{ t u } + (v^3-3v+3v^{-1}-v^{-3}) \cdot H_{ u s } + (-v+v^{-1}) \cdot H_{ t s } \\
& \qquad + (-2v^2+4-2v^{-2}) \cdot H_{ t } + (v^4-7v^2+12-7v^{-2}+v^{-4}) \cdot H_{ u } + (-v^2+2-v^{-2}) \cdot H_{ s } + (-v^3+3v-3v^{-1}+v^{-3}) \cdot H_{ e } \\
\varphi( H_{ t u t s t } X_{\varpi_\beta} ) & = H_{ t u t s t } + (-v+v^{-1}) \cdot H_{ t u t s } + (2v-2v^{-1}) \cdot H_{ t u s t } + (v-v^{-1}) \cdot H_{ u t s t } \\
& \qquad + (-v^2+2-v^{-2}) \cdot H_{ t u t } + (-v^2+2-v^{-2}) \cdot H_{ t u s } + (-2v^2+4-2v^{-2}) \cdot H_{ u t s } + (v^2-2+v^{-2}) \cdot H_{ u s t } \\
& \qquad + (-v^3+4v-4v^{-1}+v^{-3}) \cdot H_{ t u } + (-2v^3+6v-6v^{-1}+2v^{-3}) \cdot H_{ u t } + (-3v^3+9v-9v^{-1}+3v^{-3}) \cdot H_{ t s } \\
& \qquad + (-2v^3+6v-6v^{-1}+2v^{-3}) \cdot H_{ u s } + (3v-3v^{-1}) \cdot H_{ s t } \\
& \qquad + (-3v^4+15v^2-24+15v^{-2}-3v^{-4}) \cdot H_{ t } + (-2v^4+9v^2-14+9v^{-2}-2v^{-4}) \cdot H_{ u } + (-3v^4+9v^2-12+9v^{-2}-3v^{-4}) \cdot H_{ s } \\
& \qquad + (-3v^5+12v^3-21v+21v^{-1}-12v^{-3}+3v^{-5}) \cdot H_{ e } \\
\varphi( H_{ t u t u s } X_{\varpi_\beta} ) & = H_{ t u t u s } + (2v-2v^{-1}) \cdot H_{ t u t u } + (2v-2v^{-1}) \cdot H_{ u t u s } \\
& \qquad + (2v^2-4+2v^{-2}) \cdot H_{ u t u } + (2v^2-4+2v^{-2}) \cdot H_{ t u s } \\
& \qquad + (2v^3-6v+6v^{-1}-2v^{-3}) \cdot H_{ t u } + (v^3-v^{-3}) \cdot H_{ u s } + (v-v^{-1}) \cdot H_{ t s } \\
& \qquad + (v^4-v^2-v^{-2}+v^{-4}) \cdot H_{ u } + (v^2-2+v^{-2}) \cdot H_{ t } + (2v^2-4+2v^{-2}) \cdot H_{ s } + (2v^3-6v+6v^{-1}-2v^{-3}) \cdot H_{ e } \\
\varphi( H_{ u t u t u } X_{\varpi_\beta} ) & = H_{ u t u t u } + (-3v+3v^{-1}) \cdot H_{ t u t u } + (3v^2-6+3v^{-2}) \cdot H_{ u t u } \\
& \qquad + (-5v^3+13v-13v^{-1}+5v^{-3}) \cdot H_{ t u } + (-v+v^{-1}) \cdot H_{ u t } + (v^4+3v^2-8+3v^{-2}+v^{-4}) \cdot H_{ u } + (3v^2-6+3v^{-2}) \cdot H_{ t } + (-v^3+v^{-3}) \cdot H_{ e } \\
\varphi( H_{ u t u s t } X_{\varpi_\beta} ) & = H_{ u t u s t } + (-v+v^{-1}) \cdot H_{ u t u s } + (v-v^{-1}) \cdot H_{ u t u t } + (-3v+3v^{-1}) \cdot H_{ u t s t } \\
& \qquad + (-2v^2+4-2v^{-2}) \cdot H_{ u t u } + (-3v^2+6-3v^{-2}) \cdot H_{ t u s } + (-6v^2+12-6v^{-2}) \cdot H_{ t s t } + (-5v^2+10-5v^{-2}) \cdot H_{ u s t } \\
& \qquad + (-3v^3+9v-9v^{-1}+3v^{-3}) \cdot H_{ t u } + (-2v^3+6v-6v^{-1}+2v^{-3}) \cdot H_{ u s } + (-2v^3+6v-6v^{-1}+2v^{-3}) \cdot H_{ u t } + (-5v^3+14v-14v^{-1}+5v^{-3}) \cdot H_{ s t } \\
& \qquad + (-2v^4+5v^2-6+5v^{-2}-2v^{-4}) \cdot H_{ u } + (-v^2+2-v^{-2}) \cdot H_{ s } + (v^4-5v^2+8-5v^{-2}+v^{-4}) \cdot H_{ t } + (-7v^3+21v-21v^{-1}+7v^{-3}) \cdot H_{ e } \\
\varphi( H_{ u t s t u } X_{\varpi_\beta} ) & = H_{ u t s t u } + (-v+v^{-1}) \cdot H_{ u t u s } + (2v-2v^{-1}) \cdot H_{ u s t u } + (3v-3v^{-1}) \cdot H_{ t s t u } \\
& \qquad + (-v^2+2-v^{-2}) \cdot H_{ u t u } + (-3v^2+6-3v^{-2}) \cdot H_{ t u s } + (3v^2-6+3v^{-2}) \cdot H_{ s t u } \\
& \qquad + (-3v^3+9v-9v^{-1}+3v^{-3}) \cdot H_{ t u } + (-2v^3+6v-6v^{-1}+2v^{-3}) \cdot H_{ u s } \\
& \qquad + (-2v^4+10v^2-16+10v^{-2}-2v^{-4}) \cdot H_{ u } + (-v^2+2-v^{-2}) \cdot H_{ s } + (-v^3+4v-4v^{-1}+v^{-3}) \cdot H_{ e } \\
\varphi( H_{ u s t u t } X_{\varpi_\beta} ) & = H_{ u s t u t } + (-v+v^{-1}) \cdot H_{ u s t u } + (-v+v^{-1}) \cdot H_{ u t u t } + (-3v+3v^{-1}) \cdot H_{ s t u t } \\
& \qquad + (-v^2+2-v^{-2}) \cdot H_{ u t u } + (3v^2-6+3v^{-2}) \cdot H_{ s t u } + (2v^2-4+2v^{-2}) \cdot H_{ u s t } + (3v^2-6+3v^{-2}) \cdot H_{ t u t } \\
& \qquad + (-v^3+v^{-3}) \cdot H_{ u s } + (3v^3-9v+9v^{-1}-3v^{-3}) \cdot H_{ t u } + (-2v^3+4v-4v^{-1}+2v^{-3}) \cdot H_{ s t } \\
& \qquad + (-2v^4+5v^2-6+5v^{-2}-2v^{-4}) \cdot H_{ u } + (4v^2-8+4v^{-2}) \cdot H_{ s } + (-v^2+2-v^{-2}) \cdot H_{ t } + (3v^3-9v+9v^{-1}-3v^{-3}) \cdot H_{ e } \\
& \qquad + (3v^3-9v+9v^{-1}-3v^{-3}) \cdot H_{ t } + (2v^3-6v+6v^{-1}-2v^{-3}) \cdot H_{ u } + (3v^3-6v+6v^{-1}-3v^{-3}) \cdot H_{ s } + (3v^4-9v^2+12-9v^{-2}+3v^{-4}) \cdot H_{ e } \\
\varphi( H_{ t u s t } X_{\varpi_\beta} ) & = H_{ t u s t } + (-v+v^{-1}) \cdot H_{ t u s } + (v-v^{-1}) \cdot H_{ t u t } + (-3v+3v^{-1}) \cdot H_{ t s t } + (-v+v^{-1}) \cdot H_{ u s t } \\
& \qquad + (-2v^2+4-2v^{-2}) \cdot H_{ t u } + (-v^2+2-v^{-2}) \cdot H_{ u s } + (-v^2+2-v^{-2}) \cdot H_{ u t } + (-3v^2+6-3v^{-2}) \cdot H_{ s t } \\
& \qquad + (-v^3+3v-3v^{-1}+v^{-3}) \cdot H_{ u } + (-3v^2+6-3v^{-2}) \cdot H_{ e } \\
\varphi( H_{ u t s t } X_{\varpi_\beta} ) & = H_{ u t s t } + (-v+v^{-1}) \cdot H_{ u t s } + (2v-2v^{-1}) \cdot H_{ u s t } + (3v-3v^{-1}) \cdot H_{ t s t } \\
& \qquad + (-v^2+2-v^{-2}) \cdot H_{ u t } + (-v^2+2-v^{-2}) \cdot H_{ u s } + (-3v^2+6-3v^{-2}) \cdot H_{ t s } + (3v^2-6+3v^{-2}) \cdot H_{ s t } \\
& \qquad + (-v^3+4v-4v^{-1}+v^{-3}) \cdot H_{ u } + (-3v^3+9v-9v^{-1}+3v^{-3}) \cdot H_{ t } + (-3v^3+9v-9v^{-1}+3v^{-3}) \cdot H_{ s } + (-3v^4+15v^2-24+15v^{-2}-3v^{-4}) \cdot H_{ e } \\
\varphi( H_{ t s t u } X_{\varpi_\beta} ) & = H_{ t s t u } + (-v+v^{-1}) \cdot H_{ t u s } + (2v-2v^{-1}) \cdot H_{ s t u } \\
& \qquad + (-v^2+2-v^{-2}) \cdot H_{ t u } + (-v^2+2-v^{-2}) \cdot H_{ u s } + (-v^3+4v-4v^{-1}+v^{-3}) \cdot H_{ u } \\
\varphi( H_{ u s t } X_{\varpi_\beta} ) & = H_{ u s t } + (-v+v^{-1}) \cdot H_{ u s } + (v-v^{-1}) \cdot H_{ u t } + (-3v+3v^{-1}) \cdot H_{ s t } + (-2v^2+4-2v^{-2}) \cdot H_{ u } + (-3v^2+6-3v^{-2}) \cdot H_{ t } \\
\varphi( H_{ u t u } X_{\varpi_\beta} ) & = H_{ u t u } + (-3v+3v^{-1}) \cdot H_{ t u } + (v^2-2+v^{-2}) \cdot H_{ u } + (-v+v^{-1}) \cdot H_{ e } \\
\varphi( H_{ t u t } X_{\varpi_\beta} ) & = H_{ t u t } + (-v+v^{-1}) \cdot H_{ t u } + (-2v+2v^{-1}) \cdot H_{ u t } + (v^2-2+v^{-2}) \cdot H_{ u } + (3v^2-6+3v^{-2}) \cdot H_{ t } + (-3v+3v^{-1}) \cdot H_{ e } \\
\varphi( H_{ t u s } X_{\varpi_\beta} ) & = H_{ t u s } + (2v-2v^{-1}) \cdot H_{ t u } + (v-v^{-1}) \cdot H_{ u s } + (v^2-2+v^{-2}) \cdot H_{ u } \\
\varphi( H_{ t s t } X_{\varpi_\beta} ) & = H_{ t s t } + (-v+v^{-1}) \cdot H_{ t s } + (2v-2v^{-1}) \cdot H_{ s t } + (-v^2+2-v^{-2}) \cdot H_{ t } + (-v^2+2-v^{-2}) \cdot H_{ s } + (-v^3+4v-4v^{-1}+v^{-3}) \cdot H_{ e } \\
\varphi( H_{ u t } X_{\varpi_\beta} ) & = H_{ u t } + (-v+v^{-1}) \cdot H_{ u } + (-3v+3v^{-1}) \cdot H_{ t } \\
\varphi( H_{ t u } X_{\varpi_\beta} ) & = H_{ t u } + (-v+v^{-1}) \cdot H_{ u } \\
\varphi( H_{ t } X_{\varpi_\beta} ) & = H_{ t } + (-v+v^{-1}) \cdot H_{ e } \\
\end{align*}

\endgroup

\noindent
The Laurent polynomials that appear as coefficients of the standard basis elements $H_x$ in the equations listed above can be arranged in two $34 \times 20$ matrices.
One checks that the column vector (with 20 rows) whose entries are the Laurent polynomials that appear as coefficients of the standard basis elements in \eqref{eq:G2Bstuts} belongs to the kernel of both of these matrices, and so $\varphi( B_{stuts} X_{\varpi_\beta} ) = 0$ and $\varphi( B_{stuts} X_{\varpi_\beta} ) = 0$, as claimed.
As an example, the coefficient of $H_e$ in $\varphi(B_{stuts} X_{\varpi_\alpha})$ is given by
\begin{align*}
	& (v^5-3v^3+6v-6v^{-1}+3v^{-3}-v^{-5}) + 0 + 0 + (v^3-3v+3v^{-1}-v^{-3}) + (-v^5+3v^3-4v+4v^{-1}-3v^{-3}+v^{-5}) \\
	& \hspace{2cm} + (-v^3+v-v^{-1}+v^{-3}) + (-2v^3+6v-6v^{-1}+2v^{-3}) + (-2v^3+6v-6v^{-1}+2v^{-3}) + (v^3-3v+3v^{-1}-v^{-3}) \\
	& \hspace{2cm} + (-v+v^{-1}) \cdot (-v^2+2-v^{-2}) + (-v+v^{-1}) \cdot (2v^2-4+2v^{-2}) + (-v+v^{-1}) \cdot (-v^2+2-v^{-2}) \\
	& \hspace{2cm}  -2 \cdot (-v+v^{-1}) -2 \cdot 0 + (v^2-5+v^{-2}) \cdot (v-v^{-1}) - (-v+v^{-1}) - 2 \cdot (-v^3+3v-3v^{-1}+v^{-3}) \\
	& \hspace{2cm} + (v-v^{-1}) \cdot 0 + (v-v^{-1}) \cdot 0 + (-v^2+7-v^{-2}) \cdot 0 \\
	& \hspace{2cm} = 0 ,
\end{align*}
and the coefficient of $H_e$ in $\varphi(B_{stuts} X_{\varpi_\beta})$ is given by
\begin{align*}
	& (v^5-2v^3+4v-4v^{-1}+2v^{-3}-v^{-5}) + 0 + (-v^3+3v-3v^{-1}+v^{-3}) + (-3v^5+12v^3-21v+21v^{-1}-12v^{-3}+3v^{-5}) + (2v^3-6v+6v^{-1}-2v^{-3}) \\
	& \hspace{2cm} + (-v^3+v^{-3}) + (-7v^3+21v-21v^{-1}+7v^{-3}) + (-v^3+4v-4v^{-1}+v^{-3}) + (3v^3-9v+9v^{-1}-3v^{-3}) \\
	& \hspace{2cm} + (-v+v^{-1}) \cdot (-3v^2+6-3v^{-2}) + (-v+v^{-1}) \cdot (-3v^4+15v^2-24+15v^{-2}-3v^{-4}) + (-v+v^{-1}) \cdot 0 \\
	& \hspace{2cm}  - 2 \cdot (-3v+3v^{-1}) -2 \cdot 0 + (v^2-5+v^{-2}) \cdot (-v^3+4v-4v^{-1}+v^{-3}) - (-v+v^{-1}) - 2 \cdot 0 \\
	& \hspace{2cm} + (v-v^{-1}) \cdot 0 + (v-v^{-1}) \cdot 0 + (-v^2+7-v^{-2}) \cdot (-v+v^{-1}) \\
	& \hspace{2cm} = 0 .
\end{align*}

\end{landscape}

\bibliographystyle{alpha}
\bibliography{centralizer}

\end{document}